\documentclass{article}

\usepackage[nonatbib,final]{nips_arxiv}

\def\supplement{1}
\def\cameraready{1}

\usepackage[numbers,sort&compress,square]{natbib}

\PassOptionsToPackage{natbib}{square,numbers,sort&compress}

\usepackage[utf8]{inputenc} \usepackage[T1]{fontenc}    \usepackage{hyperref}       \usepackage{url}            \usepackage{booktabs}       \usepackage{amsfonts}       \usepackage{nicefrac}       \usepackage{microtype}      
\clearpage{}
\synctex=1

\usepackage{amsmath,amsfonts,amsbsy,amsgen,amscd,amssymb,amsthm,bm,stmaryrd}
\usepackage{mathtools}

\usepackage[scaled=1.2]{urwchancal}

\usepackage{multicol}

\usepackage[usenames,dvipsnames]{xcolor}
\usepackage{graphicx}
\graphicspath{{art/}}

\definecolor{dark-gray}{gray}{0.3}
\definecolor{dkgray}{rgb}{.4,.4,.4}
\definecolor{dkblue}{rgb}{0,0,.5}
\definecolor{medblue}{rgb}{0,0,.75}
\definecolor{rust}{rgb}{0.5,0.1,0.1}
\definecolor{purple}{rgb}{0.3,0.0,.4}

\usepackage{enumitem}
\usepackage{float}
\usepackage{caption}
\usepackage{subcaption}
\usepackage[font=small,margin=0.25in,labelfont={sc},labelsep={colon}]{caption}

\usepackage[noend]{algpseudocode}
\usepackage{algorithm,algorithmicx}

\algrenewcommand\alglinenumber[1]{\sf\tiny\color{medblue}{#1}\quad}
\algrenewcommand\algorithmicrequire{\textbf{Input:}}
\algrenewcommand\algorithmicensure{\textbf{Output:}}

\hypersetup{urlcolor=rust}
\hypersetup{citecolor=dkblue}
\hypersetup{linkcolor=dkblue}

\newtheorem{theorem}{Theorem}

\newtheorem{proposition}[theorem]{Proposition}

\theoremstyle{definition}
\newtheorem{remark}[theorem]{Remark}

\numberwithin{equation}{section}
\numberwithin{theorem}{section}
\numberwithin{figure}{section}

\newcommand{\R}{\mathbb{R}}
\newcommand{\C}{\mathbb{C}}
\newcommand{\F}{\mathbb{F}}

\newcommand{\eps}{\varepsilon}
\newcommand{\econst}{\mathrm{e}}

\newcommand{\vct}[1]{\bm{#1}}
\newcommand{\mtx}[1]{\bm{#1}}

\newcommand{\Id}{\mathbf{I}}

\newcommand{\rank}{\operatorname{rank}}
\newcommand{\range}{\operatorname{range}}

\newcommand{\diag}{\operatorname{diag}}

\newcommand{\norm}[1]{\Vert #1 \Vert}

\newcommand{\lowrank}[2]{\llbracket {#1} \rrbracket_{#2}}

\newcommand{\Expect}{\operatorname{\mathbb{E}}}

\algdef{SE}[SUBALG]{Indent}{EndIndent}{}{\algorithmicend\ }\algtext*{Indent}
\algtext*{EndIndent}
\clearpage{}

\title{Fixed-Rank Approximation of a \\
Positive-Semidefinite Matrix from Streaming Data \\
\ifdefined\supplement
{\small Including supplementary appendix} \fi}

\author{
  Joel A.~Tropp \\
  Caltech \\
  \hspace{-0.2pc}{\footnotesize \url{jtropp@caltech.edu}}\hspace{-0.2pc}
  \And
  Alp Yurtsever \\
  EPFL \\
  \hspace{-0.2pc}{\footnotesize \url{alp.yurtsever@epfl.ch}}\hspace{-0.2pc}
  \And
  Madeleine Udell \\   Cornell \\
  \hspace{-0.2pc}{\footnotesize \url{mru8@cornell.edu}}\hspace{-0.2pc}
  \And
  Volkan Cevher \\
  EPFL \\
  \hspace{-0.2pc}{\footnotesize \url{volkan.cevher@epfl.ch}}\hspace{-0.2pc}}

\begin{document}

\maketitle

\begin{abstract}
Several important applications, such as streaming PCA and semidefinite programming, involve a large-scale positive-semidefinite (psd) matrix that is presented
as a sequence of linear updates.  Because of storage limitations,
it may only be possible to retain a sketch of the psd matrix.
This paper develops a new algorithm for fixed-rank psd approximation from a sketch.
The approach combines the Nystr{\"o}m approximation with a novel mechanism for rank truncation.
Theoretical analysis establishes that
the proposed method can achieve any prescribed relative
error in the Schatten 1-norm and that it exploits the spectral
decay of the input matrix.  Computer experiments
show that the proposed method dominates alternative
techniques for fixed-rank psd matrix approximation across
a wide range of examples.
\end{abstract}

\section{Motivation}
\label{sec:intro}

In recent years, researchers have studied many applications where
a large positive-semidefinite (psd) matrix is presented
as a series of linear updates.  A recurring theme is that we only have space to store
a small summary of the psd matrix, and we must use this
information to construct an accurate psd approximation
with specified rank.
Here are two important cases where this problem arises.

\textbf{Streaming Covariance Estimation.}
Suppose that we receive a stream $\vct{h}_1, \vct{h}_2, \vct{h}_3, \dots \in \R^n$
of high-dimensional vectors.
The psd sample covariance matrix of these vectors has the linear dynamics
\begin{equation*} \label{eqn:sample-covar-dynamics}
\mtx{A}^{(0)} \gets \mtx{0}
\quad\text{and}\quad
\mtx{A}^{(i)} \gets (1 - i^{-1}) \mtx{A}^{(i-1)} + i^{-1} \vct{h}_i \vct{h}_i^*.
\end{equation*}
When the dimension $n$ and the number of vectors are both large,
it is not possible to store
the vectors or the sample covariance matrix.
Instead, we wish to maintain a small summary that allows us to compute the rank-$r$ psd approximation of the sample covariance
matrix $\mtx{A}^{(i)}$ at a specified instant $i$.
This problem and its variants are often called \emph{streaming PCA}
\cite{GPW12:Sketched-SVD, MCJ13:Memory-Limited, BGKL15:Online-Principal, GLPW16:Frequent-Directions, JJK+16:Streaming-PCA, FVR16:Dimensionality-Reduction}.

\textbf{Convex Low-Rank Matrix Optimization with Optimal Storage.}
A primary application of semidefinite programming (SDP)
is to search for a rank-$r$ psd matrix that satisfies
additional constraints.  Because of storage costs,
SDPs are difficult to solve when the matrix variable is large.
Recently, Yurtsever et al.~\cite{YUTC17:Sketchy-Decisions}
exhibited the first provable algorithm, called SketchyCGM, that produces a
rank-$r$ approximate solution to an SDP \emph{using optimal storage}.

Implicitly, SketchyCGM forms a sequence of approximate psd solutions to the SDP via the iteration \begin{equation*} \label{eqn:sketchy-cgm-dynamics}
\mtx{A}^{(0)} \gets \mtx{0}
\quad\text{and}\quad
\mtx{A}^{(i)} \gets (1 - \eta_i) \mtx{A}^{(i-1)} + \eta_i \vct{h}_i \vct{h}_i^*.
\end{equation*}
The step size $\eta_i = 2/(i+2)$, and the vectors $\vct{h}_i$ do not
depend on the matrices $\mtx{A}^{(i)}$.
In fact, SketchyCGM only maintains a small summary of the evolving solution $\mtx{A}^{(i)}$.
When the iteration terminates, SketchyCGM computes a rank-$r$ psd approximation
of the final iterate using the method described by \citet[Alg.~9]{TYUC17:Randomized-Single-View-TR}.

\subsection{Notation and Background} \label{sec:notation}
The scalar field $\F = \R$ or $\F = \C$. Define $\alpha(\R) = 1$ and $\alpha(\C) = 0$.
The asterisk ${}^*$ is the (conjugate) transpose,
and the dagger ${}^\dagger$ denotes the Moore--Penrose pseudoinverse.
The notation $\mtx{A}^{1/2}$ refers to the unique psd square root of a psd matrix $\mtx{A}$.
For $p \in [1, \infty]$, the Schatten $p$-norm $\norm{ \cdot }_p$ returns the $\ell_p$ norm
of the singular values of a matrix.
As usual, $\sigma_{r}$ refers to the $r$th largest singular value.

For a nonnegative integer $r$,
the phrase ``rank-$r$'' and its variants mean ``rank at most $r$.''
For a matrix $\mtx{M}$,
the symbol $\lowrank{\mtx{M}}{r}$ denotes a (simultaneous)
best rank-$r$ approximation of the matrix $\mtx{M}$ with respect to any Schatten $p$-norm.
We can take $\lowrank{\mtx{M}}{r}$ to be any $r$-truncated
singular value decomposition (SVD) of $\mtx{M}$~\cite[Sec.~6]{Hig89:Matrix-Nearness}.
Every best rank-$r$ approximation of a psd matrix is psd.

\section{Sketching and Fixed-Rank PSD Approximation}

We begin with a streaming data model for
a psd matrix that evolves via a sequence of general linear updates,
and it describes a randomized linear sketch for tracking the psd matrix.
To compute a fixed-rank psd approximation, we develop an algorithm
based on the Nystr{\"o}m method~\cite{WS01:Using-Nystrom},
a technique from the literature on kernel methods.
In contrast to previous approaches, \textbf{our algorithm uses a distinct
mechanism to truncate the rank of the approximation.}

\textbf{The Streaming Model.}
Fix a rank parameter $r$ in the range $1 \leq r \leq n$.
Initially, the psd matrix $\mtx{A} \in \F^{n \times n}$ equals
a known psd matrix $\mtx{A}_{\mathrm{init}} \in \F^{n \times n}$.
Then $\mtx{A}$ evolves via a series of linear updates:
\begin{equation} \label{eqn:linear-update}
\mtx{A} \gets \theta_1 \mtx{A} + \theta_2 \mtx{H}
\quad\text{where}\quad
\theta_i \in \R, \quad
\text{$\mtx{H} \in \F^{n \times n}$ is (conjugate) symmetric.}
\end{equation}
In many applications, the innovation $\mtx{H}$ is low-rank and/or sparse.
We assume that the evolving matrix $\mtx{A}$ always remains psd.
At one given instant, we must produce an accurate rank-$r$ approximation
of the psd matrix $\mtx{A}$ induced by the stream of linear updates.

\textbf{The Sketch.}
Fix a sketch size parameter $k$ in the range $r \leq k \leq n$.
Independent from $\mtx{A}$, we draw and fix a random test matrix
\begin{equation} \label{eqn:test-matrix}
\mtx{\Omega} \in \F^{n \times k}.
\end{equation}
See Sec.~\ref{sec:implementation} for a discussion of possible distributions.
The sketch of the matrix $\mtx{A}$ takes the form
\begin{equation} \label{eqn:sketch}
\mtx{Y} = \mtx{A} \mtx{\Omega} \in \F^{n \times k}.
\end{equation}
The sketch~\eqref{eqn:sketch} supports updates of the form~\eqref{eqn:linear-update}:
\begin{equation} \label{eqn:sketch-update}
\mtx{Y} \gets \theta_1 \mtx{Y} + \theta_2 \mtx{H} \mtx{\Omega}.
\end{equation}
To find a good rank-$r$ approximation, we must set the sketch size $k$ larger
than $r$.  But storage costs and computation also increase with $k$.
One of our main contributions is to clarify the role of $k$.

Under the model~\eqref{eqn:linear-update}, it is more or less
necessary to use a randomized linear sketch to track $\mtx{A}$~\cite{LNW14:Turnstile-Streaming}.
For psd matrices, sketches of the form~\eqref{eqn:test-matrix}--\eqref{eqn:sketch}
appear explicitly in Gittens's work~\cite{Git11:Spectral-Norm,Git13:Topics-Randomized,GM16:Revisiting-Nystrom-JMLR}.
\citet{TYUC17:Randomized-Single-View-TR} relies on
a more complicated sketch developed
in~\cite{WLRT08:Fast-Randomized,CW09:Numerical-Linear}.

\textbf{The Nystr{\"o}m Approximation.}
The Nystr{\"o}m method is a general technique for low-rank psd matrix
approximation.  Various instantiations appear in the papers~\cite{WS01:Using-Nystrom,Pla05:FastMap-MetricMap,
FBCM04:Spectral-Grouping,DM05:Nystrom-Method,HMT11:Finding-Structure,Git11:Spectral-Norm,CD13:Sublinear-Randomized,Git13:Topics-Randomized,GM16:Revisiting-Nystrom-JMLR,LLS+17:Algorithm-971}.

Here is the application to the present situation.
Given the test matrix $\mtx{\Omega}$ and the sketch $\mtx{Y} = \mtx{A\Omega}$,
the Nystr{\"o}m method constructs a rank-$k$ psd approximation of
the psd matrix $\mtx{A}$ via the formula
\begin{equation} \label{eqn:nystrom}
\hat{\mtx{A}}^{\mathrm{nys}} = \mtx{Y} (\mtx{\Omega}^* \mtx{Y})^{\dagger} \mtx{Y}^*.
\end{equation}
In most work on the Nystr{\"o}m method,
the test matrix $\mtx{\Omega}$ depends adaptively on $\mtx{A}$,
so these approaches are not valid in the streaming setting.
Gittens's framework~\cite{Git11:Spectral-Norm,Git13:Topics-Randomized,GM16:Revisiting-Nystrom-JMLR}
covers the streaming case.

\textbf{Fixed-Rank Nystr{\"o}m Approximation: Prior Art.}
To construct a Nystr{\"o}m approximation with exact rank $r$ from a sketch
of size $k$, the standard approach
is to truncate the center matrix to rank $r$:
\begin{equation} \label{eqn:gm}
\hat{\mtx{A}}^{\mathrm{nysfix}}_r = \mtx{Y} ( \lowrank{ \mtx{\Omega}^* \mtx{Y} }{r} )^{\dagger} \mtx{Y}^*.
\end{equation}
The truncated Nystr{\"o}m approximation~\eqref{eqn:gm} appears
in the many papers, including~\cite{Pla05:FastMap-MetricMap,DM05:Nystrom-Method,CD13:Sublinear-Randomized,GM13:Revisiting-Nystrom}.
We have found (Sec.~\ref{sec:numerics}) that the truncation method~\eqref{eqn:gm}
performs poorly in the present setting. This observation motivated us to search for more effective techniques.

\textbf{Fixed-Rank Nystr{\"o}m Approximation: Proposal.}
The purpose of this paper is to develop, analyze, and evaluate a new approach
for fixed-rank approximation of a psd matrix under the streaming model.
We propose a more intuitive rank-$r$ approximation:
\begin{equation} \label{eqn:Ahat-fixed}
\hat{\mtx{A}}_r = \lowrank{ \hat{\mtx{A}}^{\mathrm{nys}} }{r}.
\end{equation}
That is, we report a best rank-$r$ approximation of the full
Nystr{\"o}m approximation~\eqref{eqn:nystrom}.

This ``matrix nearness'' approach to fixed-rank approximation appears in the
papers~\cite{HMT11:Finding-Structure,Gu15:Subspace-Iteration,TYUC17:Randomized-Single-View-TR}.
The combination with the Nystr{\"o}m method~\eqref{eqn:nystrom} seems
totally natural.  Even so, we were unable to find a reference after an
exhaustive literature search and inquiries to experts on this subject.

\textbf{Summary of Contributions.}
This paper contains a number of advances over the prior art:

\vspace{-0.25pc}

\begin{enumerate} \setlength{\itemsep}{-0.1pc}
\item	We propose a distinct technique~\eqref{eqn:Ahat-fixed} for
truncating the Nystr{\"o}m approximation to rank $r$.
This formulation differs from earlier work on fixed-rank Nystr{\"o}m
approximations.

\item	We present a stable numerical implementation of~\eqref{eqn:Ahat-fixed}
based on the best practices outlined in the paper~\cite{LLS+17:Algorithm-971}.
This approach is essential for achieving high precision!  (Sec.~\ref{sec:implementation})

\item	We establish informative error bounds for the method~\eqref{eqn:Ahat-fixed}.
In particular, we prove that it attains
$(1 + \eps)$-relative error in the Schatten 1-norm when $k = \Theta(r/\eps)$.  (Sec.~\ref{sec:theory})

\item	We document numerical experiments on real and synthetic data
to demonstrate that our method dominates existing
techniques~\cite{GM13:Revisiting-Nystrom,TYUC17:Randomized-Single-View-TR} for fixed-rank psd approximation.
(Sec.~\ref{sec:numerics})
\end{enumerate}

\vspace{-0.25pc}

Psd matrix approximation is a ubiquitous problem, so we expect these results to have a broad impact.

\textbf{Related Work.}
Randomized algorithms for \textbf{low-rank matrix approximation} were proposed
in the late 1990s and developed into a technology in the 2000s;
see~\cite{HMT11:Finding-Structure,Mah11:Randomized-Algorithms,Woo14:Sketching-Tool}
for more background.  In the absence of constraints, such as streaming,
we recommend the general-purpose methods from~\cite{HMT11:Finding-Structure, LLS+17:Algorithm-971}.

Algorithms for low-rank matrix approximation in the important \textbf{streaming data} setting
are discussed in~\cite{WLRT08:Fast-Randomized,CW09:Numerical-Linear,HMT11:Finding-Structure,GPW12:Sketched-SVD,
Woo14:Sketching-Tool,CEMMP15:Dimensionality-Reduction,BWZ16:Optimal-Principal-STOC,TYUC17:Randomized-Single-View-TR}.
Few of these methods are designed for psd matrices.

\textbf{Nystr{\"o}m methods} for low-rank psd matrix approximation appear
in~\cite{WS01:Using-Nystrom,Pla05:FastMap-MetricMap,FBCM04:Spectral-Grouping,
DM05:Nystrom-Method,HMT11:Finding-Structure,Git11:Spectral-Norm,KMT12:Sampling-Methods,YLM+12:Nystrom-Method,
Git13:Topics-Randomized,GM16:Revisiting-Nystrom-JMLR,TYUC17:Randomized-Single-View-TR}.
These works mostly concern kernel matrices; they do not focus on the streaming model.

We are only aware of a few papers~\cite{Git11:Spectral-Norm,Git13:Topics-Randomized,GM16:Revisiting-Nystrom-JMLR,TYUC17:Randomized-Single-View-TR} on algorithms for \textbf{psd matrix approximation}
that operate under the \textbf{streaming model}~\eqref{eqn:linear-update}.  These papers form the comparison group.

Finally, let us mention two very recent \textbf{theoretical papers}~\cite{CW17:Low-Rank-PSD,MW17:Sublinear-Time}
that present existential results on algorithms for fixed-rank psd matrix approximation.
The approach in~\cite{CW17:Low-Rank-PSD} is only appropriate for sparse input matrices,
while the work~\cite{MW17:Sublinear-Time} is not valid in the streaming setting.

\section{Implementation}
\label{sec:implementation}

\textbf{Distributions for the Test Matrix.}
To ensure that the sketch is informative,
we must draw the test matrix~\eqref{eqn:test-matrix} at random from
a suitable distribution.  The choice of distribution determines the
computational requirements for the sketch~\eqref{eqn:sketch},
the linear updates~\eqref{eqn:sketch-update}, and the
matrix approximation~\eqref{eqn:Ahat-fixed}.
It also affects the quality of the approximation~\eqref{eqn:Ahat-fixed}.
Let us outline some of the most useful distributions.
An exhaustive discussion is outside the scope of our work, but
see~\cite{Lib09:Accelerated-Dense,HMT11:Finding-Structure,Mah11:Randomized-Algorithms,
Git13:Topics-Randomized,GM16:Revisiting-Nystrom-JMLR,Woo14:Sketching-Tool,TYUC17:Randomized-Single-View-TR}.

\textbf{Isotropic Models.}
Mathematically, the most natural model is to construct
a test matrix $\mtx{\Omega} \in \F^{n \times k}$
whose range is a uniformly random $k$-dimensional subspace in $\F^n$.
There are two approaches:

\vspace{-0.25pc}

\begin{enumerate} \setlength{\itemsep}{-0.1pc}
\item	\textbf{Gaussian.}  Draw each entry of the matrix $\mtx{\Omega} \in \F^{n \times k}$ independently
at random from the standard normal distribution on $\F$.

\item	\textbf{Orthonormal.}  Draw a Gaussian matrix $\mtx{G} \in \F^{n \times k}$,
as above.  Compute a thin orthogonal--triangular factorization $\mtx{G} = \mtx{\Omega R}$
to obtain the test matrix $\mtx{\Omega} \in \F^{n \times k}$.  Discard $\mtx{R}$.
\end{enumerate}

\vspace{-0.25pc}

Gaussian and orthonormal test matrices both require storage of $kn$ floating-point numbers
in $\F$ for the test matrix $\mtx{\Omega}$ and
another $kn$ floating-point numbers for the sketch $\mtx{Y}$.
In both cases, the cost of multiplying
a vector in $\F^n$ into $\mtx{\Omega}$ is $\Theta(kn)$ floating-point operations.

For isotropic models, we can analyze the approximation~\eqref{eqn:Ahat-fixed} in detail.
In exact arithmetic, Gaussian and isotropic test matrices yield identical Nystr{\"o}m
approximations
{\ifdefined\supplement (Proposition~\ref{prop:nys-proj}).
\else (Supplement). \fi}In floating-point arithmetic,
orthonormal matrices are more stable for large $k$,
but we can generate Gaussian matrices with
less arithmetic and communication.  References for isotropic test matrices
include~\cite{MRT11:Randomized-Algorithm,HMT11:Finding-Structure,Gu15:Subspace-Iteration}.

\textbf{Subsampled Scrambled Fourier Transform (SSFT).}
One shortcoming of the isotropic models is the cost of storing the test matrix
and the cost of multiplying a vector into the test matrix.  We can often reduce these
costs using an SSFT test matrix.  An SSFT takes the form
\begin{equation} \label{eqn:ssft}
\mtx{\Omega} = \mtx{\Pi}_1 \mtx{F} \mtx{\Pi}_2 \mtx{F} \mtx{R} \in \F^{n \times k}.
\end{equation}
The $\mtx{\Pi}_i \in \F^{n \times n}$ are independent, signed permutation matrices,\footnote{A signed permutation has exactly one nonzero entry in each row and column;
the nonzero has modulus one.}
chosen uniformly at random.  The matrix $\mtx{F} \in \F^{n \times n}$
is a discrete Fourier transform ($\F = \C$) or a discrete cosine transform ($\F = \R$).
The matrix $\mtx{R} \in \F^{n \times k}$ is a restriction to $k$ coordinates,
chosen uniformly at random.

An SSFT $\mtx{\Omega}$ requires only $\Theta(n)$ storage, but the sketch $\mtx{Y}$
still requires storage of $kn$ numbers.  We can multiply a vector in $\F^n$
into $\mtx{\Omega}$ using $\Theta(n \log n)$ arithmetic operations via an FFT
or FCT algorithm.  Thus, for most choices of sketch size $k$, the SSFT improves
over the isotropic models.

In practice, the SSFT yields matrix approximations whose quality is identical to
those we obtain with an isotropic test matrix (Sec.~\ref{sec:numerics}). Although the analysis for SSFTs is less complete, the empirical evidence confirms that the theory for isotropic models also
offers excellent guidance for SSFTs.
References for SSFTs and related test matrices include~\cite{WLRT08:Fast-Randomized,AC09:Fast-Johnson-Lindenstrauss,Lib09:Accelerated-Dense,HMT11:Finding-Structure,Tro11:Improved-Analysis,BG13:Improved-Matrix,CNW16:Optimal-Approximate}.

\begin{algorithm}[tb]
  \caption{\textsl{Sketch Initialization.}  Implements~\eqref{eqn:test-matrix}--\eqref{eqn:sketch} with a random orthonormal test matrix.
  \label{alg:sketch}}
  \begin{algorithmic}[1]
    \Require{Positive-semidefinite input matrix $\mtx{A} \in \F^{n \times n}$;
    sketch size parameter $k$}
    \Ensure{Constructs test matrix $\mtx{\Omega} \in \F^{n \times k}$
    and sketch $\mtx{Y} = \mtx{A\Omega} \in \F^{n \times k}$}
\vspace{0.5pc}

	\State \textbf{local:} $\mtx{\Omega}, \mtx{Y}$
		\Comment{Internal variables for \textsc{NystromSketch}}
	\Function{NystromSketch}{$\mtx{A}; k$}
		\Comment{Constructor}
    \If{$\F = \R$}
    \State	$\mtx{\Omega} \gets \texttt{randn}(n, k)$
			\EndIf
	\If{$\F = \C$}
    \State	$\mtx{\Omega} \gets \texttt{randn}(n, k) + \texttt{i} * \texttt{randn}(n, k)$
			\EndIf
	\State	$\mtx{\Omega} \gets \texttt{orth}(\mtx{\Omega})$
		\Comment{Improve numerical stability}
	\State	$\mtx{Y} \gets \mtx{A\Omega}$
			\EndFunction

	\vspace{0.25pc}

\end{algorithmic}
\end{algorithm}

\begin{algorithm}[tb]
  \caption{\textsl{Linear Update.}  Implements~\eqref{eqn:sketch-update}.
  \label{alg:update}}
  \begin{algorithmic}[1]
    \Require{Scalars $\theta_1, \theta_2 \in \R$ and conjugate symmetric $\mtx{H} \in \F^{n \times n}$}
    \Ensure{Updates sketch to reflect linear innovation $\mtx{A} \gets \theta_1 \mtx{A} + \theta_2\mtx{H}$}
\vspace{0.5pc}

	\State \textbf{local:} $\mtx{\Omega}, \mtx{Y}$
		\Comment{Internal variables for \textsc{NystromSketch}}

	\Function{LinearUpdate}{$\theta_1, \theta_2, \mtx{H}$}
	\State	$\mtx{Y} \gets \theta_1 \mtx{Y} + \theta_2 \mtx{H} \mtx{\Omega}$
	\EndFunction

	\vspace{0.25pc}

\end{algorithmic}
\end{algorithm}

\begin{algorithm}[t!]
  \caption{\textsl{Fixed-Rank PSD Approximation.}  Implements~\eqref{eqn:Ahat-fixed}.
  \label{alg:low-rank-recon}}
  \begin{algorithmic}[1]
  	\Require{Matrix $\mtx{A}$ in sketch must be psd; rank parameter $1 \leq r \leq k$}
    \Ensure{Returns factors $\mtx{U} \in \F^{n \times r}$ with orthonormal columns and nonnegative, diagonal $\mtx{\Lambda} \in \F^{r \times r}$ that form a rank-$r$ psd approximation $\hat{\mtx{A}}_{r} = \mtx{U\Lambda U}^*$ of the sketched matrix $\mtx{A}$}
\vspace{0.5pc}

	\State \textbf{local:} $\mtx{\Omega}, \mtx{Y}$
		\Comment{Internal variables for \textsc{NystromSketch}}

	\Function{FixedRankPSDApprox}{$r$}
	\State	$\nu \gets \mu \, \texttt{norm}(\mtx{Y})$
		\Comment	$\mu = 2.2 \cdot 10^{-16}$ in double precision

	\State	$\mtx{Y} \gets \mtx{Y} + \nu \mtx{\Omega}$
		\Comment	Sketch of shifted matrix $\mtx{A} + \nu \Id$

	\State	$\mtx{B} \gets \mtx{\Omega}^* \mtx{Y}$

	\State	$\mtx{C} \gets \texttt{chol}( (\mtx{B} + \mtx{B}^*) / 2 )$
		\Comment	Force symmetry 
	\State	$(\mtx{U}, \mtx{\Sigma}, \sim) \gets \texttt{svd}( \mtx{Y} / \mtx{C}, \texttt{'econ'} )$
		\Comment	Solve least squares problem; form thin SVD

	\State	$\mtx{U} \gets \mtx{U}(\texttt{:, 1:r})$ and $\mtx{\Sigma} \gets \mtx{\Sigma}(\texttt{1:r, 1:r})$
		\Comment	Truncate to rank $r$

	\State	$\mtx{\Lambda} \gets \max\{0, \mtx{\Sigma}^2 - \nu \Id\}$
		\Comment	Square to get eigenvalues; remove shift

	\State \Return{$(\mtx{U}, \mtx{\Lambda})$}
	\EndFunction

	\vspace{0.25pc}

\end{algorithmic}
\end{algorithm}

\textbf{Numerically Stable Implementation.}
It requires care to compute the fixed-rank approximation~\eqref{eqn:Ahat-fixed}.
{\ifdefined\supplement App.~\ref{sec:numerics-extra}
\else The supplement \fi}shows that a poor implementation may produce an approximation
with 100\% error!

Let us outline a numerically stable and very accurate implementation of~\eqref{eqn:Ahat-fixed},
based on an idea from~\cite{Tyg14:Matlab-Routines,LLS+17:Algorithm-971}.
Fix a small parameter $\nu > 0$.
Instead of approximating the psd matrix $\mtx{A}$ directly,
we approximate the shifted matrix $\mtx{A}_{\nu} = \mtx{A} + \nu \Id$ and then remove the shift.
Here are the steps:
\vspace{-0.25pc}
\begin{enumerate} \setlength{\itemsep}{-0.1pc}
\item	Construct the shifted sketch $\mtx{Y}_{\nu} = \mtx{Y} + \nu \mtx{\Omega}$.

\item	Form the matrix $\mtx{B} = \mtx{\Omega}^* \mtx{Y}_{\nu}$.

\item	Compute a Cholesky decomposition $\mtx{B} = \mtx{CC}^*$.

\item	Compute $\mtx{E} = \mtx{Y}_{\nu} \mtx{C}^{-1}$ by back-substitution.

\item	Compute the (thin) singular value decomposition $\mtx{E} = \mtx{U \Sigma V}^*$.

\item	Form $\hat{\mtx{A}}_r = \mtx{U} \lowrank{\mtx{\Sigma}^2 - \nu \Id}{r} \mtx{U}^*$.
\end{enumerate}
\vspace{-0.25pc}
\noindent
The pseudocode addresses some additional implementation details.
Related, but distinct, methods were proposed by Williams \& Seeger~\cite{WS01:Using-Nystrom}
and analyzed in Gittens's thesis~\cite{Git13:Topics-Randomized}.

\textbf{Pseudocode.}
We present detailed pseudocode for the sketch~\eqref{eqn:test-matrix}--\eqref{eqn:sketch-update}
and the implementation of the fixed-rank psd approximation~\eqref{eqn:Ahat-fixed} described above.
For simplicity, we only elaborate the case of a random orthonormal
test matrix; we have also developed an SSFT implementation for empirical testing.
The pseudocode uses both mathematical notation and \textsc{Matlab 2017a} functions.

\textbf{Algorithms and Computational Costs.}
Algorithm~\ref{alg:sketch} constructs a random orthonormal test matrix,
and computes the sketch~\eqref{eqn:sketch} of an input matrix.
The test matrix and sketch require the storage of $2kn$ floating-point
numbers.  Owing to the orthogonalization step, the construction of the
test matrix requires $\Theta(k^2 n)$ floating-point operations.
For a general input matrix, the sketch requires $\Theta(k n^2)$
floating-point operations; this cost can be removed by initializing
the input matrix to zero.

Algorithm~\ref{alg:update} implements the linear update~\eqref{eqn:sketch-update}
to the sketch.  Nominally, the computation requires $\Theta(kn^2)$ arithmetic
operations, but this cost can be reduced when $\mtx{H}$ has structure
(e.g., low rank).  Using the SSFT test matrix~\eqref{eqn:ssft} also reduces this cost.

Algorithm~\ref{alg:low-rank-recon} computes the rank-$r$ psd approximation~\eqref{eqn:Ahat-fixed}.
This method requires additional storage of $\Theta(kn)$.
The arithmetic cost is $\Theta(k^2 n)$ operations,
which is dominated by the SVD of the matrix $\mtx{E}$.

\section{Theoretical Results}
\label{sec:theory}

\textbf{Relative Error Bound.}
Our first result is an accurate bound for the expected Schatten 1-norm
error in the fixed-rank psd approximation~\eqref{eqn:Ahat-fixed}.

\begin{theorem}[Fixed-Rank Nystr{\"o}m: Relative Error] \label{thm:error-fixed}
Assume $1 \leq r < k \leq n$.   Let $\mtx{A} \in \F^{n \times n}$ be a psd matrix.
Draw a test matrix $\mtx{\Omega} \in \F^{n \times k}$ from the Gaussian or orthonormal distribution,
and form the sketch $\mtx{Y} = \mtx{A\Omega}$.  Then the approximation $\hat{\mtx{A}}_r$ given
by~\eqref{eqn:nystrom} and~\eqref{eqn:Ahat-fixed} satisfies \begin{align}
\Expect \norm{ \mtx{A} - \hat{\mtx{A}}_r }_{1}
	&\leq \left(1 + \frac{r}{k - r - \alpha}\right) \cdot \norm{ \mtx{A} - \lowrank{\mtx{A}}{r} }_1;
	\label{eqn:error-fixed-S1} \\
\Expect \norm{ \mtx{A} - \hat{\mtx{A}}_r }_{\infty}
	&\leq \norm{\mtx{A} - \lowrank{\mtx{A}}{r}}_{\infty} + \frac{r}{k - r - \alpha} \cdot \norm{ \mtx{A} - \lowrank{\mtx{A}}{r} }_1.
	\label{eqn:error-fixed-Sinf}
	\end{align}

The quantity $\alpha(\R) = 1$ and $\alpha(\C) = 0$.  Similar results hold with high probability.
\end{theorem}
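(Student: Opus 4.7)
The plan is to reduce both bounds to a classical Gaussian moment that appears in the Halko--Martinsson--Tropp (HMT) analysis of the randomized range finder. First I would write $\mtx{B} = \mtx{A}^{1/2}$ and let $\mtx{P}$ denote the orthogonal projector onto $\range(\mtx{B}\mtx{\Omega})$, so that $\hat{\mtx{A}}^{\mathrm{nys}} = \mtx{B}\mtx{P}\mtx{B}$ and $\mtx{A} - \hat{\mtx{A}}^{\mathrm{nys}} = \mtx{B}(\Id - \mtx{P})\mtx{B} \succeq 0$.

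The core deterministic step for \eqref{eqn:error-fixed-S1} is a trace inequality obtained by applying Ky Fan's maximum principle to $\hat{\mtx{A}}^{\mathrm{nys}}$ with the \emph{fixed} test subspace $\mtx{V}_r$ spanned by the top-$r$ eigenvectors of $\mtx{A}$ (not of $\hat{\mtx{A}}^{\mathrm{nys}}$):
\[
\trace(\hat{\mtx{A}}_r) \;=\; \sum_{i=1}^r \lambda_i(\hat{\mtx{A}}^{\mathrm{nys}}) \;\geq\; \trace(\mtx{V}_r^{*}\hat{\mtx{A}}^{\mathrm{nys}}\mtx{V}_r) \;=\; \trace(\mtx{P}\lowrank{\mtx{A}}{r}),
\]
where the last equality uses $\mtx{B}\mtx{V}_r = \mtx{V}_r\mtx{\Lambda}_r^{1/2}$, cyclicity of the trace, and $\lowrank{\mtx{A}}{r} = \mtx{V}_r\mtx{\Lambda}_r\mtx{V}_r^{*}$. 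Rearranging would then yield the deterministic bound
\[
\norm{\mtx{A} - \hat{\mtx{A}}_r}_1 \;\leq\; \norm{\mtx{A} - \lowrank{\mtx{A}}{r}}_1 + \trace\bigl((\Id - \mtx{P})\lowrank{\mtx{A}}{r}\bigr),
\]
and the extra term equals $\fnormsq{(\Id - \mtx{P})\mtx{W}}$ for $\mtx{W} := \mtx{V}_r\mtx{\Lambda}_r^{1/2}$.

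To pass to expectation I would rotate to the eigenbasis of $\mtx{A}$ and partition the test matrix as $\mtx{\Omega}_1 \in \F^{r\times k}$, $\mtx{\Omega}_2 \in \F^{(n-r)\times k}$ (independent Gaussians; the orthonormal case reduces to the Gaussian one by the isotropy statement referenced earlier in the paper). The identity $\mtx{B}\mtx{\Omega}\,\mtx{\Omega}_1^\dagger = \mtx{W} + \mtx{V}_\perp\mtx{\Lambda}_\perp^{1/2}\mtx{\Omega}_2\mtx{\Omega}_1^\dagger$, valid on the a.s.\ event that $\mtx{\Omega}_1$ has full row rank, combined with $(\Id-\mtx{P})\mtx{B}\mtx{\Omega}=0$, gives the HMT structural identity $(\Id - \mtx{P})\mtx{W} = -(\Id - \mtx{P})\mtx{V}_\perp\mtx{\Lambda}_\perp^{1/2}\mtx{\Omega}_2\mtx{\Omega}_1^\dagger$ and hence $\fnormsq{(\Id - \mtx{P})\mtx{W}} \leq \fnormsq{\mtx{\Lambda}_\perp^{1/2}\mtx{\Omega}_2\mtx{\Omega}_1^\dagger}$. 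Conditioning on $\mtx{\Omega}_1$ and using $\Expect[\mtx{\Omega}_2\mtx{M}\mtx{\Omega}_2^{*}] = \trace(\mtx{M})\,\Id$ for Gaussian $\mtx{\Omega}_2$, the inner expectation equals $\trace(\mtx{\Lambda}_\perp)\cdot\fnormsq{\mtx{\Omega}_1^\dagger} = \norm{\mtx{A} - \lowrank{\mtx{A}}{r}}_1 \cdot \fnormsq{\mtx{\Omega}_1^\dagger}$, and the classical Wishart-type moment $\Expect\fnormsq{\mtx{\Omega}_1^\dagger} = r/(k-r-\alpha)$ closes \eqref{eqn:error-fixed-S1}. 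The bound \eqref{eqn:error-fixed-Sinf} is obtained by a parallel pipeline in which the Frobenius-norm HMT structural bound is replaced by its operator-norm counterpart $\norm{(\Id-\mtx{P})\mtx{B}}_\infty^2 \leq \sigma_{r+1}(\mtx{B})^2 + \norm{\mtx{\Lambda}_\perp^{1/2}\mtx{\Omega}_2\mtx{\Omega}_1^\dagger}_\infty^2$; the spectral norm of the random piece is dominated by its Frobenius norm to reuse the same expectation, while the $\sigma_{r+1}(\mtx{B})^2 = \norm{\mtx{A} - \lowrank{\mtx{A}}{r}}_\infty$ summand carries the truncation term via the eigenvalue interlacing $\lambda_{r+1}(\hat{\mtx{A}}^{\mathrm{nys}}) \leq \lambda_{r+1}(\mtx{A})$.

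The main obstacle is the Ky Fan step, because $\hat{\mtx{A}}_r$ is determined by the top-$r$ eigenvectors of $\hat{\mtx{A}}^{\mathrm{nys}}$ rather than of $\mtx{A}$: any triangle-inequality approach through $\hat{\mtx{A}}^{\mathrm{nys}}$ picks up an extra additive $\norm{\mtx{A} - \lowrank{\mtx{A}}{r}}_1$ term and delivers only the weaker constant $(2 + r/(k-r-\alpha))$. Probing $\hat{\mtx{A}}^{\mathrm{nys}}$ at the $\mtx{A}$-aligned subspace $\mtx{V}_r$ via Ky Fan is precisely what restores the sharp $(1 + r/(k-r-\alpha))$ factor and, by virtue of its norm-free trace form, simultaneously feeds into the operator-norm argument. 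The high-probability variants in the last sentence of the theorem should follow by replacing $\Expect\fnormsq{\mtx{\Omega}_1^\dagger}$ with a concentration bound on $\sigma_{\min}(\mtx{\Omega}_1)$.
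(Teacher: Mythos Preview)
Your Schatten $1$-norm argument is correct and lands on exactly the same deterministic inequality as the paper,
\[
\norm{\mtx{A} - \hat{\mtx{A}}_r}_1 \;\leq\; \norm{\mtx{A} - \lowrank{\mtx{A}}{r}}_1 + \norm{(\Id - \mtx{P})\lowrank{\mtx{A}^{1/2}}{r}}_2^2,
\]
after which the expectation is handled identically. The route, however, is genuinely different. The paper introduces a second projector $\mtx{Q}$ onto $\range(\lowrank{\mtx{P}\mtx{A}^{1/2}}{r})$, writes $\hat{\mtx{A}}_r = \mtx{A}^{1/2}\mtx{PQP}\mtx{A}^{1/2}$, rewrites $\norm{\mtx{A} - \hat{\mtx{A}}_r}_1 = \norm{\mtx{A}^{1/2} - \mtx{P}\lowrank{\mtx{P}\mtx{A}^{1/2}}{r}}_2^2$, and then invokes Gu's inequality for expressions of the form $\norm{\mtx{M} - \mtx{P}\lowrank{\mtx{P}\mtx{M}}{r}}_2$. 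Your route exploits psd-ness directly: since $\mtx{A} \succeq \hat{\mtx{A}}^{\mathrm{nys}} \succeq \hat{\mtx{A}}_r$, the error is a trace, and the Ky Fan step $\trace(\hat{\mtx{A}}_r) \geq \trace(\mtx{V}_r^{*}\hat{\mtx{A}}^{\mathrm{nys}}\mtx{V}_r)$ does in one line what Gu's lemma does in general. Your argument is more elementary and more transparent for psd inputs; the paper's argument goes through a tool that works for arbitrary rectangular matrices.

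Your Schatten $\infty$-norm sketch has a gap. The pipeline you describe necessarily splits $\mtx{A} - \hat{\mtx{A}}_r = (\mtx{A} - \hat{\mtx{A}}^{\mathrm{nys}}) + (\hat{\mtx{A}}^{\mathrm{nys}} - \hat{\mtx{A}}_r)$. The first piece, via the HMT operator-norm bound, already contributes a deterministic summand $\sigma_{r+1}(\mtx{A}^{1/2})^2 = \sigma_{r+1}(\mtx{A})$; the second piece contributes $\lambda_{r+1}(\hat{\mtx{A}}^{\mathrm{nys}}) \leq \sigma_{r+1}(\mtx{A})$ via interlacing. These are two \emph{separate} copies of $\norm{\mtx{A} - \lowrank{\mtx{A}}{r}}_\infty$, so you end up with $2\norm{\mtx{A} - \lowrank{\mtx{A}}{r}}_\infty$ rather than the single copy stated in~\eqref{eqn:error-fixed-Sinf}. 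The paper obtains the sharp constant by a purely formal reduction from the Schatten $1$-norm bound: for any $\mtx{B}$ with $\rank(\mtx{B}) \leq r$, Weyl's inequality $\sigma_{i+r}(\mtx{A}) \leq \sigma_i(\mtx{A} - \mtx{B})$ summed over $i \geq 2$ yields
\[
\norm{\mtx{A} - \mtx{B}}_\infty \;\leq\; \sigma_{r+1}(\mtx{A}) + \bigl[\,\norm{\mtx{A} - \mtx{B}}_1 - \norm{\mtx{A} - \lowrank{\mtx{A}}{r}}_1\bigr].
\]
Taking $\mtx{B} = \hat{\mtx{A}}_r$ and inserting your already-proved bound~\eqref{eqn:error-fixed-S1} gives~\eqref{eqn:error-fixed-Sinf} with the correct leading term.
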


\noindent
The proof of Theorem~\ref{thm:error-fixed} appears in
{\ifdefined\supplement App.~\ref{sec:proofs}.
\else the supplement. \fi}
In contrast to previous analyses of Nystr{\"o}m methods,
Theorem~\ref{thm:error-fixed} yields explicit, sharp constants.
As a consequence, the formulae~\eqref{eqn:error-fixed-S1}--\eqref{eqn:error-fixed-Sinf} offer
an \emph{a priori} mechanism for selecting the sketch size $k$
to achieve a desired error bound.  In particular, for each $\eps > 0$,
$$
k = (1+\eps^{-1})r + \alpha
\quad\text{implies}\quad
\Expect \norm{ \mtx{A} - \hat{\mtx{A}}_r }_{1}
	\leq (1 + \eps) \cdot \norm{\mtx{A} - \lowrank{\mtx{A}}{r}}_1. $$
Thus, we can attain an arbitrarily small relative error
in the Schatten 1-norm.  In the streaming setting, the scaling $k = \Theta(r/\eps)$
is optimal for this result~\cite[Thm.~4.2]{GLPW16:Frequent-Directions}.
Furthermore, it is impossible~\cite[Sec.~6.2]{Woo14:Sketching-Tool} 
to obtain ``pure'' relative error bounds in the Schatten $\infty$-norm unless $k = \Omega(n)$.

\textbf{The Role of Spectral Decay.}
To circumvent these limitations, it is necessary to develop
a different kind of error bound.
Our second result shows that the fixed-rank psd approximation~\eqref{eqn:Ahat-fixed}
automatically exploits decay in the spectrum of the input matrix.

\begin{theorem}[Fixed-Rank Nystr{\"o}m: Spectral Decay] \label{thm:error-fixed-2}
Instate the notation and assumptions of Theorem~\ref{thm:error-fixed}.
Then
\begin{align}
\Expect \norm{ \mtx{A} - \hat{\mtx{A}}_r }_{1}
	&\leq \norm{\mtx{A} - \lowrank{\mtx{A}}{r}}_1
	+ 2 \min_{\varrho < k - \alpha}\left[ \left(1 + \frac{\varrho}{k - \varrho - \alpha}\right)
	\cdot \norm{\mtx{A} - \lowrank{\mtx{A}}{\varrho}}_1 \right];
	 \label{eqn:error-fixed-2-S1} \\
\Expect \norm{ \mtx{A} - \hat{\mtx{A}}_r }_{\infty}
	&\leq \norm{\mtx{A} - \lowrank{\mtx{A}}{r}}_{\infty}
	+ 2 \min_{\varrho < k - \alpha}\left[ \left(1 + \frac{\varrho}{k - \varrho - \alpha}\right)
	\cdot \norm{\mtx{A} - \lowrank{\mtx{A}}{\varrho}}_1 \right].
	 \label{eqn:error-fixed-2-Sinf}
\end{align}
The index $\varrho$ ranges over the natural numbers.
\end{theorem}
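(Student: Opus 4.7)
The plan is to reduce Theorem~\ref{thm:error-fixed-2} to the unrestricted (rank-$k$) Nyström error bound via a short triangle-inequality argument, then minimize over the free parameter $\varrho$. The key structural fact is that $\hat{\mtx{A}}_r = \lowrank{\hat{\mtx{A}}^{\mathrm{nys}}}{r}$ is, by definition, a best rank-$r$ approximation of the full Nyström object $\hat{\mtx{A}}^{\mathrm{nys}}$ in every Schatten norm. Comparing it against the rank-$r$ comparator $\lowrank{\mtx{A}}{r}$ (which also has rank at most $r$) yields, for any Schatten $p$-norm,
\begin{equation*}
\bigl\| \hat{\mtx{A}}^{\mathrm{nys}} - \hat{\mtx{A}}_r \bigr\|_p
\;\leq\; \bigl\| \hat{\mtx{A}}^{\mathrm{nys}} - \lowrank{\mtx{A}}{r} \bigr\|_p
\;\leq\; \bigl\| \mtx{A} - \hat{\mtx{A}}^{\mathrm{nys}} \bigr\|_p + \bigl\| \mtx{A} - \lowrank{\mtx{A}}{r} \bigr\|_p.
\end{equation*}
One more triangle inequality gives the master bound
\begin{equation*}
\bigl\| \mtx{A} - \hat{\mtx{A}}_r \bigr\|_p
\;\leq\; \bigl\| \mtx{A} - \lowrank{\mtx{A}}{r} \bigr\|_p + 2\,\bigl\| \mtx{A} - \hat{\mtx{A}}^{\mathrm{nys}} \bigr\|_p.
\end{equation*}

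Next I would invoke a plain-Nyström error bound of the same flavour as Theorem~\ref{thm:error-fixed}, but applied to the \emph{untruncated} approximation $\hat{\mtx{A}}^{\mathrm{nys}}$ with a free rank parameter $\varrho$. Inspecting the proof of Theorem~\ref{thm:error-fixed}, the Schatten 1-norm case reduces to
\begin{equation*}
\Expect\bigl\| \mtx{A} - \hat{\mtx{A}}^{\mathrm{nys}} \bigr\|_1
\;\leq\; \left(1 + \tfrac{\varrho}{k - \varrho - \alpha}\right)\, \bigl\| \mtx{A} - \lowrank{\mtx{A}}{\varrho} \bigr\|_1
\qquad\text{for every integer } \varrho < k - \alpha,
\end{equation*}
which should already be available as an intermediate step (it is the standard Gaussian/orthonormal Nyström tail estimate obtained by splitting $\mtx{\Omega}$ along the dominant-$\varrho$ and tail eigenspaces and bounding a pseudoinverse by an expected inverse Wishart trace). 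Plugging this into the master bound with $p = 1$ and taking the minimum over $\varrho$ yields~\eqref{eqn:error-fixed-2-S1} immediately.

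For the Schatten $\infty$-norm inequality~\eqref{eqn:error-fixed-2-Sinf}, I apply the master bound with $p = \infty$ for the first term and then dominate the second term using the universal inequality $\|\mtx{M}\|_\infty \leq \|\mtx{M}\|_1$, i.e.
\begin{equation*}
\Expect\bigl\| \mtx{A} - \hat{\mtx{A}}^{\mathrm{nys}} \bigr\|_\infty
\;\leq\; \Expect\bigl\| \mtx{A} - \hat{\mtx{A}}^{\mathrm{nys}} \bigr\|_1,
\end{equation*}
after which the same rank-$\varrho$ Nyström bound applies and the minimization over $\varrho$ finishes the argument. The main obstacle is thus not the triangle-inequality reduction, which is essentially mechanical, but verifying the plain-Nyström Schatten-1 estimate with the explicit constant $(1 + \varrho/(k-\varrho-\alpha))$; this in turn hinges on the Gaussian/orthonormal moment identity for $\mtx{\Omega}_2 \mtx{\Omega}_1^\dagger$ (after partitioning $\mtx{\Omega}$ in the eigenbasis of $\mtx{A}$) and should be read off from the auxiliary lemmas already developed for Theorem~\ref{thm:error-fixed}.
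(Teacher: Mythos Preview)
Your proposal is correct and, for the Schatten 1-norm bound~\eqref{eqn:error-fixed-2-S1}, matches the paper's argument exactly: the triangle-inequality reduction you call the ``master bound'' is stated in the appendix as Proposition~A.3, and the untruncated Nystr\"om estimate you need is established separately as Theorem~A.1 (via the representation $\hat{\mtx{A}}^{\mathrm{nys}}=\mtx{A}^{1/2}\mtx{P}\mtx{A}^{1/2}$ and the Halko--Martinsson--Tropp Frobenius bound, precisely the mechanism you sketch).

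For the Schatten $\infty$-norm bound~\eqref{eqn:error-fixed-2-Sinf} your route is slightly different and in fact more elementary than the paper's. You apply the master bound with $p=\infty$ and then dominate $\norm{\mtx{A}-\hat{\mtx{A}}^{\mathrm{nys}}}_\infty$ by $\norm{\mtx{A}-\hat{\mtx{A}}^{\mathrm{nys}}}_1$. The paper instead proves a ``reversed Eckart--Young'' inequality (Proposition~A.4), namely $\norm{\mtx{A}-\mtx{B}}_\infty\leq\sigma_{r+1}(\mtx{A})+\bigl[\norm{\mtx{A}-\mtx{B}}_1-\sigma_{r+1}^{(1)}(\mtx{A})\bigr]$ for any rank-$r$ matrix $\mtx{B}$, and applies it to $\mtx{B}=\hat{\mtx{A}}_r$ together with the already-established Schatten-1 bound~\eqref{eqn:error-fixed-2-S1}. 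Both routes give the same final inequality here; your approach avoids an auxiliary lemma, while the paper's reversed Eckart--Young proposition is a sharper tool in general (it converts \emph{any} Schatten-1 bound on a rank-$r$ approximant into a spectral-norm bound with the correct leading term $\sigma_{r+1}(\mtx{A})$, without passing through the crude $\norm{\cdot}_\infty\leq\norm{\cdot}_1$).
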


\noindent
The proof of Theorem~\ref{thm:error-fixed-2} appears in
{\ifdefined\supplement App.~\ref{sec:proofs}.
\else the supplement. \fi}
Here is one way to understand this result.
As the index $\varrho$ increases, the quantity $\varrho / (k - \varrho - \alpha)$ increases
while the rank-$\varrho$ approximation error decreases. Theorem~\ref{thm:error-fixed-2} states that the approximation~\eqref{eqn:Ahat-fixed} automatically achieves the best tradeoff between these two terms.
When the spectrum of $\mtx{A}$ decays, the rank-$\varrho$ approximation
error may be far smaller than the rank-$r$ approximation error.  In this case,
Theorem~\ref{thm:error-fixed-2} is tighter than Theorem~\ref{thm:error-fixed},
although the prediction is more qualitative.

\textbf{Additional Results.}
The proofs can be extended to obtain high-probability bounds, as well as results
for other Schatten norms or for other test matrices
{\ifdefined\supplement (App.~\ref{sec:proofs}).
\else (Supplement). \fi}

\section{Numerical Performance}
\label{sec:numerics}

\textbf{Experimental Setup.}
In many streaming applications, such as~\cite{YUTC17:Sketchy-Decisions},
it is essential that the sketch uses as little memory as
possible and that the psd approximation achieves the best possible
error.  For the methods we consider, the arithmetic costs of
linear updates and psd approximation are roughly comparable.
Therefore, we only assess storage and accuracy.

For the numerical experiments,
the field $\F = \C$ except when noted explicitly.
Choose a psd input matrix $\mtx{A} \in \F^{n \times n}$
and a target rank $r$.  Then fix a sketch size parameter $k$ with $r \leq k \leq n$.
For each trial, draw the test matrix $\mtx{\Omega}$ from the orthonormal or the SSFT distribution, and form the sketch $\mtx{Y} = \mtx{A \Omega}$ of the input matrix.
Using Algorithm~\ref{alg:low-rank-recon}, compute the rank-$r$ psd approximation $\hat{\mtx{A}}_r$
defined in~\eqref{eqn:Ahat-fixed}.
We evaluate the performance using the relative error metric:
\begin{equation} \label{eqn:relative-error}
\text{Schatten $p$-norm relative error} \quad = \quad
\frac{\norm{ \mtx{A} - \hat{\mtx{A}}_{r}}_p}{\norm{\mtx{A} - \lowrank{\mtx{A}}{r}}_p}
	- 1.
\end{equation}
We perform 20 independent trials and report the average error.

We compare our method~\eqref{eqn:Ahat-fixed} with the standard truncated Nystr{\"om} approximation~\eqref{eqn:gm};
the best reference for this type of approach is~\cite[Sec.~2.2]{GM13:Revisiting-Nystrom}.
The approximation~\eqref{eqn:gm} is constructed from the same sketch as~\eqref{eqn:Ahat-fixed},
so the experimental procedure is identical.

We also consider the sketching method and psd approximation
algorithm~\cite[Alg.~9]{TYUC17:Randomized-Single-View-TR} based on earlier work
from~\cite{WLRT08:Fast-Randomized,CW09:Numerical-Linear,HMT11:Finding-Structure}.
We implemented this sketch with orthonormal matrices and also with SSFT matrices.
The sketch has two different parameters $(k, \ell)$,
so we select the parameters that result in the minimum relative error.
Otherwise, the experimental procedure is the same.

We apply the methods to representative input matrices;
{\ifdefined\supplement see Figure~\ref{fig:spectra} for the spectra.
\else see the Supplement for plots of the spectra. \fi}

\textbf{Synthetic Examples.}
The synthetic examples are \textbf{diagonal} with dimension $n = 10^3$;
results for larger and non-diagonal matrices are similar.
These matrices are parameterized by an effective rank parameter $R$,
which takes values in $\{ 5, 10, 20 \}$.  We compute approximations
with rank $r = 10$.

\vspace{-0.25pc}

\begin{enumerate} \item	\textbf{Low-Rank + PSD Noise.}  These matrices take the form
\begin{equation*}
\mtx{A} = \diag( \underbrace{1, \dots, 1}_R, 0, \dots, 0 )
	+ \xi n^{-1} \mtx{W} \in \F^{n \times n}.
\end{equation*}
The matrix $\mtx{W} \in \F^{n \times n}$ has the $\textsc{Wishart}(n, n; \F)$ distribution;
that is, $\mtx{W} = \mtx{GG}^*$ where $\mtx{G} \in \F^{n \times n}$ is standard normal.
The parameter $\xi$ controls the signal-to-noise ratio.  We consider three examples:
\texttt{LowRankLowNoise} ($\xi = 10^{-4}$),
\texttt{LowRankMedNoise} ($\xi = 10^{-2}$),
\texttt{LowRankHiNoise} ($\xi = 10^{-1}$).

\item	\textbf{Polynomial Decay.}  These matrices take the form
\begin{equation*}
\mtx{A} = \diag( \underbrace{1, \dots, 1}_R, 2^{-p}, 3^{-p}, \dots, (n - R + 1)^{-p} )
	\in \F^{n \times n}.
\end{equation*}
The parameter $p > 0$ controls the rate of polynomial decay.  We consider three examples:
\texttt{PolyDecaySlow} ($p = 0.5$),
\texttt{PolyDecayMed} ($p = 1$),
\texttt{PolyDecayFast} ($p = 2$).

\item	\textbf{Exponential Decay.}  These matrices take the form
\begin{equation*}
\mtx{A} = \diag( \underbrace{1, \dots, 1}_R, 10^{-q}, 10^{-2q}, \dots, 10^{-(n - R)q} )
	\in \F^{n \times n}.
\end{equation*}
The parameter $q > 0$ controls the rate of exponential decay.  We consider three examples:
\texttt{ExpDecaySlow} ($q = 0.1$),
\texttt{ExpDecayMed} ($q = 0.25$),
\texttt{ExpDecayFast} ($q = 1$).

\end{enumerate}

\vspace{-0.25pc}

\textbf{Application Examples.}
We also consider \textbf{non-diagonal} matrices inspired by the SDP algorithm~\cite{YUTC17:Sketchy-Decisions}.

\vspace{-0.25pc}

\begin{enumerate}
\item	\texttt{MaxCut}: This is a \textbf{real-valued} psd matrix with dimension $n = 2 \, 000$, and its effective rank $R = 14$.  We form approximations with rank $r \in \{1, 14\}$.  The matrix is an approximate solution to the \textsc{MaxCut} SDP~\cite{GW95:Improved-Approximation} for the sparse graph \texttt{G40}~\cite{DH11:University-Florida}.

\item	\texttt{PhaseRetrieval}: This is a psd matrix with dimension $n = 25 \, 921$.
It has exact rank $250$, but its effective rank $R = 5$.
We form approximations with rank $r \in \{1, 5\}$.  The matrix is an approximate solution to a phase
retrieval SDP; it was provided by the authors of~\cite{YUTC17:Sketchy-Decisions}.
\end{enumerate}

\vspace{-0.25pc}

\begin{figure}[t]
\begin{center}
\begin{subfigure}{.35\textwidth}
\begin{center}
\includegraphics[height=1.5in]{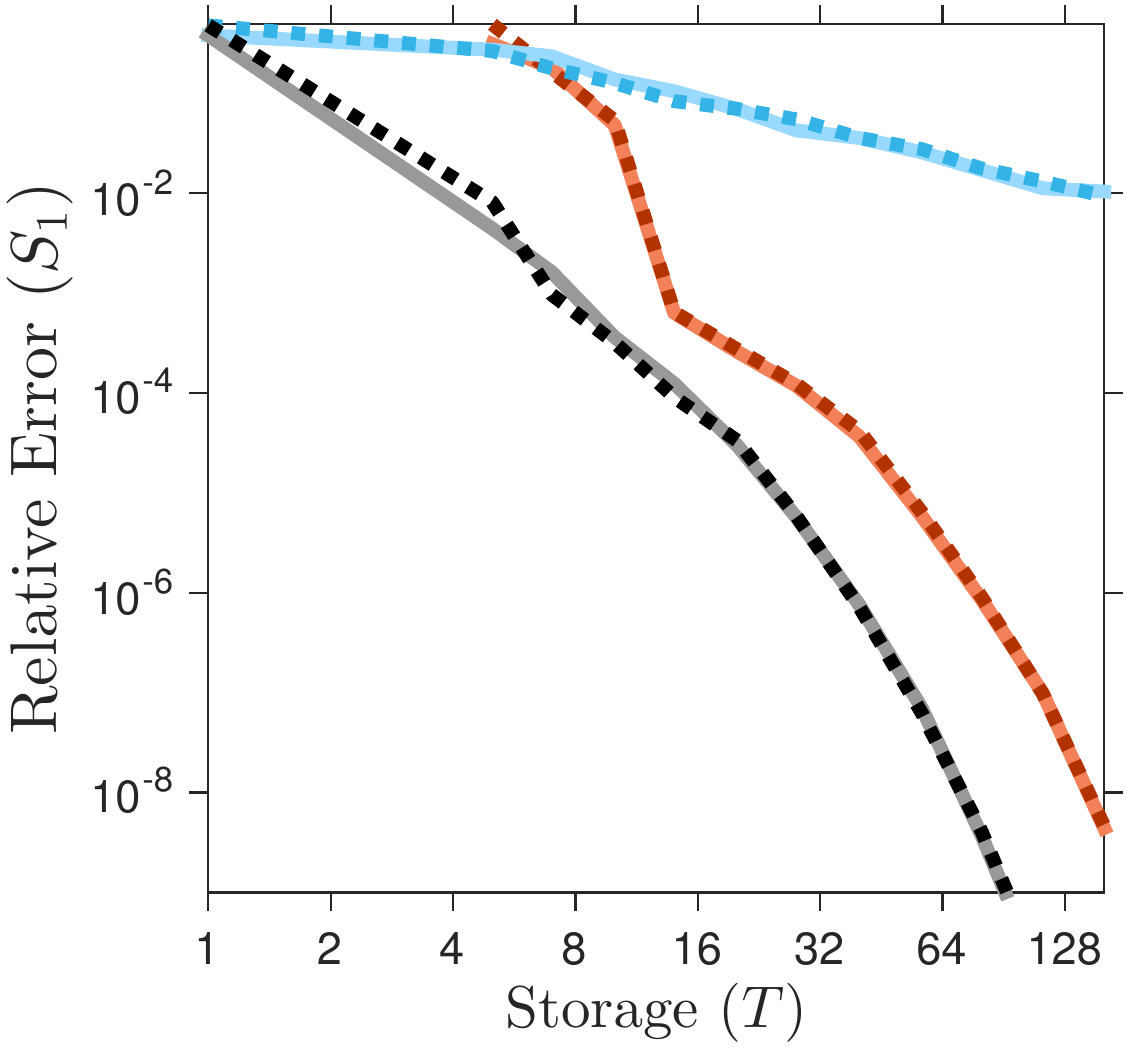}
\caption{\texttt{PhaseRetrieval} $(r = 1)$}
\end{center}
\end{subfigure}
\begin{subfigure}{.35\textwidth}
\begin{center}
\includegraphics[height=1.5in]{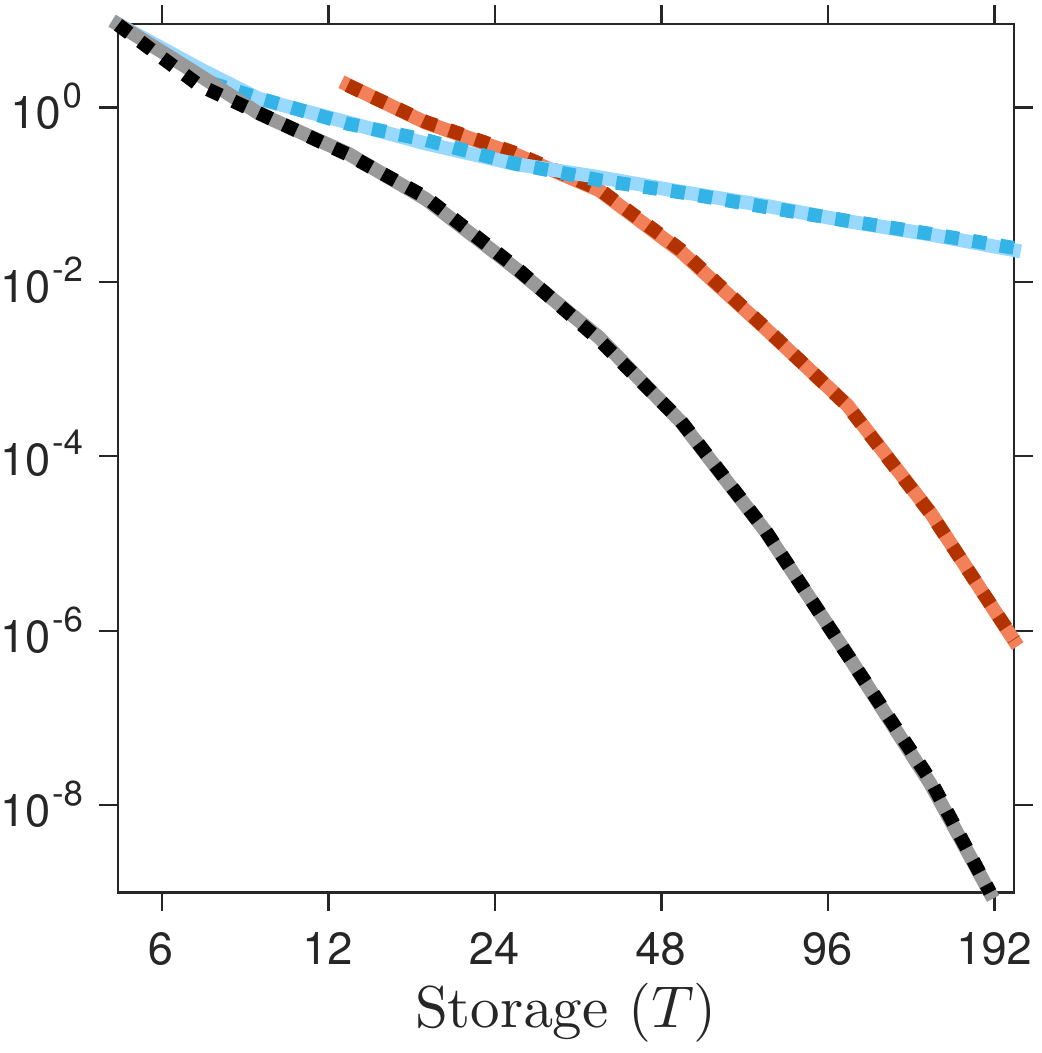}
\caption{\texttt{PhaseRetrieval} $(r = 5)$}
\end{center}
\end{subfigure}

\vspace{1pc}

\begin{subfigure}{.35\textwidth}
\begin{center}
\includegraphics[height=1.5in]{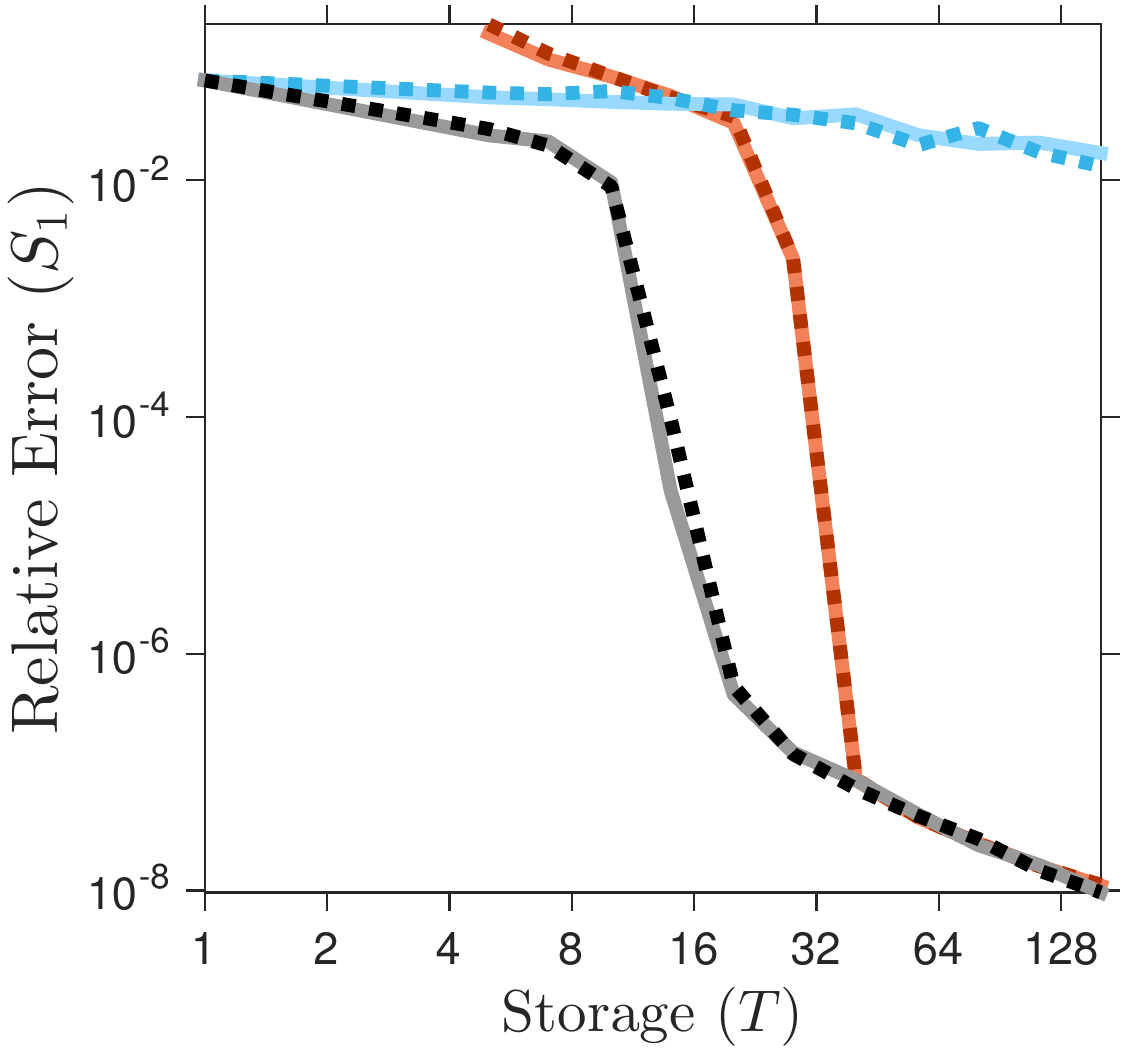}
\caption{\texttt{MaxCut} $(r = 1)$}
\end{center}
\end{subfigure}
\begin{subfigure}{.35\textwidth}
\begin{center}
\includegraphics[height=1.5in]{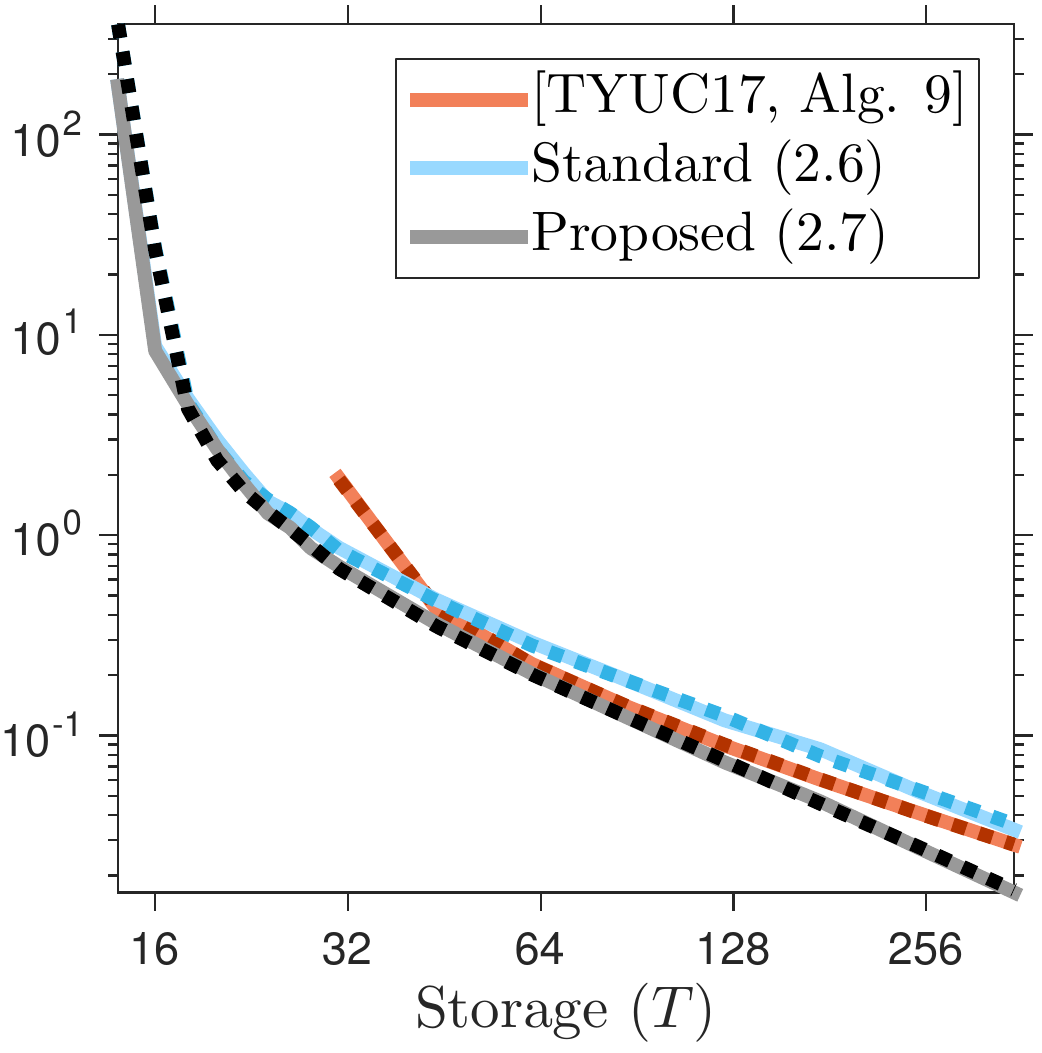}
\caption{\texttt{MaxCut} $(r = 14)$}
\end{center}
\end{subfigure}
\end{center}

\caption{\textbf{Application Examples, Approximation Rank $r$, Schatten $1$-Norm Error.}
The data series show the performance
of three algorithms for rank-$r$ psd approximation.
\textbf{Solid lines} are generated from the Gaussian sketch;
\textbf{dashed lines} are from the SSFT sketch.
Each panel displays the Schatten 1-norm relative error~\eqref{eqn:relative-error}
as a function of storage cost $T$.  See Sec.~\ref{sec:numerics}
for details.}
\label{fig:data-S1}
\end{figure}

\textbf{Experimental Results.}
Figures~\ref{fig:data-S1}--\ref{fig:synthetic-S1-R10} display the performance
of the three fixed-rank psd approximation methods for a subcollection of the input matrices.
The vertical axis is the Schatten $1$-norm relative error~\eqref{eqn:relative-error}.
The variable $T$ on the horizontal axis is proportional to the storage required for
the sketch only.  For the Nystr{\"o}m-based approximations~\eqref{eqn:gm}--\eqref{eqn:Ahat-fixed},
we have the correspondence $T = k$.
For the approximation~\cite[Alg.~9]{TYUC17:Randomized-Single-View-TR}, we set $T = k + \ell$.

The experiments demonstrate that the proposed method~\eqref{eqn:Ahat-fixed}
has a significant benefit over the alternatives for input matrices that admit a
good low-rank approximation.  It equals or improves on the competitors for almost
all other examples and storage budgets.
{\ifdefined\supplement App.~\ref{sec:numerics-extra}
\else The supplement \fi}contains additional numerical results;
these experiments only reinforce the message of Figures~\ref{fig:data-S1}--\ref{fig:synthetic-S1-R10}.

\begin{figure}[t]
\begin{center}
\begin{subfigure}{.325\textwidth}
\begin{center}
\includegraphics[height=1.5in]{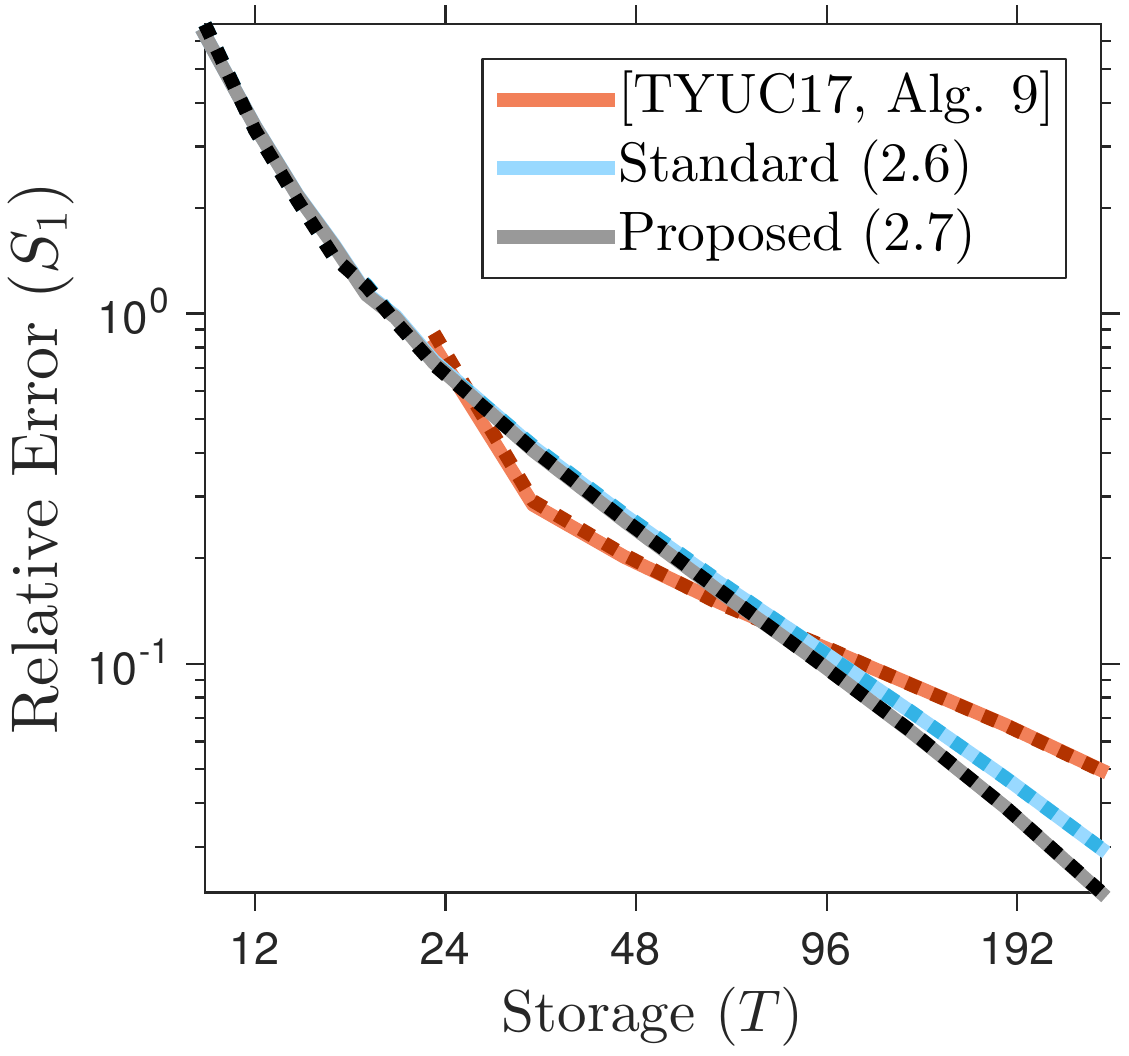}
\caption{\texttt{LowRankLowNoise}}
\label{fig:LR-algs}
\end{center}
\end{subfigure}
\begin{subfigure}{.325\textwidth}
\begin{center}
\includegraphics[height=1.5in]{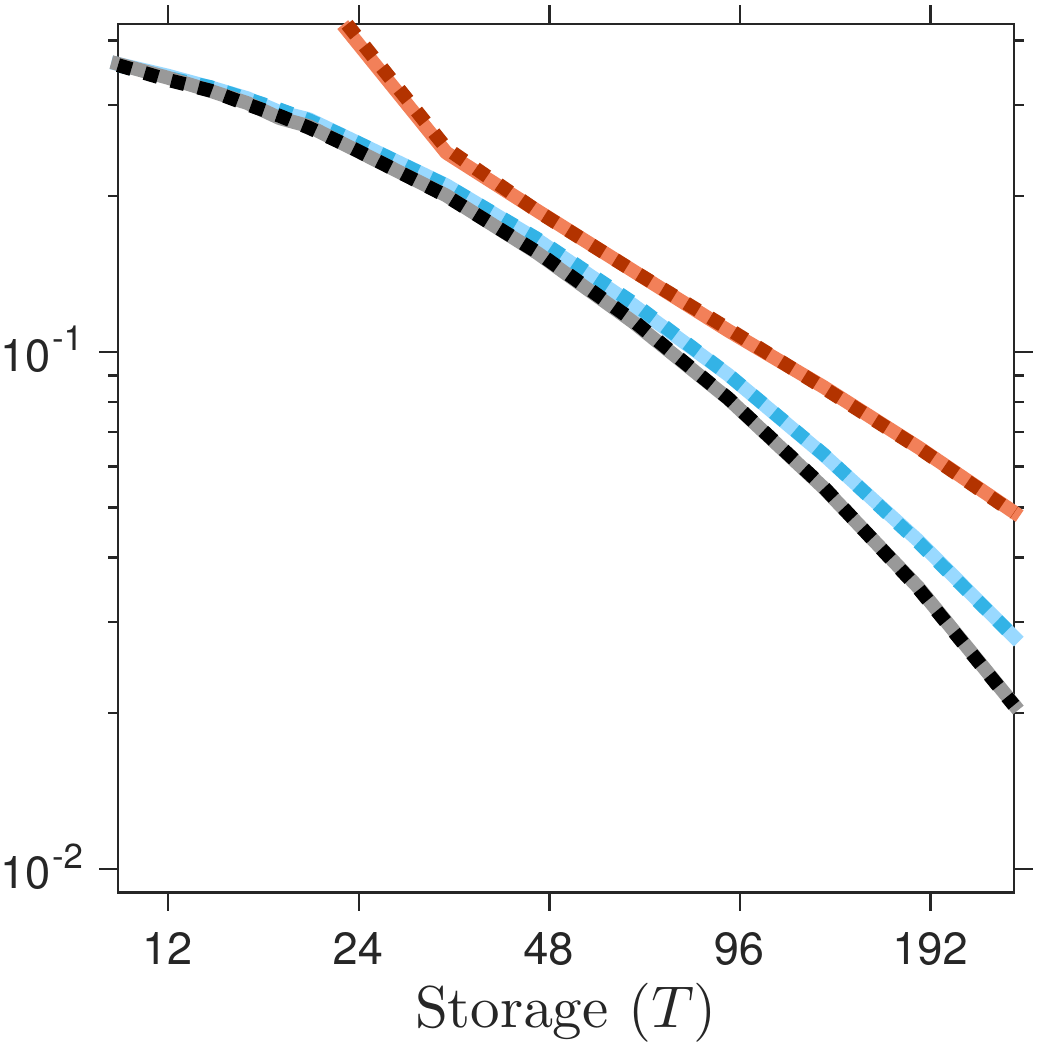}
\caption{\texttt{LowRankMedNoise}}
\label{fig:MED-theory}
\end{center}
\end{subfigure}
\begin{subfigure}{.325\textwidth}
\begin{center}
\includegraphics[height=1.5in]{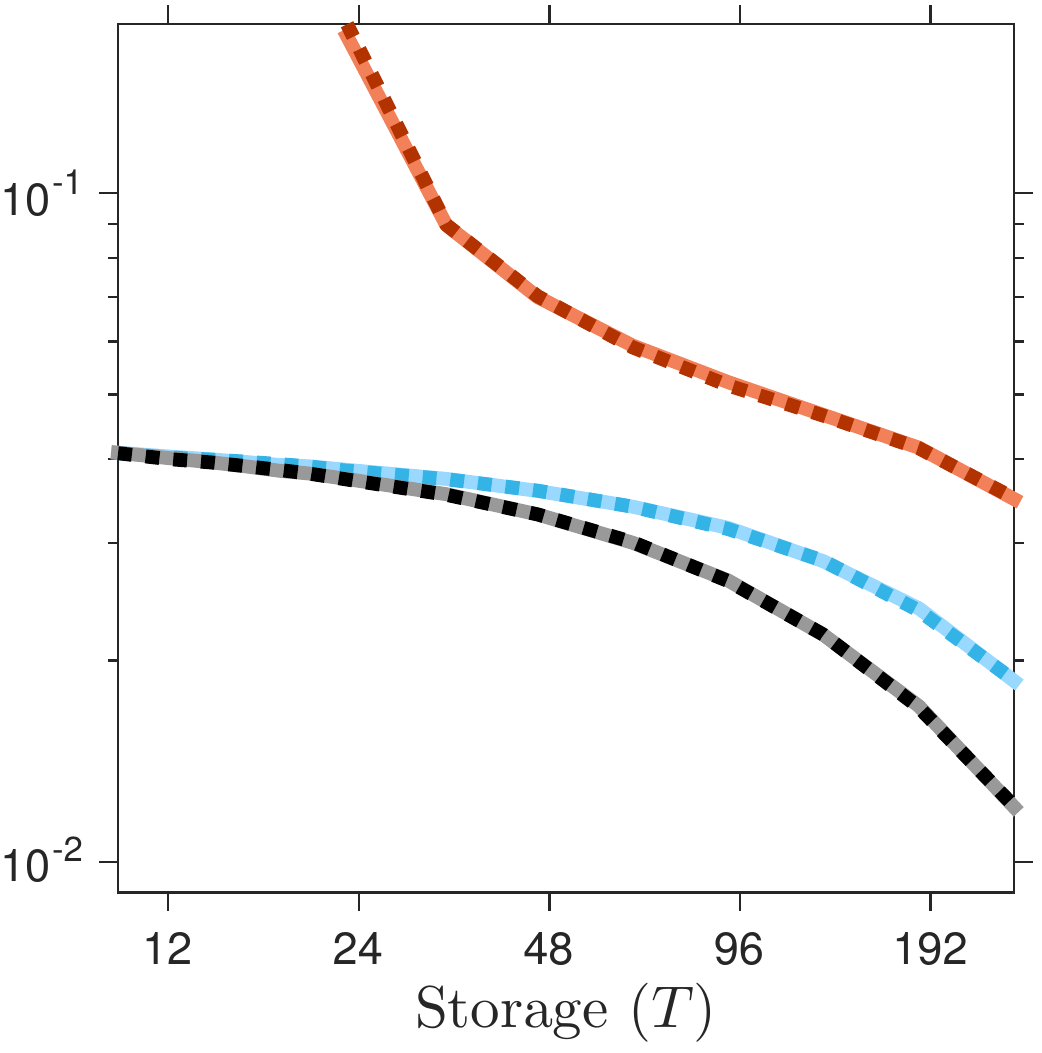}
\caption{\texttt{LowRankHiNoise}}
\label{fig:HI-theory}
\end{center}
\end{subfigure}
\end{center}

\vspace{.5em}

\begin{center}
\begin{subfigure}{.325\textwidth}
\begin{center}
\includegraphics[height=1.5in]{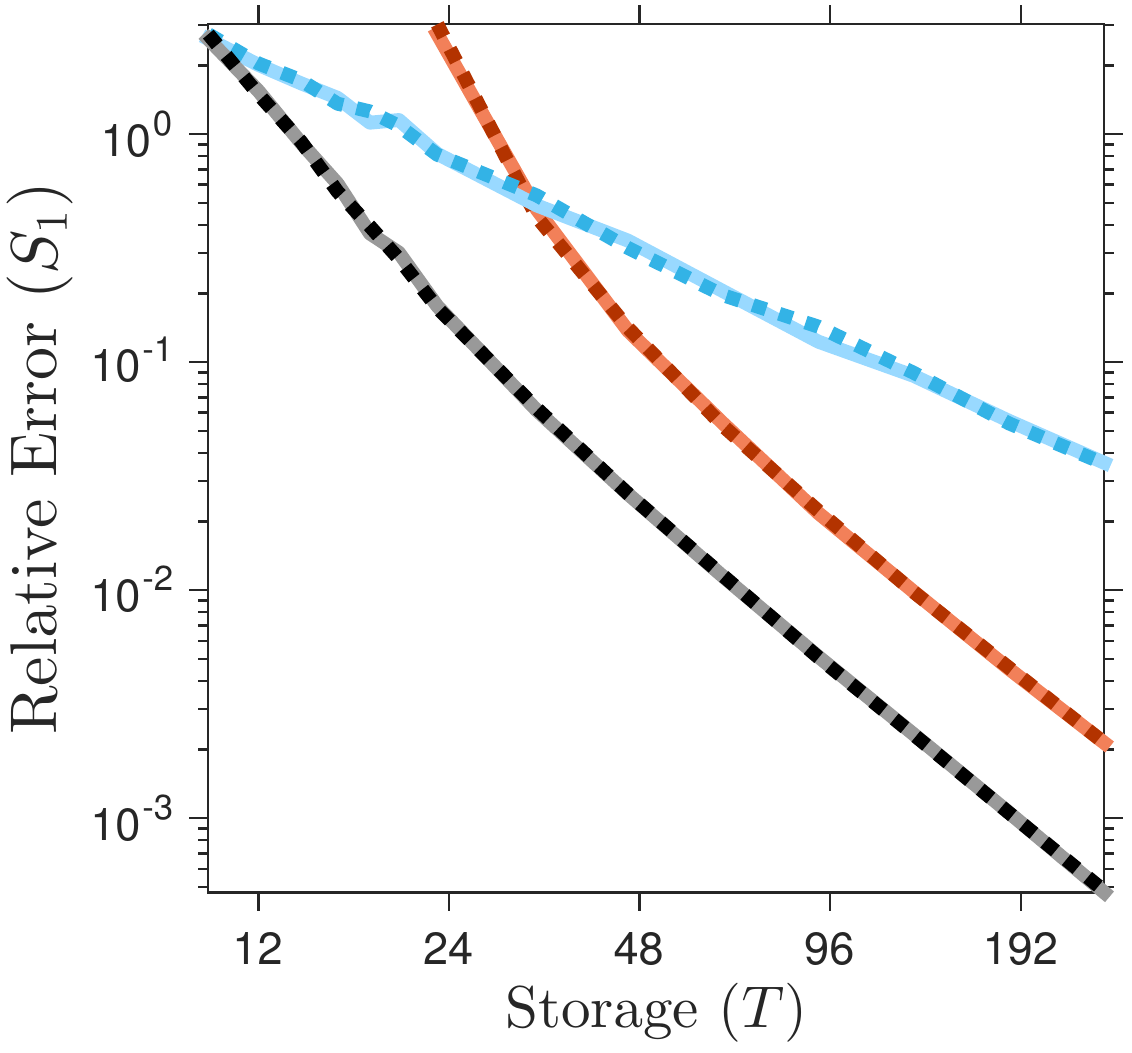}
\caption{\texttt{PolyDecayFast}}
\end{center}
\end{subfigure}
\begin{subfigure}{.325\textwidth}
\begin{center}
\includegraphics[height=1.5in]{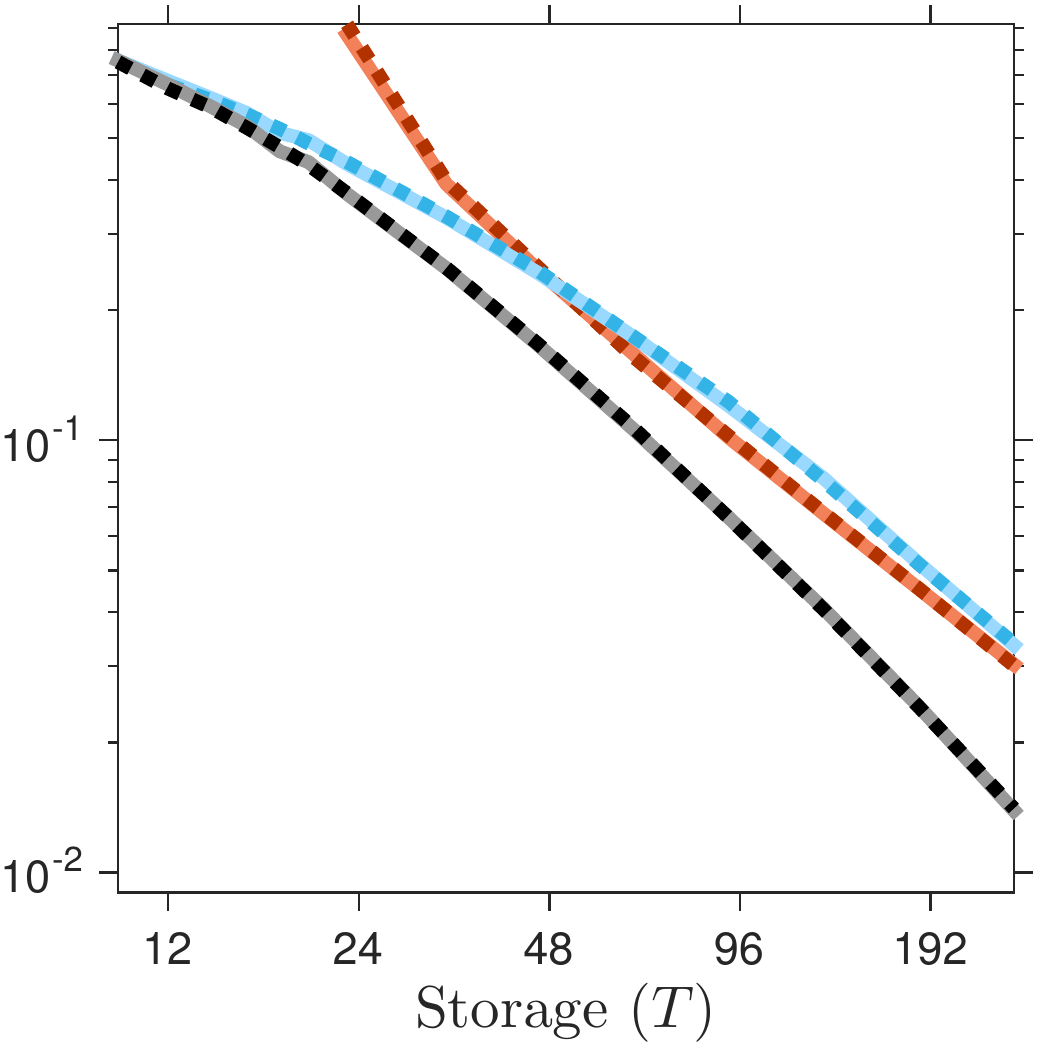}
\caption{\texttt{PolyDecayMed}}
\end{center}
\end{subfigure}
\begin{subfigure}{.325\textwidth}
\begin{center}
\includegraphics[height=1.5in]{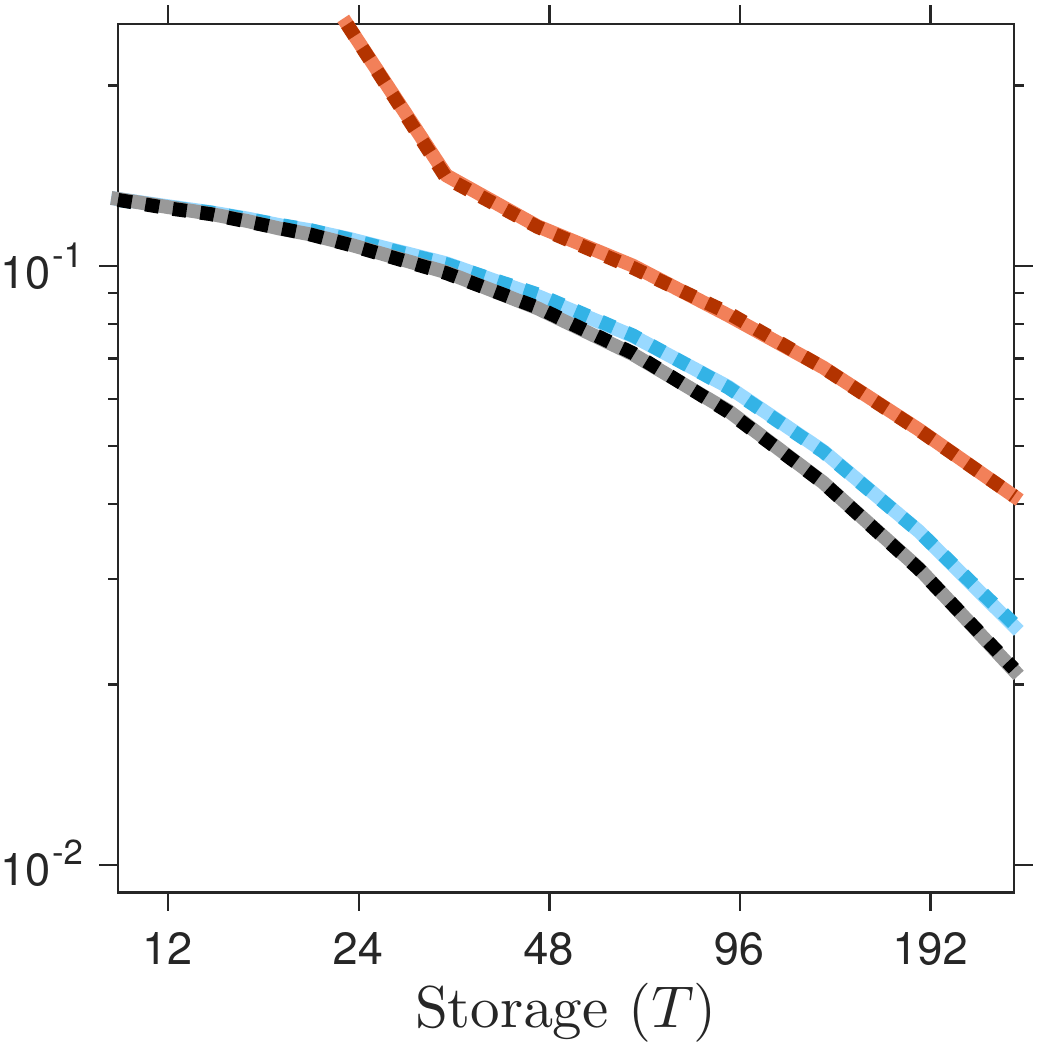}
\caption{\texttt{PolyDecaySlow}}
\end{center}
\end{subfigure}
\end{center}

\vspace{0.5em}

\begin{center}
\begin{subfigure}{.325\textwidth}
\begin{center}
\includegraphics[height=1.5in]{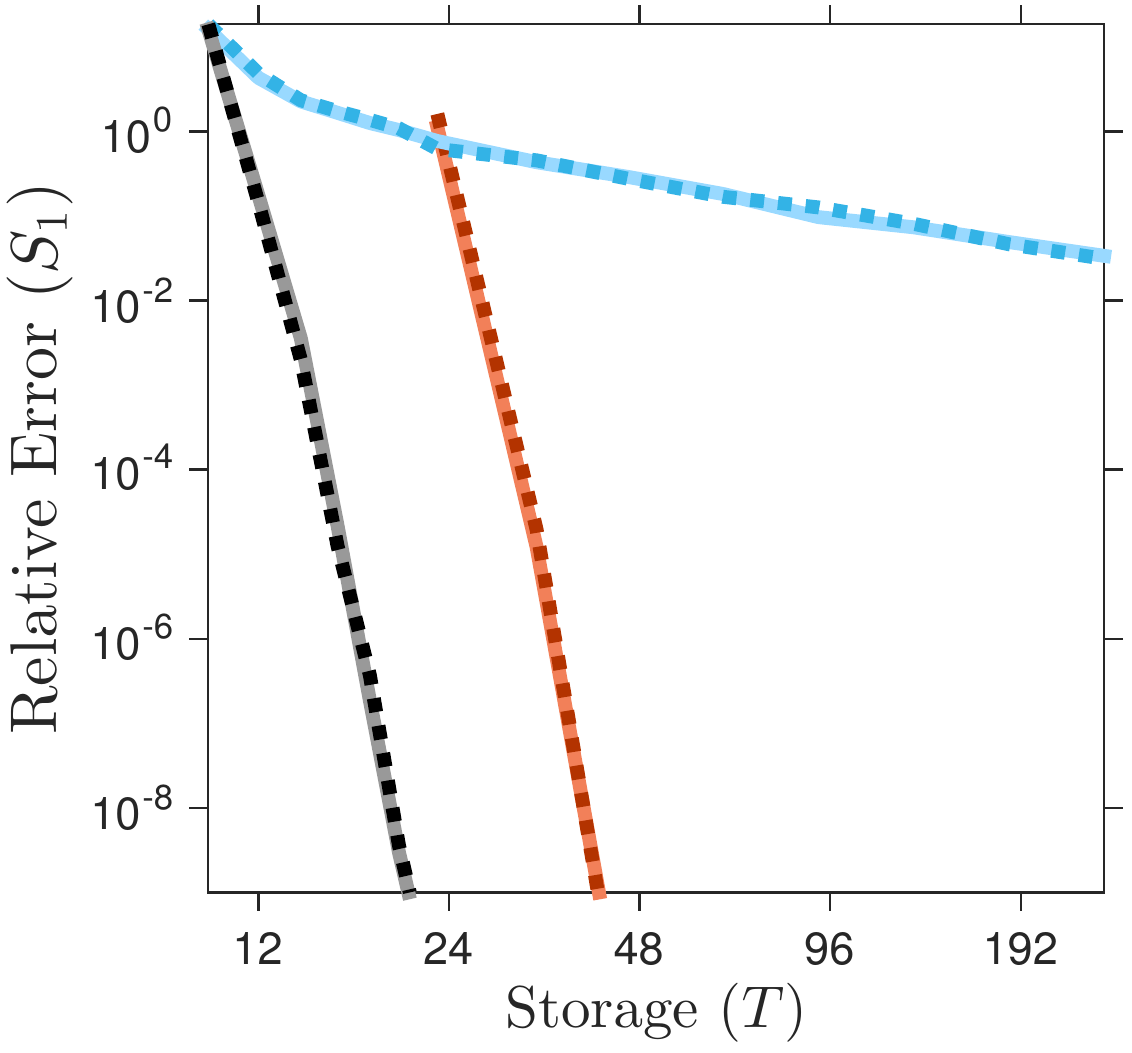}
\caption{\texttt{ExpDecayFast}}
\end{center}
\end{subfigure}
\begin{subfigure}{.325\textwidth}
\begin{center}
\includegraphics[height=1.5in]{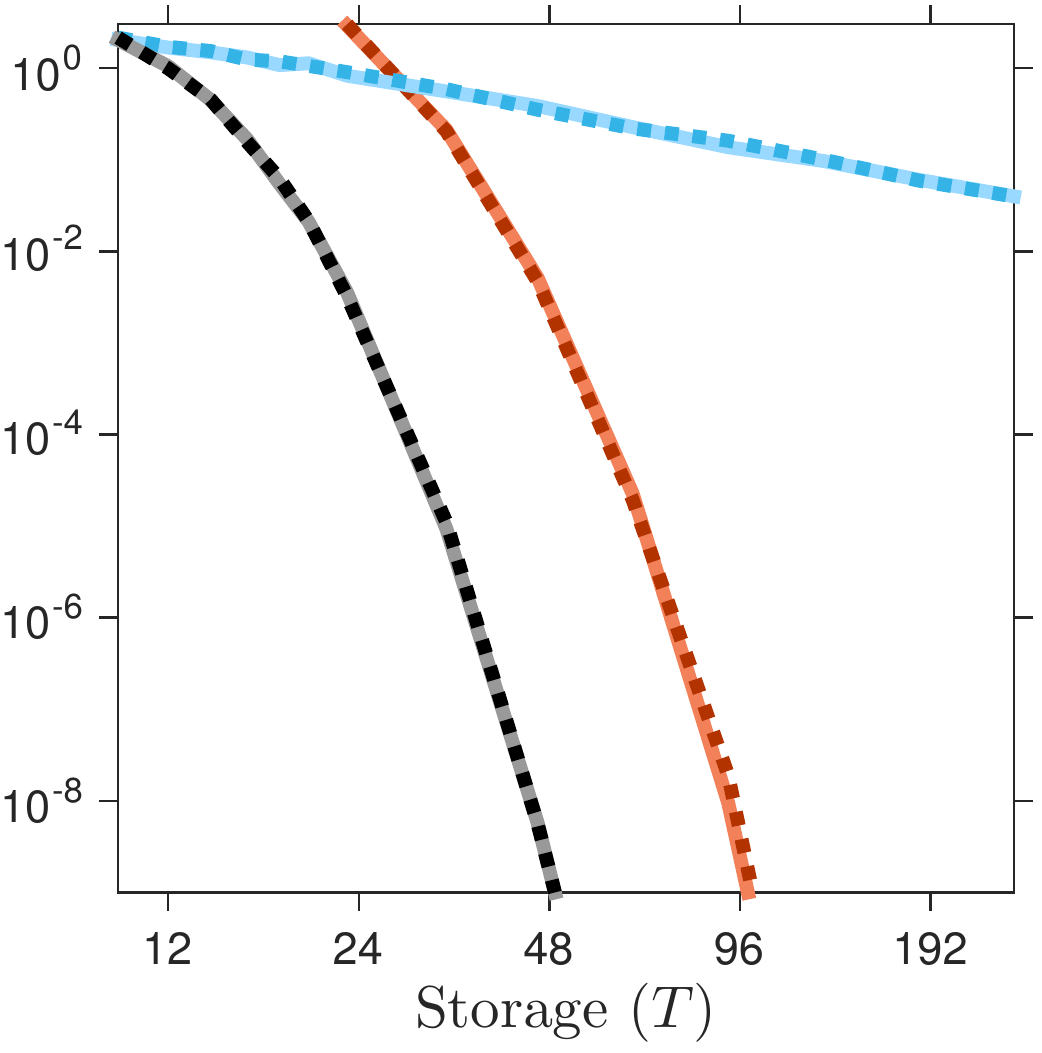}
\caption{\texttt{ExpDecayMed}}
\end{center}
\end{subfigure}
\begin{subfigure}{.325\textwidth}
\begin{center}
\includegraphics[height=1.5in]{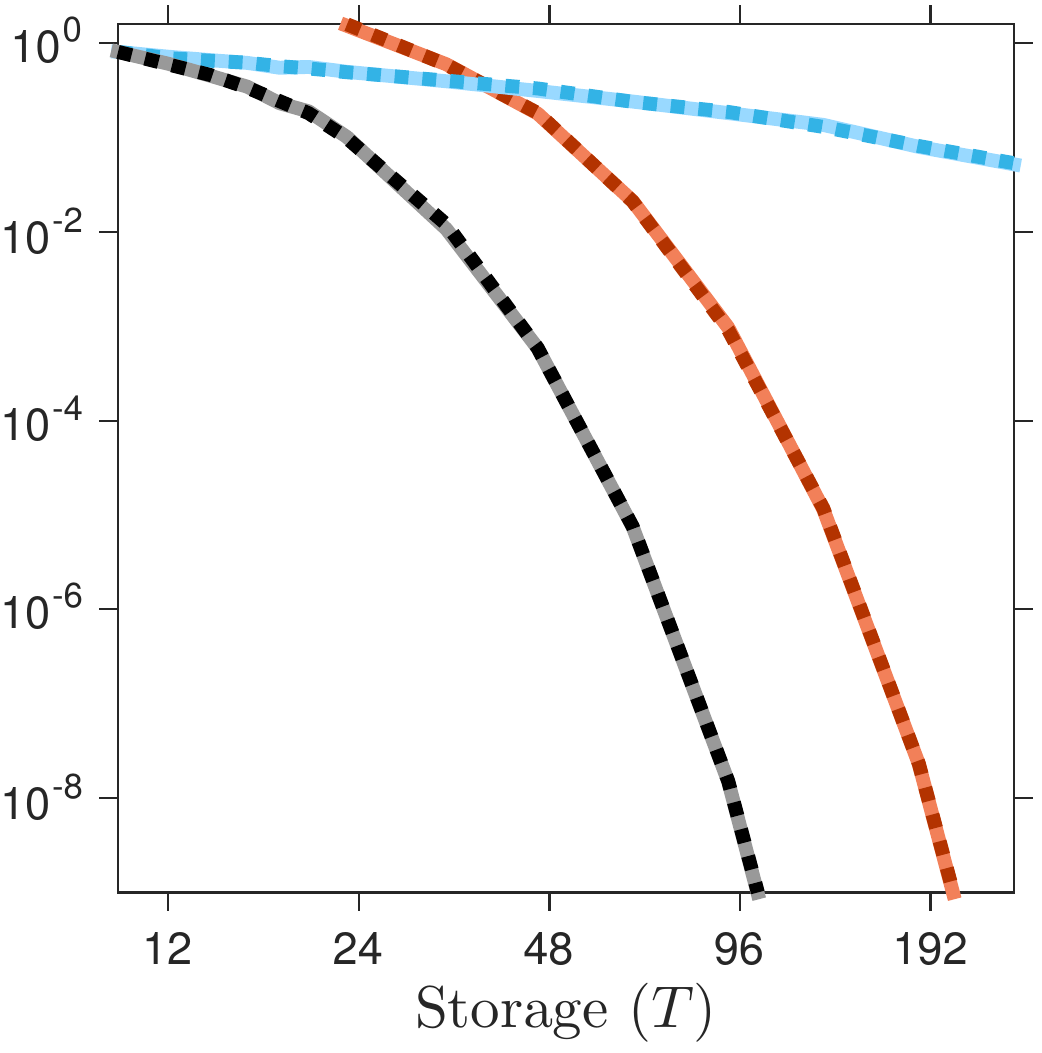}
\caption{\texttt{ExpDecaySlow}}
\end{center}
\end{subfigure}
\end{center}

\caption{\textbf{Synthetic Examples with Effective Rank $R = 10$, Approximation Rank $r = 10$, Schatten $1$-Norm Error.}
The data series show the performance of three algorithms for rank-$r$ psd approximation with $r = 10$.
\textbf{Solid lines} are generated from the Gaussian sketch;
\textbf{dashed lines} are from the SSFT sketch.
Each panel displays the Schatten 1-norm relative error~\eqref{eqn:relative-error}
as a function of storage cost $T$.}\label{fig:synthetic-S1-R10}
\end{figure}

\textbf{Conclusions.}
This paper makes the case for using the
proposed fixed-rank psd approximation~\eqref{eqn:Ahat-fixed}
in lieu of the alternatives~\eqref{eqn:gm} or~\cite[Alg.~9]{TYUC17:Randomized-Single-View-TR}.
Theorem~\ref{thm:error-fixed} shows that the proposed fixed-rank psd
approximation~\eqref{eqn:Ahat-fixed} can attain any prescribed
relative error, and Theorem~\ref{thm:error-fixed-2} shows that it can exploit spectral decay.
Furthermore, our numerical work demonstrates that the proposed approximation
improves (almost) uniformly over the competitors for a range of examples.
These results are timely because of the recent arrival of compelling applications,
such as~\cite{YUTC17:Sketchy-Decisions}, for sketching psd matrices.

{\ifdefined\cameraready

\textbf{Acknowledgments.}
The authors wish to thank Mark Tygert and Alex Gittens for helpful feedback
on preliminary versions of this work.  JAT gratefully acknowledges partial
support from ONR Award N00014-17-1-2146 and the Gordon \& Betty Moore Foundation.
VC and AY were supported in part by the European Commission under Grant ERC Future Proof, SNF 200021-146750, and SNF CRSII2-147633.  MU was supported in part by DARPA Award FA8750-17-2-0101.

\fi}

{\ifdefined\supplement

\clearpage
\appendix

\section{Details of the Theoretical Analysis}
\label{sec:proofs}

This appendix contains a new theoretical analysis of the
simple Nystr{\"o}m approximation~\eqref{eqn:nystrom} and
the proposed fixed-rank Nystr{\"o}m approximation~\eqref{eqn:Ahat-fixed}.

\subsection{Best Approximation in Schatten Norms}

Let us introduce compact notation for the optimal rank-$r$ approximation error
in the Schatten $p$-norm:
\begin{equation} \label{eqn:tail-sum}
\sigma_{r+1}^{(p)}(\mtx{M}) = \norm{ \mtx{M} - \lowrank{\mtx{M}}{r} }_{p}
	= \left[ \sum\nolimits_{i > r} \sigma_i(\mtx{M})^p \right]^{1/p}.
\end{equation}
Ordinary singular values correspond to the case $p = \infty$.

\subsection{Analysis of the Nystr{\"o}m Approximation}

The first result gives a very accurate error bound for
the basic Nystr{\"o}m approximation $\hat{\mtx{A}}^{\mathrm{nys}}$
with respect to the Schatten 1-norm.  This estimate
is the key ingredient in the proof of Theorem~\ref{thm:error-fixed-2}.

\begin{theorem}[Error in Nystr{\"o}m Approximation] \label{thm:error-lr} \rm
Assume $1 \leq r \leq k \leq n$.   Let $\mtx{A} \in \F^{n \times n}$ be a psd matrix.
Draw the test matrix $\mtx{\Omega} \in \F^{n \times k}$ from the Gaussian or orthonormal distribution,
and form the sketch $\mtx{Y} = \mtx{A\Omega}$.  Then the rank-$k$ Nystr{\"o}m
approximation $\hat{\mtx{A}}^{\mathrm{nys}}$ determined by~\eqref{eqn:nystrom}
satisfies the error bound
\begin{equation} \label{eqn:error-lr}
\Expect \norm{ \mtx{A} - \hat{\mtx{A}}^{\mathrm{nys}} }_{1}
	\leq \min_{\varrho < k - \alpha} \left[ \left(1 + \frac{\varrho}{k - \varrho - \alpha}\right)
	\sigma^{(1)}_{\varrho + 1}(\mtx{A}) \right].
\end{equation}
The index $\varrho$ ranges over natural numbers.
The quantity $\alpha(\R) = 1$ and $\alpha(\C) = 0$.
The optimal rank-$\varrho$ Schatten 1-norm approximation error is defined in~\eqref{eqn:tail-sum}.
\end{theorem}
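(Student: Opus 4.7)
The plan is to reduce Theorem~\ref{thm:error-lr} to the classical expected-Frobenius error bound for a randomized range-finder applied to $\mtx{A}^{1/2}$, so that essentially nothing new has to be computed.

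First I would rewrite the Nyström approximation as a conjugation of an orthogonal projector. Writing $\mtx{B} = \mtx{A}^{1/2}$, the sketch factors as $\mtx{Y} = \mtx{B}(\mtx{B}\mtx{\Omega})$ and the center matrix as $\mtx{\Omega}^*\mtx{Y} = (\mtx{B}\mtx{\Omega})^*(\mtx{B}\mtx{\Omega})$. Combined with the identity $\mtx{M}(\mtx{M}^*\mtx{M})^{\dagger}\mtx{M}^* = \mtx{P}_{\range(\mtx{M})}$, this yields
$$\hat{\mtx{A}}^{\mathrm{nys}} \;=\; \mtx{B}\,\mtx{P}\,\mtx{B},
\quad \text{where } \mtx{P} = \mtx{P}_{\range(\mtx{B}\mtx{\Omega})}.$$
Consequently $\mtx{A} - \hat{\mtx{A}}^{\mathrm{nys}} = \mtx{B}(\Id - \mtx{P})\mtx{B}$ is psd, so its Schatten $1$-norm equals its trace. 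Using the idempotence $(\Id - \mtx{P})^2 = \Id - \mtx{P}$ and cyclicity,
$$\norm{\mtx{A} - \hat{\mtx{A}}^{\mathrm{nys}}}_1
\;=\; \trace\!\bigl(\mtx{B}(\Id-\mtx{P})\mtx{B}\bigr)
\;=\; \fnormsq{(\Id - \mtx{P})\mtx{B}}.$$

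Second, I would apply the standard Halko--Martinsson--Tropp bound for the Gaussian randomized range-finder: for every matrix $\mtx{M}$ and every natural number $\varrho$ with $\varrho \leq k - \alpha - 1$,
$$\Expect \fnormsq{(\Id - \mtx{P}_{\mtx{M}\mtx{\Omega}})\,\mtx{M}}
\;\leq\; \left(1 + \frac{\varrho}{k - \varrho - \alpha}\right) \sum_{i > \varrho} \sigma_i(\mtx{M})^2,$$
with $\alpha(\R) = 1$ and $\alpha(\C) = 0$. Inserting $\mtx{M} = \mtx{B} = \mtx{A}^{1/2}$ and using $\sigma_i(\mtx{A}^{1/2})^2 = \sigma_i(\mtx{A})$, the tail sum becomes $\sigma^{(1)}_{\varrho+1}(\mtx{A})$. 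Minimizing over $\varrho$ gives the claimed bound. For an orthonormal test matrix, I would note that the Nyström approximation depends on $\mtx{\Omega}$ only through $\range(\mtx{\Omega})$ (any invertible right factor cancels in the pseudoinverse expression), and orthonormalizing a Gaussian matrix preserves its range almost surely; this is essentially Proposition~\ref{prop:nys-proj}, so the Gaussian bound transfers verbatim.

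The only step with any subtlety is the projector rewriting when $\mtx{A}$ is singular, since then $\mtx{B}\mtx{\Omega}$ may be rank-deficient and one must invoke the Moore--Penrose identity rather than a plain inverse. Everything else is bookkeeping: the reduction to $\fnormsq{(\Id - \mtx{P})\mtx{A}^{1/2}}$ is the conceptual heart of the argument, and the remaining work is to match notation with the off-the-shelf Gaussian range-finder estimate.
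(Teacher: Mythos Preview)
Your proposal is correct and follows essentially the same route as the paper: rewrite $\hat{\mtx{A}}^{\mathrm{nys}} = \mtx{A}^{1/2}\mtx{P}\mtx{A}^{1/2}$ via Proposition~\ref{prop:nys-proj}, convert the Schatten $1$-norm error to $\fnormsq{(\Id-\mtx{P})\mtx{A}^{1/2}}$, invoke the Halko--Martinsson--Tropp expected Frobenius bound with parameter $\varrho$, and minimize. The only cosmetic difference is that you pass through the trace to reach the Frobenius norm, whereas the paper writes $\norm{\mtx{A}^{1/2}(\Id-\mtx{P})(\Id-\mtx{P})\mtx{A}^{1/2}}_1 = \norm{(\Id-\mtx{P})\mtx{A}^{1/2}}_2^2$ directly.
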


\noindent
To the best of our knowledge, Theorem~\ref{thm:error-lr} is new.
The proof appears below in App.~\ref{sec:proof-lr}.

Let us situate Theorem~\ref{thm:error-lr} with respect to
the results in Gittens's work~\cite{Git13:Topics-Randomized,GM16:Revisiting-Nystrom-JMLR}.
Gittens develops error bounds for the Nystr{\"o}m approximation~\eqref{eqn:nystrom}
that hold with high probability, rather than in expectation.
He measures errors in the Schatten $p$-norm for $p = 1, 2, \infty$.
He also obtains results for several types of test matrices,
including isotropic models and a relative of the SSFT.
In contrast to Theorem~\ref{thm:error-lr}, Gittens's
bounds are more complicated, and the constants are much larger.

\subsection{Proof of Theorem~\ref{thm:error-lr}}
\label{sec:proof-lr}

We begin with the proof of Theorem~\ref{thm:error-lr}.
Gittens~\cite{Git11:Spectral-Norm,Git13:Topics-Randomized,GM16:Revisiting-Nystrom-JMLR}
uses a related argument to obtain bounds on the \emph{probability}
that the Nystr{\"o}m approximation achieves a given error.

The first step is to write the Nystr{\"o}m approximation
in terms of an orthogonal projector.  This expression allows us
to exploit the analysis from~\cite{HMT11:Finding-Structure,TYUC17:Randomized-Single-View-TR}.

\begin{proposition}[Representation of Nystr{\"o}m Approximation] \label{prop:nys-proj}
Let $\mtx{P}$ be the orthogonal projector onto $\range(\mtx{A}^{1/2} \mtx{\Omega})$:
\begin{equation} \label{eqn:proj-form}
\mtx{P} = (\mtx{A}^{1/2}\mtx{\Omega}) (\mtx{\Omega}^* \mtx{A} \mtx{\Omega})^\dagger (\mtx{A}^{1/2}\mtx{\Omega})^*.
\end{equation}
Then the Nystr{\"o}m approximation~\eqref{eqn:nystrom} can be expressed as
\begin{equation} \label{eqn:nys-proj}
\hat{\mtx{A}}^{\mathrm{nys}} = \mtx{A}^{1/2} \mtx{P} \mtx{A}^{1/2}
\end{equation}
In particular, the Nystr{\"o}m approximation only depends on $\mtx{\Omega}$
through $\range(\mtx{\Omega})$.
\end{proposition}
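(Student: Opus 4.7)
The proof is essentially a direct algebraic manipulation followed by the recognition of a standard projector formula. The plan is to substitute the definition of the sketch, factor through the psd square root, and identify the resulting expression as an orthogonal projector.

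First I would write $\mtx{Y} = \mtx{A}\mtx{\Omega} = \mtx{A}^{1/2}(\mtx{A}^{1/2}\mtx{\Omega})$ and, using the fact that $\mtx{A}^{1/2}$ is Hermitian, expand the middle factor as $\mtx{\Omega}^*\mtx{Y} = \mtx{\Omega}^*\mtx{A}\mtx{\Omega} = (\mtx{A}^{1/2}\mtx{\Omega})^*(\mtx{A}^{1/2}\mtx{\Omega})$. Setting $\mtx{M} := \mtx{A}^{1/2}\mtx{\Omega}$, the Nyström formula~\eqref{eqn:nystrom} becomes
\begin{equation*}
\hat{\mtx{A}}^{\mathrm{nys}} \;=\; \mtx{A}^{1/2}\,\bigl[\,\mtx{M}(\mtx{M}^*\mtx{M})^{\dagger}\mtx{M}^*\,\bigr]\,\mtx{A}^{1/2}.
\end{equation*}
The second step is to invoke the standard fact that $\mtx{M}(\mtx{M}^*\mtx{M})^{\dagger}\mtx{M}^*$ is precisely the orthogonal projector onto $\range(\mtx{M})$, which gives the desired representation~\eqref{eqn:nys-proj} with $\mtx{P}$ as in~\eqref{eqn:proj-form}.

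The only step that requires real care is the projector identity, because $\mtx{M}^*\mtx{M}$ can be rank-deficient (when $\mtx{\Omega}^*\mtx{A}\mtx{\Omega}$ is singular), so one cannot just invert it. The cleanest way is to take a thin SVD $\mtx{M} = \mtx{U}\mtx{\Sigma}\mtx{V}^*$ with $\mtx{\Sigma}$ containing only the nonzero singular values; then $(\mtx{M}^*\mtx{M})^{\dagger} = \mtx{V}\mtx{\Sigma}^{-2}\mtx{V}^*$ by the Moore--Penrose definition, and a direct substitution collapses $\mtx{M}(\mtx{M}^*\mtx{M})^{\dagger}\mtx{M}^*$ to $\mtx{U}\mtx{U}^*$, the orthogonal projector onto $\range(\mtx{M})$. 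Equivalently, one can note that both sides agree on $\range(\mtx{M})$ and on $\nullsp(\mtx{M}^*)$, which together span~$\F^n$.

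Finally, for the dependence on $\range(\mtx{\Omega})$: if $\mtx{\Omega}$ and $\mtx{\Omega}'$ have the same column span, then $\range(\mtx{A}^{1/2}\mtx{\Omega}') = \mtx{A}^{1/2}\range(\mtx{\Omega}') = \mtx{A}^{1/2}\range(\mtx{\Omega}) = \range(\mtx{A}^{1/2}\mtx{\Omega})$, so the projector $\mtx{P}$ is unchanged and hence so is $\hat{\mtx{A}}^{\mathrm{nys}}$. In particular, this explains the remark in Section~\ref{sec:implementation} that Gaussian and orthonormal test matrices produce identical Nyström approximations in exact arithmetic, since orthonormalization of a Gaussian $\mtx{G}$ via $\mtx{G} = \mtx{\Omega}\mtx{R}$ preserves the column range.
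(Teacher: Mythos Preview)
Your argument is correct and follows essentially the same route as the paper: factor $\mtx{A}=\mtx{A}^{1/2}\mtx{A}^{1/2}$, set $\mtx{M}=\mtx{A}^{1/2}\mtx{\Omega}$, and recognize $\mtx{M}(\mtx{M}^*\mtx{M})^{\dagger}\mtx{M}^*$ as the orthogonal projector onto $\range(\mtx{M})$. You actually supply more detail than the paper does, namely the SVD verification of the projector identity in the rank-deficient case and the explicit check that $\mtx{P}$ depends only on $\range(\mtx{\Omega})$; the paper simply asserts both points.
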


We believe that Proposition~\ref{prop:nys-proj} first appeared explicitly
in the work of Gittens~\cite{Git11:Spectral-Norm}.

\begin{proof}
This argument follows from a direct calculation:
\begin{equation*} \begin{aligned}
\hat{\mtx{A}}^{\mathrm{nys}} &= \mtx{A} \mtx{\Omega} (\mtx{\Omega}^* \mtx{A} \mtx{\Omega})^{\dagger} \mtx{\Omega}^* \mtx{A} \\
	&= \mtx{A}^{1/2} (\mtx{A}^{1/2} \mtx{\Omega})
	\big[ (\mtx{A}^{1/2} \mtx{\Omega})^*  (\mtx{A}^{1/2} \mtx{\Omega}) \big]^\dagger
	(\mtx{A}^{1/2} \mtx{\Omega})^* \mtx{A}^{1/2} \\
	&= \mtx{A}^{1/2} \mtx{P} \mtx{A}^{1/2}.
\end{aligned}
\end{equation*}
To reach the last line, we identified the orthogonal projector~\eqref{eqn:proj-form}.
\end{proof}

With Proposition~\ref{prop:nys-proj} at hand,
the proof of Theorem~\ref{thm:error-lr} is straightforward.

We may assume that $\mtx{\Omega}$ is a Gaussian matrix because
the reconstruction $\hat{\mtx{A}}$ only depends on $\range(\mtx{\Omega})$.
The range of a random orthonormal matrix has the same distribution
as a Gaussian matrix up to a set of measure zero.

Let $\mtx{P}$ be the orthogonal projector~\eqref{eqn:proj-form}.
In view of the formula~\eqref{eqn:nys-proj} for $\hat{\mtx{A}}^{\mathrm{nys}}$, we have
\begin{equation} \label{eqn:nys-error}
\mtx{A} - \hat{\mtx{A}}^{\mathrm{nys}} = \mtx{A}^{1/2}(\Id - \mtx{P}) \mtx{A}^{1/2}.
\end{equation}
We can now express the Schatten 1-norm of the error in terms of the Schatten 2-norm:
\begin{equation*}
\norm{ \mtx{A} - \hat{\mtx{A}}^{\mathrm{nys}} }_{1}
	= \norm{ \mtx{A}^{1/2} (\Id - \mtx{P}) (\Id - \mtx{P}) \mtx{A}^{1/2} }_1
	= \norm{ (\Id - \mtx{P}) \mtx{A}^{1/2} }_{2}^2.
\end{equation*}
The first identity follows from~\eqref{eqn:nys-error}
and the fact that the orthogonal projector $\Id - \mtx{P}$ is idempotent.

Fix a natural number $\varrho < k - \alpha$.  We can use established
results from the literature to control the expectation of the error.
In particular, we invoke a slight generalization~\cite[Fact.~8.3]{TYUC17:Randomized-Single-View-TR}
of a result~\cite[Thm.~10.5]{HMT11:Finding-Structure} of Halko et al.
We arrive at the bound
\begin{equation*}
\begin{aligned}
\Expect \norm{ (\Id - \mtx{P}) \mtx{A}^{1/2} }_{2}^2
	&\leq \left( 1 + \frac{ \varrho }{k - \varrho - \alpha} \right) \sum_{i > \varrho} \sigma_i(\mtx{A}^{1/2})^2 \\
	&= \left( 1 + \frac{ \varrho }{k - \varrho - \alpha} \right) \sum_{i > \varrho} \sigma_i(\mtx{A})
	= \left( 1 + \frac{ \varrho }{k - \varrho - \alpha} \right) \sigma^{(1)}_{\varrho+1}(\mtx{A}).
\end{aligned}
\end{equation*}
Combine the last two displays and minimize over eligible $\varrho$ to complete the argument.

\begin{remark}[Spectral-Norm Error]
When $\F = \R$, we can also obtain a spectral-norm error bound by
combining this argument with another result~\cite[Thm.~10.6]{HMT11:Finding-Structure}
of Halko et al.:
$$
\Expect \sqrt{\norm{ \mtx{A} - \hat{\mtx{A}}^{\mathrm{nys}} } } 	\leq \min_{\varrho < k - 1} \left[ \left( 1 + \sqrt{\frac{\varrho}{k - \varrho - 1}} \right) \sqrt{\sigma_{\varrho+1}(\mtx{A})}
	+ \frac{\econst \sqrt{k}}{k - \varrho} \sqrt{ \sigma^{(1)}_{\varrho + 1}(\mtx{A}) } \right].
$$
It takes a surprising amount of additional work to obtain an accurate bound
for the first moment of the error (instead of the 1/2 moment).  We have chosen
not to include this argument.
\end{remark}

\begin{remark}[High-Probability Bounds]
As noted by Gittens~\cite{Git13:Topics-Randomized,GM16:Revisiting-Nystrom-JMLR},
we can obtain high-probability error bounds in the real setting
by combining the approach here with results~\cite[Thms.~10.7--10.8]{HMT11:Finding-Structure} from Halko et al.
We omit the details.
\end{remark}

\begin{remark}[Other Test Matrices]
As noted by Gittens~\cite{Git13:Topics-Randomized,GM16:Revisiting-Nystrom-JMLR},
we can obtain results for other types of test matrices by replacing parts
of the analysis that depend on Gaussian matrices.  These changes result in
bounds that are quantitatively and qualitatively worse.
The numerical evidence suggests that many types of test matrices
have the same empirical performance, so we omit this development.
\end{remark}

\subsection{Theorem~\ref{thm:error-fixed-2}: Schatten 1-Norm Bound}

Let us continue with the proof of the Schatten 1-norm bound~\eqref{eqn:error-fixed-2-S1}
from Theorem~\ref{thm:error-fixed-2}.
We require a basic result on rank-$r$ approximation
adapted from~\cite[Prop.~7.1]{TYUC17:Randomized-Single-View-TR}.

\begin{proposition}[Fixed-Rank Projection] \label{prop:fixed-rank}
Let $\mtx{A} \in \F^{n \times n}$ and $\hat{\mtx{A}} \in \F^{n \times n}$
be arbitrary matrices.  For each natural number $r$ and number $p \in [1, \infty]$,
$$
\norm{ \mtx{A} - \lowrank{\hat{\mtx{A}}}{r} }_p
	\leq \sigma_{r+1}^{(p)}(\mtx{A}) + 2 \norm{ \mtx{A} - \hat{\mtx{A}} }_p.
$$
\end{proposition}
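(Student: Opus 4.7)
The plan is to establish the bound by two applications of the triangle inequality, sandwiched around the optimality property of the truncated SVD. The intuition is that $\lowrank{\hat{\mtx{A}}}{r}$ is close to $\hat{\mtx{A}}$ by construction, and $\hat{\mtx{A}}$ is close to $\mtx{A}$ by hypothesis, so $\lowrank{\hat{\mtx{A}}}{r}$ must be a reasonable rank-$r$ surrogate for $\mtx{A}$. Because $\lowrank{\mtx{A}}{r}$ is available as a concrete alternative rank-$r$ competitor, it can be used as an intermediate comparison matrix.

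Concretely, I would first insert $\hat{\mtx{A}}$ and use the triangle inequality:
$$
\norm{\mtx{A} - \lowrank{\hat{\mtx{A}}}{r}}_p
	\leq \norm{\mtx{A} - \hat{\mtx{A}}}_p + \norm{\hat{\mtx{A}} - \lowrank{\hat{\mtx{A}}}{r}}_p .
$$
Next, I would invoke the Eckart--Young--Mirsky theorem, which asserts that $\lowrank{\hat{\mtx{A}}}{r}$ is a best rank-$r$ approximation of $\hat{\mtx{A}}$ in every Schatten $p$-norm (indeed every unitarily invariant norm). Since $\lowrank{\mtx{A}}{r}$ has rank at most $r$, this optimality gives
$$
\norm{\hat{\mtx{A}} - \lowrank{\hat{\mtx{A}}}{r}}_p
	\leq \norm{\hat{\mtx{A}} - \lowrank{\mtx{A}}{r}}_p .
$$
A second triangle inequality, inserting $\mtx{A}$, then yields
$$
\norm{\hat{\mtx{A}} - \lowrank{\mtx{A}}{r}}_p
	\leq \norm{\hat{\mtx{A}} - \mtx{A}}_p + \norm{\mtx{A} - \lowrank{\mtx{A}}{r}}_p
	= \norm{\mtx{A} - \hat{\mtx{A}}}_p + \sigma_{r+1}^{(p)}(\mtx{A}) ,
$$
where the last equality is just the definition~\eqref{eqn:tail-sum} of the tail-sum notation. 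Chaining these three displays produces the stated bound, with the factor of $2$ arising because $\norm{\mtx{A} - \hat{\mtx{A}}}_p$ enters once directly and once through the detour via $\lowrank{\mtx{A}}{r}$.

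There is essentially no hard step: the argument is a ``three-point'' triangle inequality whose only nontrivial ingredient is the simultaneous Schatten-$p$ optimality of the truncated SVD, which is classical (and is precisely the property that makes the notation $\lowrank{\cdot}{r}$ unambiguous across Schatten norms, as noted in Sec.~\ref{sec:notation}). The bound is tight up to the constant $2$ and is the standard ``approximate nearest rank-$r$ matrix'' estimate used throughout the sketching literature; the proof is essentially the same as in~\cite[Prop.~7.1]{TYUC17:Randomized-Single-View-TR}.
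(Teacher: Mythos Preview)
Your proof is correct and follows exactly the same three-step argument as the paper: triangle inequality, optimality of $\lowrank{\hat{\mtx{A}}}{r}$ against the competitor $\lowrank{\mtx{A}}{r}$, and a second triangle inequality. The paper's proof is line-for-line the same, so there is nothing to add.
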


\begin{proof}
The argument follows from a short calculation based on the triangle inequality:
$$
\begin{aligned}
\norm{ \mtx{A} - \lowrank{\hat{\mtx{A}}}{r} }_p
	&\leq \norm{ \mtx{A} - \hat{\mtx{A}} }_p + \norm{ \hat{\mtx{A}} - \lowrank{\hat{\mtx{A}}}{r} }_p \\
	&\leq \norm{ \mtx{A} - \hat{\mtx{A}} }_p + \norm{ \hat{\mtx{A}} - \lowrank{\mtx{A}}{r} }_p \\
	&\leq 2 \norm{ \mtx{A} - \hat{\mtx{A}} }_p + \norm{ \mtx{A} - \lowrank{\mtx{A}}{r} }_p.
\end{aligned}
$$
In the second line, we have used the fact that $\lowrank{\hat{\mtx{A}}}{r}$ is a \emph{best} rank-$r$
approximation of $\hat{\mtx{A}}$.  To complete the argument, we identify the last term~\eqref{eqn:tail-sum}
as the best rank-$r$ approximation error in the Schatten $p$-norm.
\end{proof}

The bound~\eqref{eqn:error-fixed-2-S1} from Theorem~\ref{thm:error-fixed-2}
is now an immediate consequence of Theorem~\ref{thm:error-lr} and Proposition~\ref{prop:fixed-rank}:
$$
\begin{aligned}
\Expect \norm{ \mtx{A} - \hat{\mtx{A}}_r }_1
	&\leq \sigma_{r+1}^{(1)}(\mtx{A}) + 2 \Expect \norm{ \mtx{A} - \hat{\mtx{A}}^{\mathrm{nys} } }_1 \\
	&\leq \sigma_{r+1}^{(1)}(\mtx{A}) + 2 \min_{\varrho < k - \alpha} \left(1 + \frac{\varrho}{k - \varrho - \alpha} \right) \sigma^{(1)}_{\varrho + 1}(\mtx{A}). \\
\end{aligned}
$$
We have used the definition~\eqref{eqn:Ahat-fixed} of our fixed-rank approximation:
$\hat{\mtx{A}}_r = \lowrank{ \hat{\mtx{A}}^{\mathrm{nys}} }{r}$.

\begin{remark}[Extensions]
Given a bound on the error in the Nystr{\"o}m approximation~\eqref{eqn:nystrom}
in the Schatten $p$-norm for any test matrix, this approach automatically yields
an estimate for the associated fixed-rank psd approximation~\eqref{eqn:Ahat-fixed}.
\end{remark}

\subsection{Theorem~\ref{thm:error-fixed}: Schatten 1-Norm Bound}
\label{sec:proof-fixed}

Next, we turn to the proof of the Schatten 1-norm bound~\eqref{eqn:error-fixed-S1}
from Theorem~\ref{thm:error-fixed}.
This argument is based on the same approach as Theorem~\ref{thm:error-lr},
but we require several additional ingredients
from~\cite{HMT11:Finding-Structure,Git13:Topics-Randomized,Gu15:Subspace-Iteration,TYUC17:Randomized-Single-View-TR}.

As before, we may assume that $\mtx{\Omega}$ is Gaussian.
With probability one, the nonzero eigenvalues of $\hat{\mtx{A}}^{\mathrm{nys}}$
are all distinct, so the best rank-$r$ approximation
$\hat{\mtx{A}}_r$ of $\hat{\mtx{A}}^{\mathrm{nys}}$ is determined uniquely.

Let $\mtx{P}$ be the orthogonal projector~\eqref{eqn:proj-form}. According to~\eqref{eqn:nys-proj}, the Nystr{\"o}m approximation takes the form
\begin{equation*}
\hat{\mtx{A}}^{\mathrm{nys}} = \mtx{A}^{1/2} \mtx{P} \mtx{A}^{1/2}
	= (\mtx{A}^{1/2} \mtx{P})(\mtx{P} \mtx{A}^{1/2}).
\end{equation*}
Let $\mtx{Q}$ denote the orthogonal projector onto the range of
$\lowrank{\mtx{P} \mtx{A}^{1/2}}{r}$.
Using the (truncated) SVD of the matrix $\mtx{PA}^{1/2}$, we can verify that
the best rank-$r$ approximation $\hat{\mtx{A}}_r$
of $\hat{\mtx{A}}^{\mathrm{nys}}$ satisfies
\begin{equation*}
\hat{\mtx{A}}_r = \lowrank{ \mtx{A}^{1/2} \mtx{P} }{r} \lowrank{ \mtx{P} \mtx{A}^{1/2} }{r}
	= \mtx{A}^{1/2} \mtx{P} \mtx{Q} \mtx{P} \mtx{A}^{1/2}
\end{equation*}
As in the proof of Theorem~\ref{thm:error-lr}, the Schatten 1-norm of the error satisfies
$$
\norm{ \mtx{A} - \hat{\mtx{A}}_r }_1
	= \norm{ \mtx{A} - \mtx{A}^{1/2} \mtx{PQP} \mtx{A}^{1/2} }_1
	= \norm{ (\Id - \mtx{QP}) \mtx{A}^{1/2} }_2^2.
$$
Since $\range(\mtx{Q}) \subset \range(\mtx{P})$, we can rewrite this expression as
$$
\norm{ (\Id - \mtx{QP})\mtx{A}^{1/2} }_2^2
	= \norm{ (\Id - \mtx{PQP})\mtx{A}^{1/2} }_2^2
	= \norm{ \mtx{A}^{1/2} - \mtx{P} \lowrank{\mtx{PA}^{1/2}}{r} }_2^2.
$$
The last identity holds because $\mtx{QPA}^{1/2} = \lowrank{\mtx{PA}^{1/2}}{r}$.
A direct application of Gu's result~\cite[Thm.~3.5]{Gu15:Subspace-Iteration}
yields
$$
\norm{ \mtx{A}^{1/2} - \mtx{P} \lowrank{\mtx{PA}^{1/2}}{r} }_2^2
	\leq \norm{ (\Id - \mtx{P}) \lowrank{\mtx{A}^{1/2}}{r} }_2^2 + \sum_{i > r} \sigma_i(\mtx{A}^{1/2})^2.
$$
A direct application of the result~\cite[Prop.~9.2]{TYUC17:Randomized-Single-View-TR}
shows that
$$
\Expect \norm{ (\Id - \mtx{P}) \lowrank{\mtx{A}^{1/2}}{r} }_2^2
	= \frac{r}{k - r - \alpha} \sum_{i > r} \sigma_i(\mtx{A}^{1/2})^2.
$$
As before, we note that
$$
\sum_{i > r} \sigma_i(\mtx{A}^{1/2})^2 = \sigma^{(1)}_{r+1}(\mtx{A}).
$$
Taking an expectation and sequencing these displays, we arrive at
$$
\Expect \norm{ \mtx{A} - \hat{\mtx{A}}_r }_1
	\leq \left( 1 + \frac{r}{k-r-\alpha} \right) \sigma^{(1)}_{r+1}(\mtx{A}).
$$
This is the stated result~\eqref{eqn:error-fixed-S1}.

\subsection{Theorems~\ref{thm:error-fixed} and~\ref{thm:error-fixed-2}: Schatten $\infty$-Norm Bounds}

Last, we develop the bounds~\eqref{eqn:error-fixed-Sinf} and~\eqref{eqn:error-fixed-2-Sinf}
on the Schatten $\infty$-norm of the fixed-rank psd approximation~\eqref{eqn:Ahat-fixed}
using a formal argument.  We require the following result.

\begin{proposition}[Reversed Eckart--Young] \label{prop:rev-ey}
Let $\mtx{A}, \mtx{B} \in \F^{n \times n}$ be matrices, and assume that $\rank(\mtx{B}) \leq r$.
Then
$$
\norm{ \mtx{A} - \mtx{B} }_{\infty}
	\leq \sigma_{r+1}(\mtx{A}) + \left[ \norm{\mtx{A} - \mtx{B}}_1 - \sigma_{r+1}^{(1)}(\mtx{A}) \right].
$$
\end{proposition}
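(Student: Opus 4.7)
My plan is to relate the ordered singular values of the residual $\mtx{A}-\mtx{B}$ to those of $\mtx{A}$ via Weyl's additive singular-value inequality, then separate the spectral-norm contribution from the Schatten 1-norm of the residual.

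The key tool is Weyl's inequality, which for any matrices $\mtx{X},\mtx{Y}\in\F^{n\times n}$ and indices $i,j\geq 1$ with $i+j-1\leq n$ asserts
$$
\sigma_{i+j-1}(\mtx{X}+\mtx{Y}) \leq \sigma_i(\mtx{X}) + \sigma_j(\mtx{Y}).
$$
Applied to the splitting $\mtx{A} = (\mtx{A}-\mtx{B}) + \mtx{B}$ with $j = r+1$, and using $\sigma_{r+1}(\mtx{B}) = 0$ because $\rank(\mtx{B}) \leq r$, this produces the comparison
$$
\sigma_{i+r}(\mtx{A}) \leq \sigma_i(\mtx{A}-\mtx{B}) \quad\text{for every } i \geq 1.
$$

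To conclude, I split the Schatten 1-norm by isolating the leading singular value of the residual and bounding the remaining terms from below using the comparison above:
$$
\norm{\mtx{A}-\mtx{B}}_1 \;=\; \sigma_1(\mtx{A}-\mtx{B}) + \sum_{i\geq 2}\sigma_i(\mtx{A}-\mtx{B}) \;\geq\; \norm{\mtx{A}-\mtx{B}}_\infty + \sum_{i\geq 2}\sigma_{i+r}(\mtx{A}).
$$
The shifted tail sum reindexes to $\sum_{j > r+1}\sigma_j(\mtx{A})$, which by the definition~\eqref{eqn:tail-sum} equals $\sigma^{(1)}_{r+1}(\mtx{A}) - \sigma_{r+1}(\mtx{A})$. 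Rearranging the resulting inequality yields the stated bound.

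There is no serious obstacle here; the proof is essentially mechanical once one commits to the right instance of Weyl's inequality. The only subtle point is the bookkeeping: one must select $j = r+1$ so that the rank constraint on $\mtx{B}$ annihilates the $\sigma_j(\mtx{B})$ term, and one must split off exactly one singular value of the residual at the top so that $\sigma_{r+1}(\mtx{A})$ reappears as an explicit additive correction on the right-hand side rather than being absorbed into the tail.
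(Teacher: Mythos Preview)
Your proposal is correct and essentially identical to the paper's own proof: both apply Weyl's inequality to $\mtx{A} = (\mtx{A}-\mtx{B}) + \mtx{B}$ with the index shift $j = r+1$ to obtain $\sigma_{i+r}(\mtx{A}) \leq \sigma_i(\mtx{A}-\mtx{B})$, then peel off $\sigma_1(\mtx{A}-\mtx{B}) = \norm{\mtx{A}-\mtx{B}}_\infty$ from the Schatten~1 sum and recognize the remaining tail as $\sigma_{r+1}^{(1)}(\mtx{A}) - \sigma_{r+1}(\mtx{A})$. The only cosmetic difference is that the paper writes the tail directly as $\sum_{i\geq 2}\sigma_{r+i}(\mtx{A})$ rather than reindexing first.
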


The proof of Proposition~\ref{prop:rev-ey} follows from a minor
change to~\cite[Thm.~3.4]{Gu15:Subspace-Iteration}.

\begin{proof}
As a consequence of Weyl's inequalities~\cite[Thm.~III.2.1]{Bha97:Matrix-Analysis},
we have the bound
\begin{equation} \label{eqn:my-weyl}
\sigma_{i + r}(\mtx{A}) \leq \sigma_i(\mtx{A} - \mtx{B}) + \sigma_{r+1}(\mtx{B}) = \sigma_i(\mtx{A} - \mtx{B}).
\end{equation}
The last identity holds because $\rank(\mtx{B}) \leq r$.  It follows that
$$
\begin{aligned}
\norm{\mtx{A} - \mtx{B}}_1 &= \sum_{i \geq 1} \sigma_i(\mtx{A} - \mtx{B}) \\
	&= \sigma_1(\mtx{A} - \mtx{B}) + \sum_{i \geq 2} \sigma_i(\mtx{A} - \mtx{B}) \\
	&\geq \norm{\mtx{A} - \mtx{B}}_{\infty} + \sum_{i \geq 2} \sigma_{r+i}(\mtx{A}) \\
	&= \norm{\mtx{A} - \mtx{B}}_{\infty} - \sigma_{r+1}(\mtx{A}) + \sigma^{(1)}_{r+1}(\mtx{A}).
\end{aligned}
$$
The first expression is the representation of the Schatten 1-norm
in terms of singular values.  The inequality is~\eqref{eqn:my-weyl}.
Finally, we identify the best Schatten 1-norm error from~\eqref{eqn:tail-sum}.
\end{proof}

To obtain the Schatten $\infty$-norm bound~\eqref{eqn:error-fixed-Sinf},
we combine Proposition~\ref{prop:rev-ey} with the Schatten 1-norm bound~\eqref{eqn:error-fixed-S1}:
\begin{equation*}
\begin{aligned}
\Expect \norm{ \mtx{A} - \hat{\mtx{A}}_r }_{\infty}
	&\leq \sigma_{r+1}(\mtx{A}) + \left[ \Expect \norm{ \mtx{A} -\hat{\mtx{A}}_r }_1 - \sigma^{(1)}_{r+1}(\mtx{A}) \right] \\
	&\leq \sigma_{r+1}(\mtx{A}) + \frac{r}{k - r - \alpha} \cdot \sigma^{(1)}_{r+1}(\mtx{A}).
\end{aligned}
\end{equation*}
Similarly, to obtain the Schatten $\infty$-norm bound~\eqref{eqn:error-fixed-2-Sinf},
we combine Proposition~\ref{prop:rev-ey} with the Schatten 1-norm bound~\eqref{eqn:error-fixed-2-S1}.

\section{Supplemental Numerics}
\label{sec:numerics-extra}

This appendix documents additional numerical work.
These experiments provide a more complete picture
of the performance of the psd approximation methods.

\begin{itemize}
\item	Figure~\ref{fig:spectra} contains a plot of the singular-value spectrum
of each input matrix described in Sec.~\ref{sec:numerics}.

\item	Figures~\ref{fig:data-S2}--\ref{fig:synthetic-Sinf-R10}
document the results of numerical experiments for the remaining parameter
regimes outlined in Sec.~\ref{sec:numerics}.  In particular, we consider
all Schatten $p$-norm relative error measures for $p \in \{1, 2, \infty\}$
and all effective rank parameters $R \in \{5, 10, 20\}$ for the synthetic data.
We omit the case $p = \infty, R = 20$ because the plots are uninformative.

\item	Figure~\ref{fig:bad-numerics} gives evidence about the numerical challenges
involved in implementing Nystr{\"o}m approximations, such as~\eqref{eqn:Ahat-fixed}.
Our implementation in Algorithm~\ref{alg:low-rank-recon} is based on the
Nystr{\"o}m approximation routine \texttt{eigenn}
released by Tygert~\cite{Tyg14:Matlab-Routines} to accompany
the paper~\cite{LLS+17:Algorithm-971}.  We compare with another implementation strategy
described in the text of the same paper~\cite[Eqn.~(13)]{LLS+17:Algorithm-971}.
It is surprising to discover very different levels of precision in two implementations
designed by professional numerical analysts.

\end{itemize}

\begin{figure}[htp!]
\begin{center}
\begin{subfigure}{.45\textwidth}
\begin{center}
\includegraphics[height=2in]{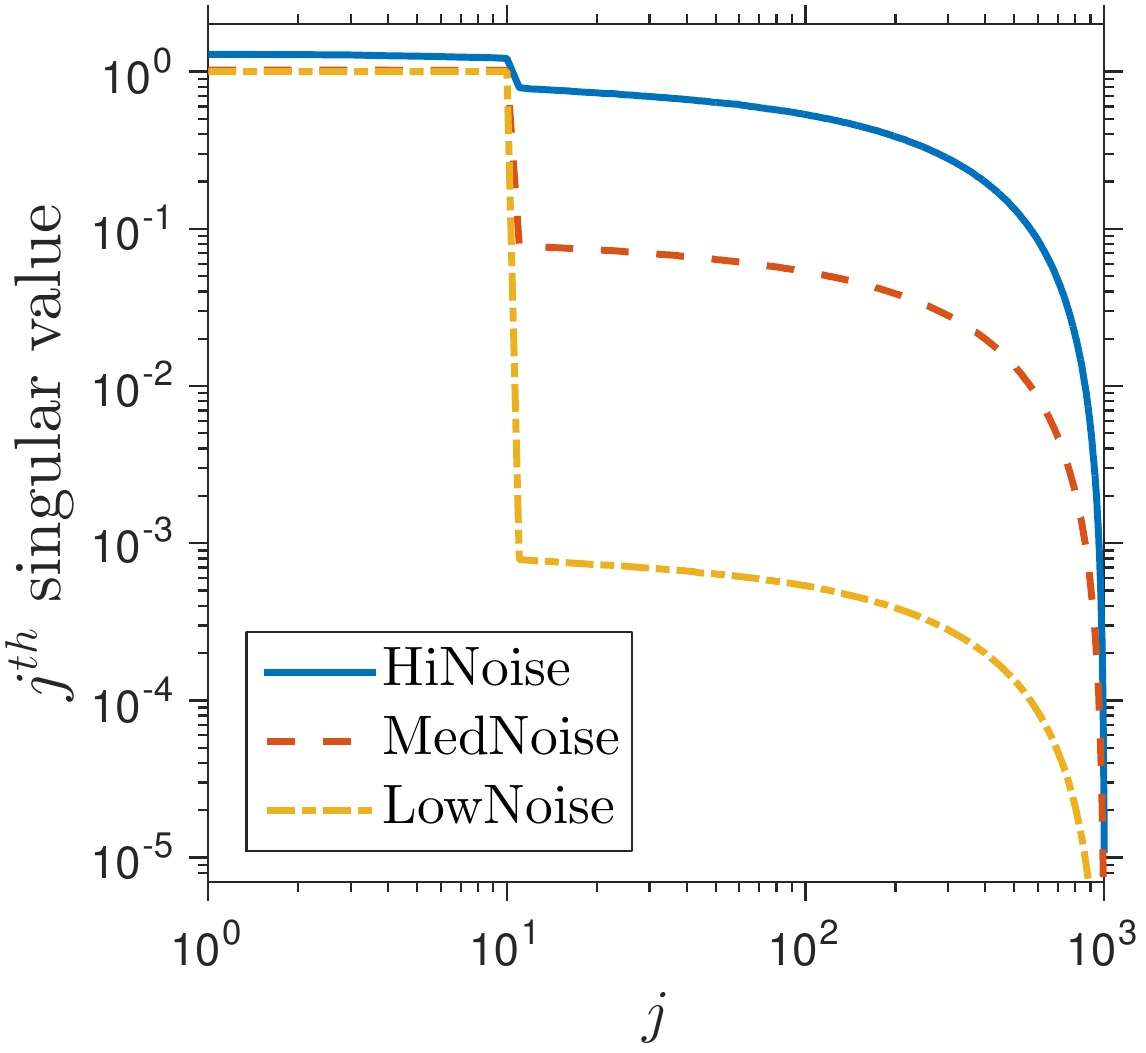}
\caption{Low-Rank + PSD Noise}
\end{center}
\end{subfigure}
\begin{subfigure}{.45\textwidth}
\begin{center}
\includegraphics[height=2in]{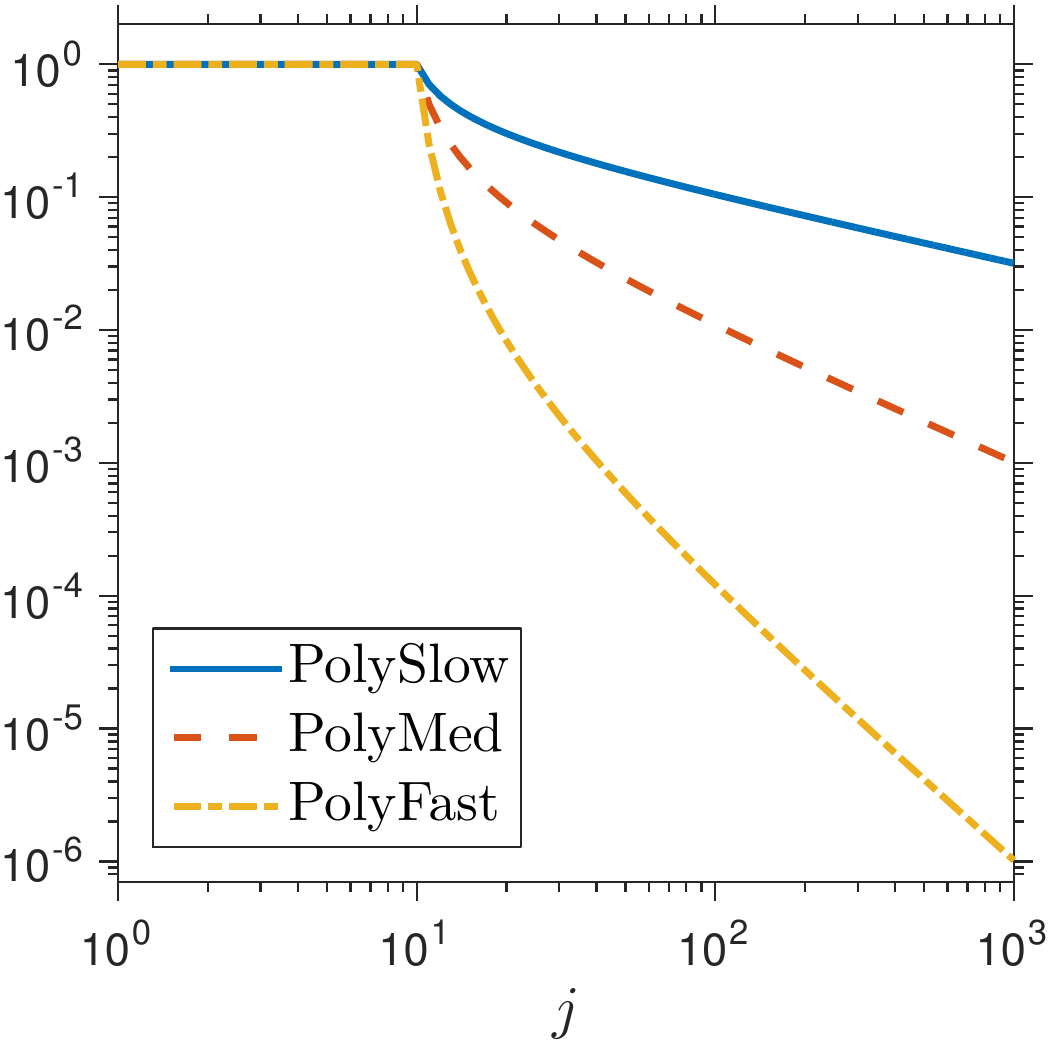}
\caption{Polynomial Decay}
\end{center}
\end{subfigure}

\vspace{2pc}

\begin{subfigure}{.45\textwidth}
\begin{center}
\includegraphics[height=2in]{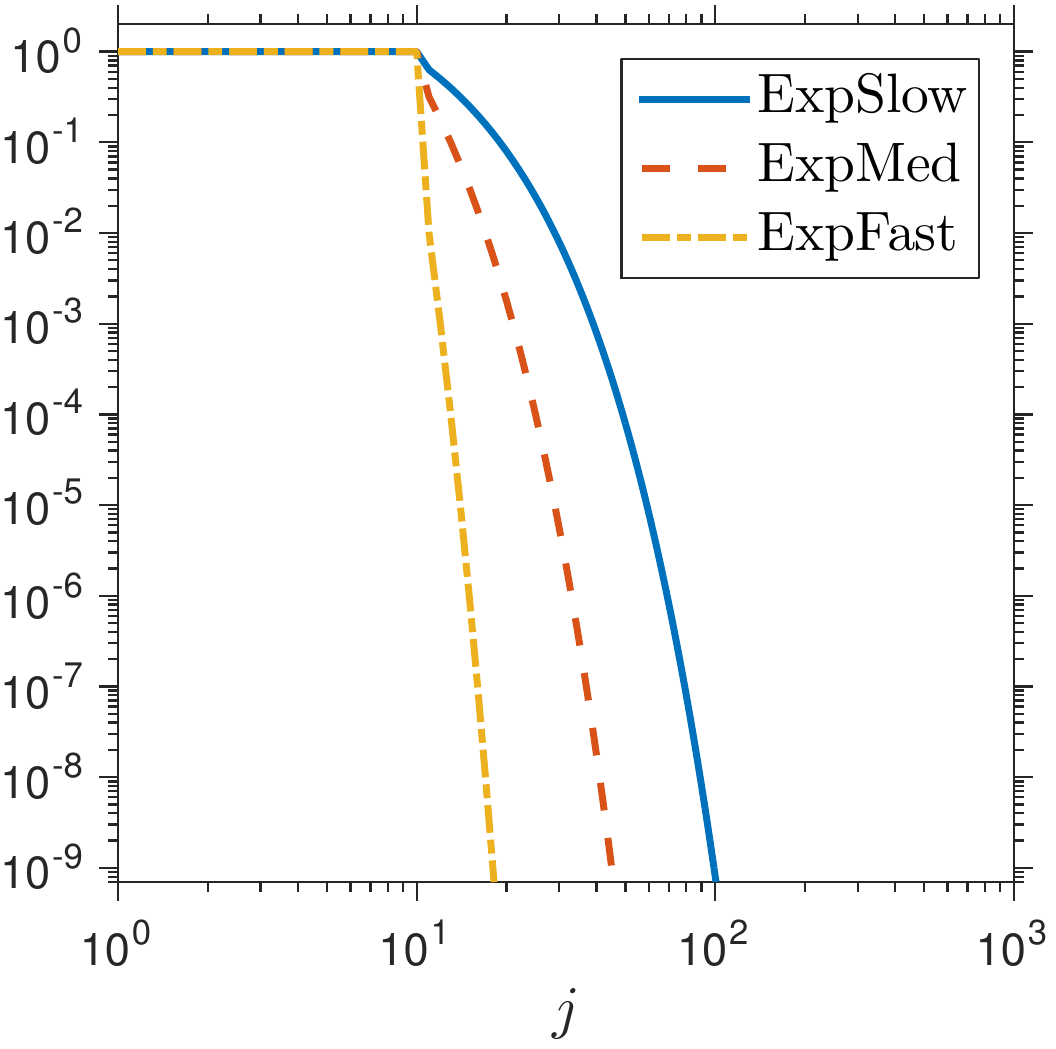}
\caption{Exponential Decay}
\end{center}
\end{subfigure}
\begin{subfigure}{.45\textwidth}
\begin{center}
\includegraphics[height=2in]{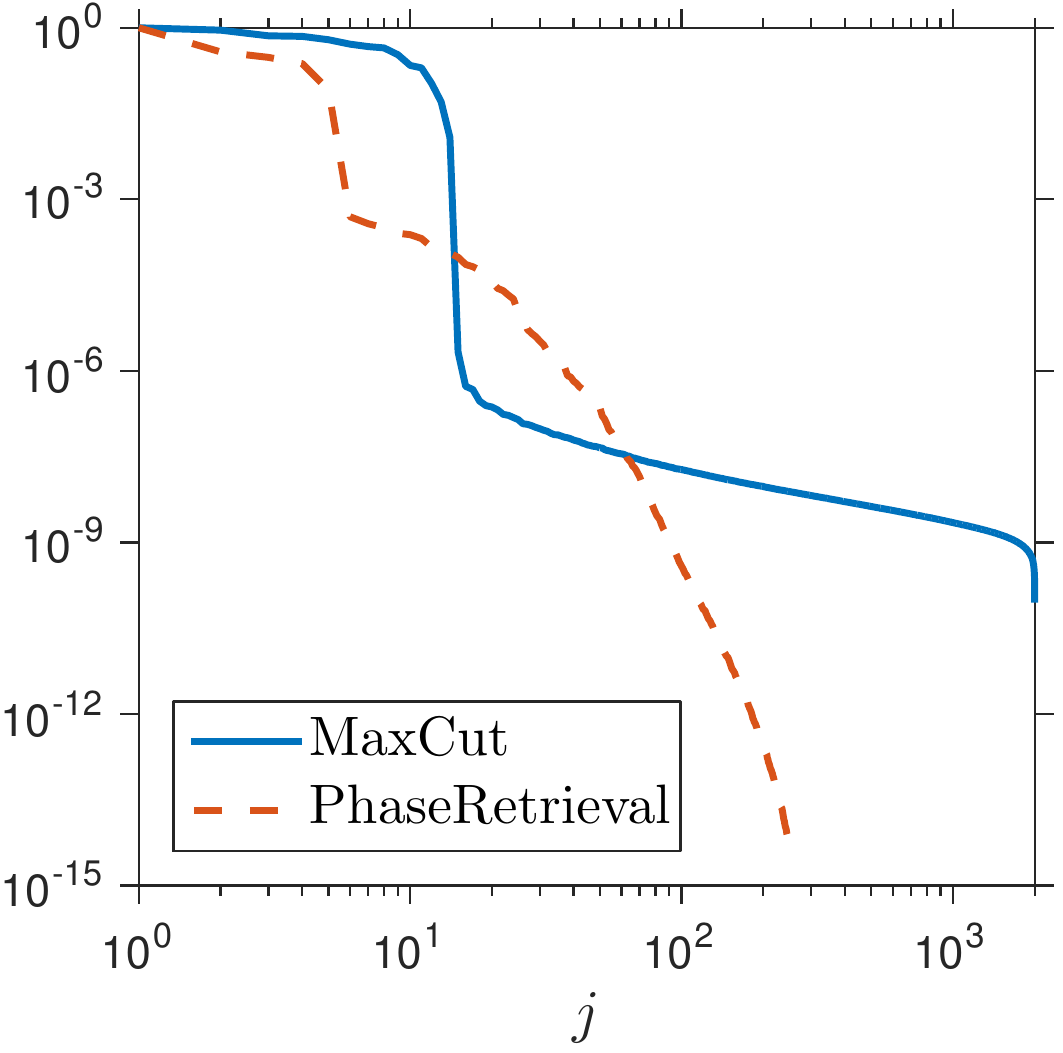}
\caption{\texttt{MaxCut} and \texttt{PhaseRetrieval}}
\end{center}
\end{subfigure}
\end{center}

\caption{\textbf{Singular Values of Input Matrices.}
These plots display the singular value spectra of the
input matrices that appear in the experiments.
See Sec.~\ref{sec:numerics} for descriptions of the matrices.} \label{fig:spectra}
\end{figure}

\begin{figure}[htp!]
\begin{center}
\begin{subfigure}{.45\textwidth}
\begin{center}
\includegraphics[height=2in]{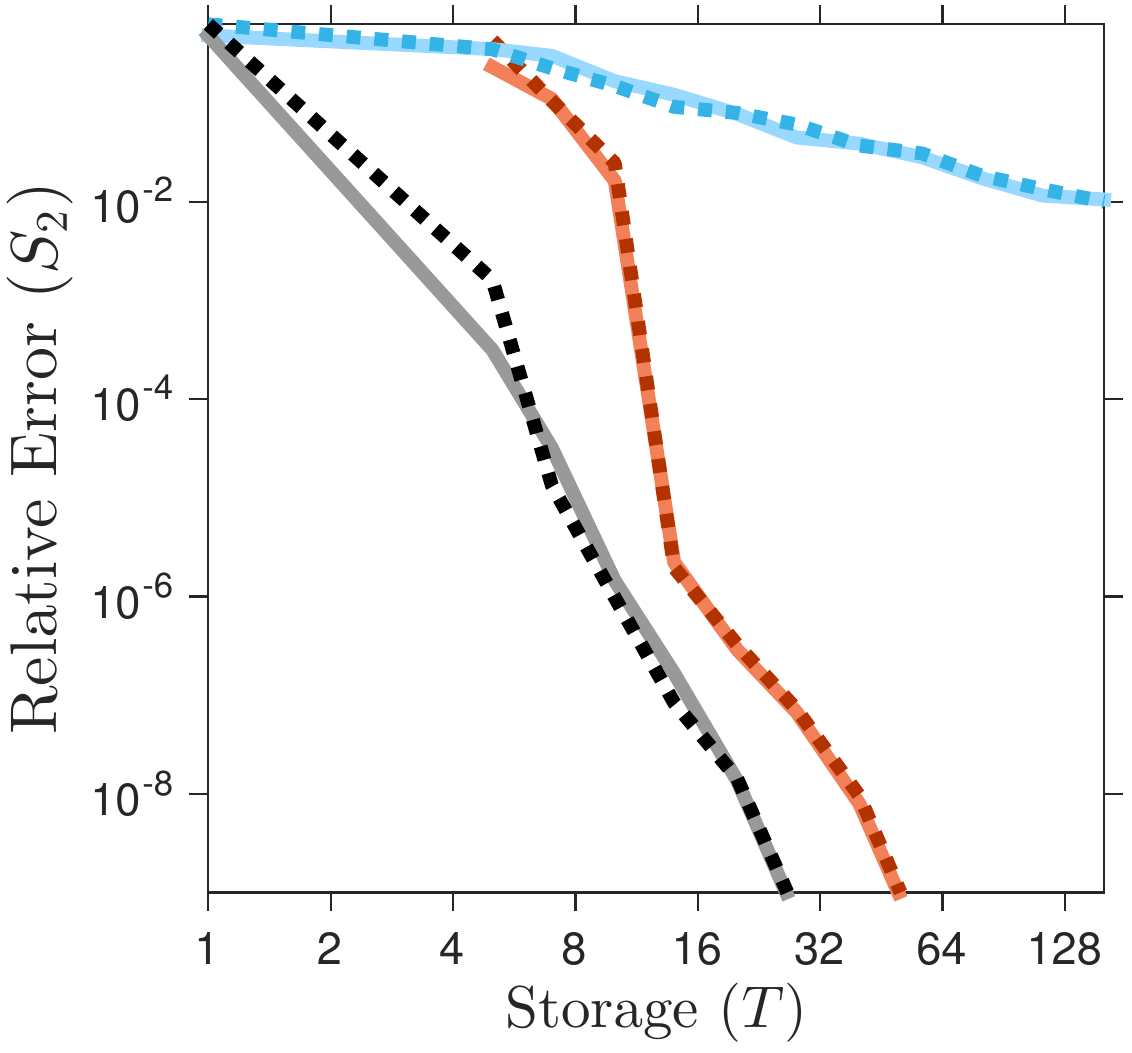}
\caption{\texttt{PhaseRetrieval} $(r = 1)$}
\end{center}
\end{subfigure}
\begin{subfigure}{.45\textwidth}
\begin{center}
\includegraphics[height=2in]{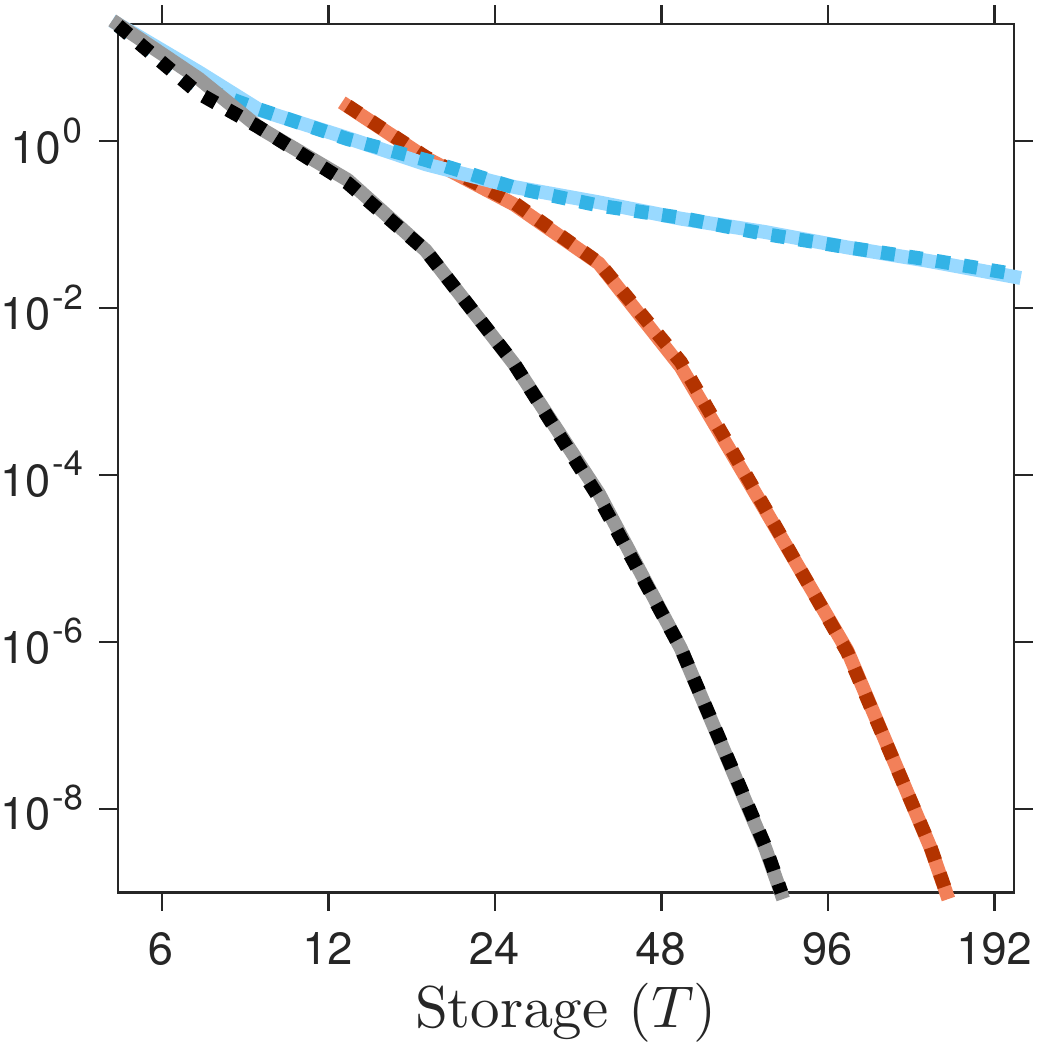}
\caption{\texttt{PhaseRetrieval} $(r = 5)$}
\end{center}
\end{subfigure}

\vspace{2pc}

\begin{subfigure}{.45\textwidth}
\begin{center}
\includegraphics[height=2in]{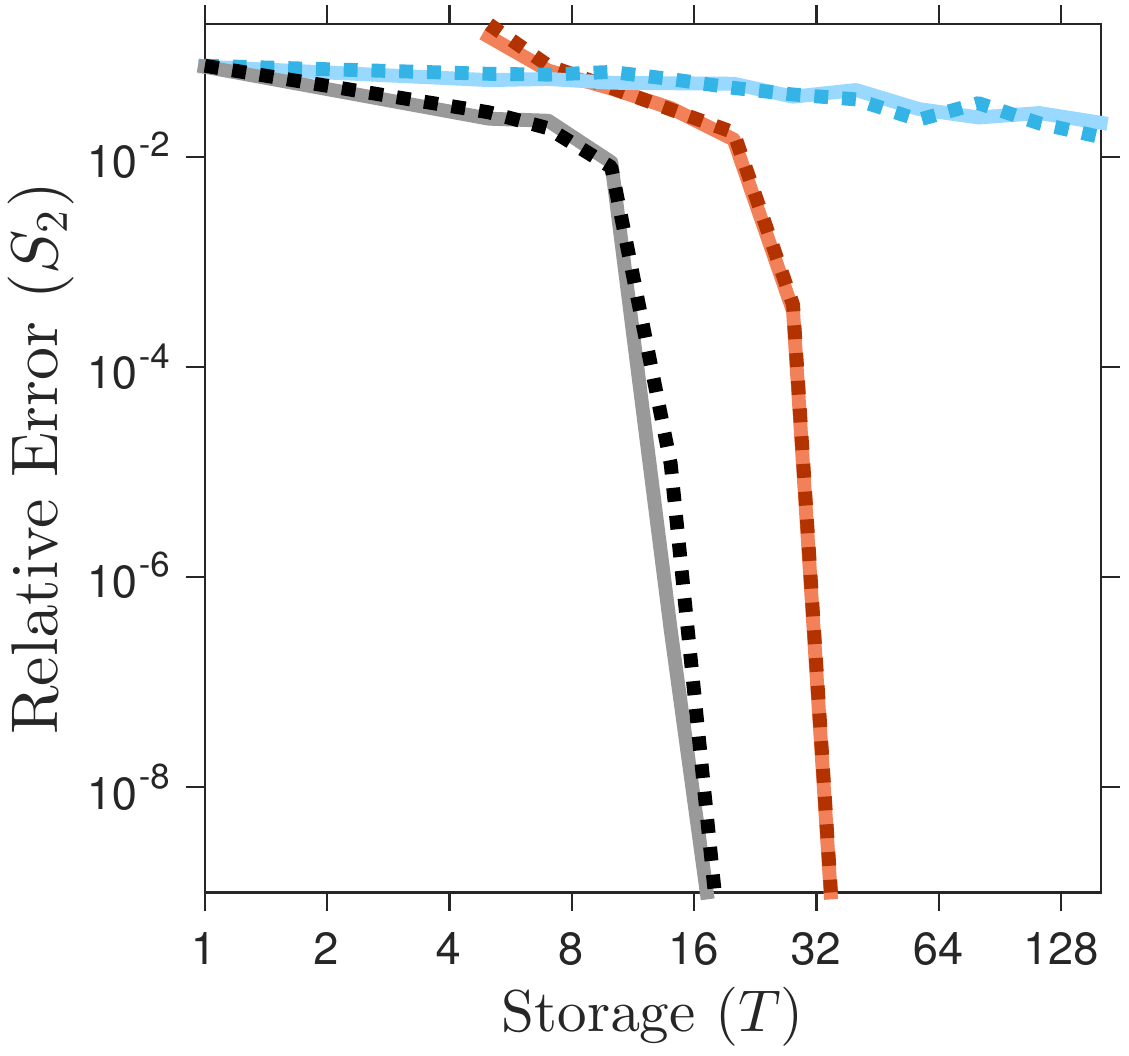}
\caption{\texttt{MaxCut} $(r = 1)$}
\end{center}
\end{subfigure}
\begin{subfigure}{.45\textwidth}
\begin{center}
\includegraphics[height=2in]{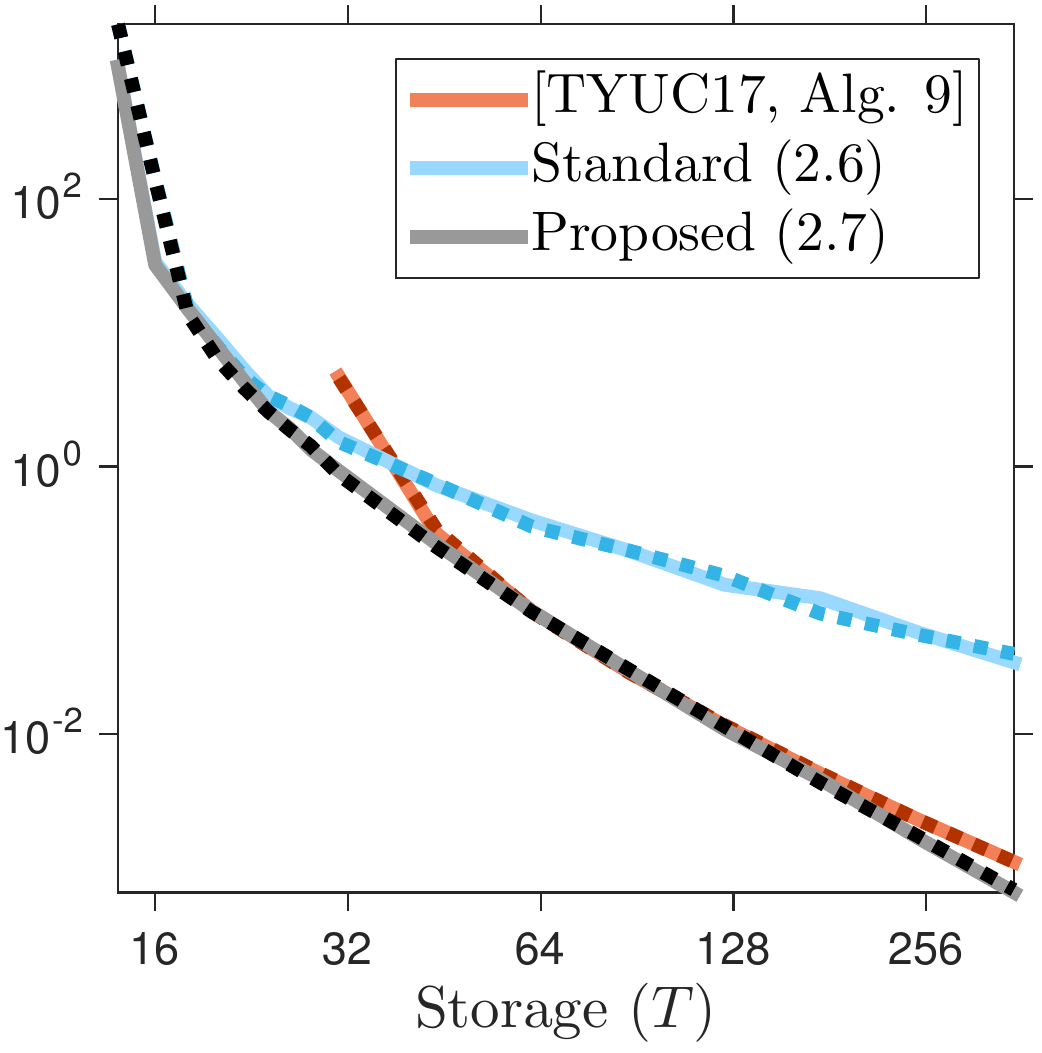}
\caption{\texttt{MaxCut} $(r = 14)$}
\end{center}
\end{subfigure}
\end{center}

\caption{\textbf{Application Examples, Approximation Rank $r$, Schatten $2$-Norm Error.}
The data series are
generated by three algorithms for rank-$r$ psd approximation.
\textbf{Solid lines} are generated from the Gaussian sketch;
\textbf{dashed lines} are from the SSFT sketch.
Each panel displays the Schatten 2-norm relative error~\eqref{eqn:relative-error}
as a function of storage cost $T$.  See Sec.~\ref{sec:numerics}
for details.} \label{fig:data-S2}
\end{figure}

\begin{figure}[htp!]
\begin{center}
\begin{subfigure}{.45\textwidth}
\begin{center}
\includegraphics[height=2in]{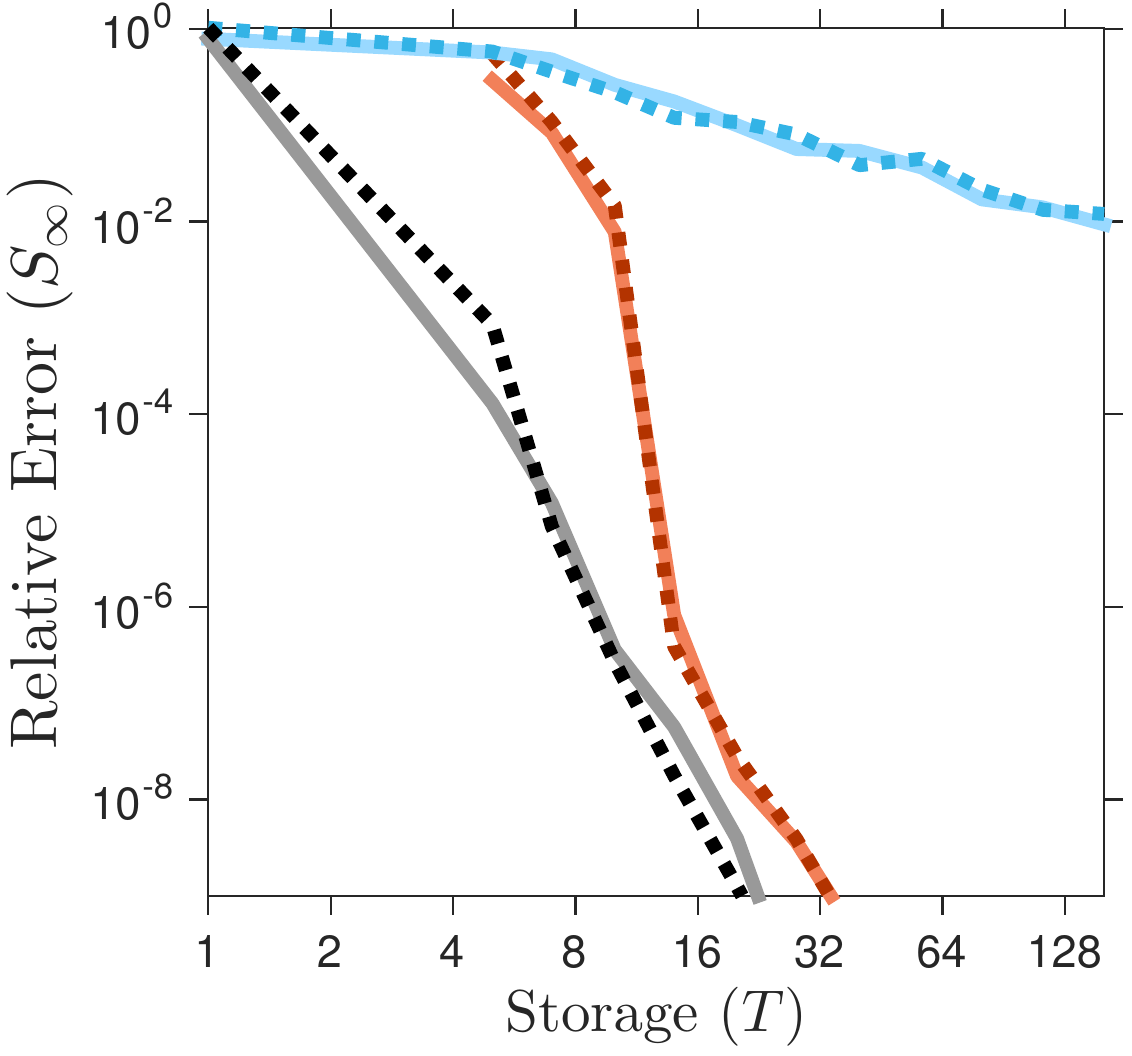}
\caption{\texttt{PhaseRetrieval} $(r = 1)$}
\end{center}
\end{subfigure}
\begin{subfigure}{.45\textwidth}
\begin{center}
\includegraphics[height=2in]{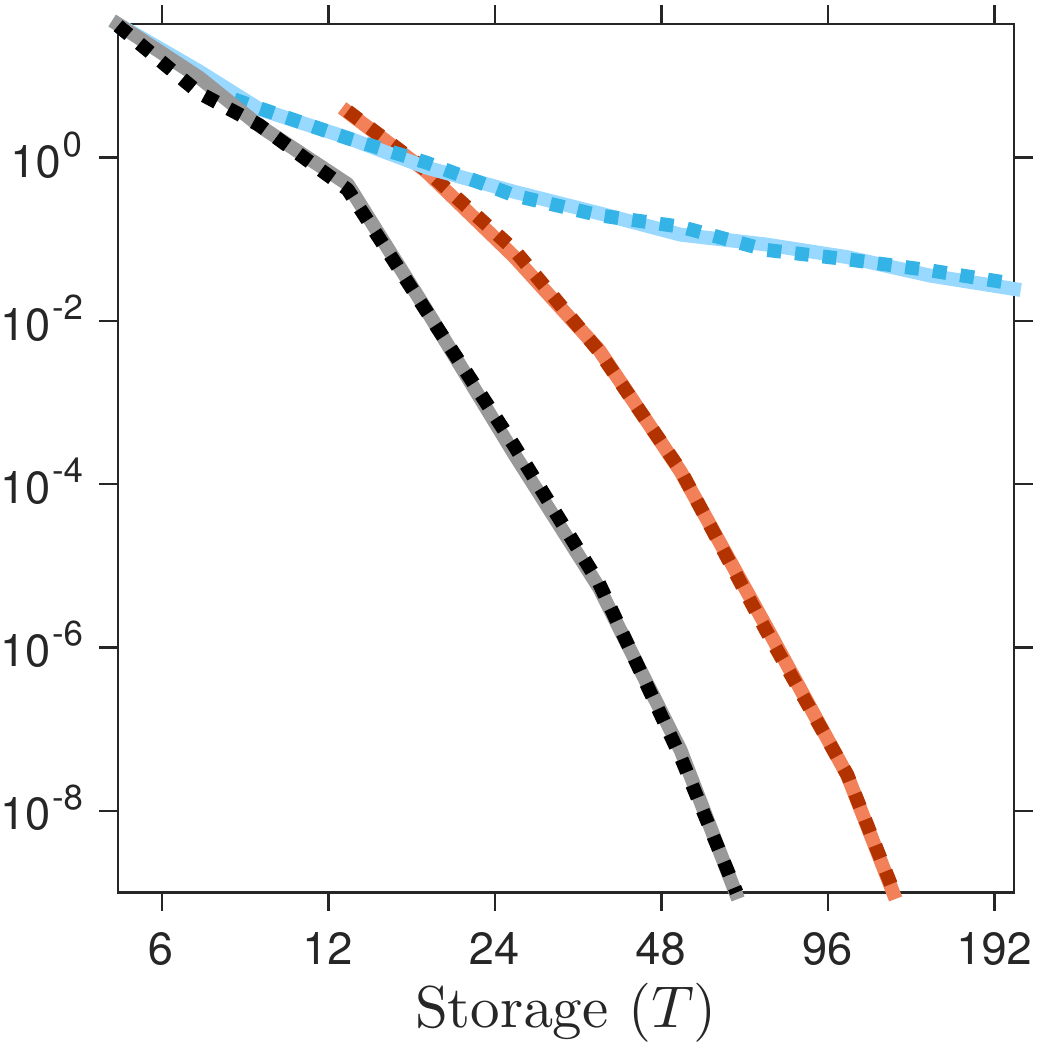}
\caption{\texttt{PhaseRetrieval} $(r = 5)$}
\end{center}
\end{subfigure}

\vspace{2pc}

\begin{subfigure}{.45\textwidth}
\begin{center}
\includegraphics[height=2in]{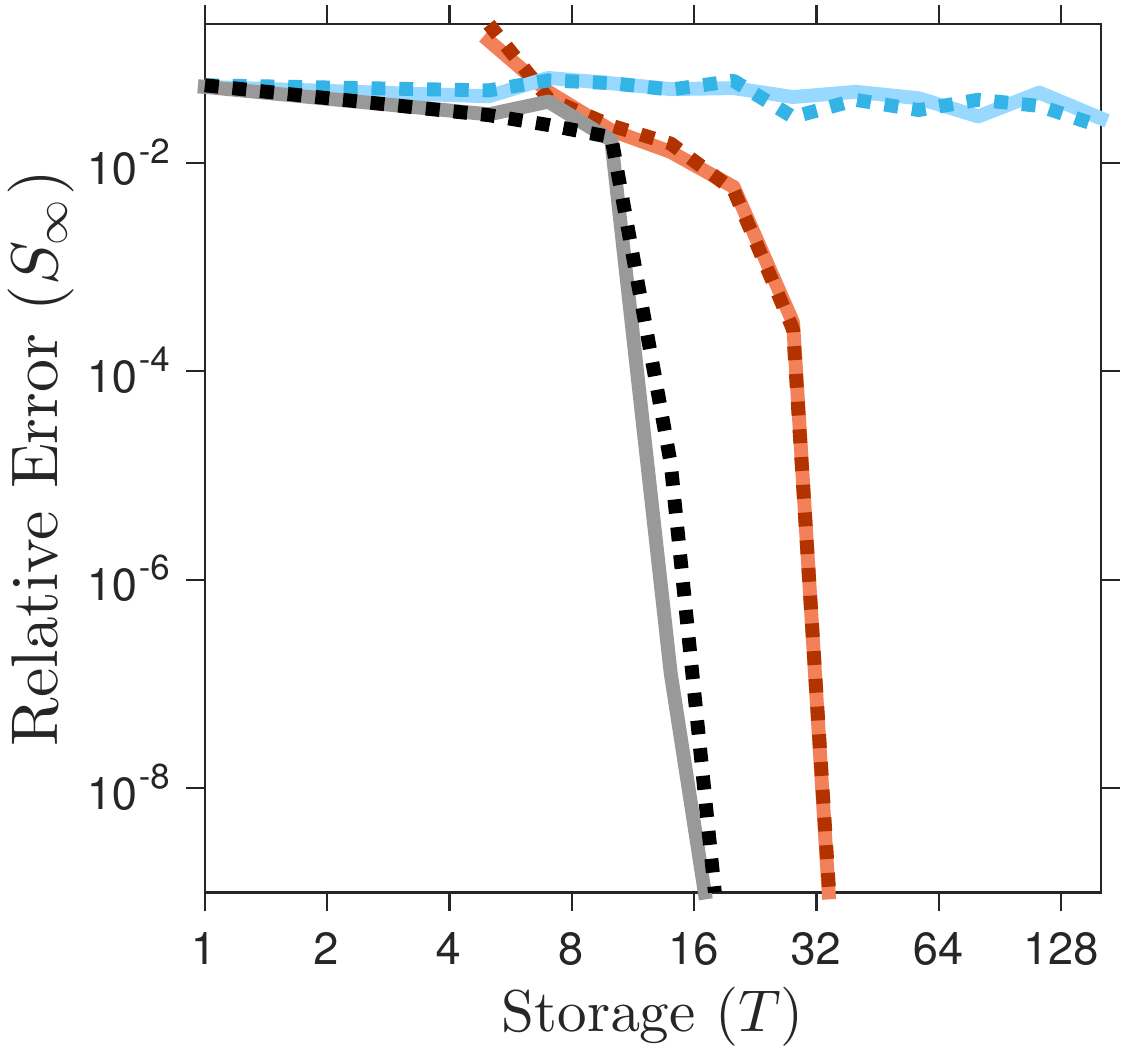}
\caption{\texttt{MaxCut} $(r = 1)$}
\end{center}
\end{subfigure}
\begin{subfigure}{.45\textwidth}
\begin{center}
\includegraphics[height=2in]{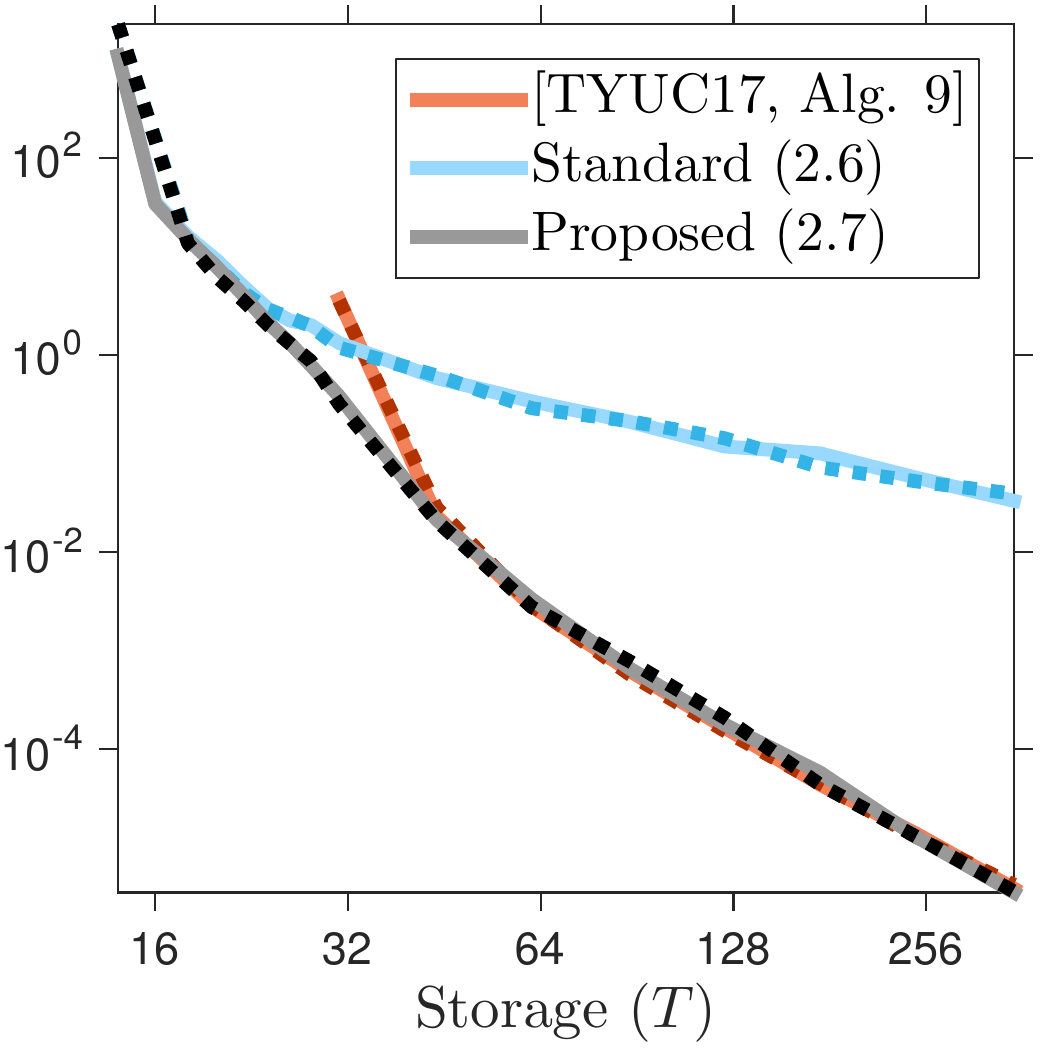}
\caption{\texttt{MaxCut} $(r = 14)$}
\end{center}
\end{subfigure}
\end{center}

\caption{\textbf{Application Examples, Approximation Rank $r$, Schatten $\infty$-Norm Error.}
The data series are generated by three algorithms for rank-$r$ psd approximation.
\textbf{Solid lines} are generated from the Gaussian sketch;
\textbf{dashed lines} are from the SSFT sketch.
Each panel displays the Schatten $\infty$-norm relative error~\eqref{eqn:relative-error}
as a function of storage cost $T$.  See Sec.~\ref{sec:numerics}
for details.} \label{fig:data-Sinf}
\end{figure}

\begin{figure}[htp!]
\begin{center}
\begin{subfigure}{.325\textwidth}
\begin{center}
\includegraphics[height=1.5in]{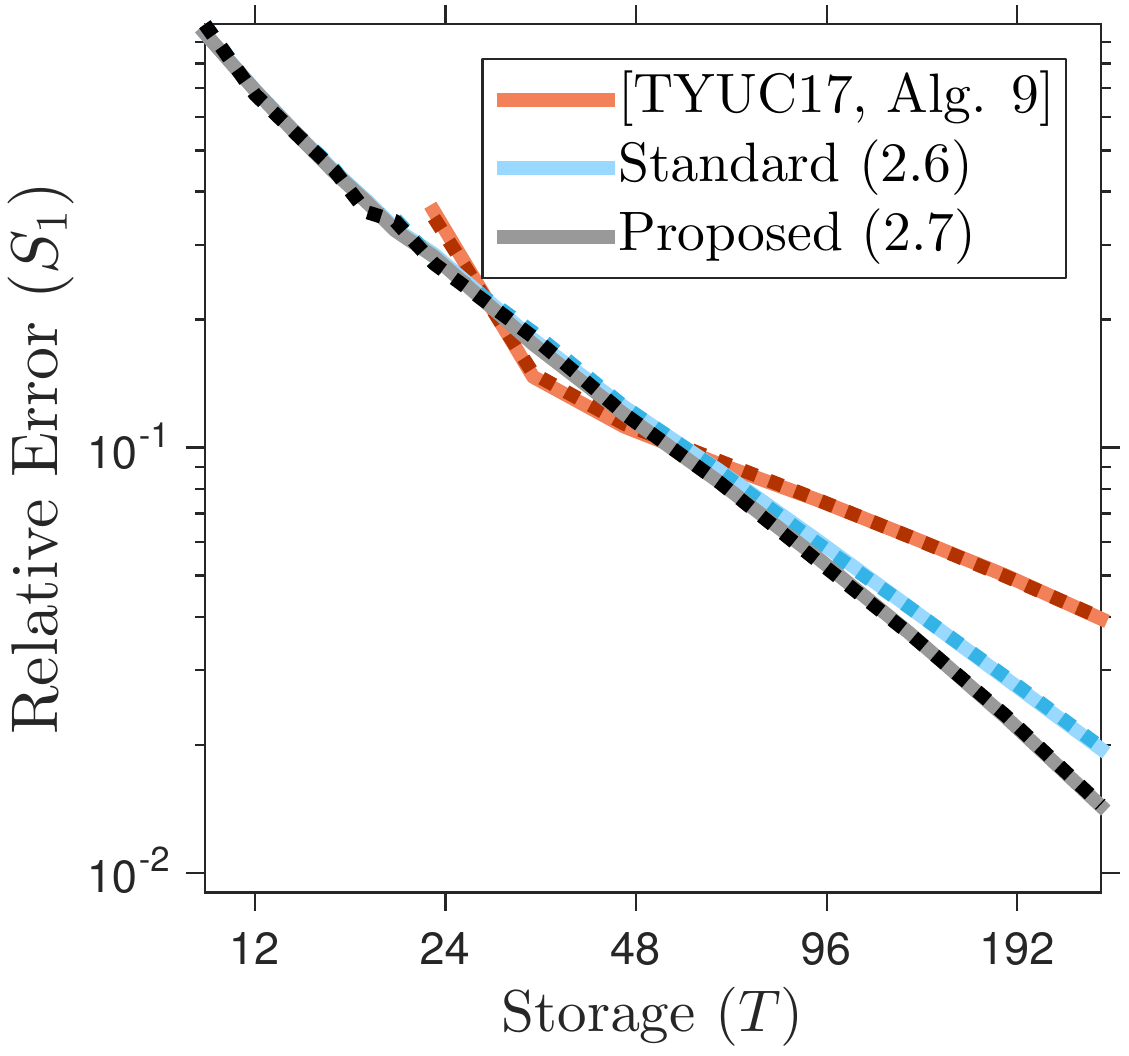}
\caption{\texttt{LowRankLowNoise}}
\end{center}
\end{subfigure}
\begin{subfigure}{.325\textwidth}
\begin{center}
\includegraphics[height=1.5in]{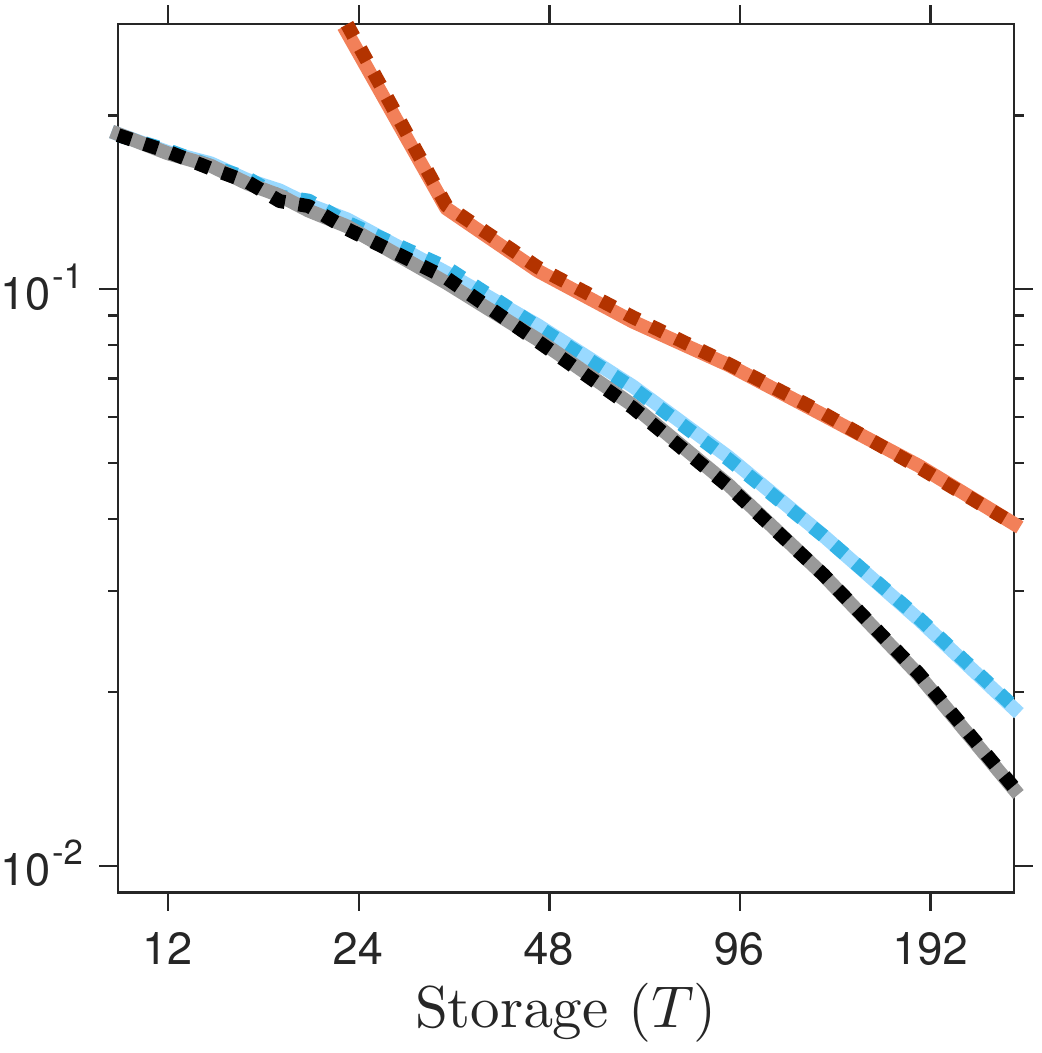}
\caption{\texttt{LowRankMedNoise}}
\end{center}
\end{subfigure}
\begin{subfigure}{.325\textwidth}
\begin{center}
\includegraphics[height=1.5in]{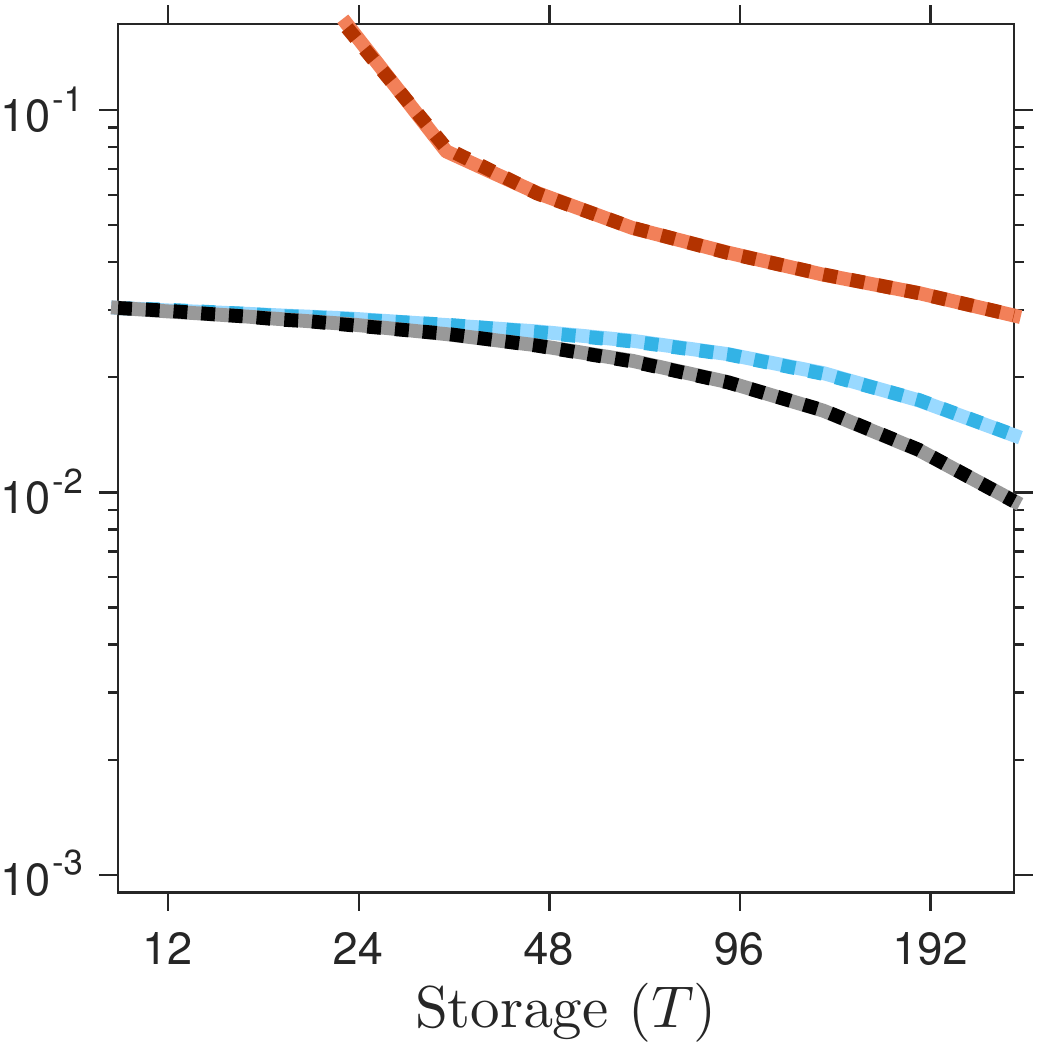}
\caption{\texttt{LowRankHiNoise}}
\end{center}
\end{subfigure}
\end{center}

\vspace{.5em}

\begin{center}
\begin{subfigure}{.325\textwidth}
\begin{center}
\includegraphics[height=1.5in]{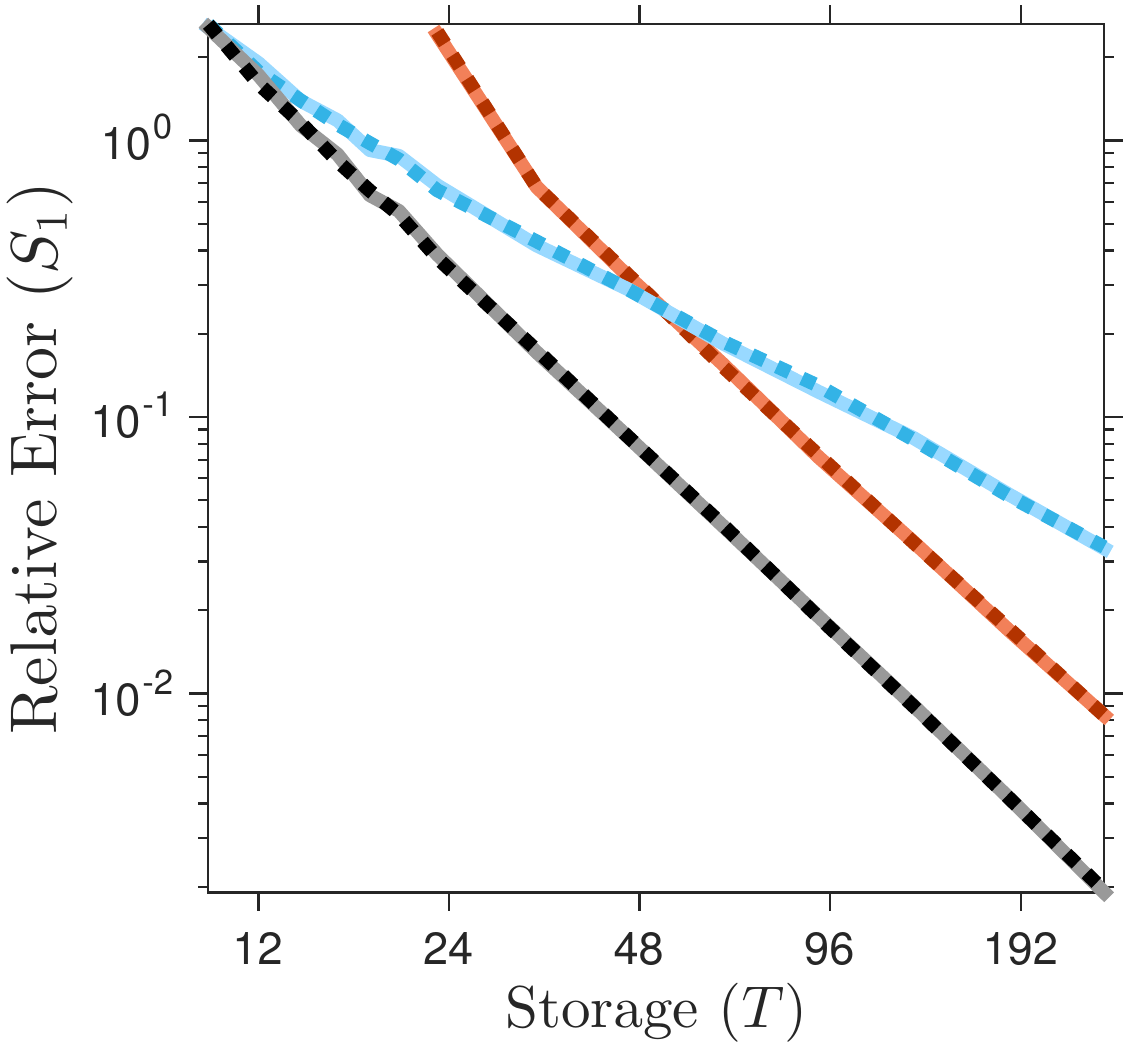}
\caption{\texttt{PolyDecayFast}}
\end{center}
\end{subfigure}
\begin{subfigure}{.325\textwidth}
\begin{center}
\includegraphics[height=1.5in]{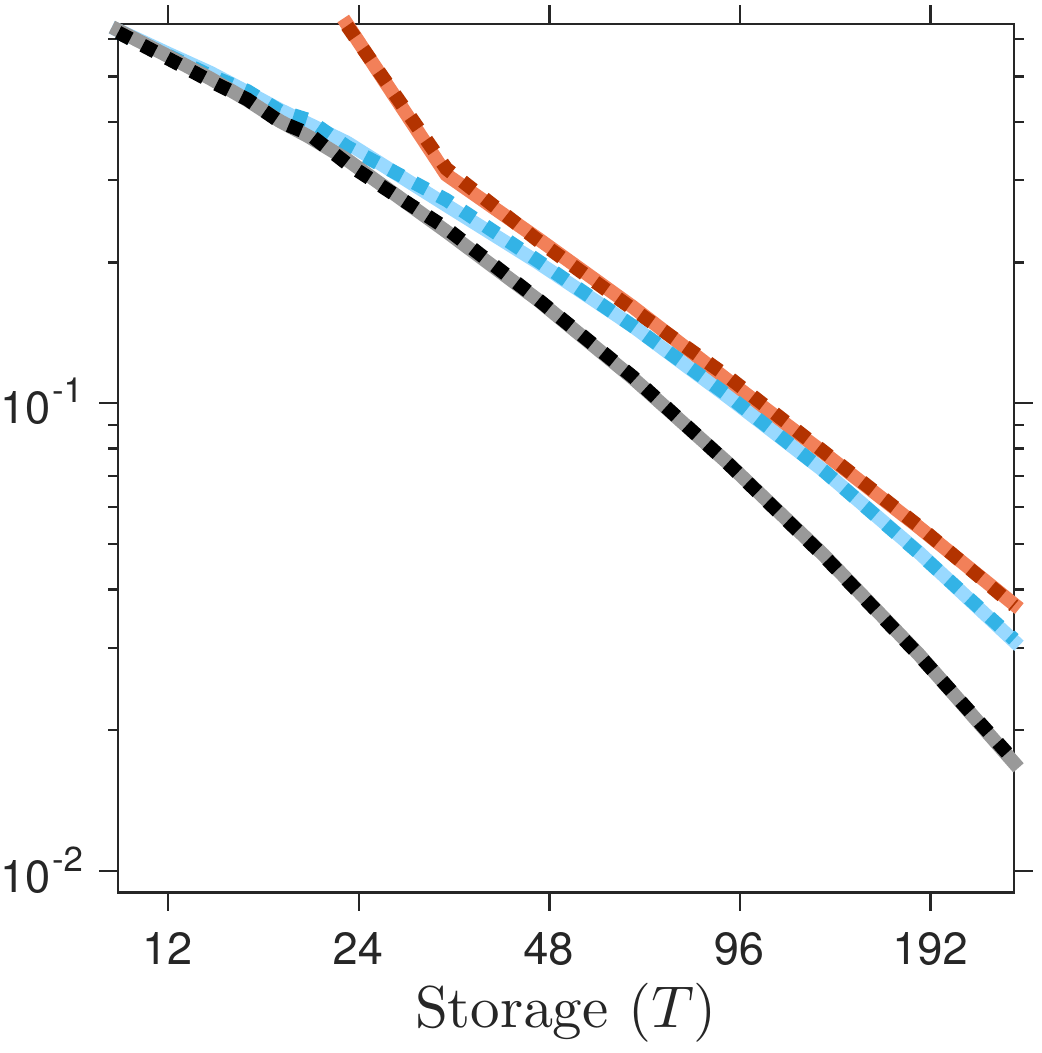}
\caption{\texttt{PolyDecayMed}}
\end{center}
\end{subfigure}
\begin{subfigure}{.325\textwidth}
\begin{center}
\includegraphics[height=1.5in]{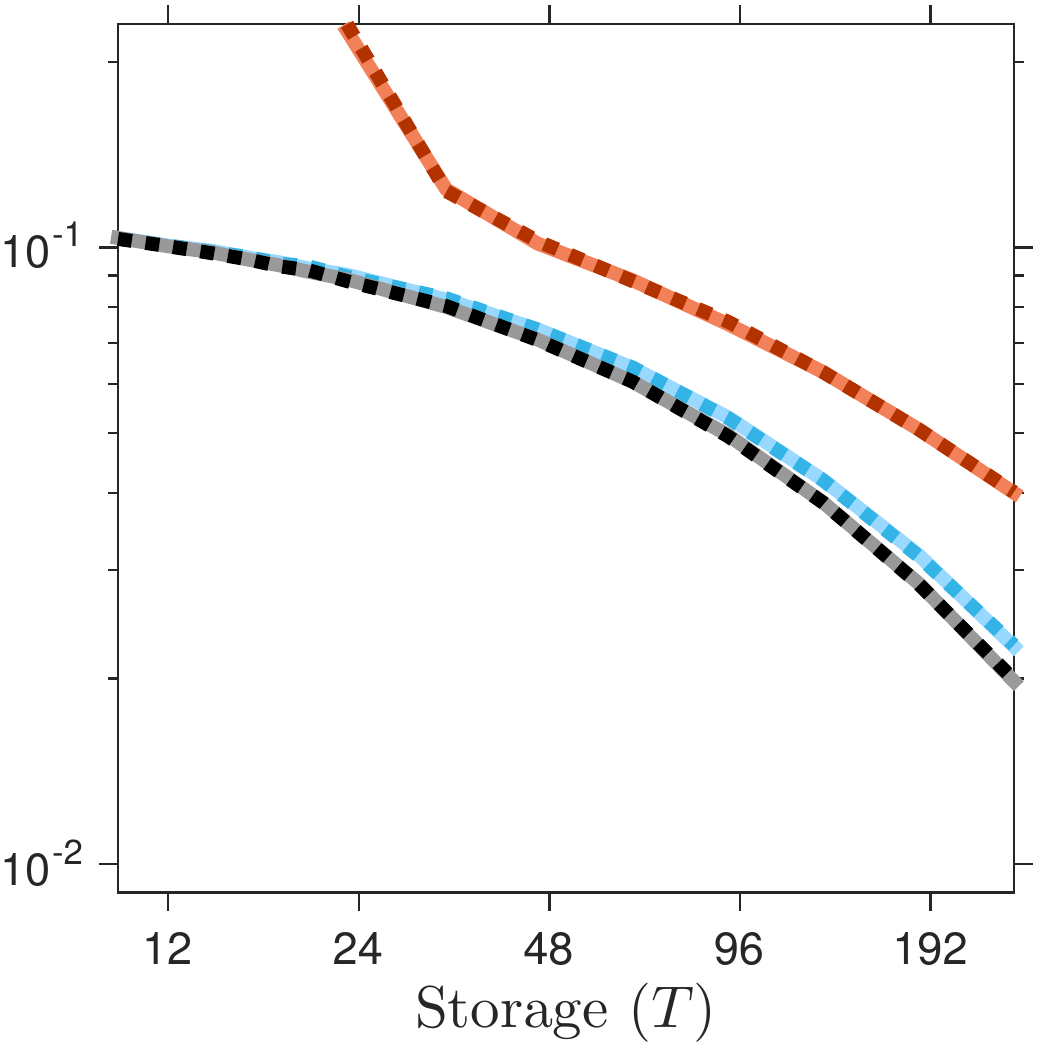}
\caption{\texttt{PolyDecaySlow}}
\end{center}
\end{subfigure}
\end{center}

\vspace{0.5em}

\begin{center}
\begin{subfigure}{.325\textwidth}
\begin{center}
\includegraphics[height=1.5in]{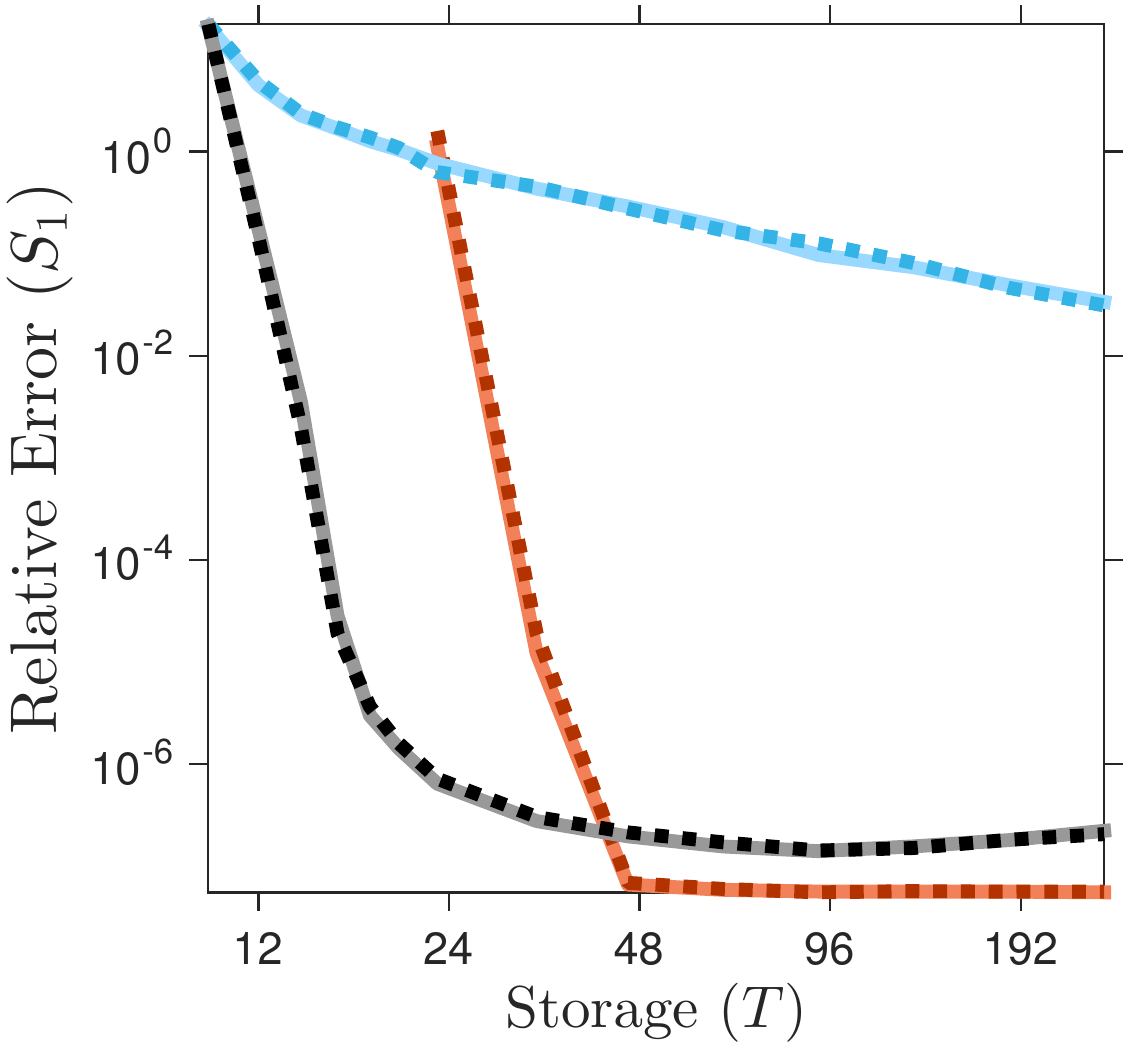}
\caption{\texttt{ExpDecayFast}}
\end{center}
\end{subfigure}
\begin{subfigure}{.325\textwidth}
\begin{center}
\includegraphics[height=1.5in]{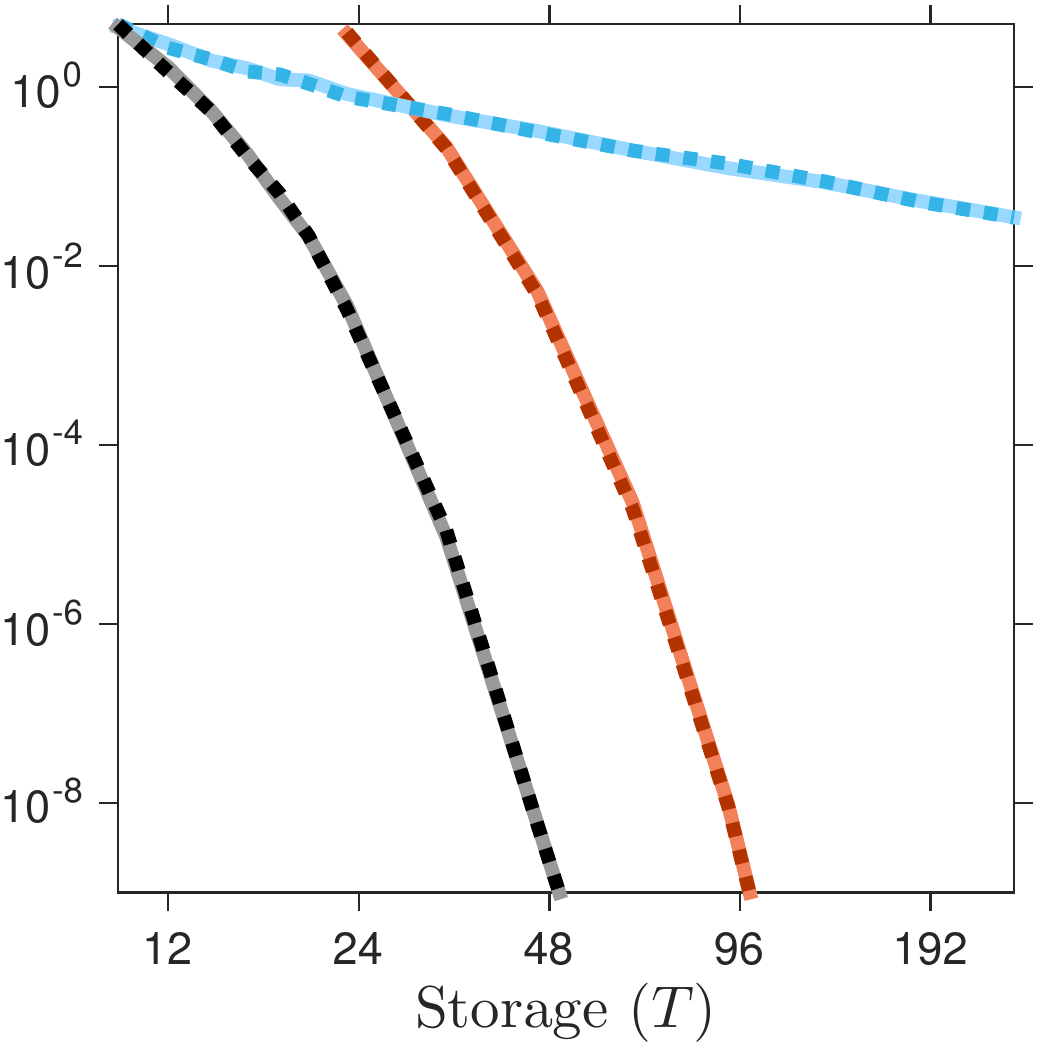}
\caption{\texttt{ExpDecayMed}}
\end{center}
\end{subfigure}
\begin{subfigure}{.325\textwidth}
\begin{center}
\includegraphics[height=1.5in]{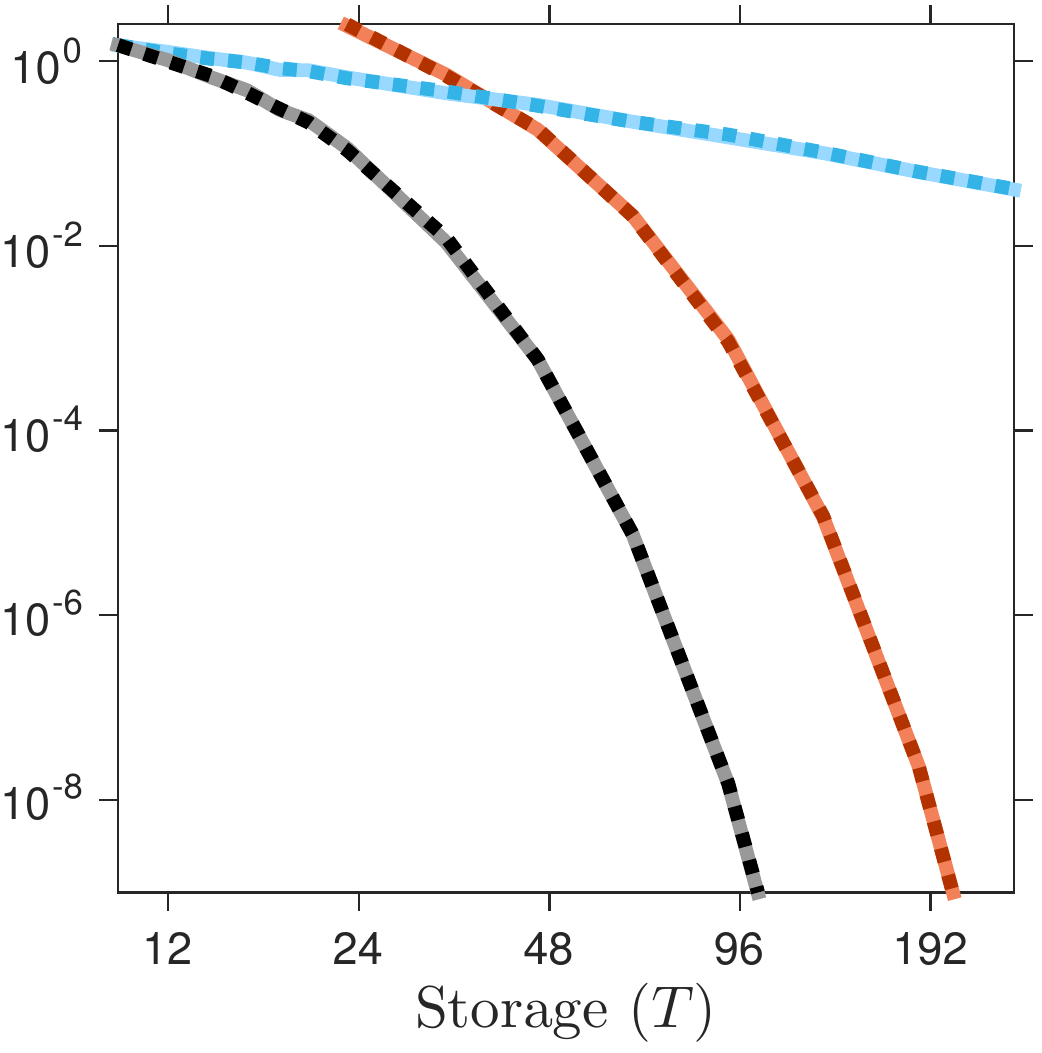}
\caption{\texttt{ExpDecaySlow}}
\end{center}
\end{subfigure}
\end{center}

\caption{\textbf{Synthetic Examples with Effective Rank $R = 5$, Approximation Rank $r = 10$, Schatten $1$-Norm Error.}
The series are
generated by three algorithms for rank-$r$ psd approximation with $r = 10$.
\textbf{Solid lines} are generated from the Gaussian sketch;
\textbf{dashed lines} are from the SSFT sketch.
Each panel displays the Schatten 1-norm relative error~\eqref{eqn:relative-error}
as a function of storage cost $T$.  See Sec.~\ref{sec:numerics}
for details.}
\label{fig:synthetic-S1-R5}
\end{figure}

\begin{figure}[htp!]
\begin{center}
\begin{subfigure}{.325\textwidth}
\begin{center}
\includegraphics[height=1.5in]{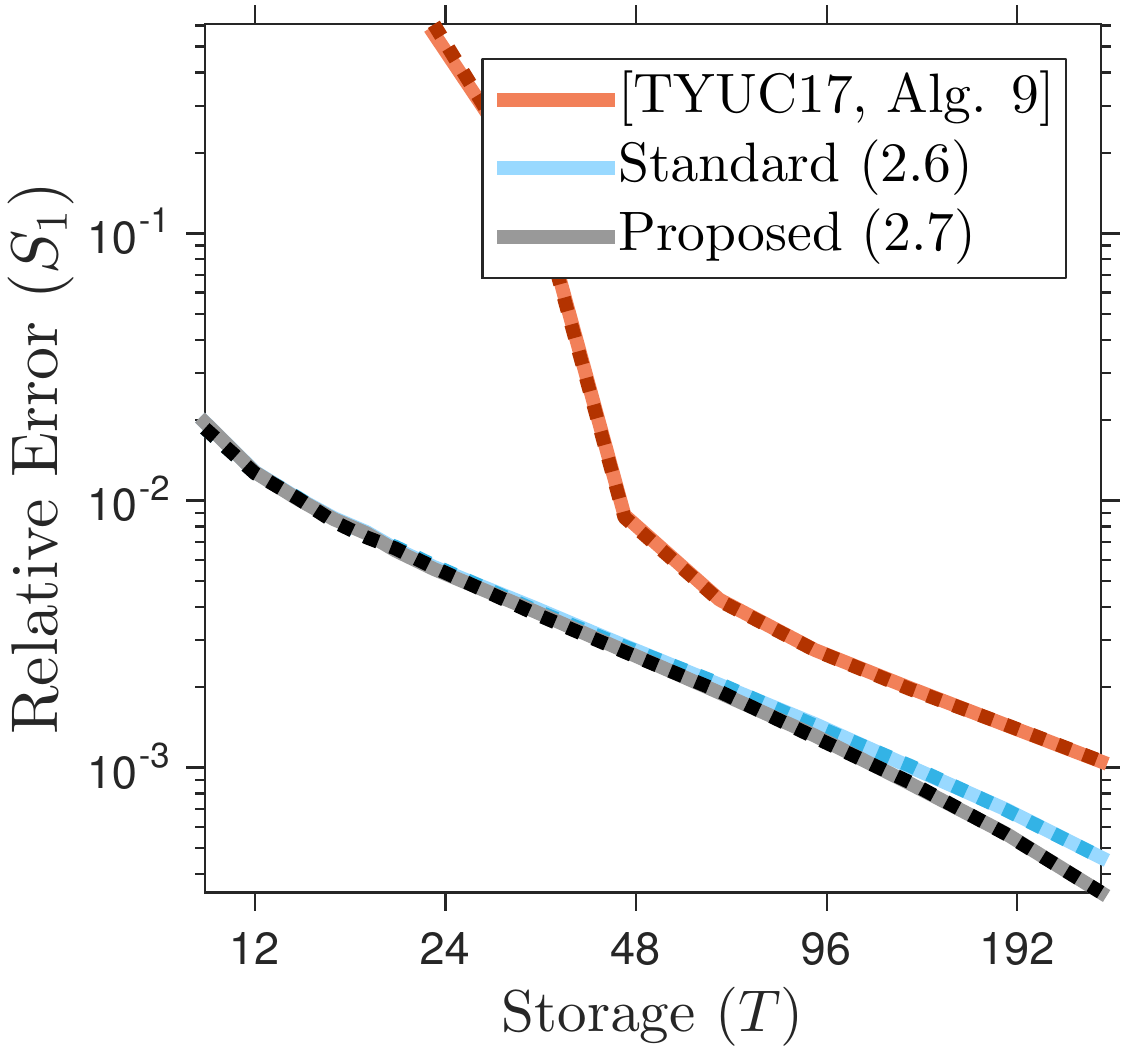}
\caption{\texttt{LowRankLowNoise}}
\end{center}
\end{subfigure}
\begin{subfigure}{.325\textwidth}
\begin{center}
\includegraphics[height=1.5in]{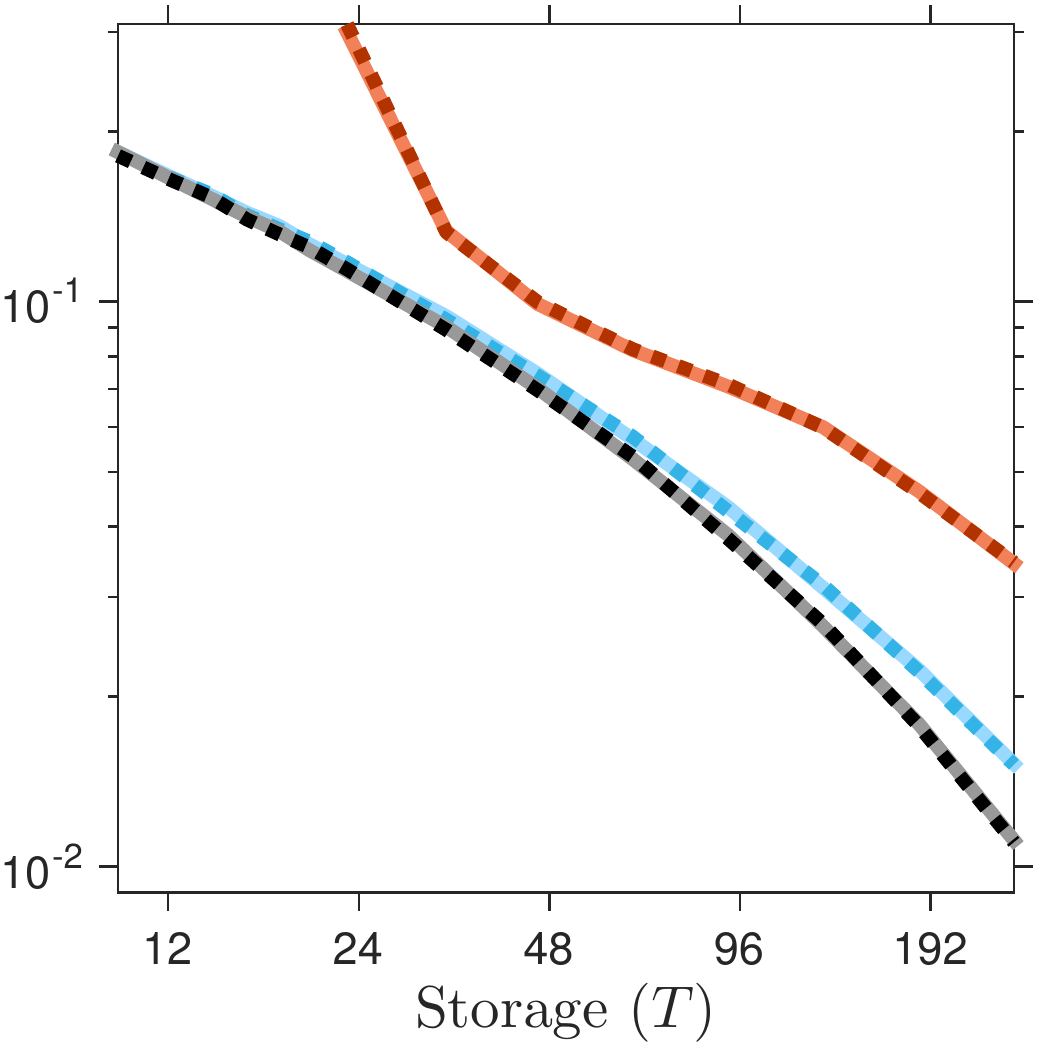}
\caption{\texttt{LowRankMedNoise}}
\end{center}
\end{subfigure}
\begin{subfigure}{.325\textwidth}
\begin{center}
\includegraphics[height=1.5in]{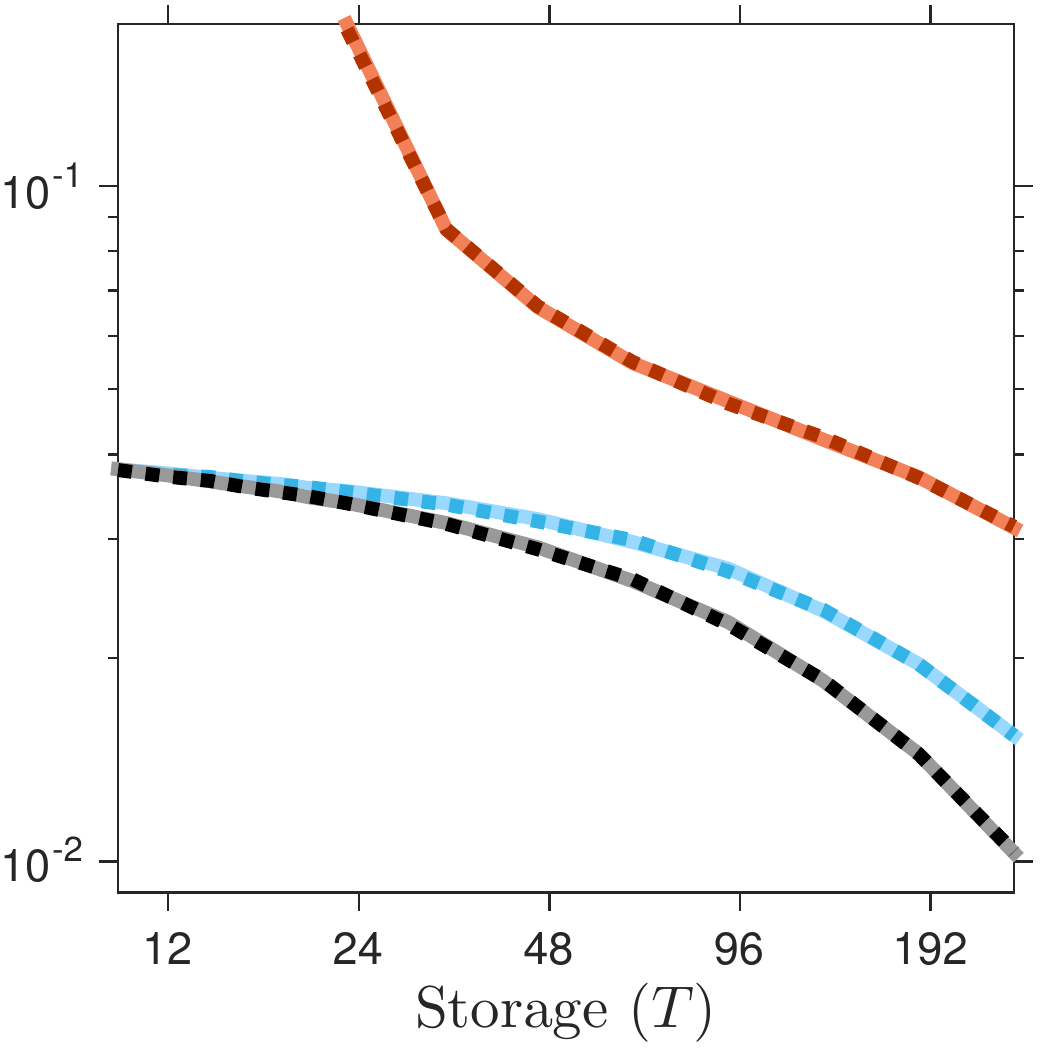}
\caption{\texttt{LowRankHiNoise}}
\end{center}
\end{subfigure}
\end{center}

\vspace{.5em}

\begin{center}
\begin{subfigure}{.325\textwidth}
\begin{center}
\includegraphics[height=1.5in]{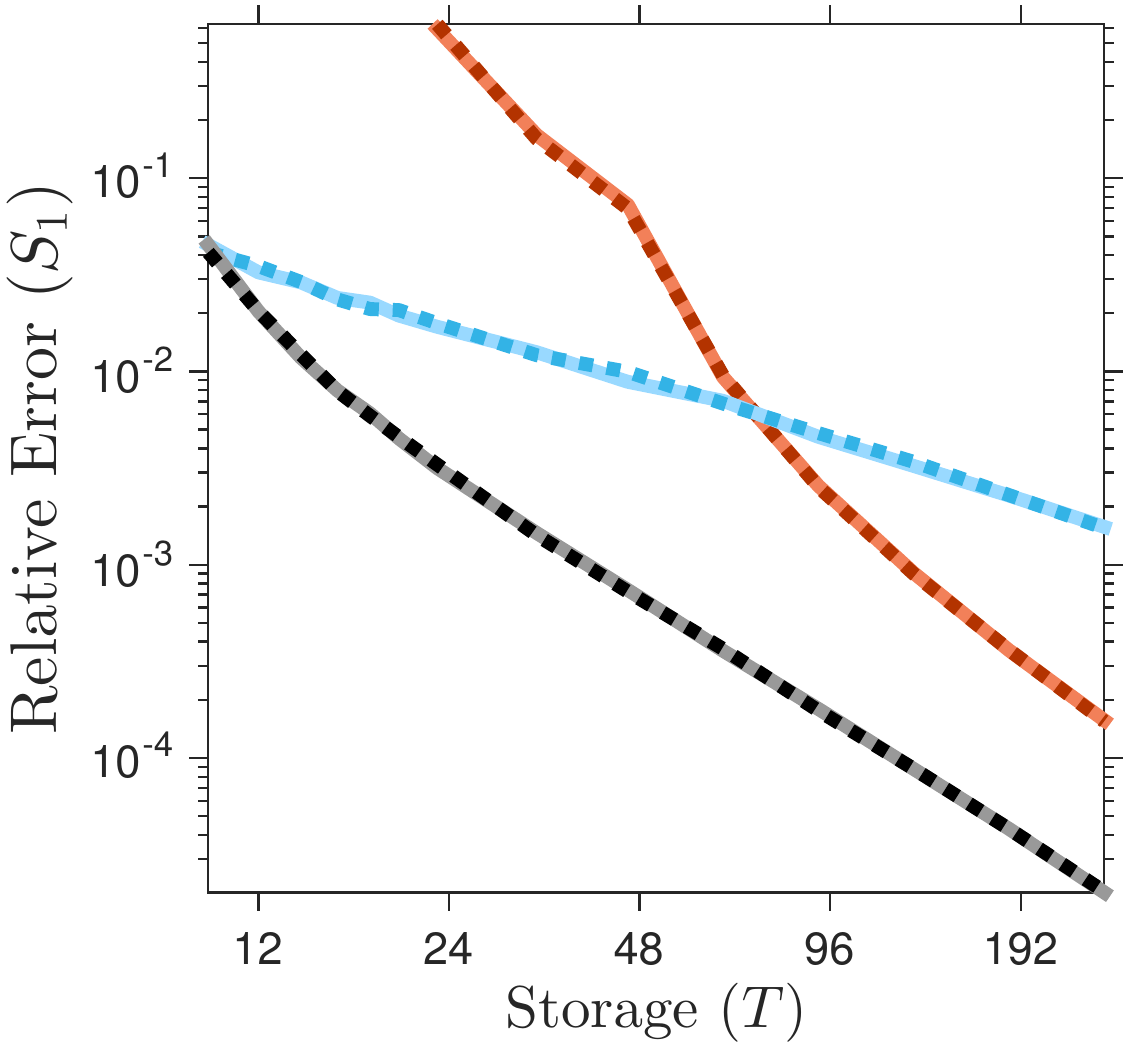}
\caption{\texttt{PolyDecayFast}}
\end{center}
\end{subfigure}
\begin{subfigure}{.325\textwidth}
\begin{center}
\includegraphics[height=1.5in]{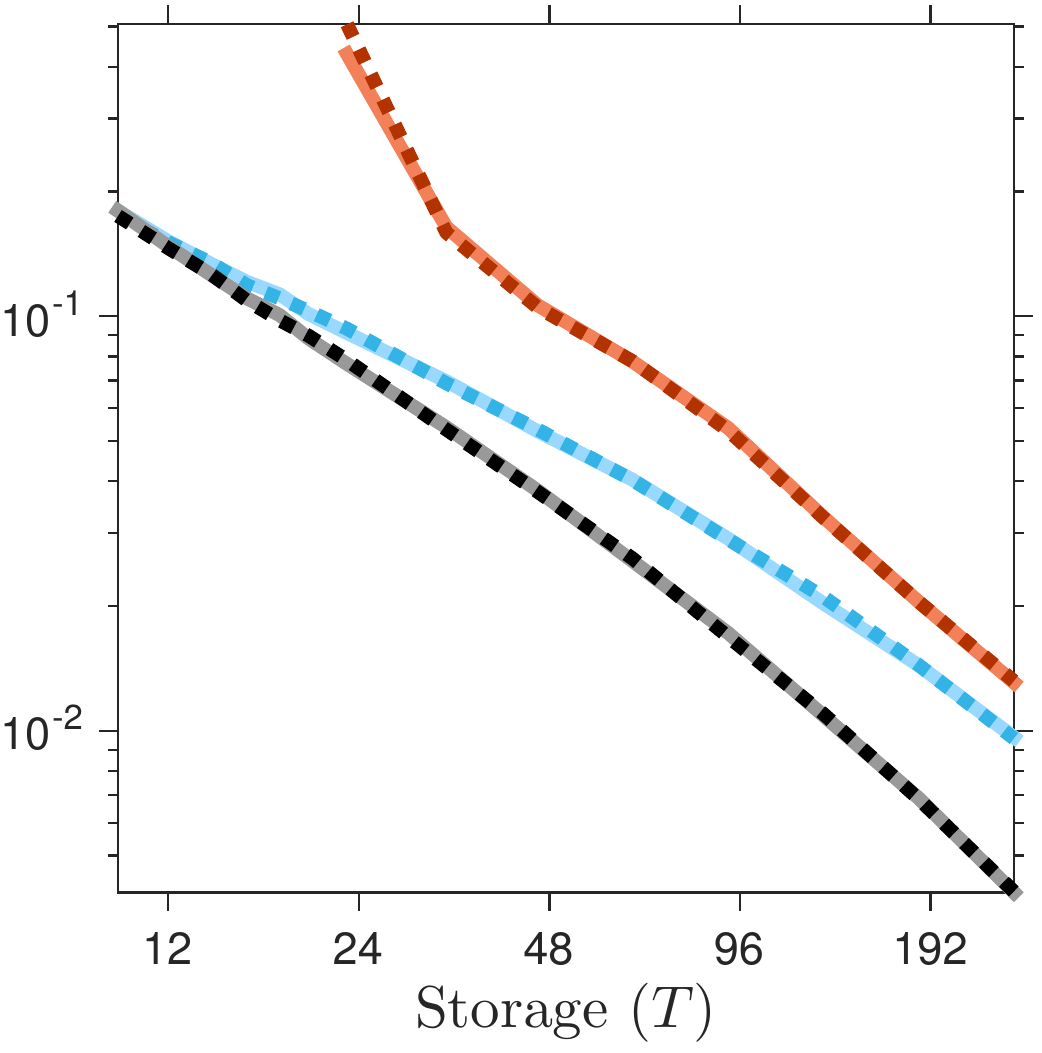}
\caption{\texttt{PolyDecayMed}}
\end{center}
\end{subfigure}
\begin{subfigure}{.325\textwidth}
\begin{center}
\includegraphics[height=1.5in]{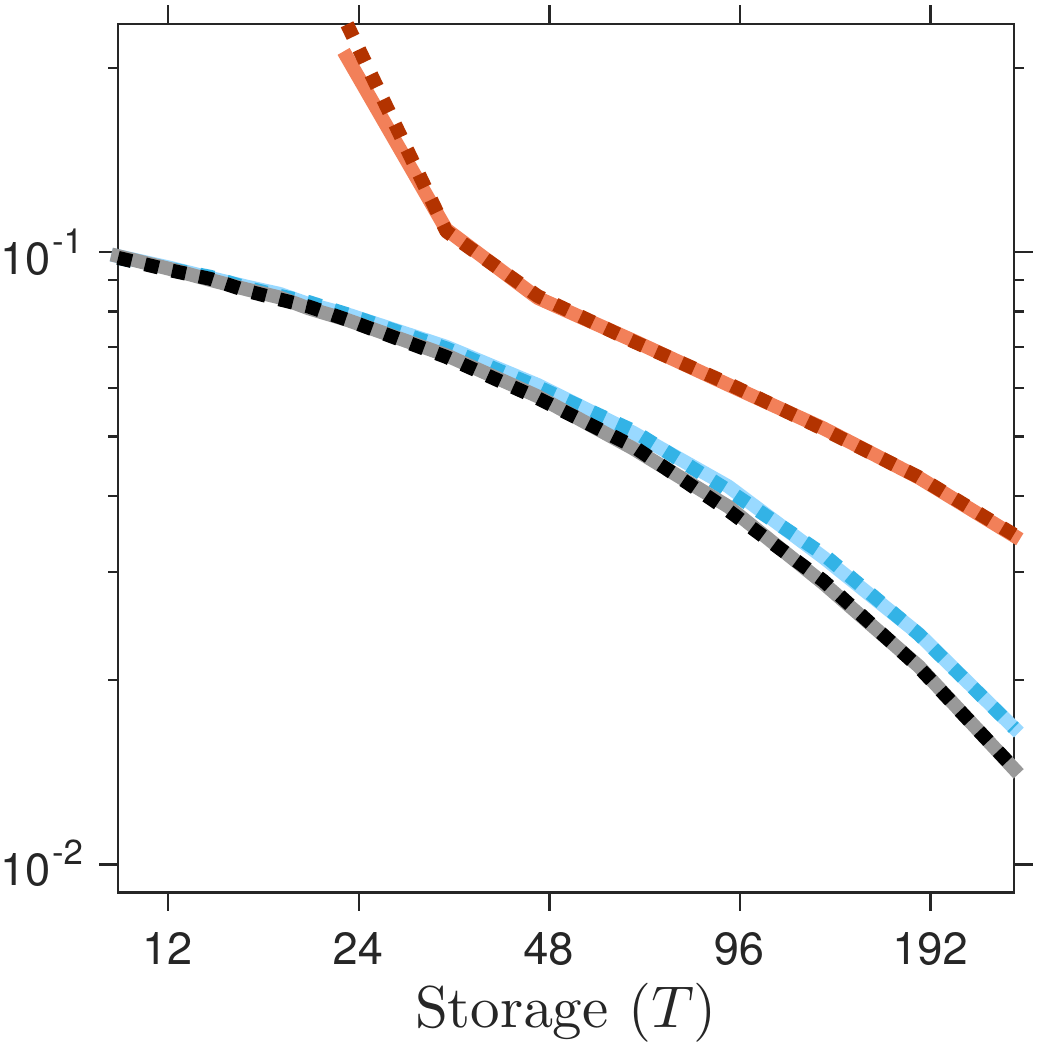}
\caption{\texttt{PolyDecaySlow}}
\end{center}
\end{subfigure}
\end{center}

\vspace{0.5em}

\begin{center}
\begin{subfigure}{.325\textwidth}
\begin{center}
\includegraphics[height=1.5in]{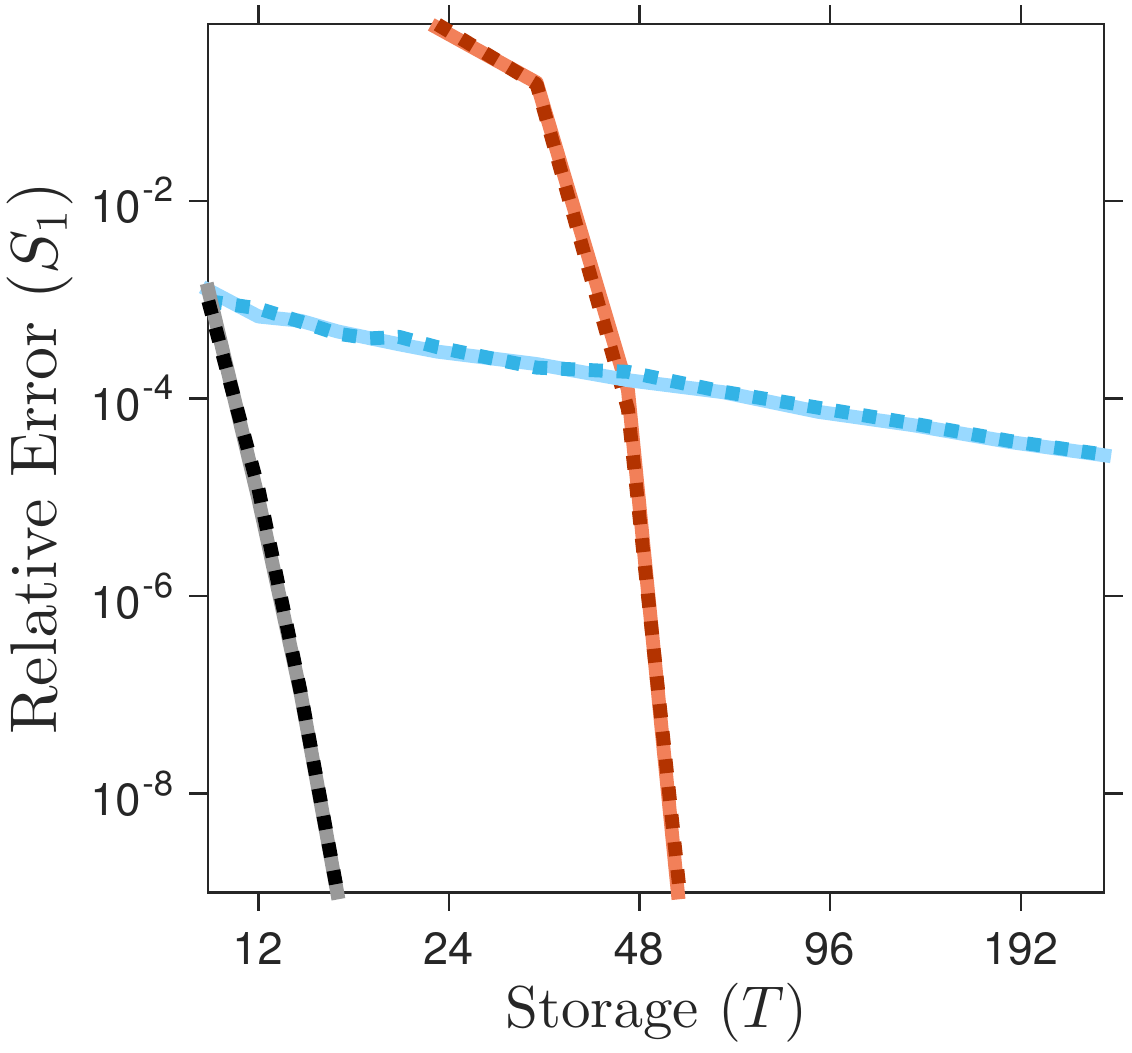}
\caption{\texttt{ExpDecayFast}}
\end{center}
\end{subfigure}
\begin{subfigure}{.325\textwidth}
\begin{center}
\includegraphics[height=1.5in]{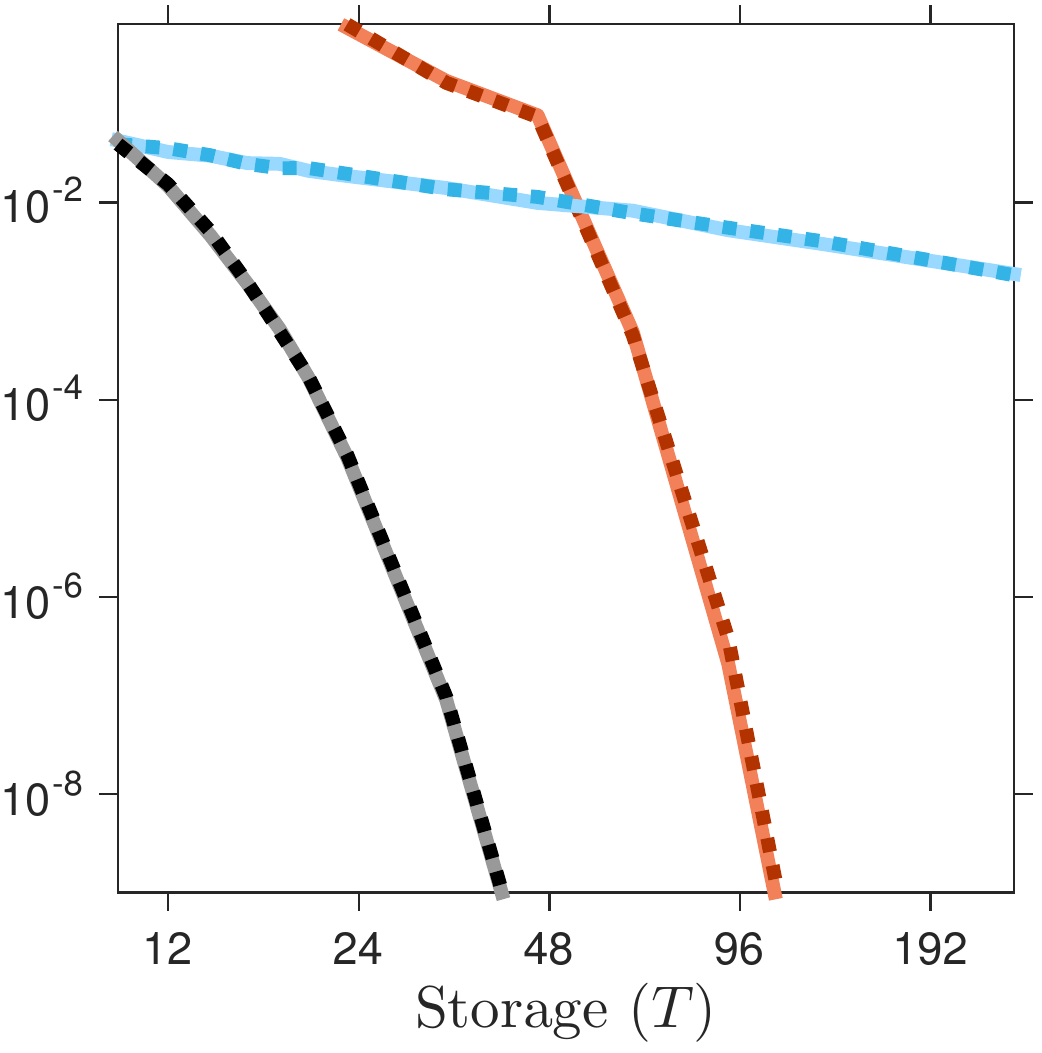}
\caption{\texttt{ExpDecayMed}}
\end{center}
\end{subfigure}
\begin{subfigure}{.325\textwidth}
\begin{center}
\includegraphics[height=1.5in]{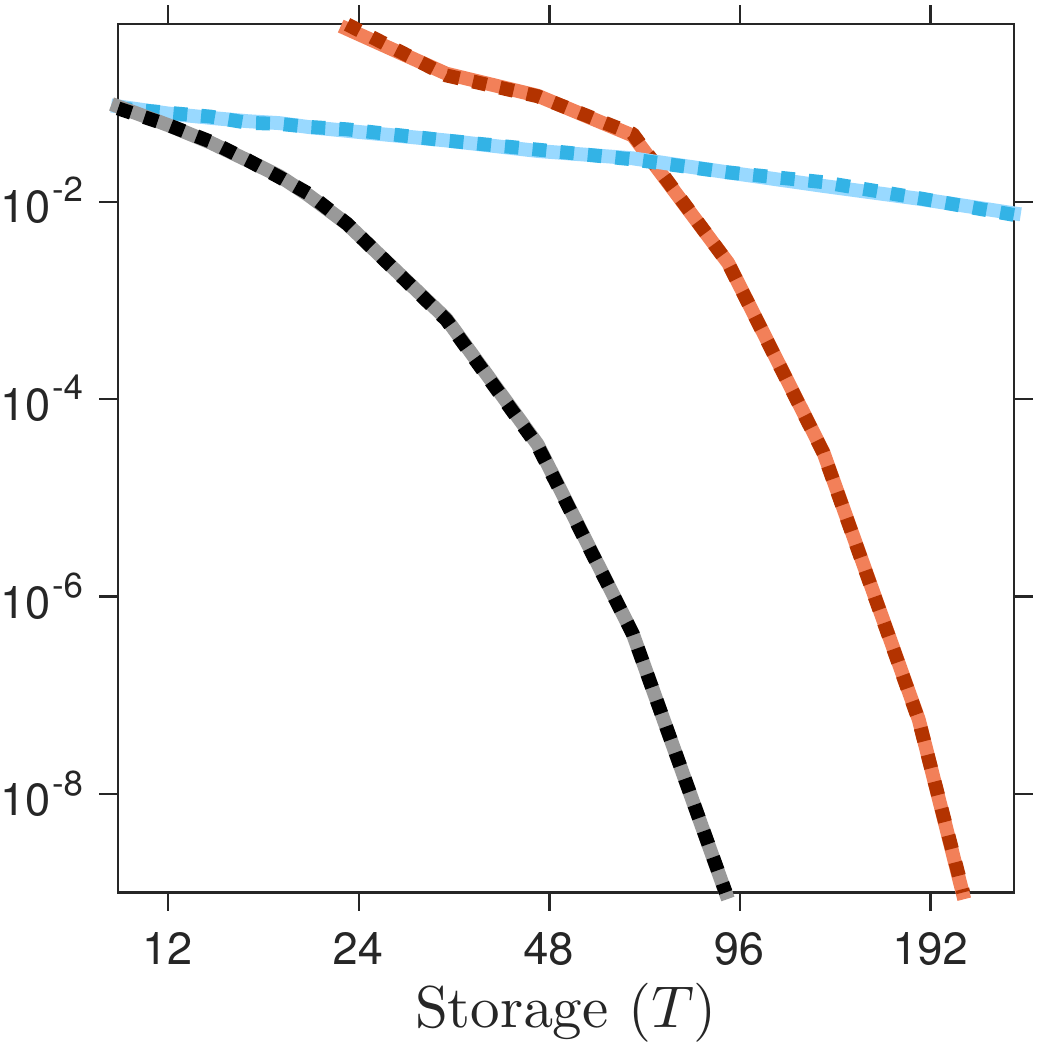}
\caption{\texttt{ExpDecaySlow}}
\end{center}
\end{subfigure}
\end{center}

\caption{\textbf{Synthetic Examples with Effective Rank $R = 20$, Approximation Rank $r = 10$, Schatten $1$-Norm Error.}
The series are generated by three algorithms for rank-$r$ psd approximation with $r = 10$.
\textbf{Solid lines} are generated from the Gaussian sketch;
\textbf{dashed lines} are from the SSFT sketch.
Each panel displays the  Schatten 1-norm relative error~\eqref{eqn:relative-error}
as a function of storage cost $T$.  See Sec.~\ref{sec:numerics}
for details.}
\label{fig:synthetic-S1-R25}
\end{figure}

\begin{figure}[htp!]
\begin{center}
\begin{subfigure}{.325\textwidth}
\begin{center}
\includegraphics[height=1.5in]{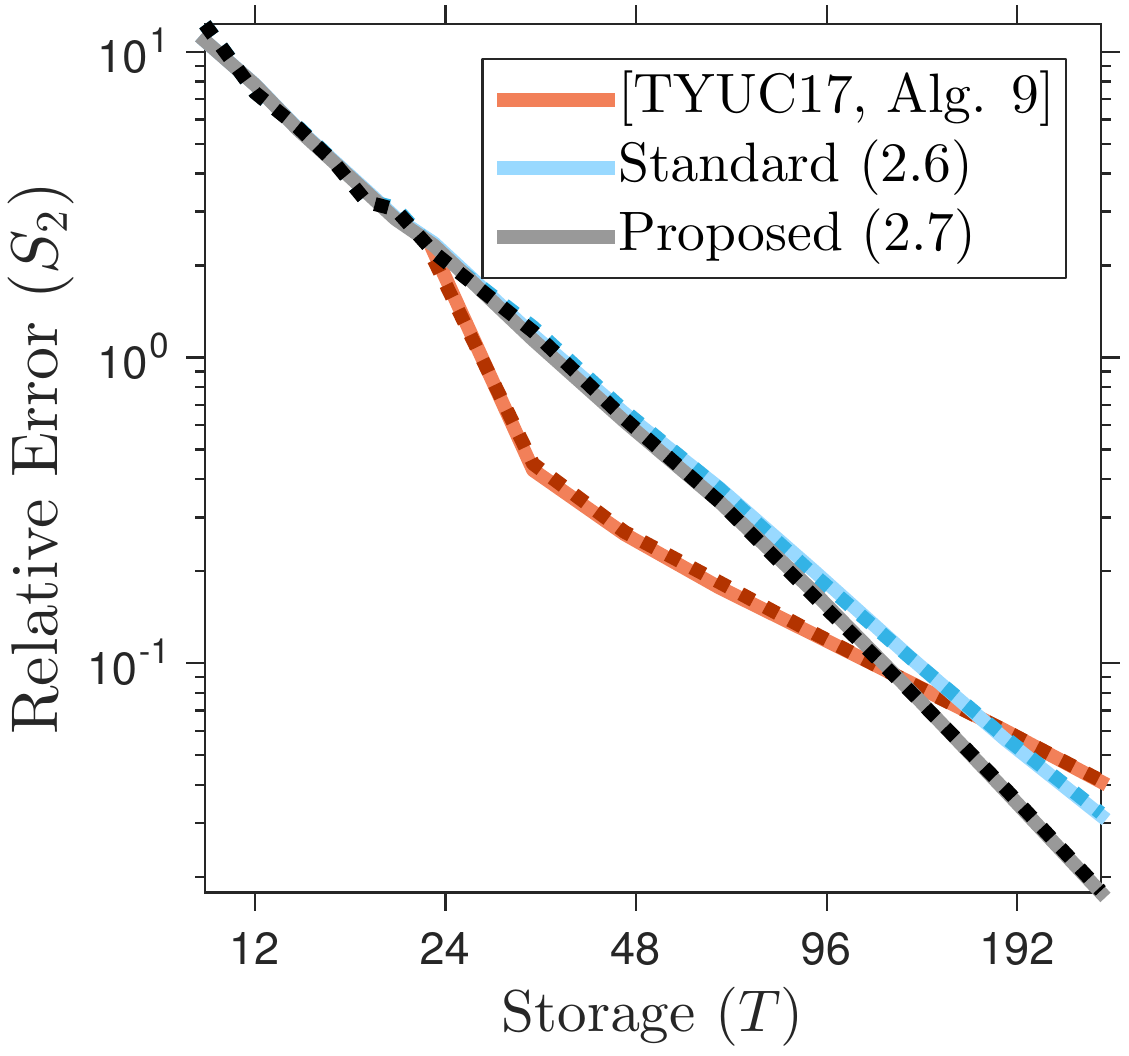}
\caption{\texttt{LowRankLowNoise}}
\end{center}
\end{subfigure}
\begin{subfigure}{.325\textwidth}
\begin{center}
\includegraphics[height=1.5in]{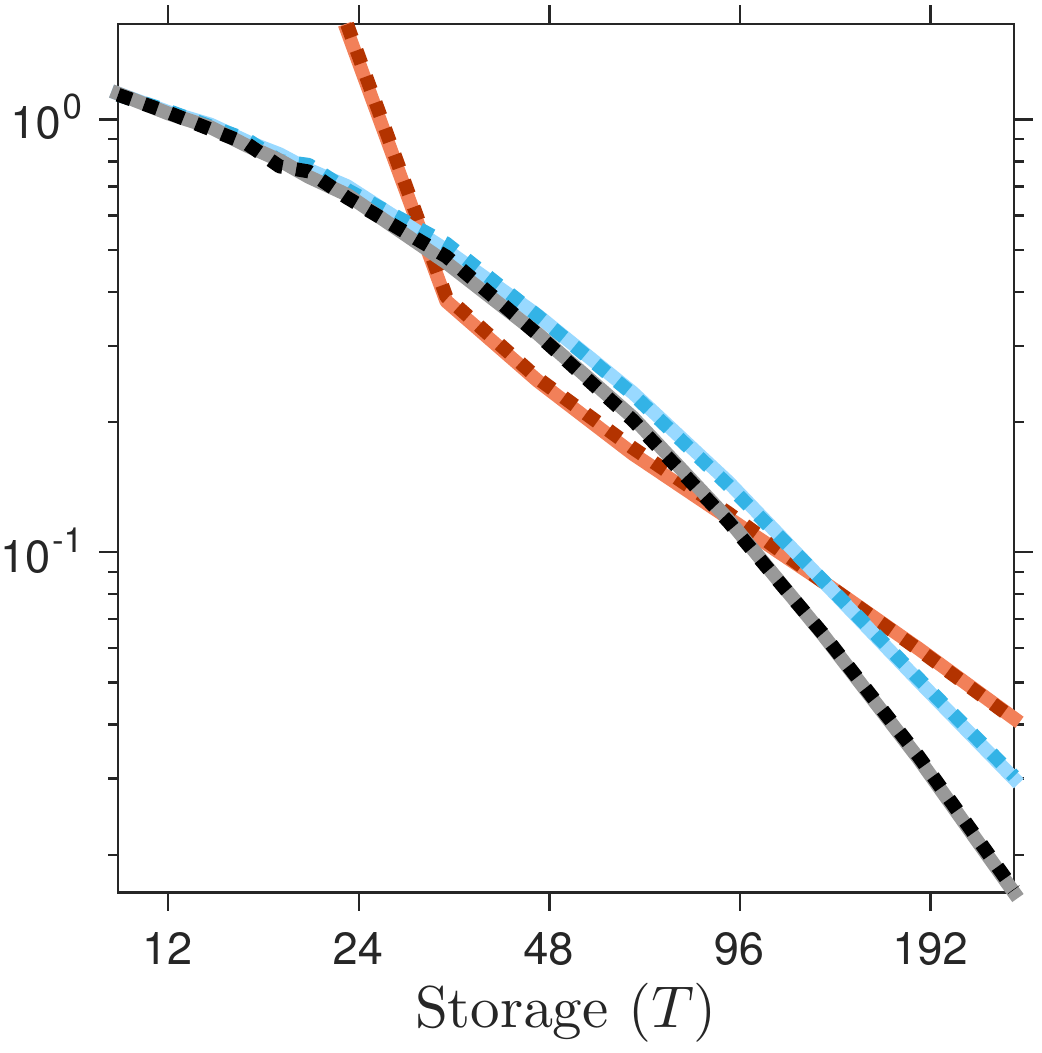}
\caption{\texttt{LowRankMedNoise}}
\end{center}
\end{subfigure}
\begin{subfigure}{.325\textwidth}
\begin{center}
\includegraphics[height=1.5in]{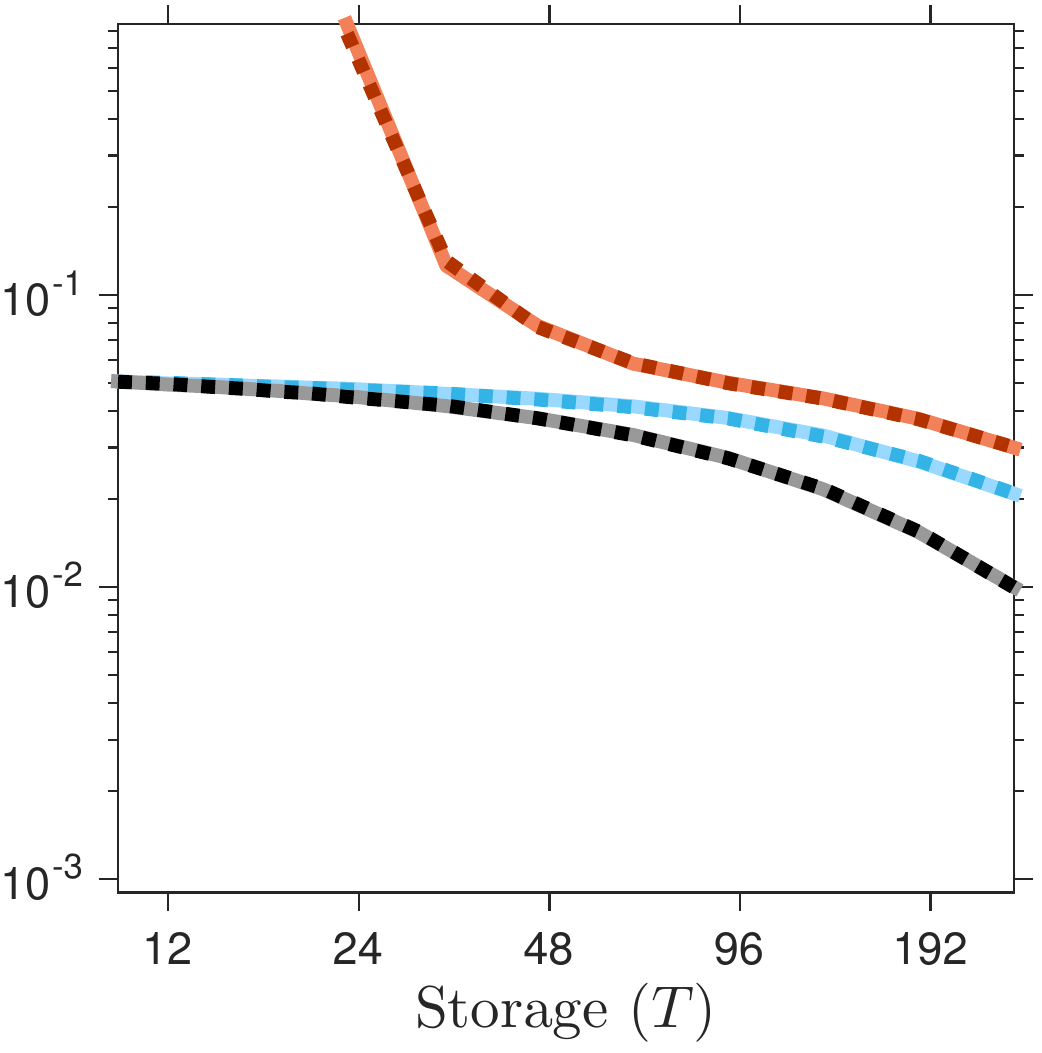}
\caption{\texttt{LowRankHiNoise}}
\end{center}
\end{subfigure}
\end{center}

\vspace{.5em}

\begin{center}
\begin{subfigure}{.325\textwidth}
\begin{center}
\includegraphics[height=1.5in]{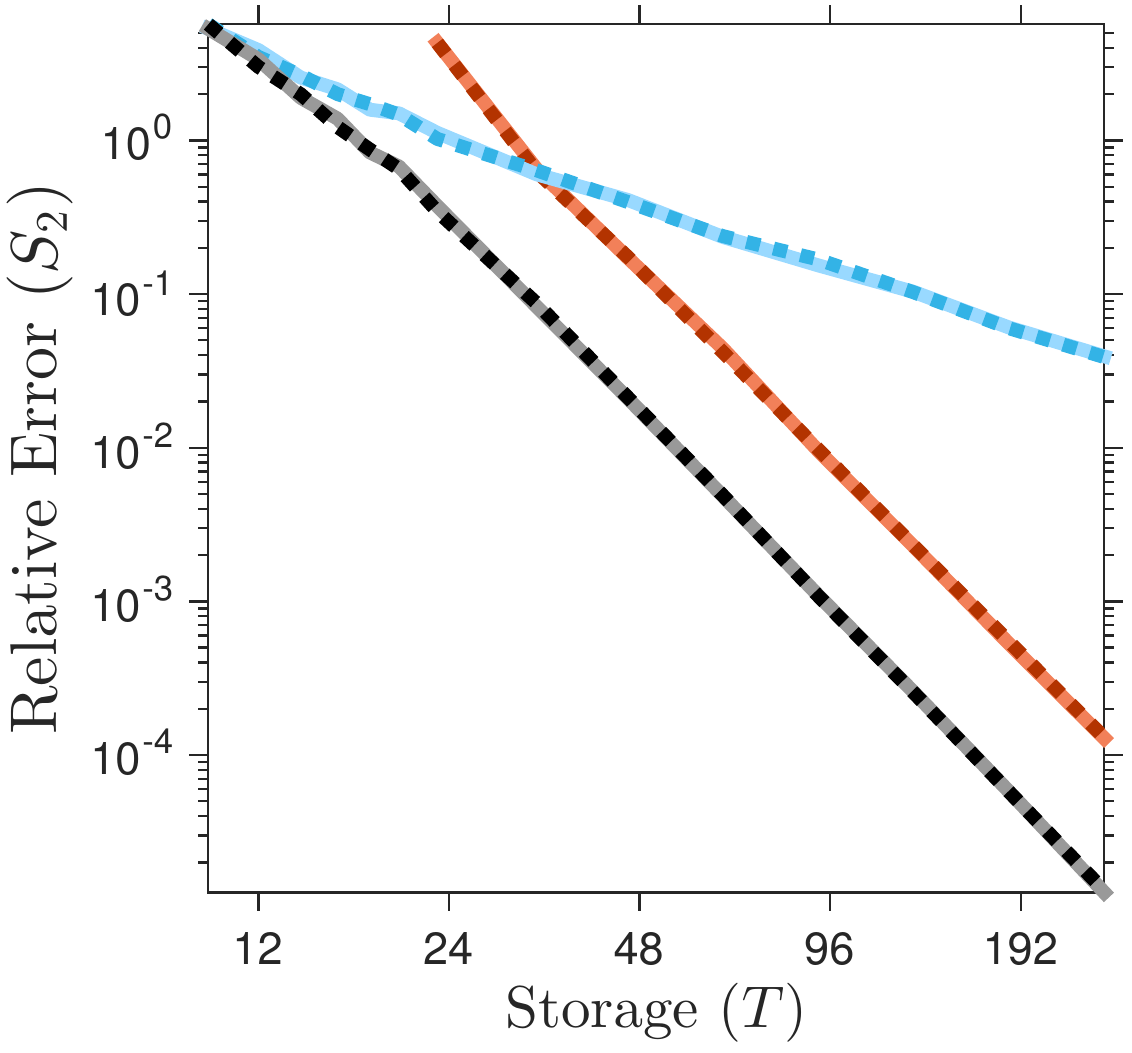}
\caption{\texttt{PolyDecayFast}}
\end{center}
\end{subfigure}
\begin{subfigure}{.325\textwidth}
\begin{center}
\includegraphics[height=1.5in]{art/S1/R5/PolyDecayMed_R5_S1.pdf}
\caption{\texttt{PolyDecayMed}}
\end{center}
\end{subfigure}
\begin{subfigure}{.325\textwidth}
\begin{center}
\includegraphics[height=1.5in]{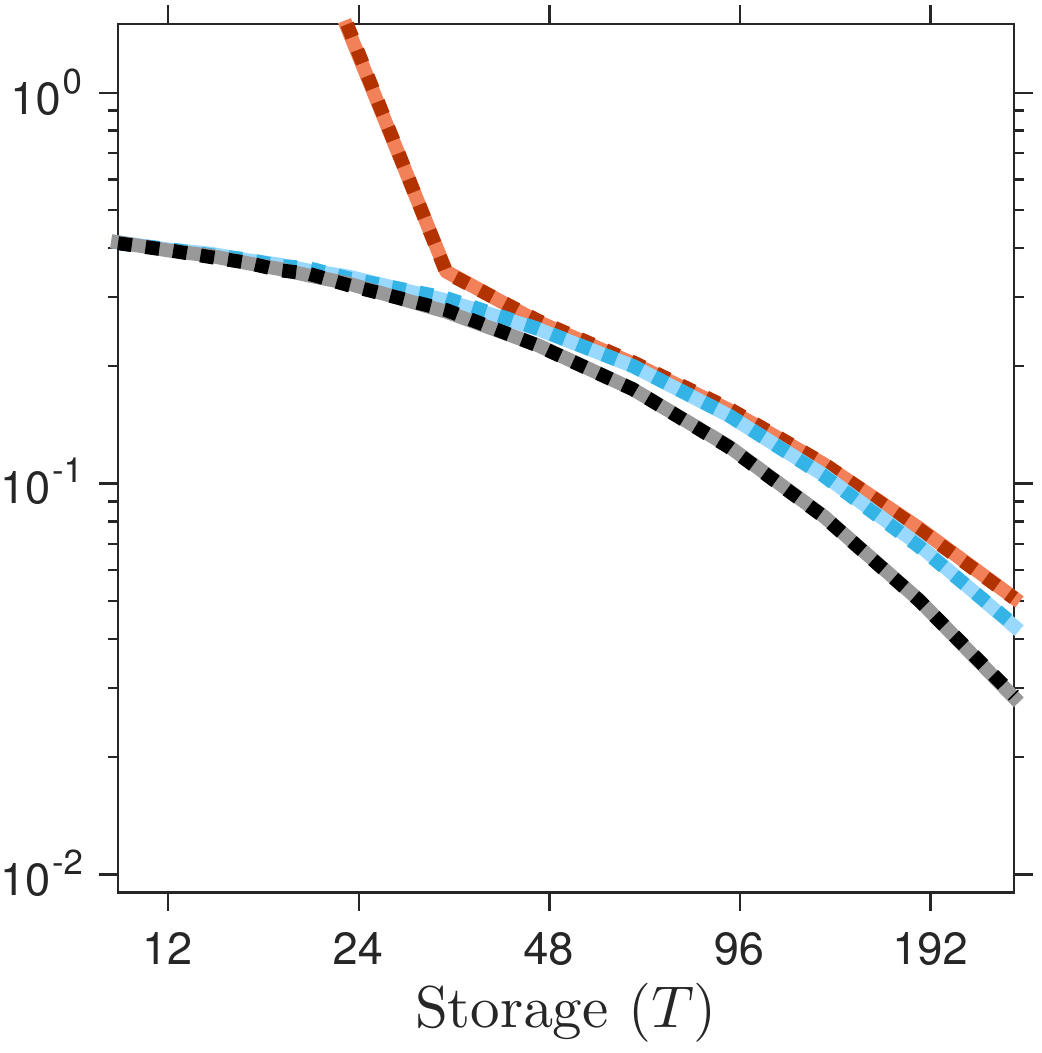}
\caption{\texttt{PolyDecaySlow}}
\end{center}
\end{subfigure}
\end{center}

\vspace{0.5em}

\begin{center}
\begin{subfigure}{.325\textwidth}
\begin{center}
\includegraphics[height=1.5in]{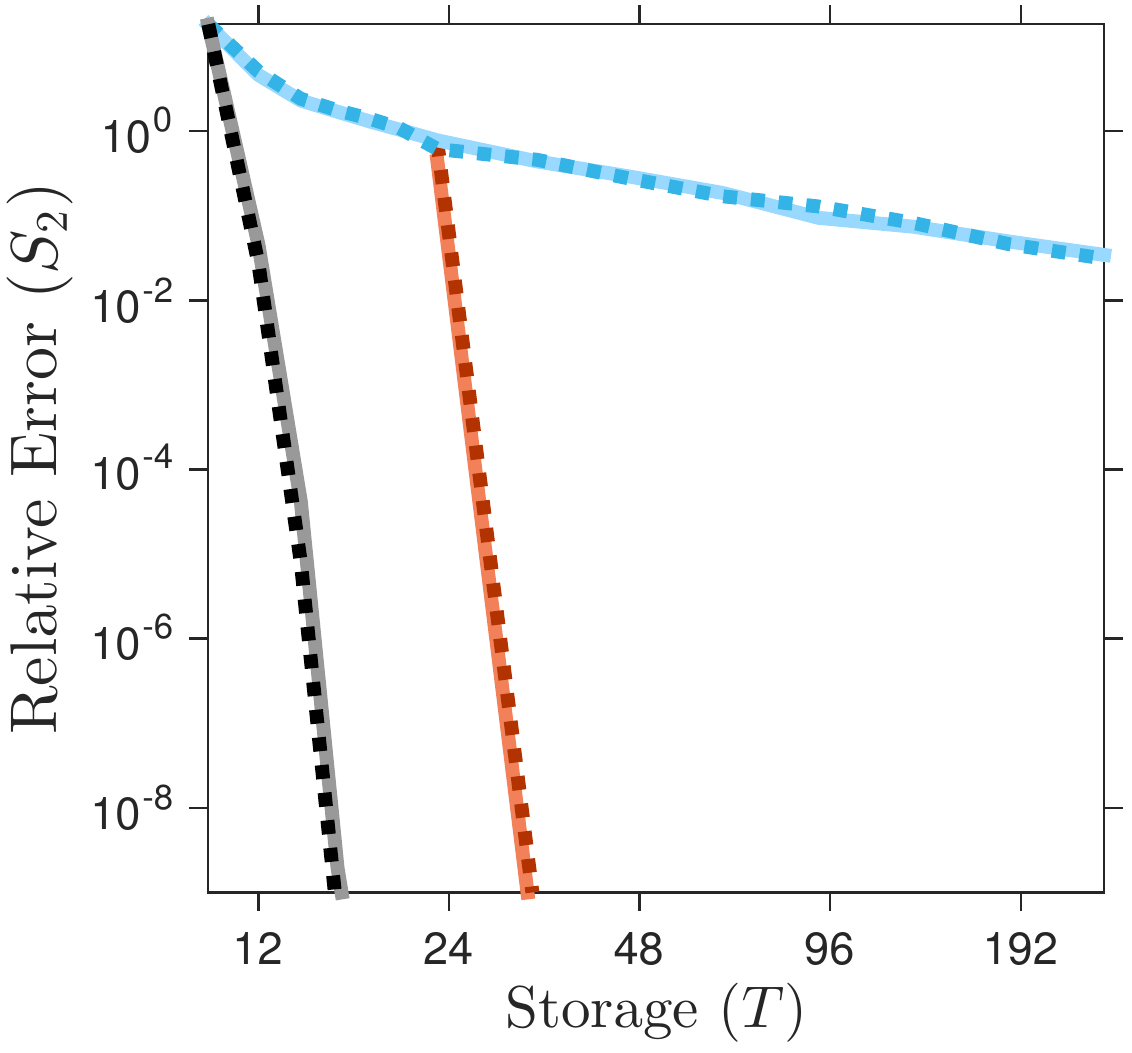}
\caption{\texttt{ExpDecayFast}}
\end{center}
\end{subfigure}
\begin{subfigure}{.325\textwidth}
\begin{center}
\includegraphics[height=1.5in]{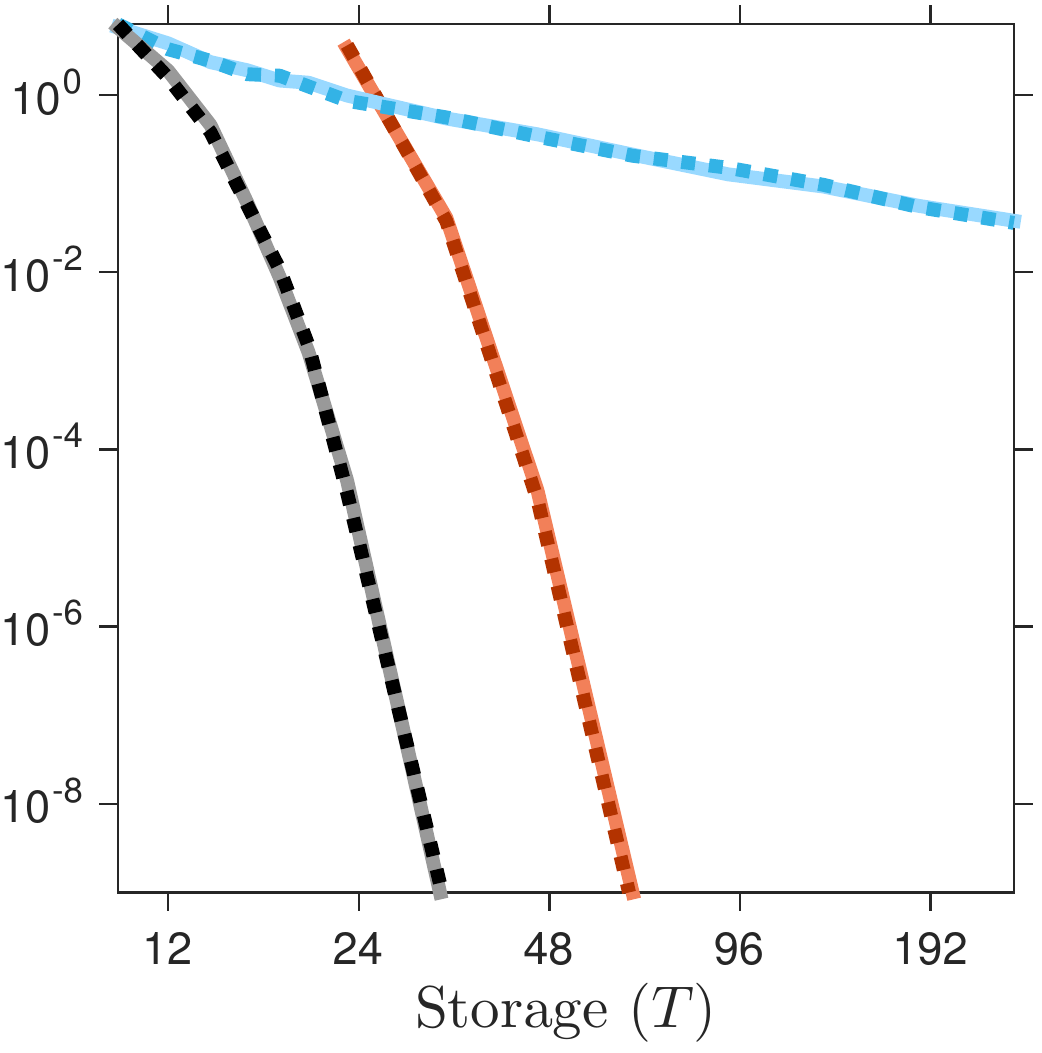}
\caption{\texttt{ExpDecayMed}}
\end{center}
\end{subfigure}
\begin{subfigure}{.325\textwidth}
\begin{center}
\includegraphics[height=1.5in]{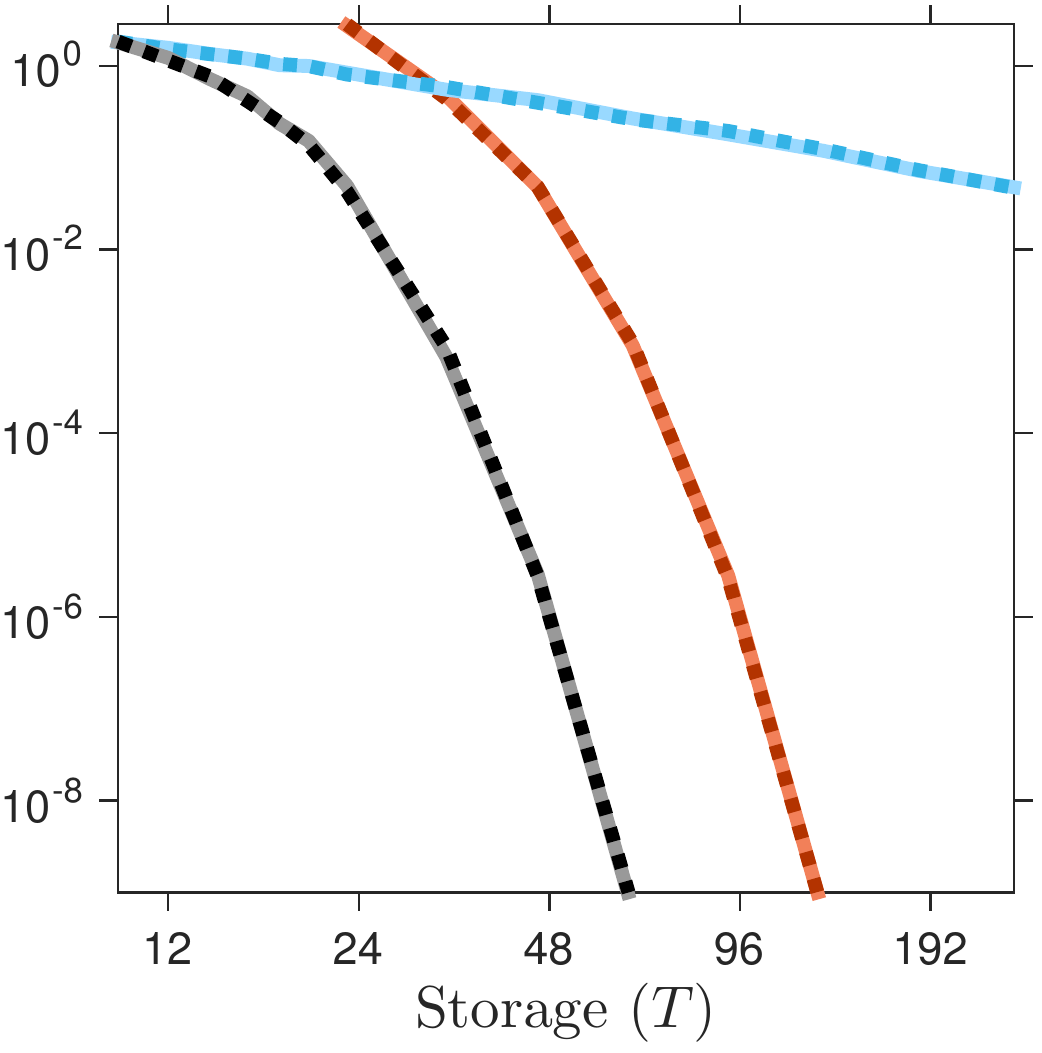}
\caption{\texttt{ExpDecaySlow}}
\end{center}
\end{subfigure}
\end{center}

\caption{\textbf{Synthetic Examples with Effective Rank $R = 5$, Approximation Rank $r = 10$, Schatten $2$-Norm Error.}
The series are
generated by three algorithms for rank-$r$ psd approximation with $r = 10$.
\textbf{Solid lines} are generated from the Gaussian sketch;
\textbf{dashed lines} are from the SSFT sketch.
Each panel displays the  Schatten 2-norm relative error~\eqref{eqn:relative-error}
as a function of storage cost $T$.
See Sec.~\ref{sec:numerics}
for details.}
\label{fig:synthetic-S2-R5}
\end{figure}

\begin{figure}[htp!]
\begin{center}
\begin{subfigure}{.325\textwidth}
\begin{center}
\includegraphics[height=1.5in]{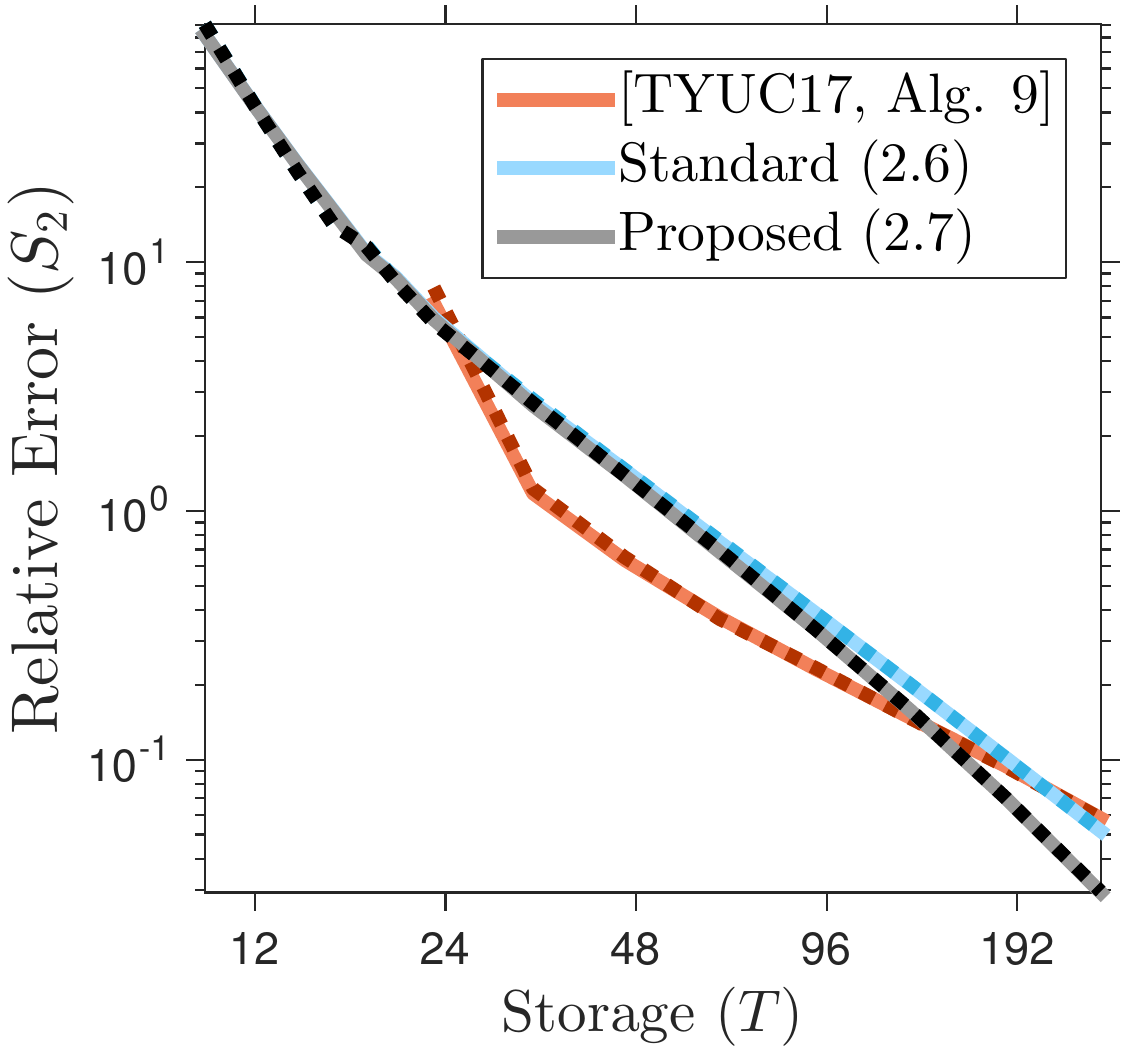}
\caption{\texttt{LowRankLowNoise}}
\end{center}
\end{subfigure}
\begin{subfigure}{.325\textwidth}
\begin{center}
\includegraphics[height=1.5in]{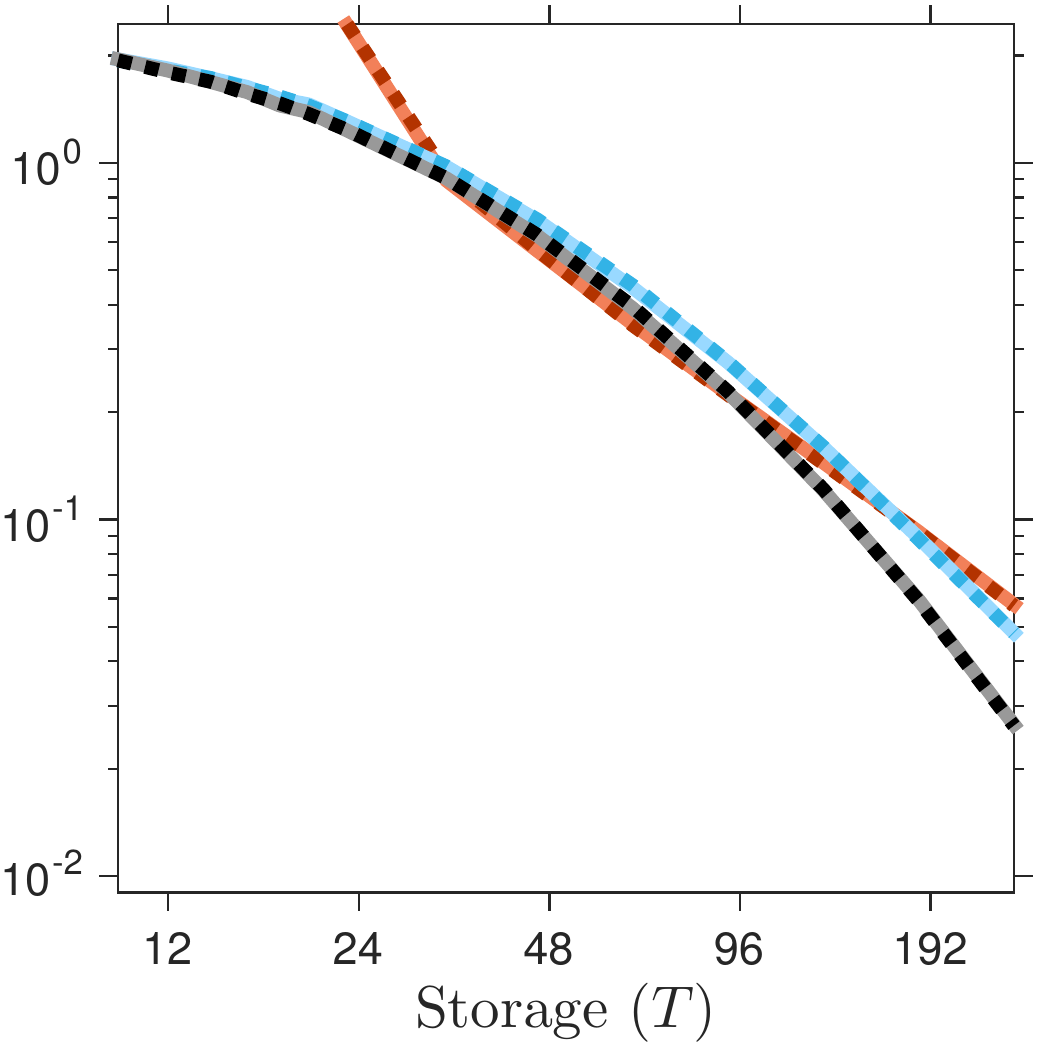}
\caption{\texttt{LowRankMedNoise}}
\end{center}
\end{subfigure}
\begin{subfigure}{.325\textwidth}
\begin{center}
\includegraphics[height=1.5in]{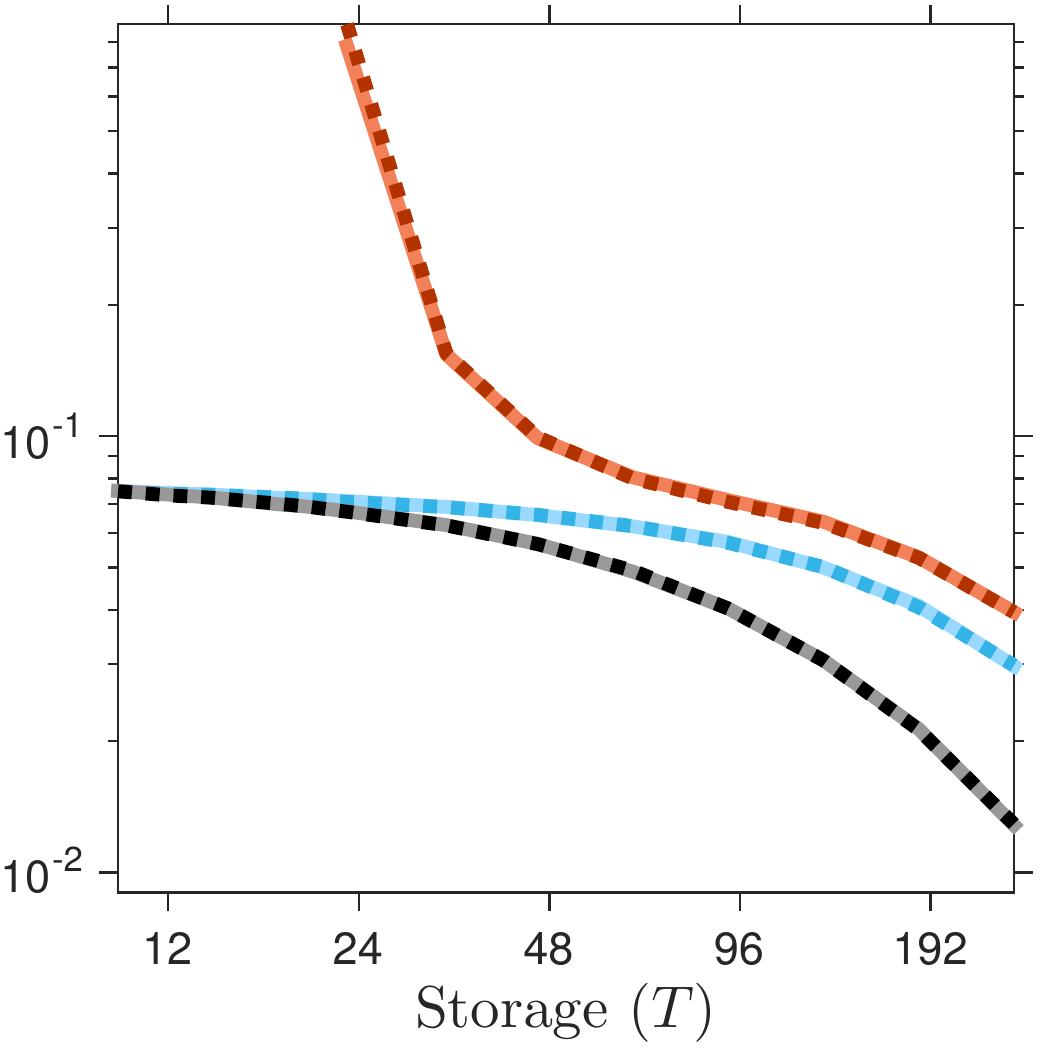}
\caption{\texttt{LowRankHiNoise}}
\end{center}
\end{subfigure}
\end{center}

\vspace{.5em}

\begin{center}
\begin{subfigure}{.325\textwidth}
\begin{center}
\includegraphics[height=1.5in]{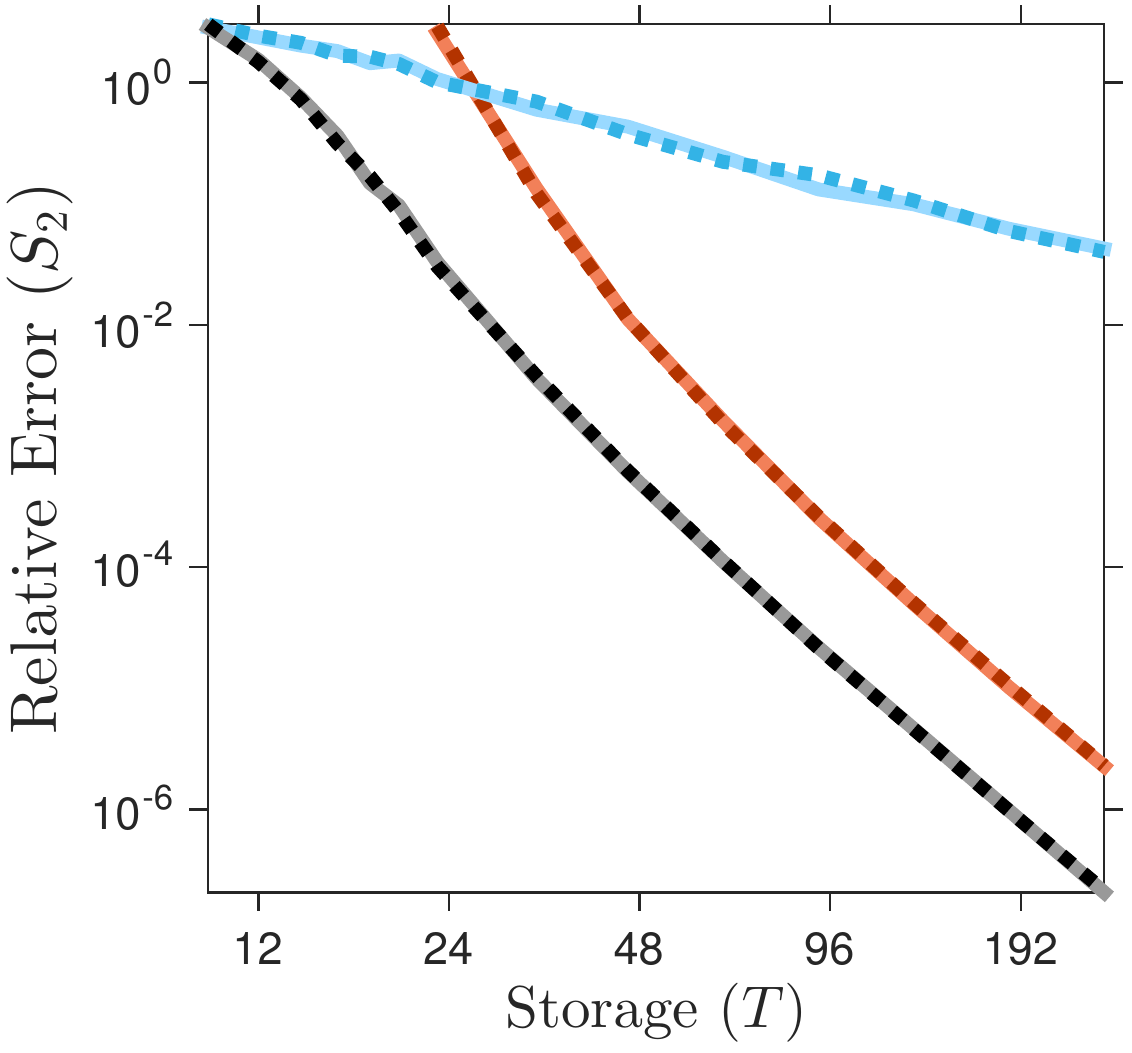}
\caption{\texttt{PolyDecayFast}}
\end{center}
\end{subfigure}
\begin{subfigure}{.325\textwidth}
\begin{center}
\includegraphics[height=1.5in]{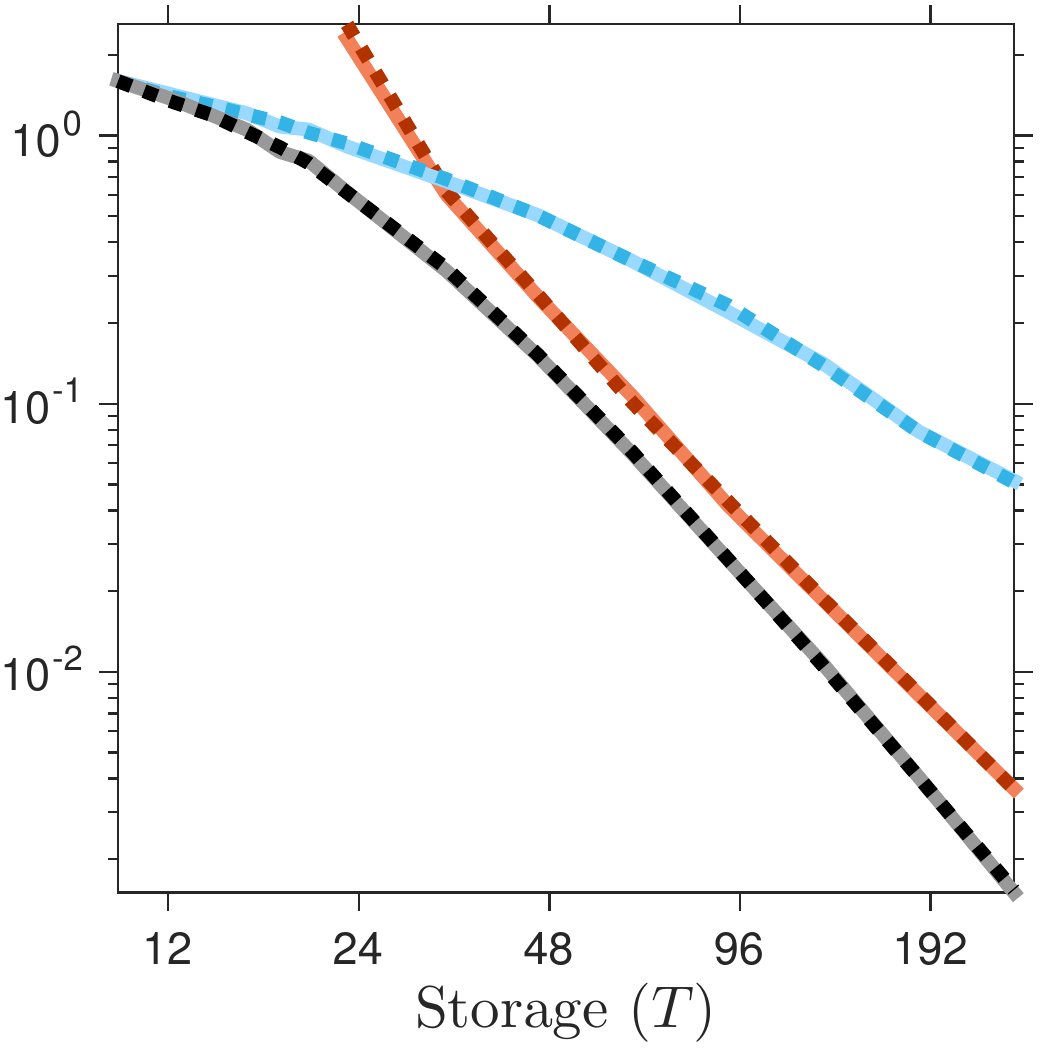}
\caption{\texttt{PolyDecayMed}}
\end{center}
\end{subfigure}
\begin{subfigure}{.325\textwidth}
\begin{center}
\includegraphics[height=1.5in]{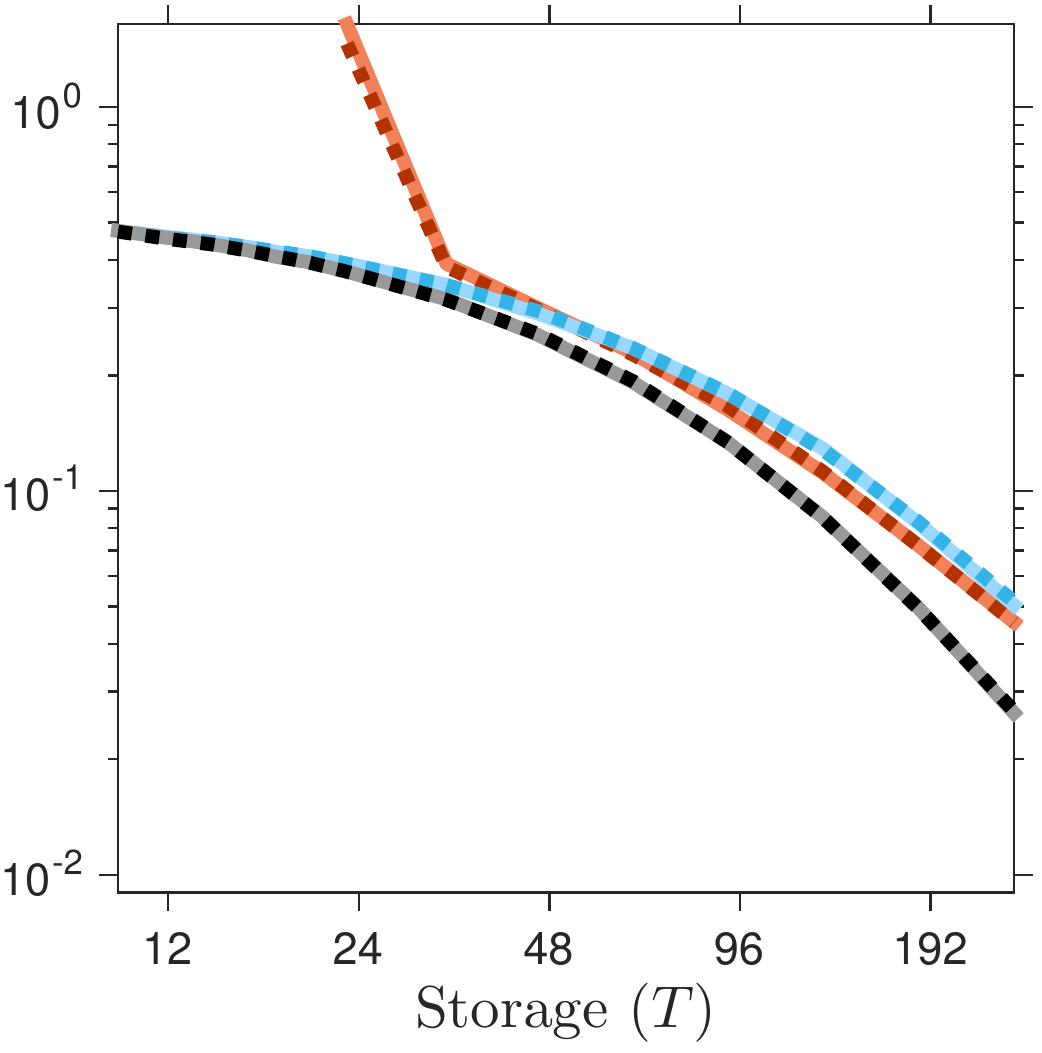}
\caption{\texttt{PolyDecaySlow}}
\end{center}
\end{subfigure}
\end{center}

\vspace{0.5em}

\begin{center}
\begin{subfigure}{.325\textwidth}
\begin{center}
\includegraphics[height=1.5in]{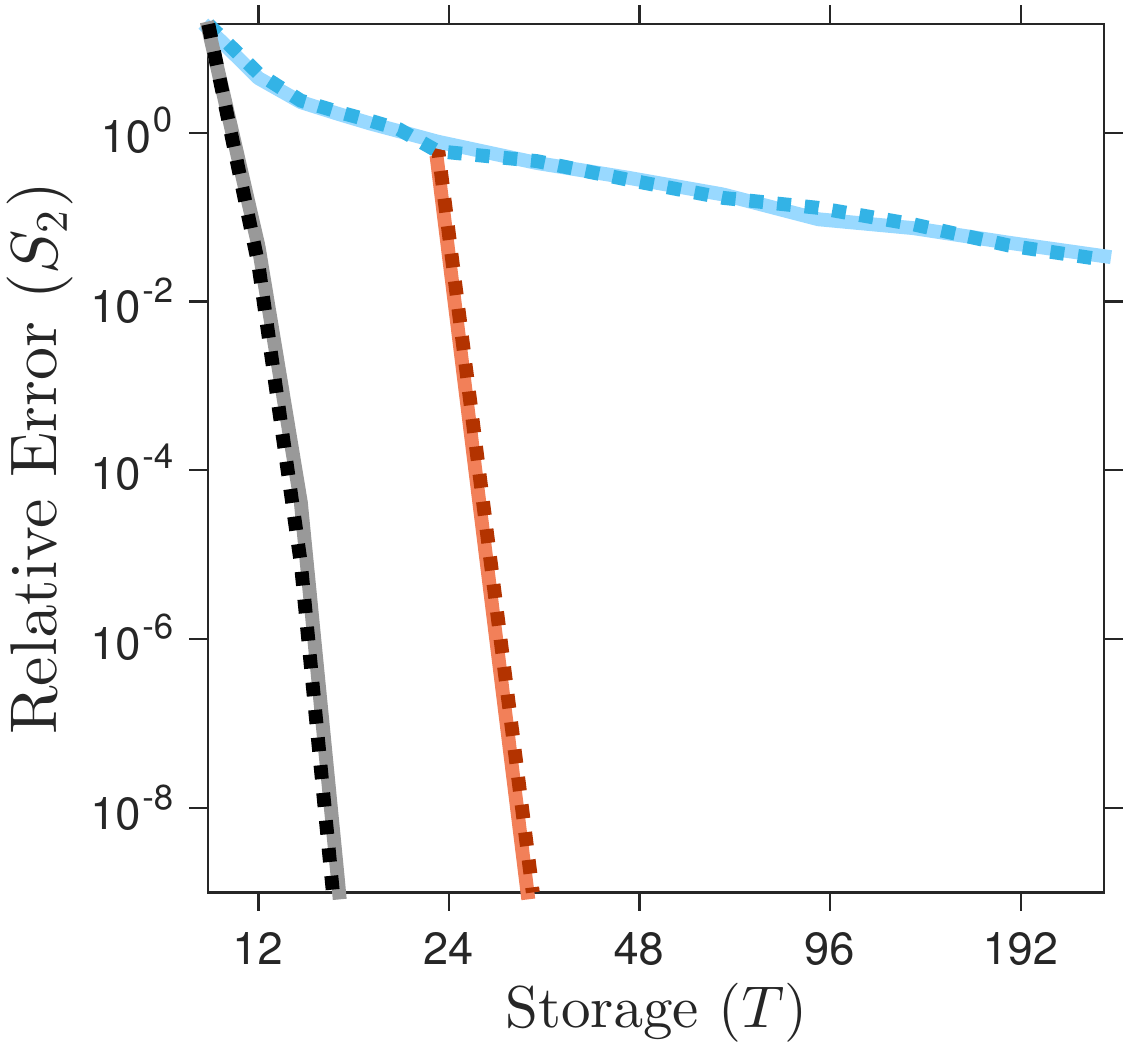}
\caption{\texttt{ExpDecayFast}}
\end{center}
\end{subfigure}
\begin{subfigure}{.325\textwidth}
\begin{center}
\includegraphics[height=1.5in]{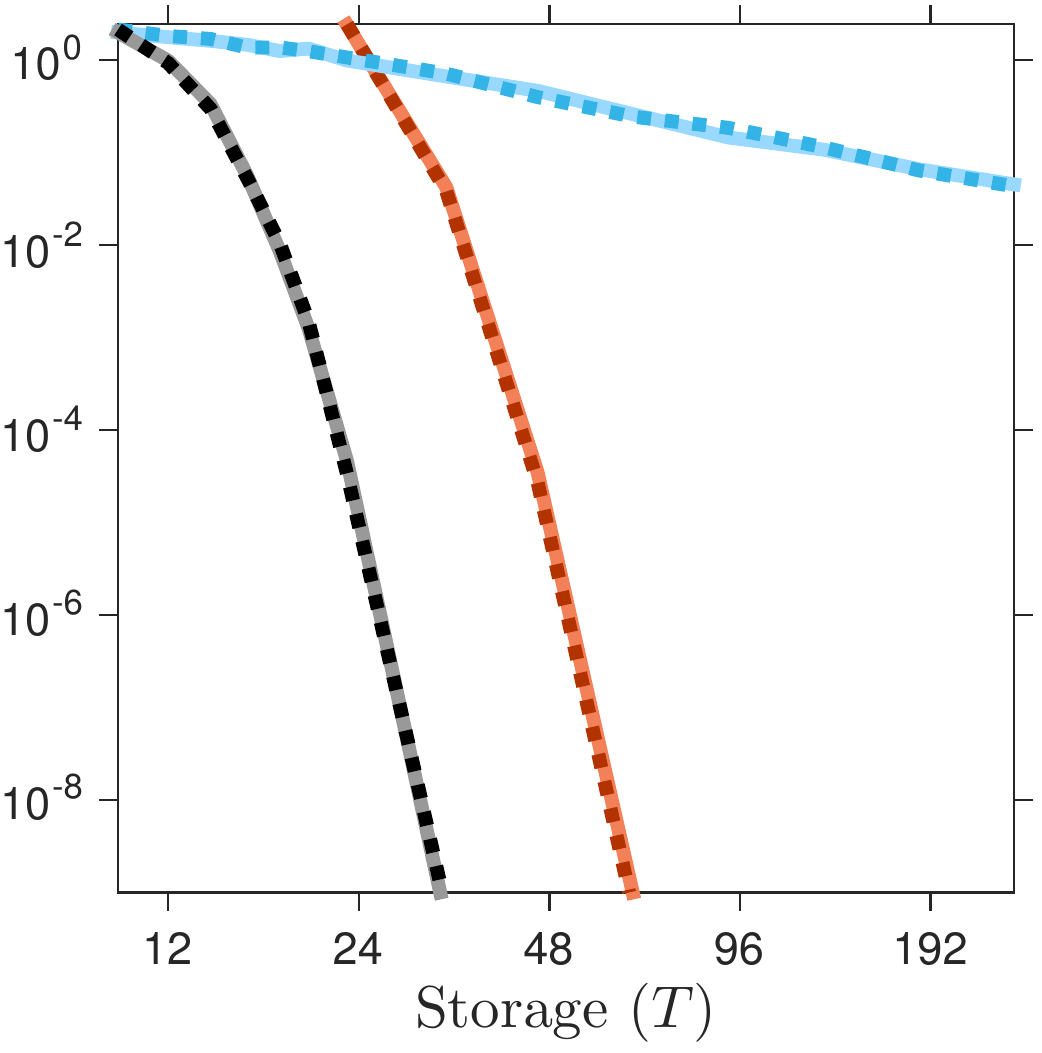}
\caption{\texttt{ExpDecayMed}}
\end{center}
\end{subfigure}
\begin{subfigure}{.325\textwidth}
\begin{center}
\includegraphics[height=1.5in]{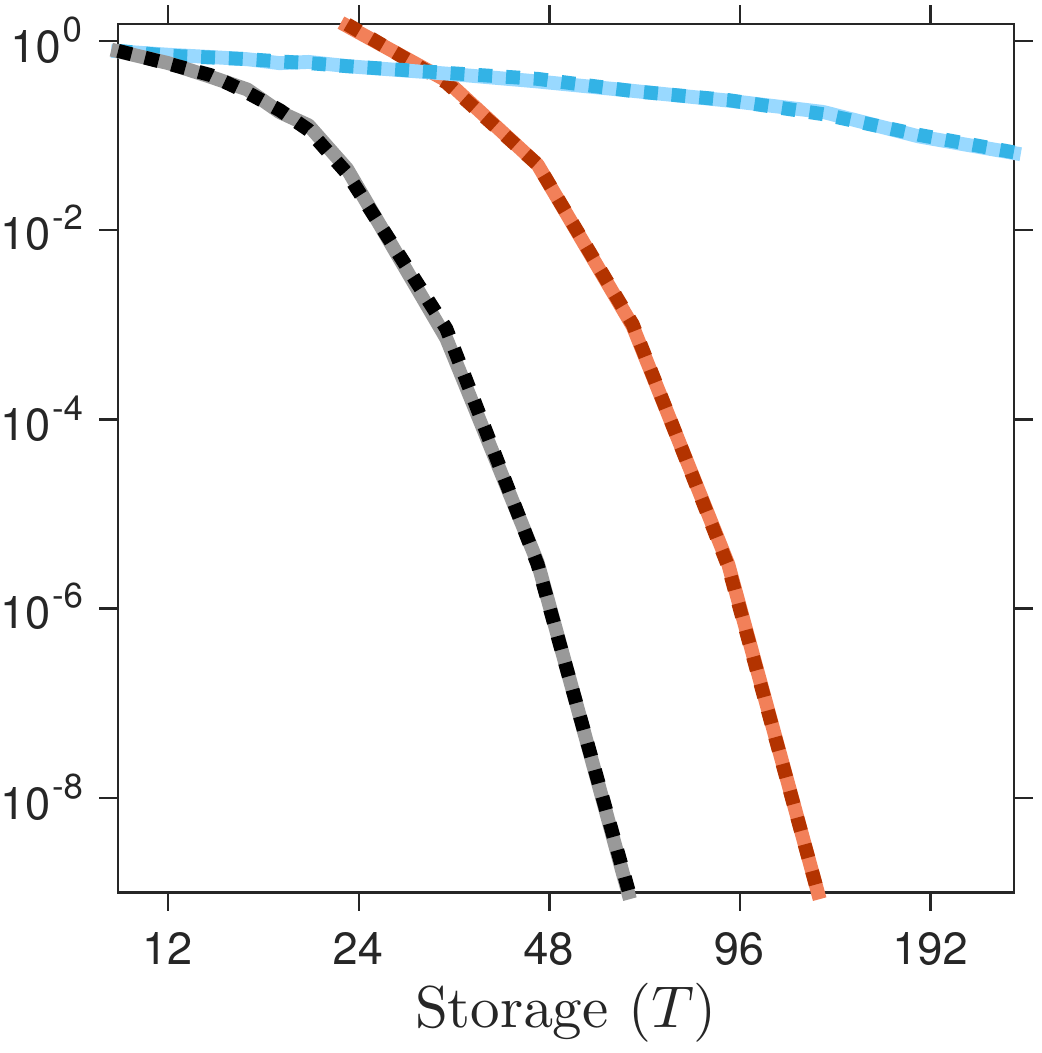}
\caption{\texttt{ExpDecaySlow}}
\end{center}
\end{subfigure}
\end{center}

\caption{\textbf{Synthetic Examples with Effective Rank $R = 10$, Approximation Rank $r = 10$, Schatten $2$-Norm Error.}
The series are
generated by three algorithms for rank-$r$ psd approximation with $r = 10$.
\textbf{Solid lines} are generated from the Gaussian sketch;
\textbf{dashed lines} are from the SSFT sketch.
Each panel displays the  Schatten 2-norm relative error~\eqref{eqn:relative-error}
as a function of storage cost $T$.  See Sec.~\ref{sec:numerics}
for details.}
\label{fig:synthetic-S2-R10}
\end{figure}

\begin{figure}[htp!]
\begin{center}
\begin{subfigure}{.325\textwidth}
\begin{center}
\includegraphics[height=1.5in]{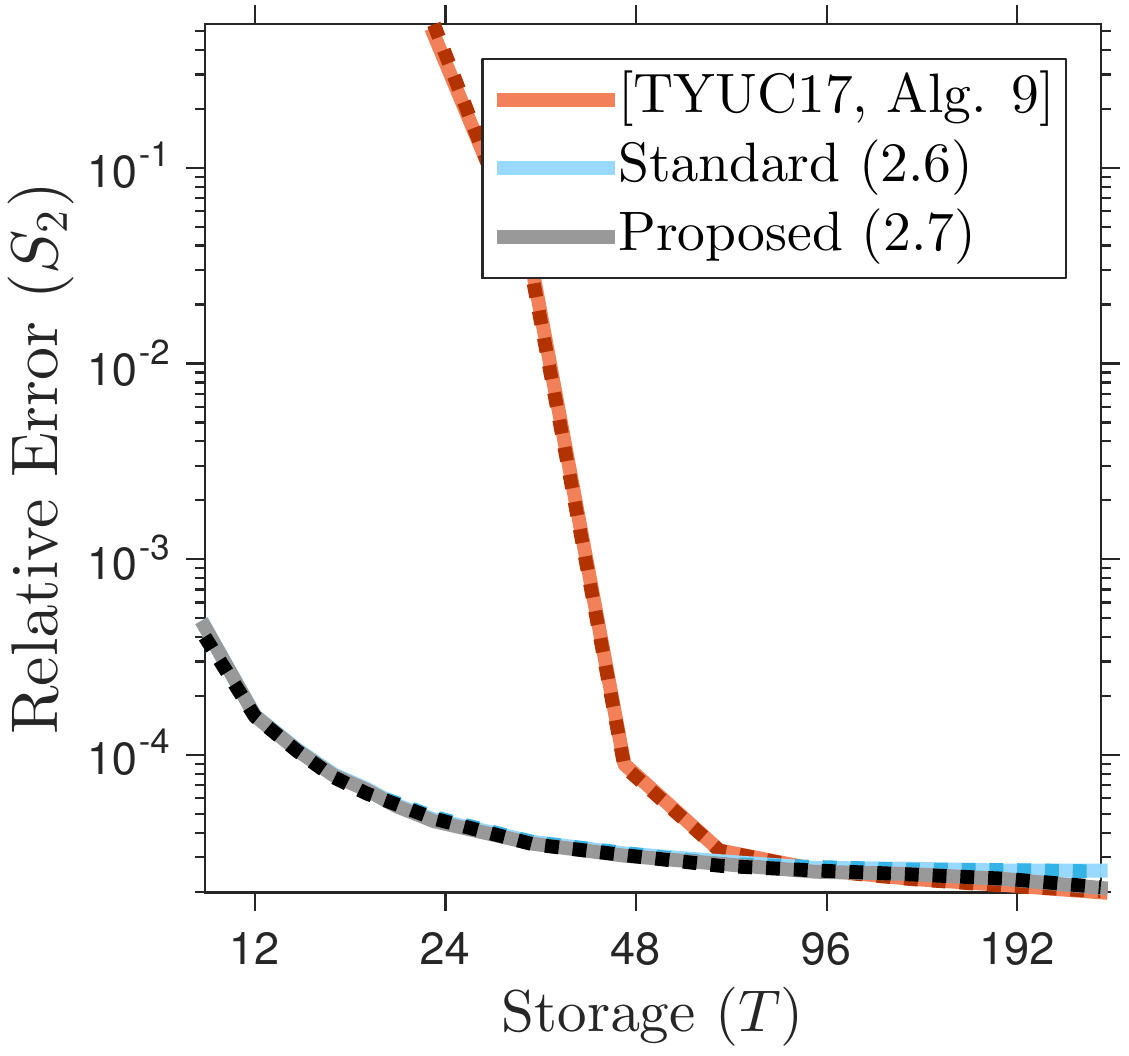}
\caption{\texttt{LowRankLowNoise}}
\end{center}
\end{subfigure}
\begin{subfigure}{.325\textwidth}
\begin{center}
\includegraphics[height=1.5in]{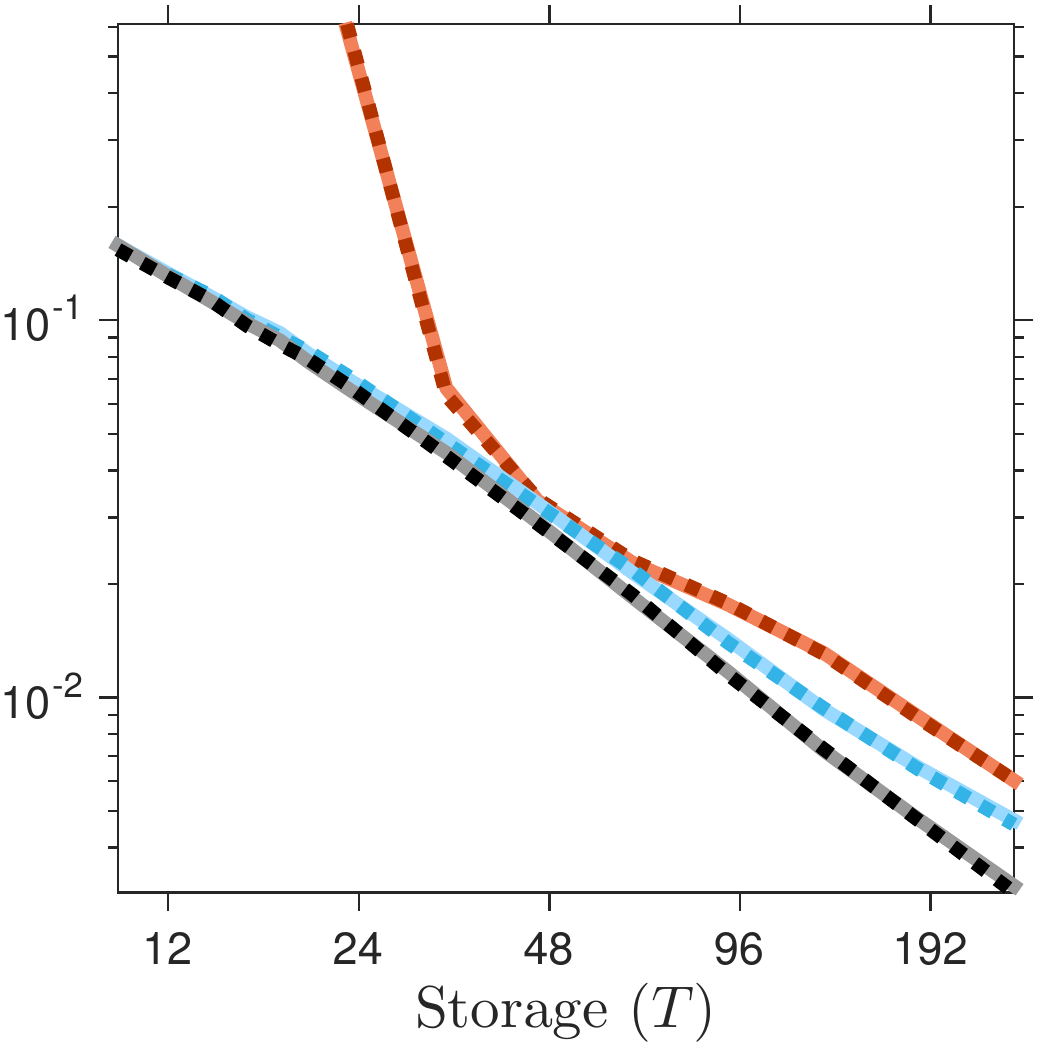}
\caption{\texttt{LowRankMedNoise}}
\end{center}
\end{subfigure}
\begin{subfigure}{.325\textwidth}
\begin{center}
\includegraphics[height=1.5in]{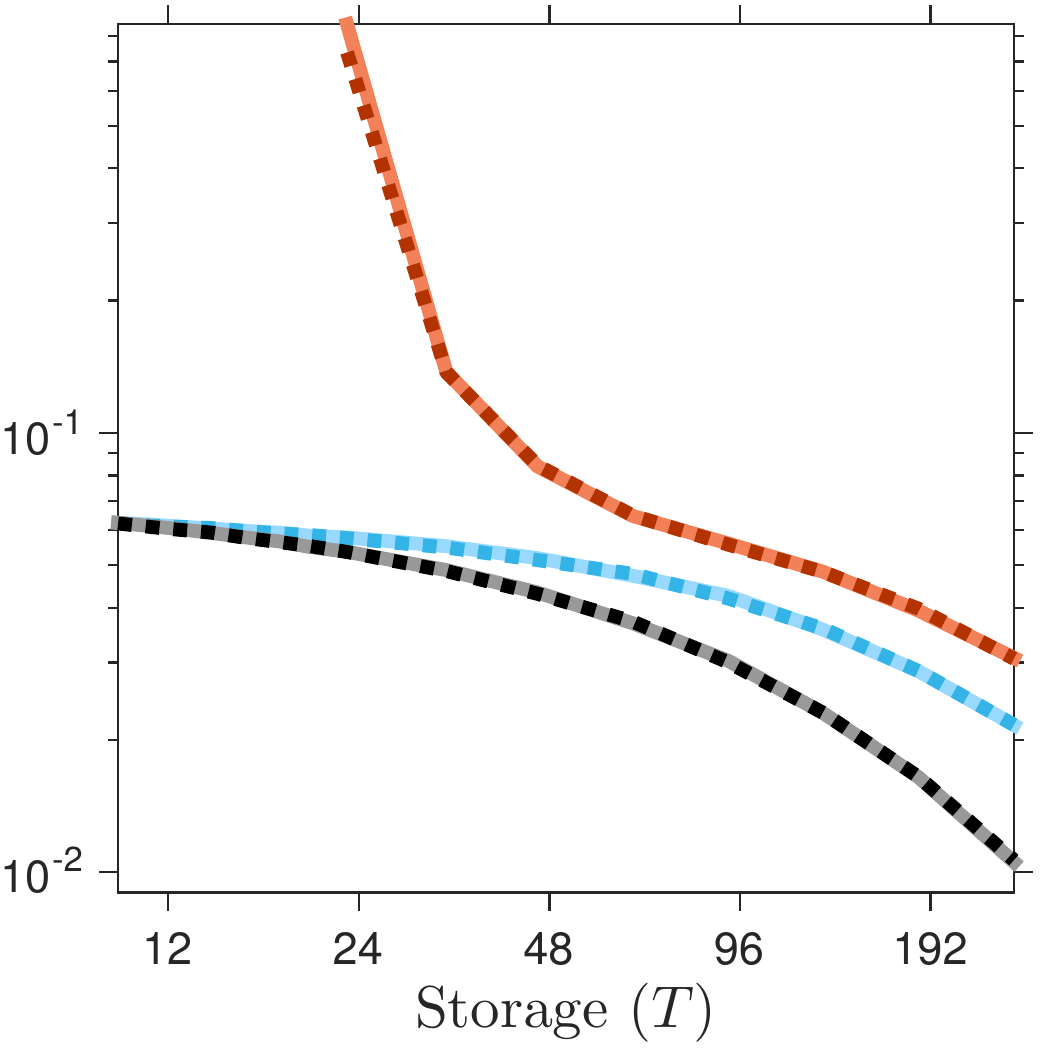}
\caption{\texttt{LowRankHiNoise}}
\end{center}
\end{subfigure}
\end{center}

\vspace{.5em}

\begin{center}
\begin{subfigure}{.325\textwidth}
\begin{center}
\includegraphics[height=1.5in]{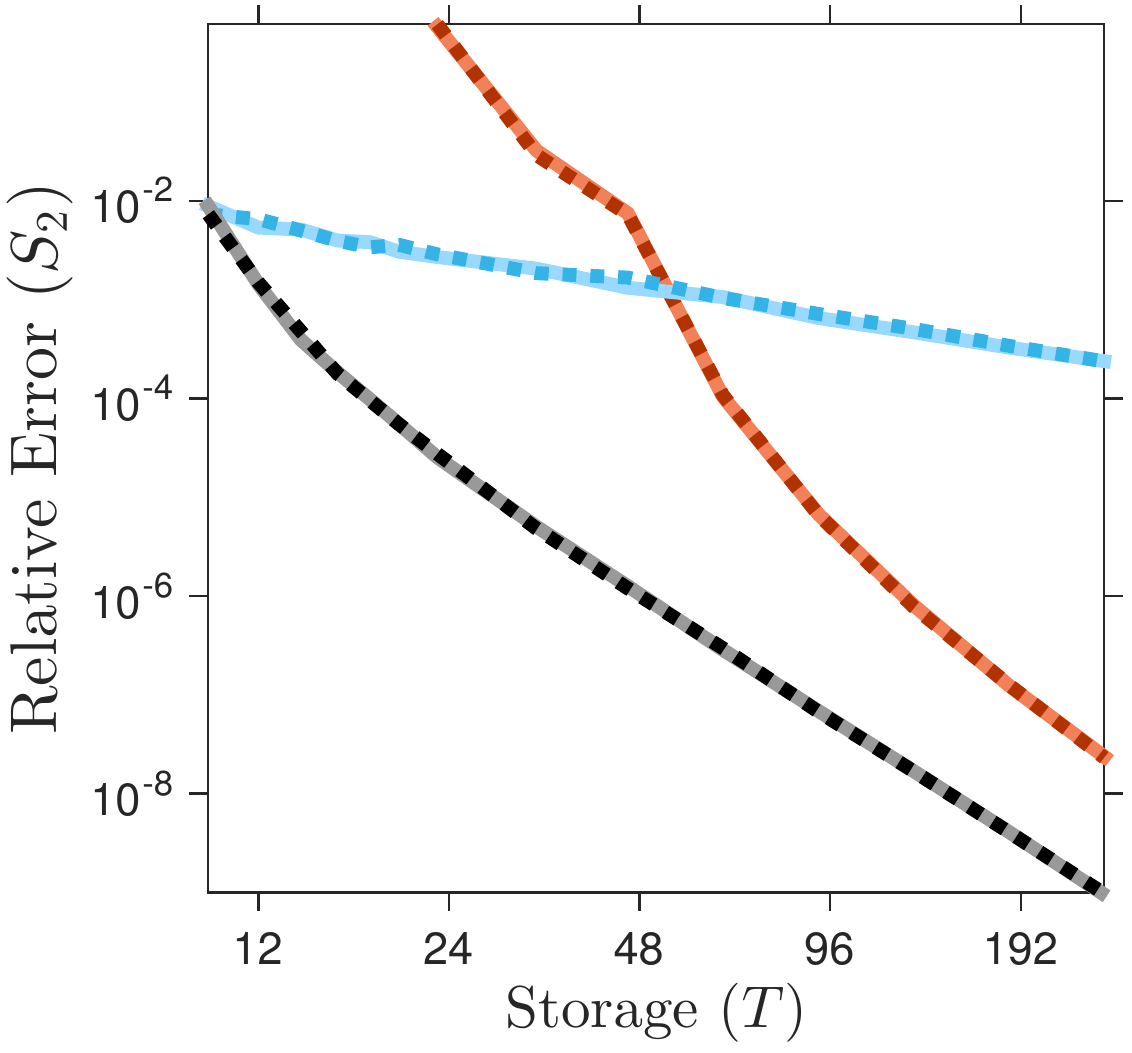}
\caption{\texttt{PolyDecayFast}}
\end{center}
\end{subfigure}
\begin{subfigure}{.325\textwidth}
\begin{center}
\includegraphics[height=1.5in]{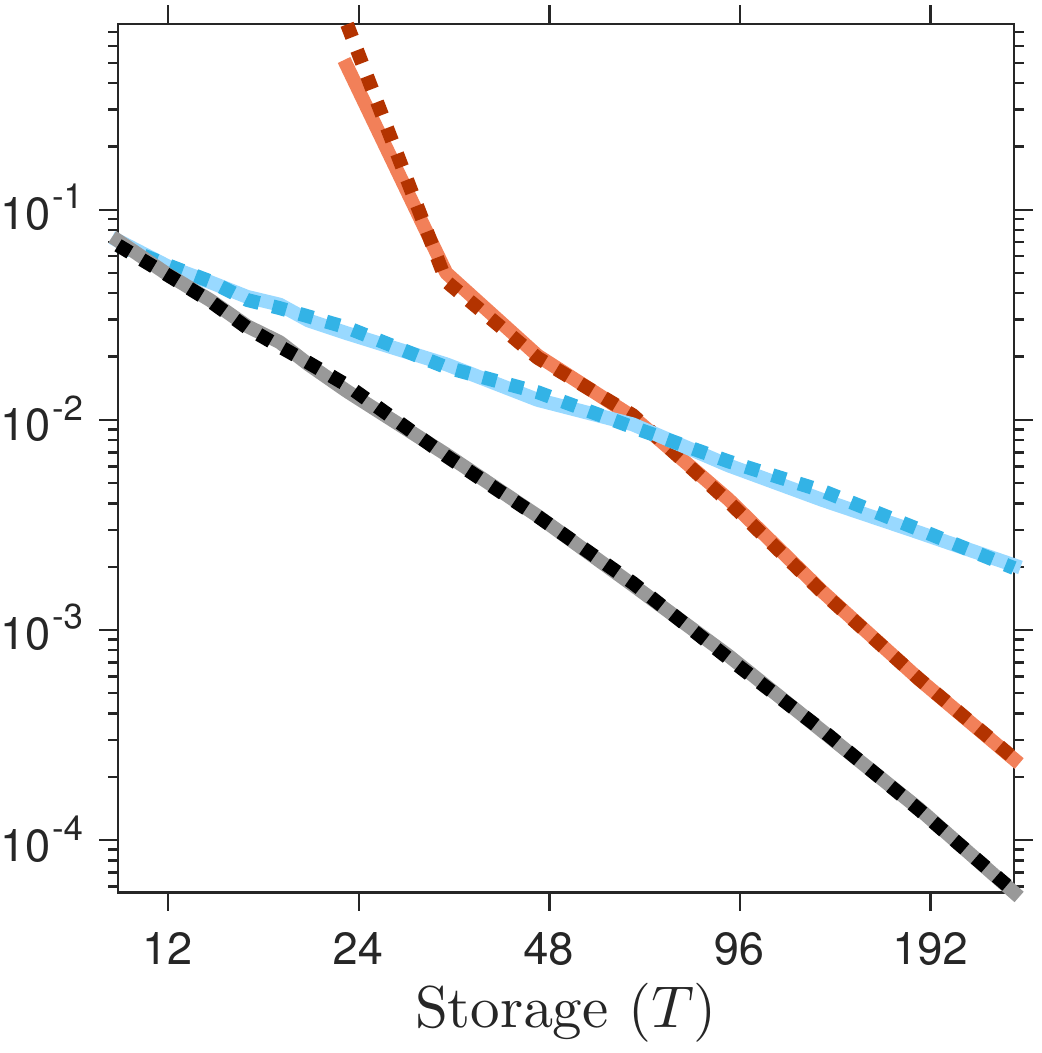}
\caption{\texttt{PolyDecayMed}}
\end{center}
\end{subfigure}
\begin{subfigure}{.325\textwidth}
\begin{center}
\includegraphics[height=1.5in]{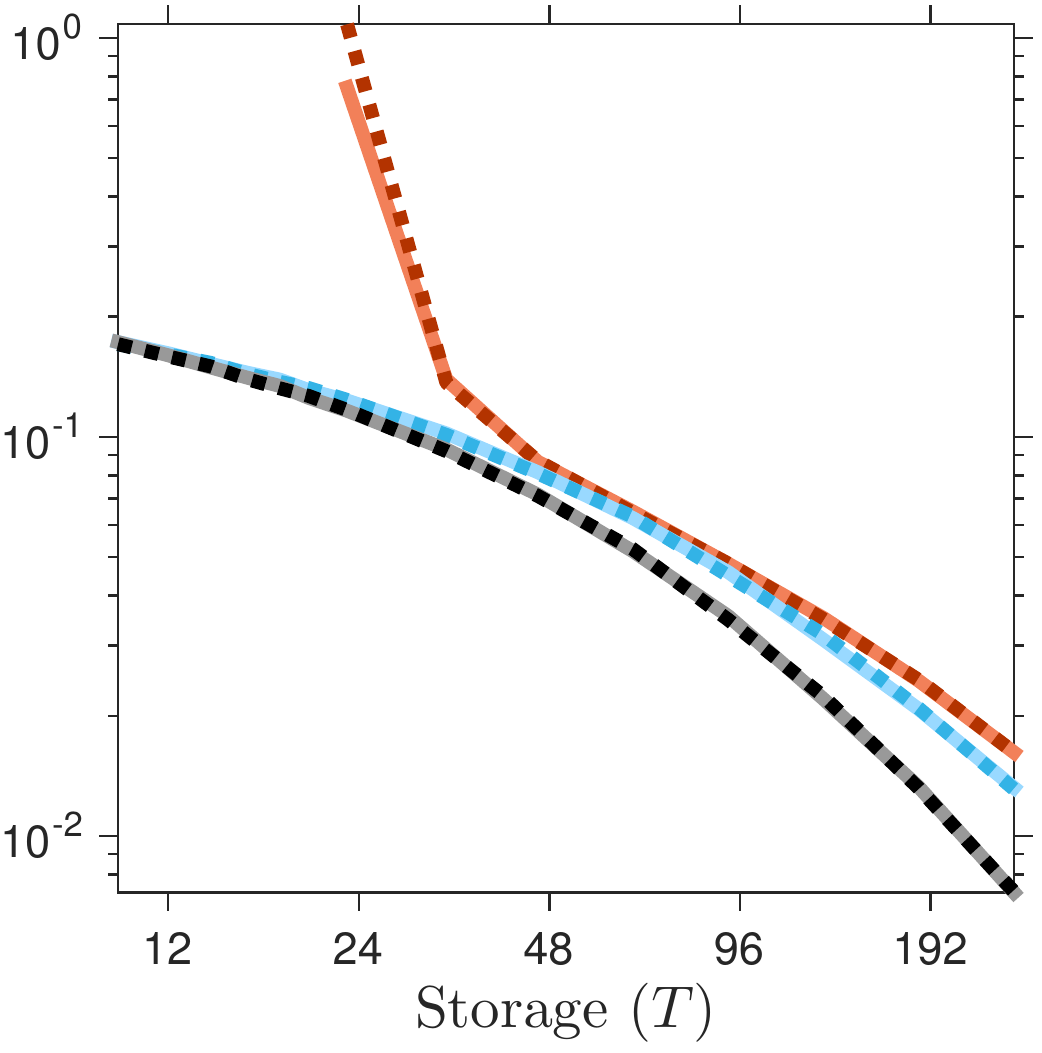}
\caption{\texttt{PolyDecaySlow}}
\end{center}
\end{subfigure}
\end{center}

\vspace{0.5em}

\begin{center}
\begin{subfigure}{.325\textwidth}
\begin{center}
\includegraphics[height=1.5in]{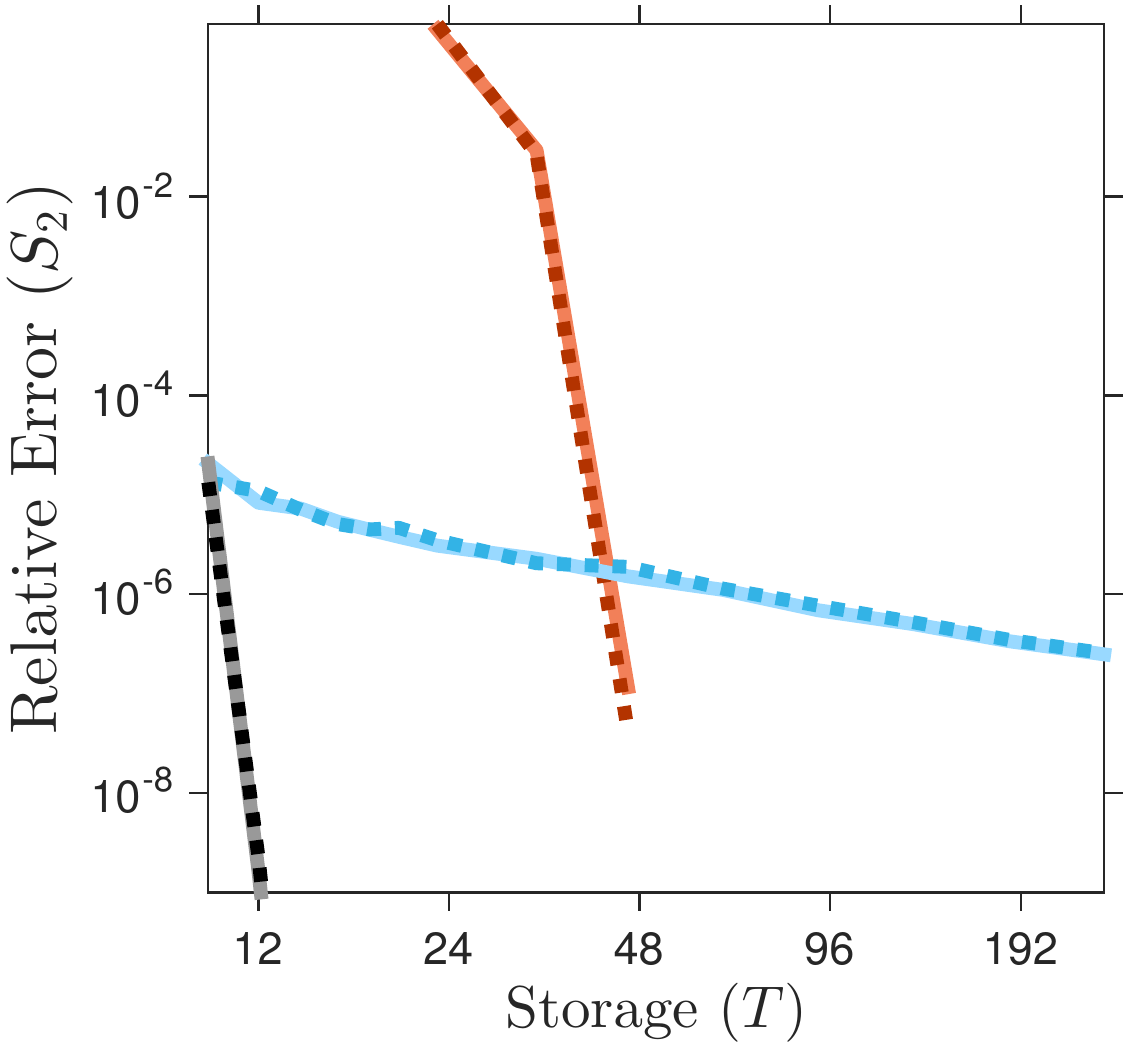}
\caption{\texttt{ExpDecayFast}}
\end{center}
\end{subfigure}
\begin{subfigure}{.325\textwidth}
\begin{center}
\includegraphics[height=1.5in]{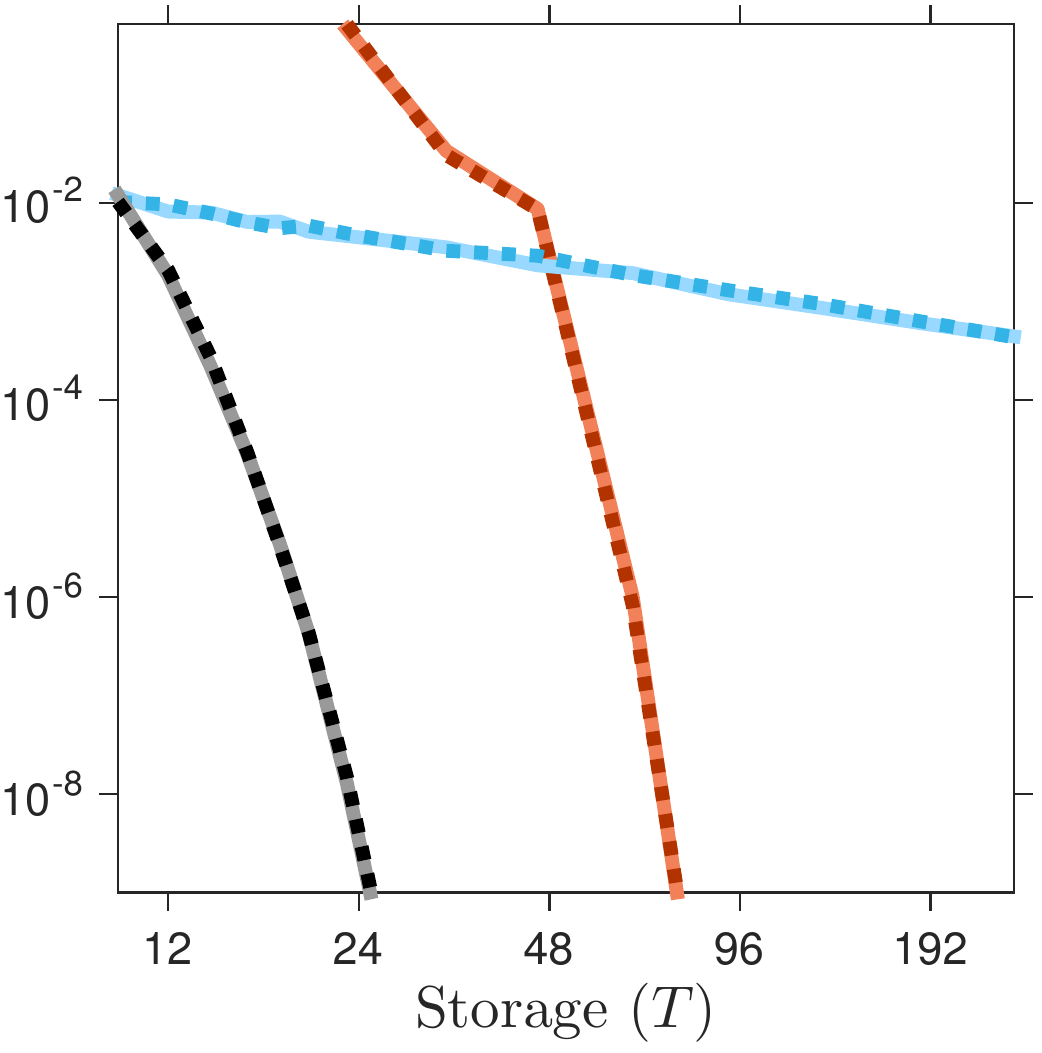}
\caption{\texttt{ExpDecayMed}}
\end{center}
\end{subfigure}
\begin{subfigure}{.325\textwidth}
\begin{center}
\includegraphics[height=1.5in]{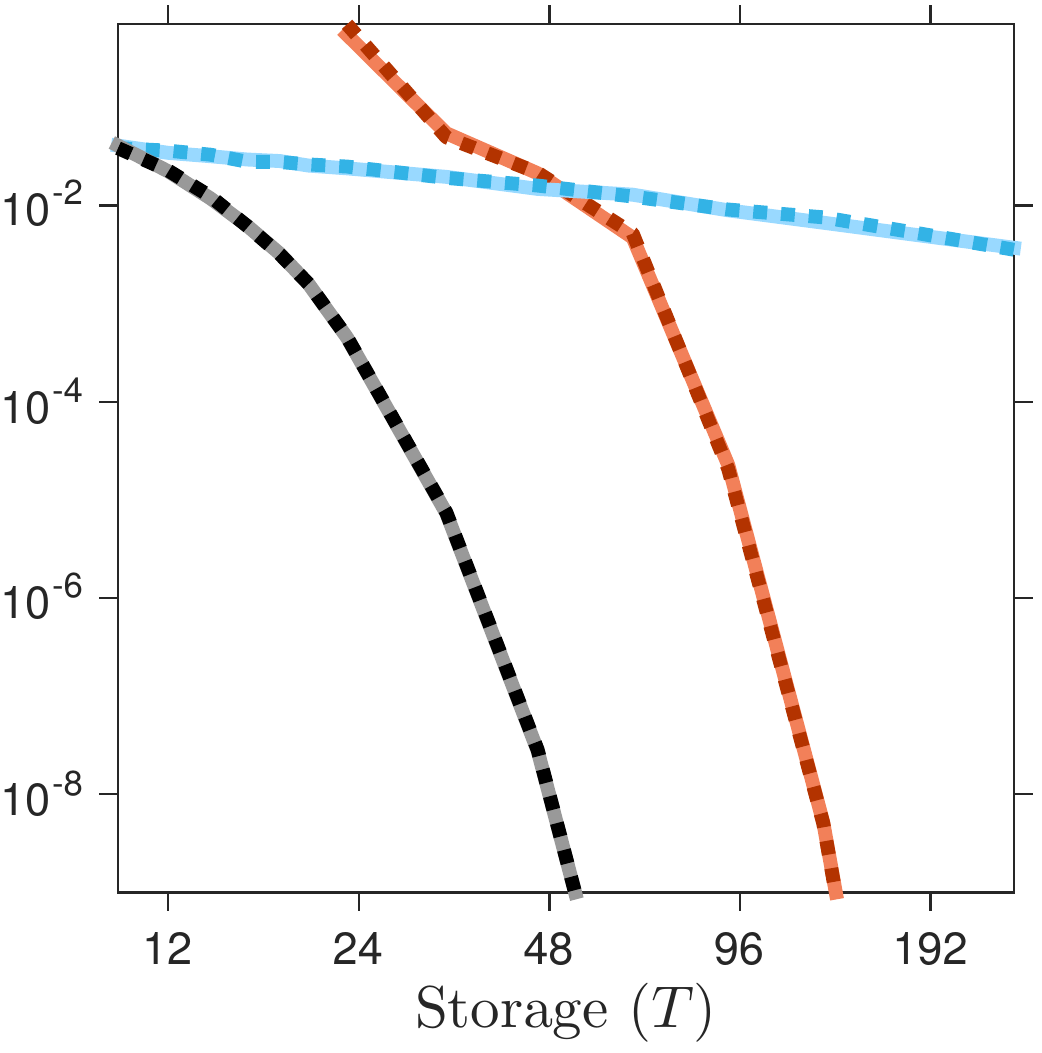}
\caption{\texttt{ExpDecaySlow}}
\end{center}
\end{subfigure}
\end{center}

\caption{\textbf{Synthetic Examples with Effective Rank $R = 20$, Approximation Rank $r = 10$, Schatten $2$-Norm Error.}
The series are
generated by three algorithms for rank-$r$ psd approximation with $r = 10$.
\textbf{Solid lines} are generated from the Gaussian sketch;
\textbf{dashed lines} are from the SSFT sketch.
Each panel displays the Schatten 2-norm relative error~\eqref{eqn:relative-error}
as a function of storage cost $T$.  See Sec.~\ref{sec:numerics}
for details.}
\label{fig:synthetic-S2-R20}
\end{figure}

\begin{figure}[htp!]
\begin{center}
\begin{subfigure}{.325\textwidth}
\begin{center}
\includegraphics[height=1.5in]{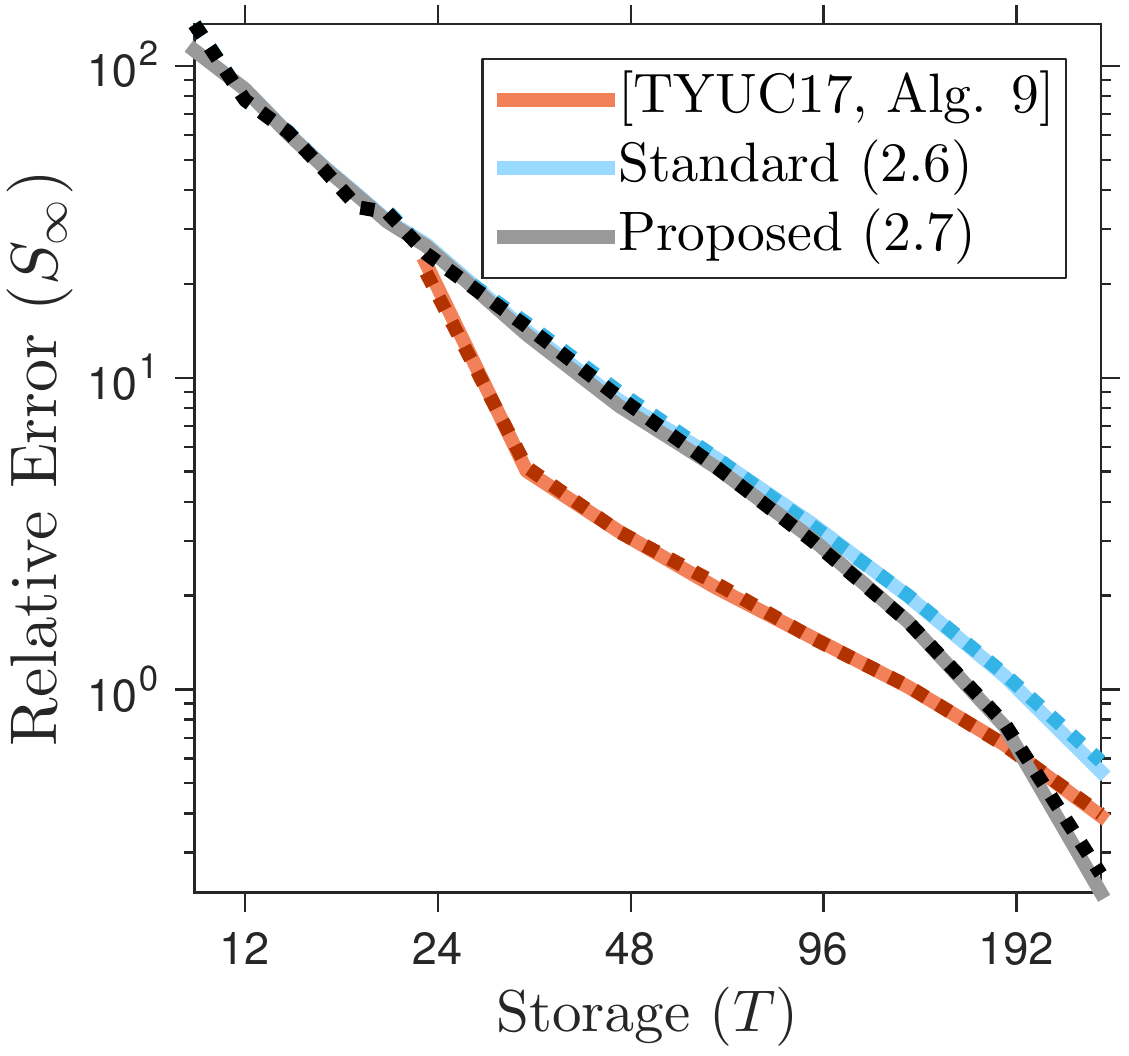}
\caption{\texttt{LowRankLowNoise}}
\end{center}
\end{subfigure}
\begin{subfigure}{.325\textwidth}
\begin{center}
\includegraphics[height=1.5in]{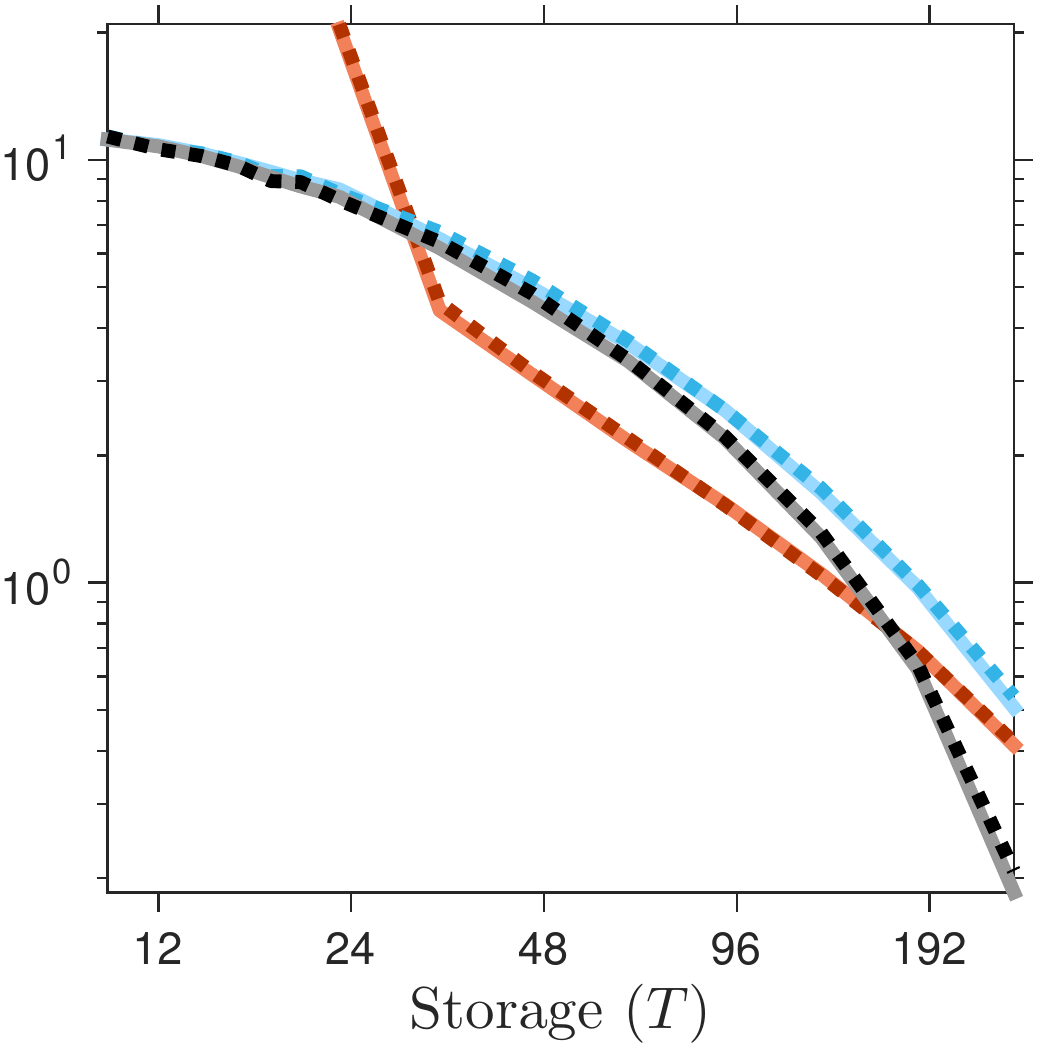}
\caption{\texttt{LowRankMedNoise}}
\end{center}
\end{subfigure}
\begin{subfigure}{.325\textwidth}
\begin{center}
\includegraphics[height=1.5in]{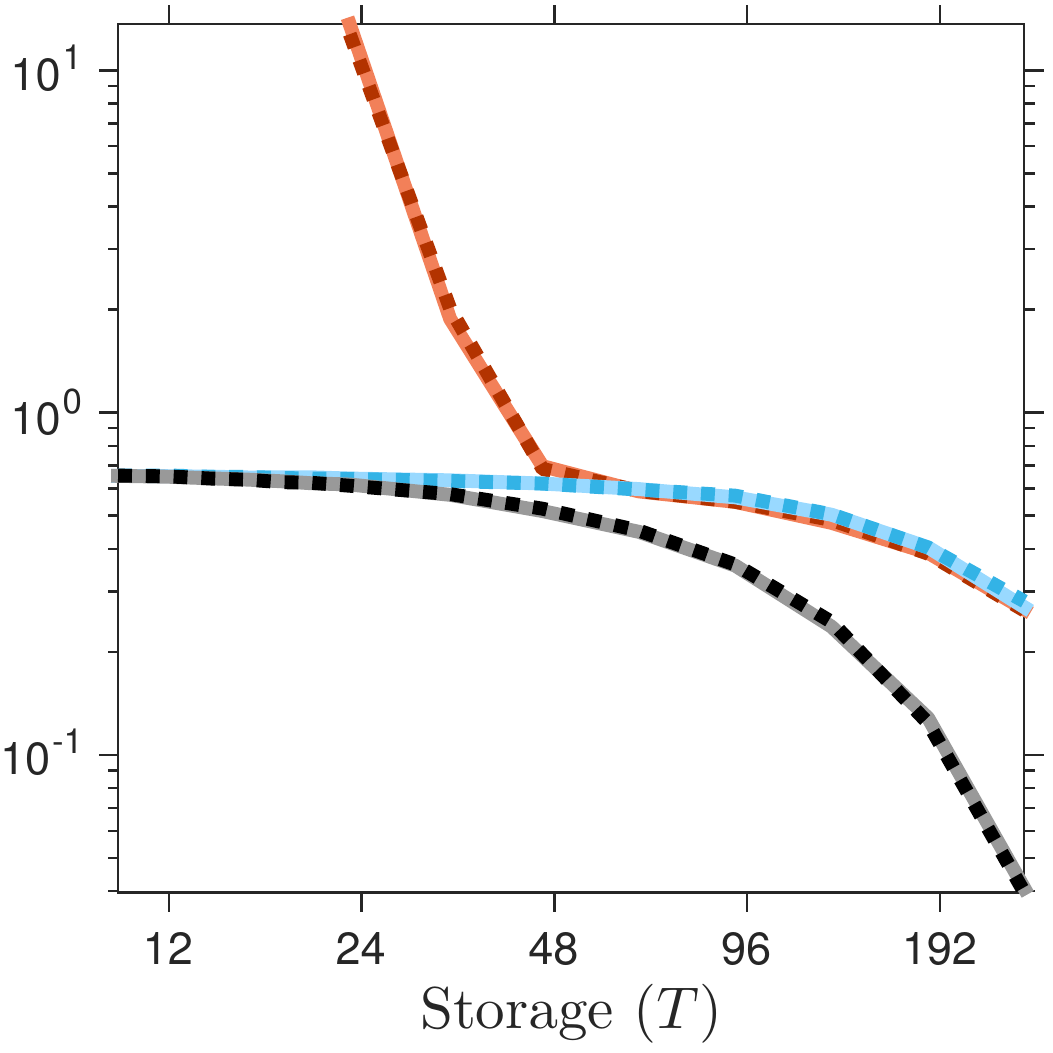}
\caption{\texttt{LowRankHiNoise}}
\end{center}
\end{subfigure}
\end{center}

\vspace{.5em}

\begin{center}
\begin{subfigure}{.325\textwidth}
\begin{center}
\includegraphics[height=1.5in]{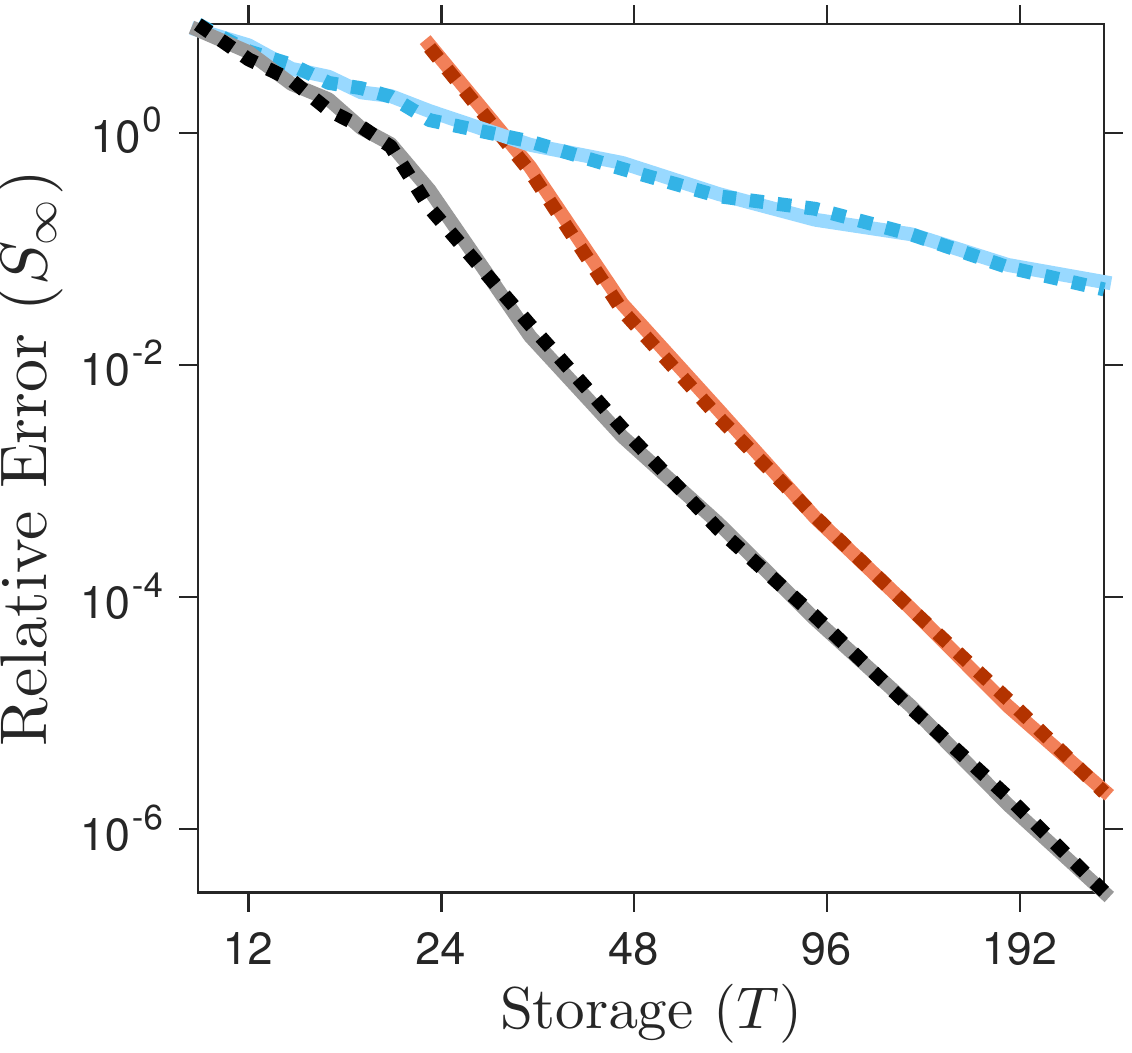}
\caption{\texttt{PolyDecayFast}}
\end{center}
\end{subfigure}
\begin{subfigure}{.325\textwidth}
\begin{center}
\includegraphics[height=1.5in]{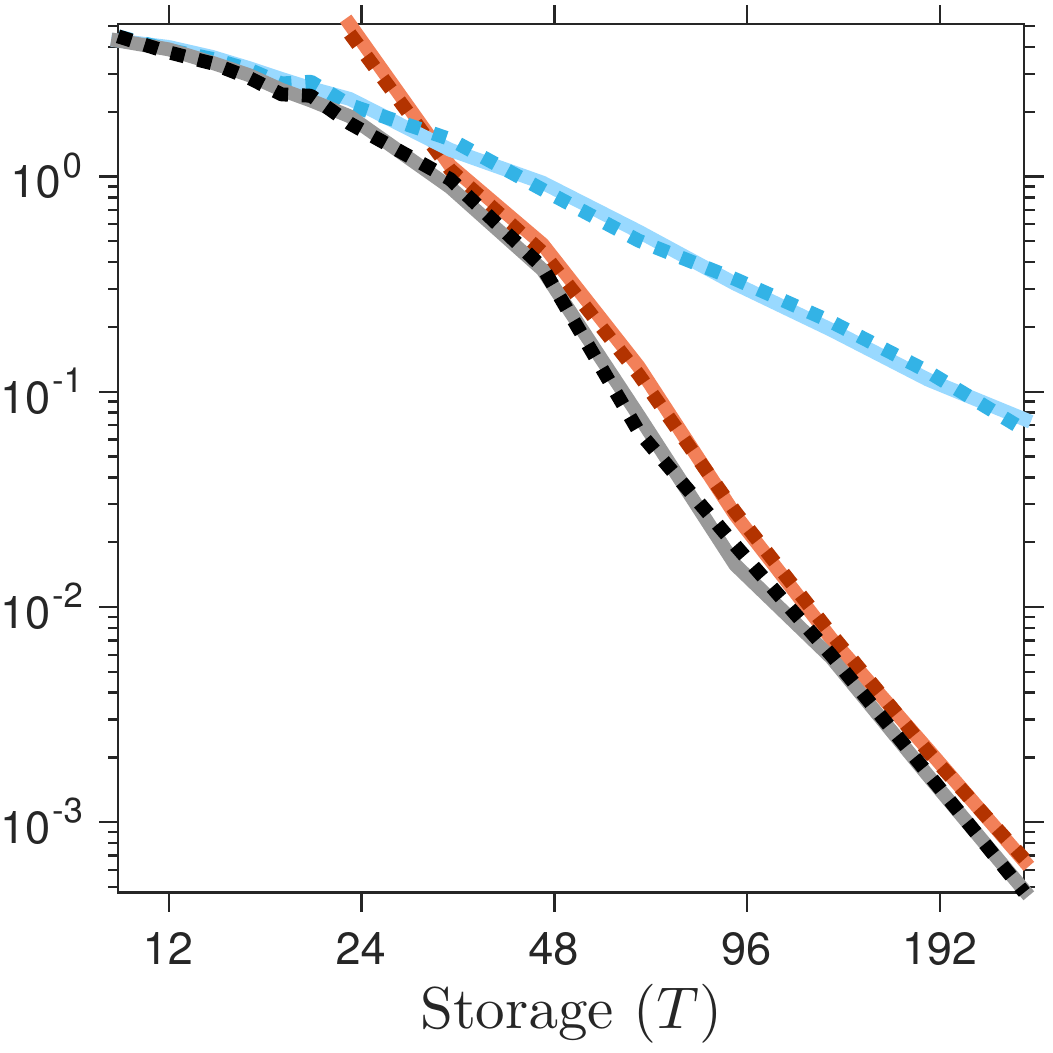}
\caption{\texttt{PolyDecayMed}}
\end{center}
\end{subfigure}
\begin{subfigure}{.325\textwidth}
\begin{center}
\includegraphics[height=1.5in]{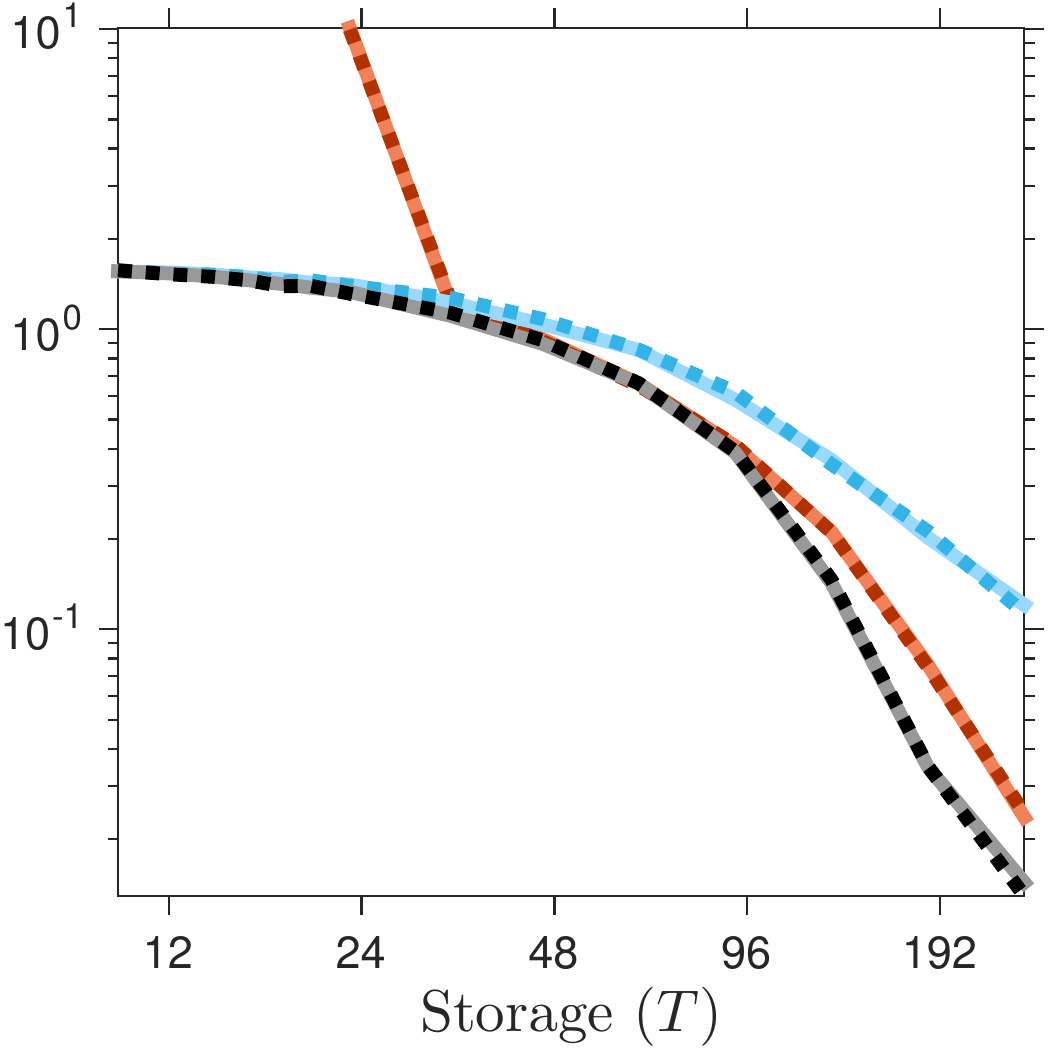}
\caption{\texttt{PolyDecaySlow}}
\end{center}
\end{subfigure}
\end{center}

\vspace{0.5em}

\begin{center}
\begin{subfigure}{.325\textwidth}
\begin{center}
\includegraphics[height=1.5in]{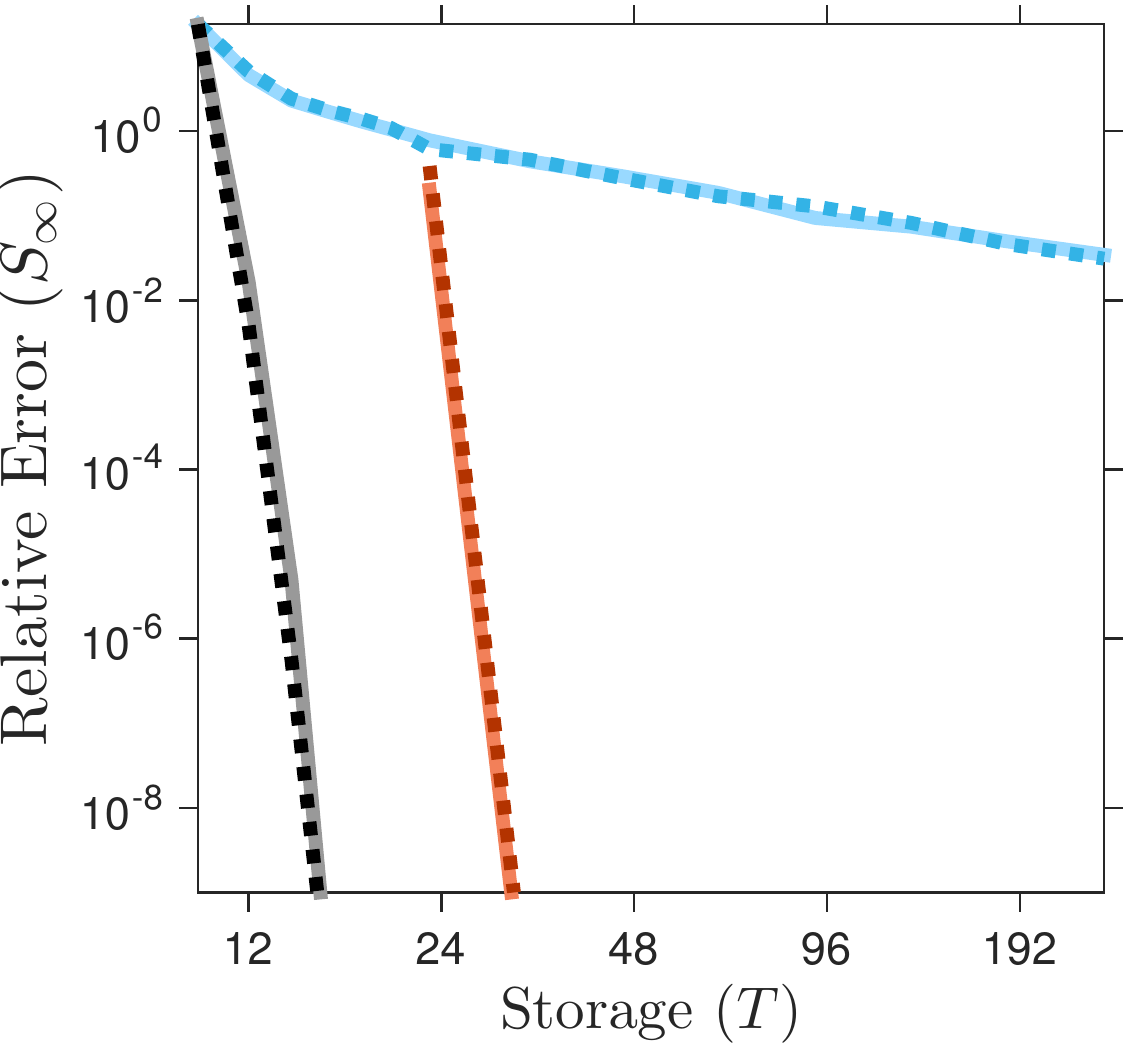}
\caption{\texttt{ExpDecayFast}}
\end{center}
\end{subfigure}
\begin{subfigure}{.325\textwidth}
\begin{center}
\includegraphics[height=1.5in]{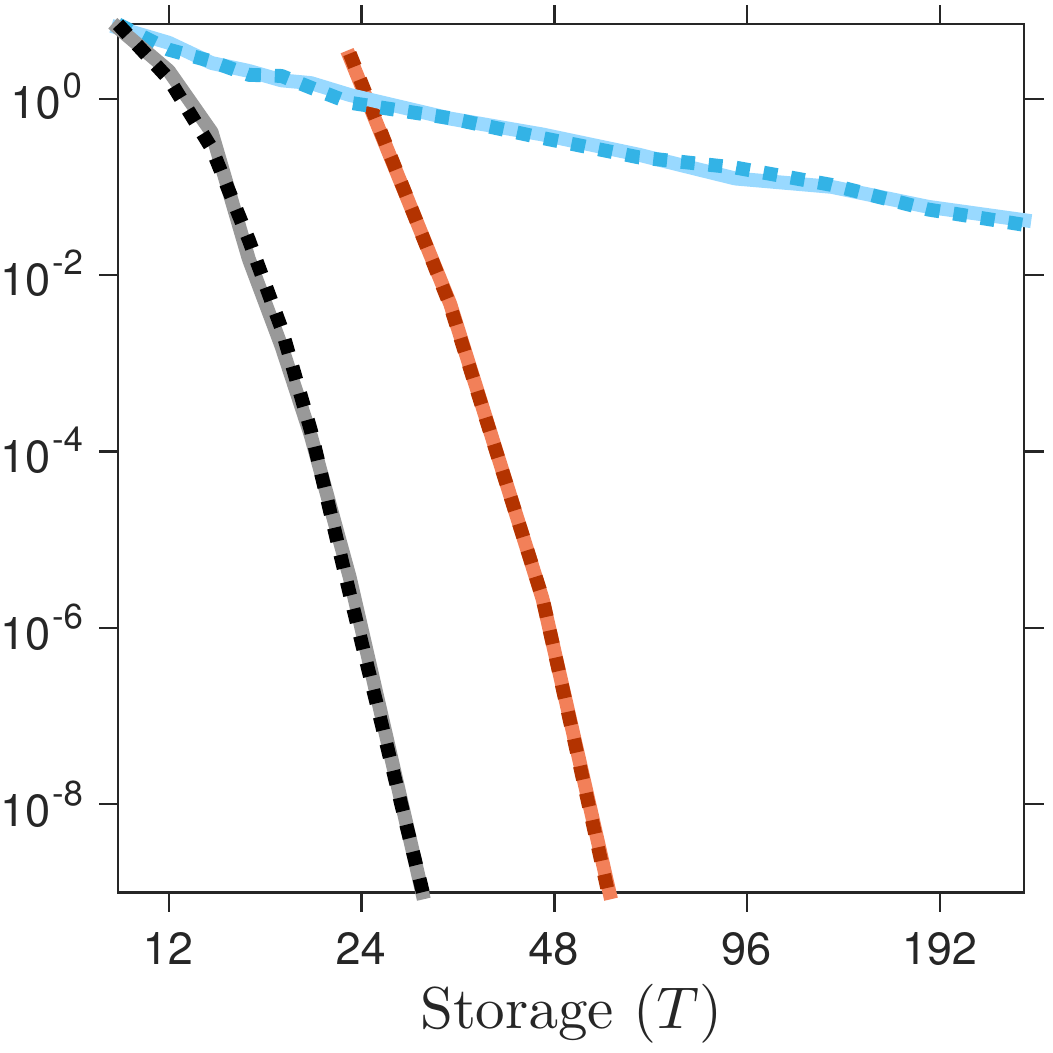}
\caption{\texttt{ExpDecayMed}}
\end{center}
\end{subfigure}
\begin{subfigure}{.325\textwidth}
\begin{center}
\includegraphics[height=1.5in]{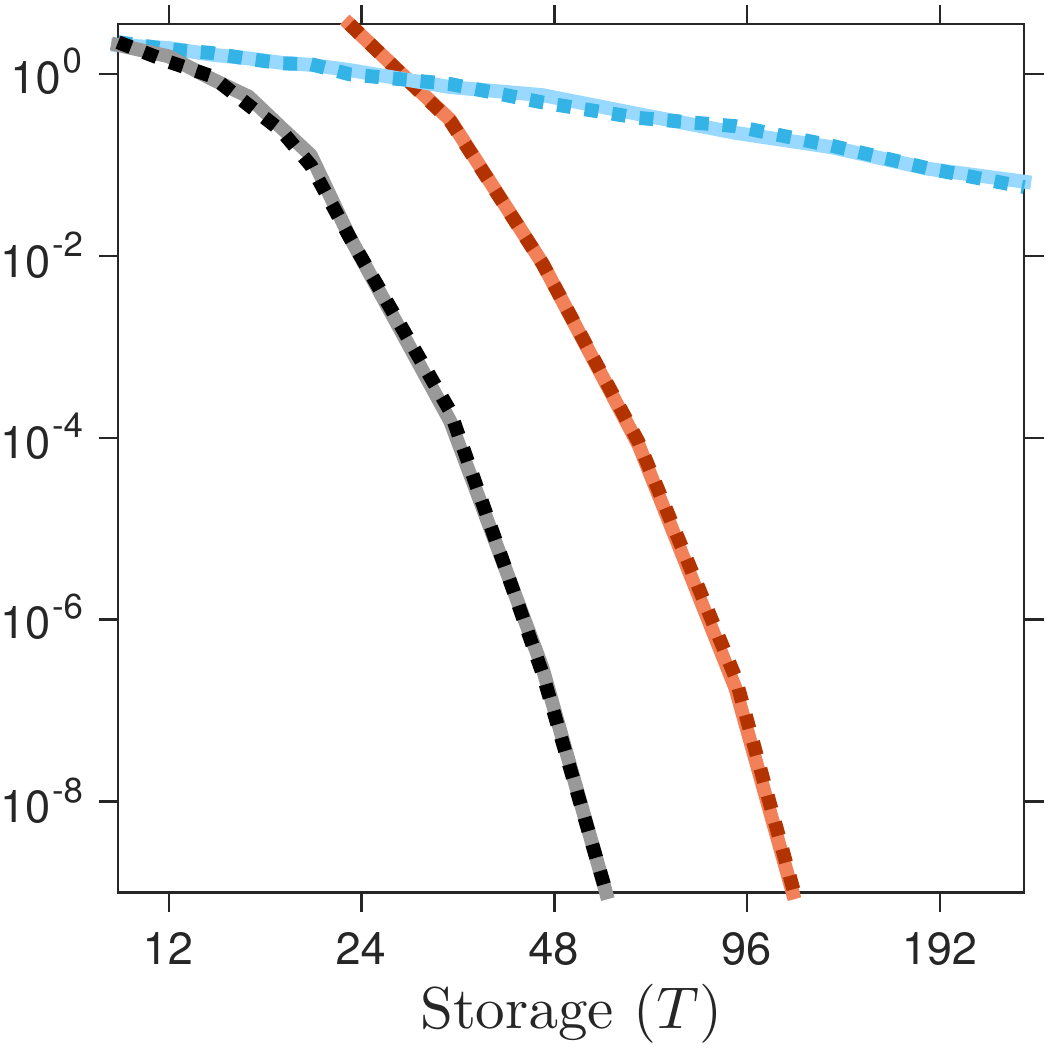}
\caption{\texttt{ExpDecaySlow}}
\end{center}
\end{subfigure}
\end{center}

\caption{\textbf{Synthetic Examples with Effective Rank $R = 5$, Approximation Rank $r = 10$, Schatten $\infty$-Norm Error.}
The series are
generated by three algorithms for rank-$r$ psd approximation with $r = 10$.
\textbf{Solid lines} are generated from the Gaussian sketch;
\textbf{dashed lines} are from the SSFT sketch.
Each panel displays the  Schatten $\infty$-norm relative error~\eqref{eqn:relative-error}
as a function of storage cost $T$.  See Sec.~\ref{sec:numerics}
for details.}
\label{fig:synthetic-Sinf-R5}
\end{figure}

\begin{figure}[htp!]
\begin{center}
\begin{subfigure}{.325\textwidth}
\begin{center}
\includegraphics[height=1.5in]{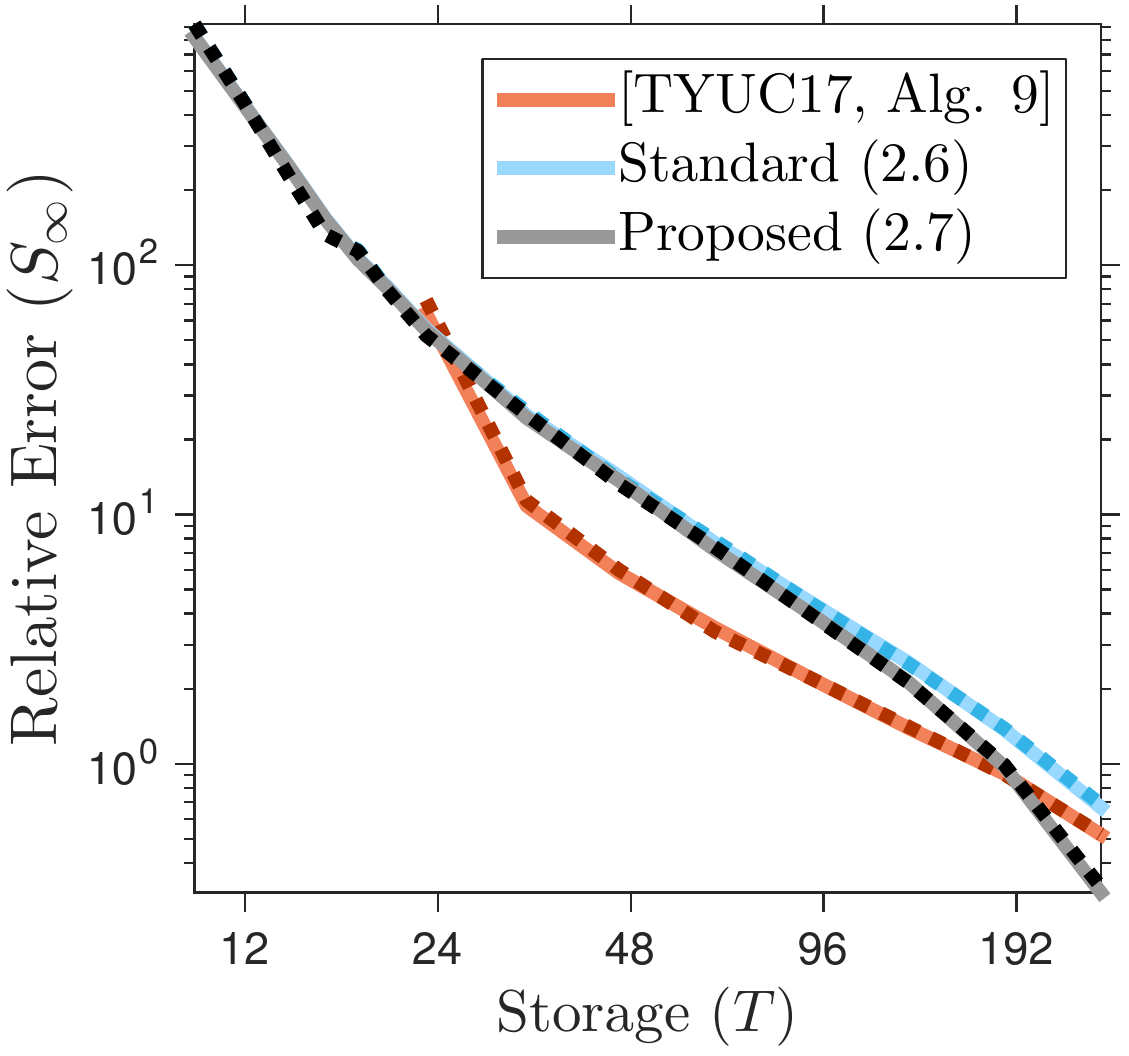}
\caption{\texttt{LowRankLowNoise}}
\end{center}
\end{subfigure}
\begin{subfigure}{.325\textwidth}
\begin{center}
\includegraphics[height=1.5in]{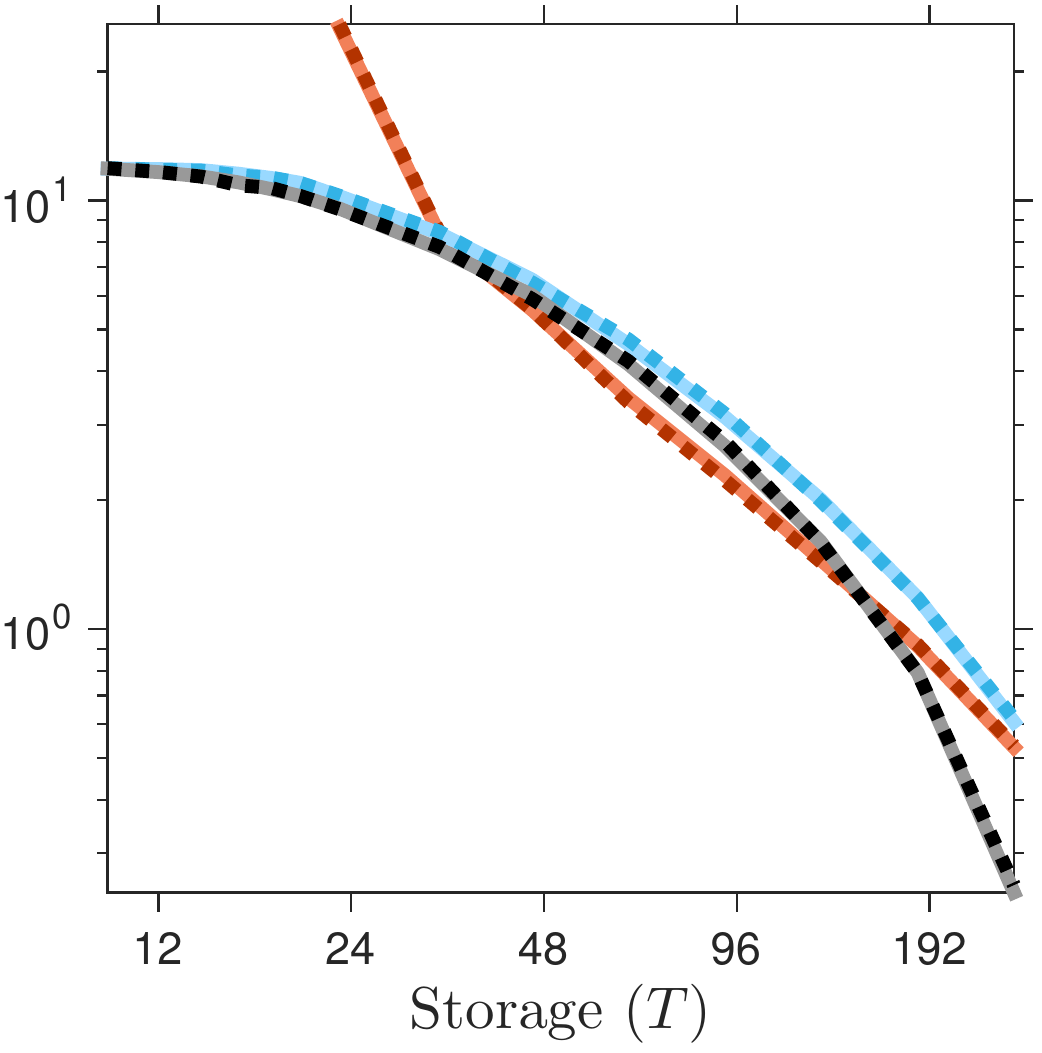}
\caption{\texttt{LowRankMedNoise}}
\end{center}
\end{subfigure}
\begin{subfigure}{.325\textwidth}
\begin{center}
\includegraphics[height=1.5in]{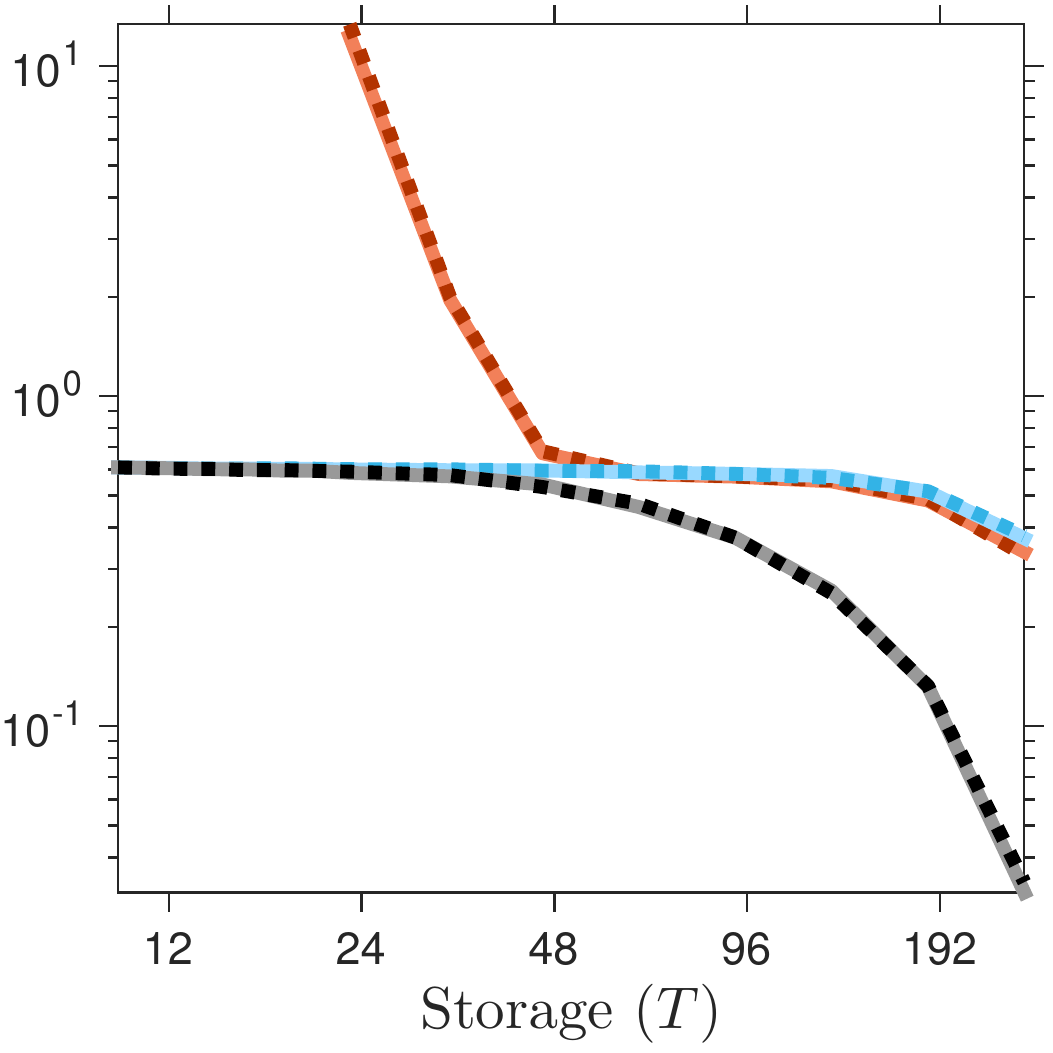}
\caption{\texttt{LowRankHiNoise}}
\end{center}
\end{subfigure}
\end{center}

\vspace{.5em}

\begin{center}
\begin{subfigure}{.325\textwidth}
\begin{center}
\includegraphics[height=1.5in]{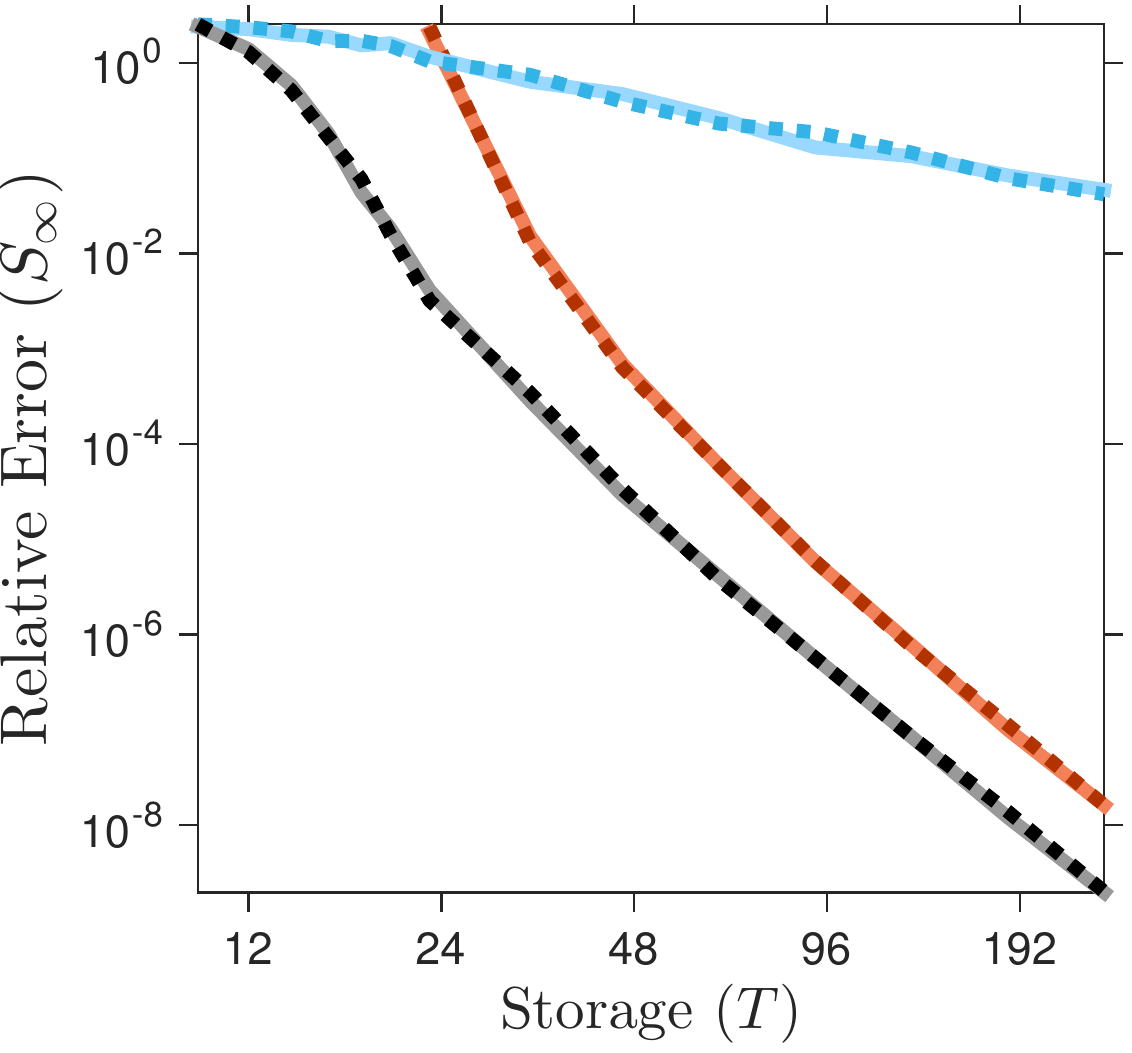}
\caption{\texttt{PolyDecayFast}}
\end{center}
\end{subfigure}
\begin{subfigure}{.325\textwidth}
\begin{center}
\includegraphics[height=1.5in]{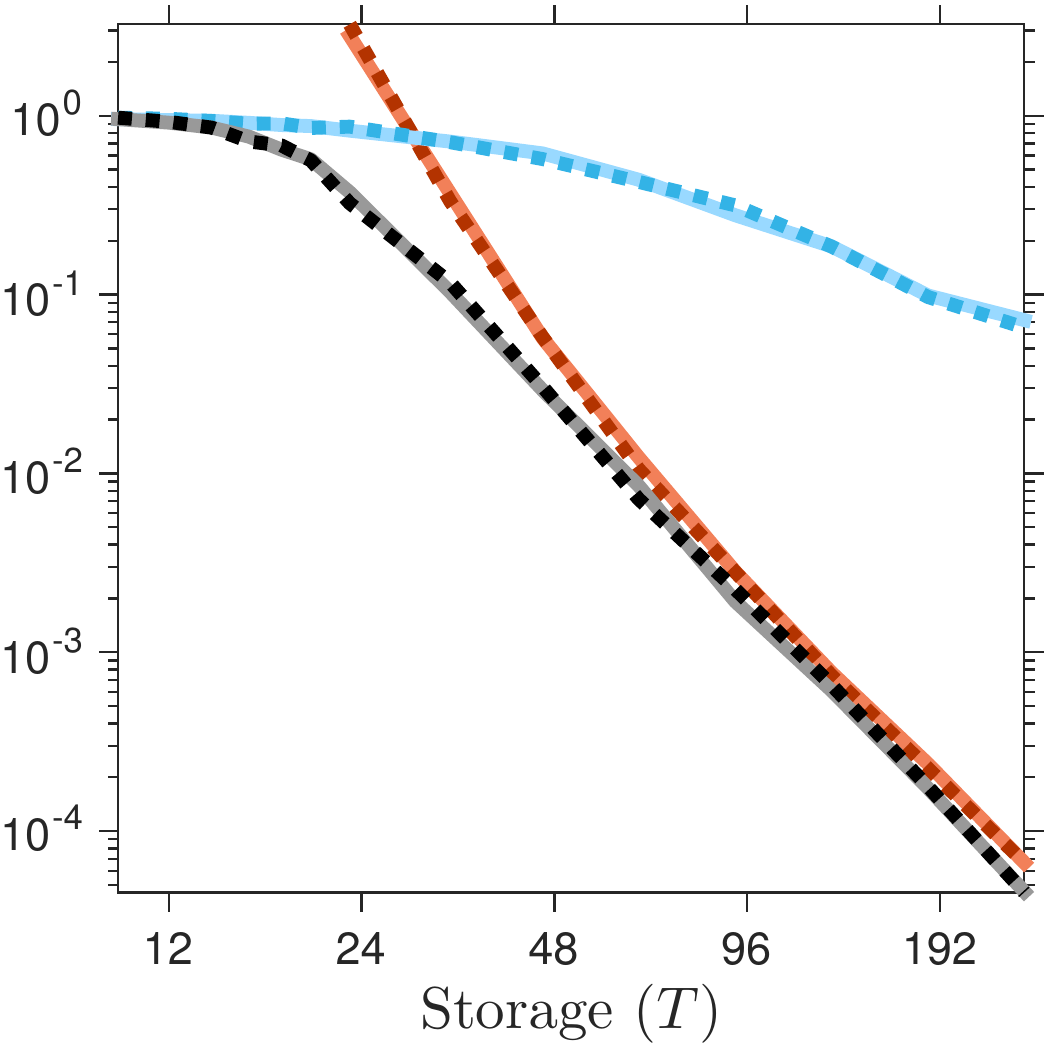}
\caption{\texttt{PolyDecayMed}}
\end{center}
\end{subfigure}
\begin{subfigure}{.325\textwidth}
\begin{center}
\includegraphics[height=1.5in]{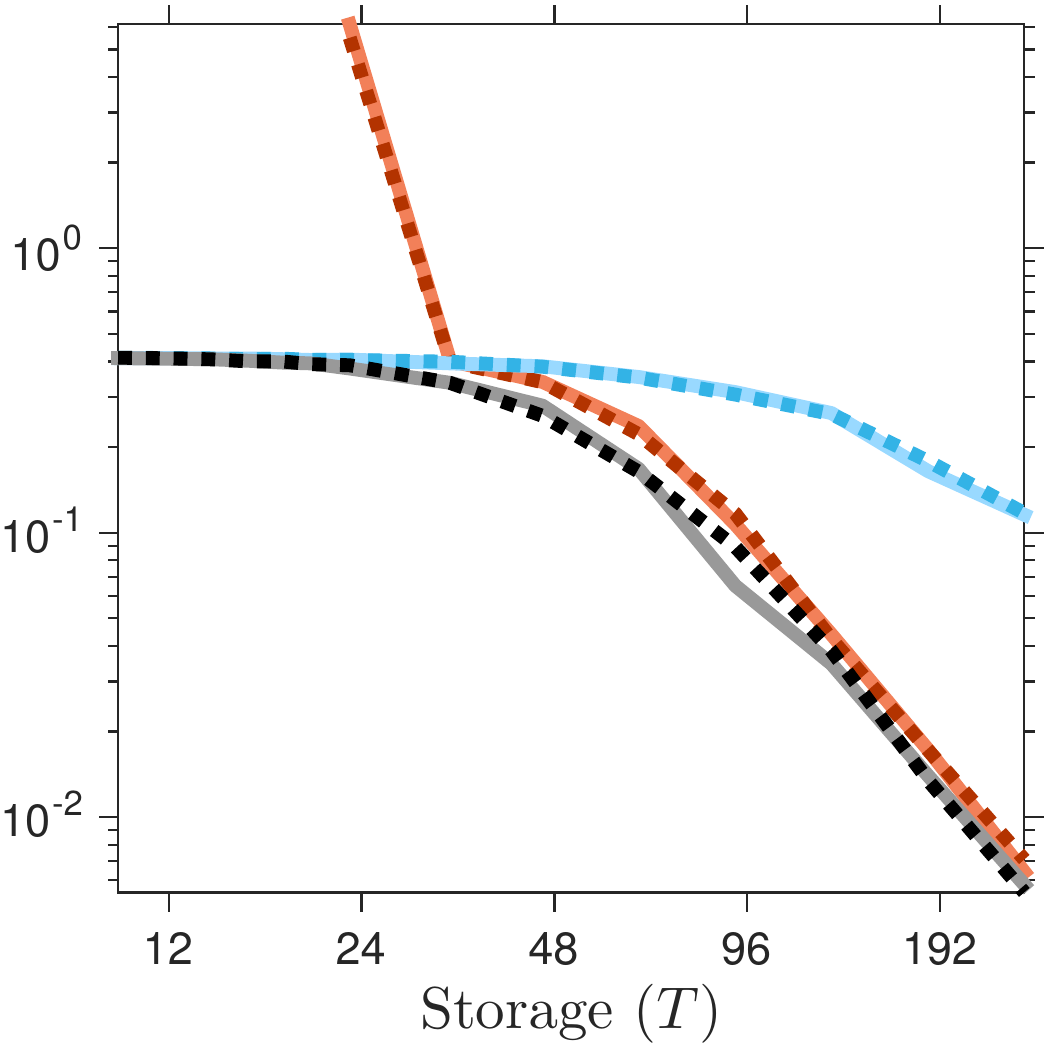}
\caption{\texttt{PolyDecaySlow}}
\end{center}
\end{subfigure}
\end{center}

\vspace{0.5em}

\begin{center}
\begin{subfigure}{.325\textwidth}
\begin{center}
\includegraphics[height=1.5in]{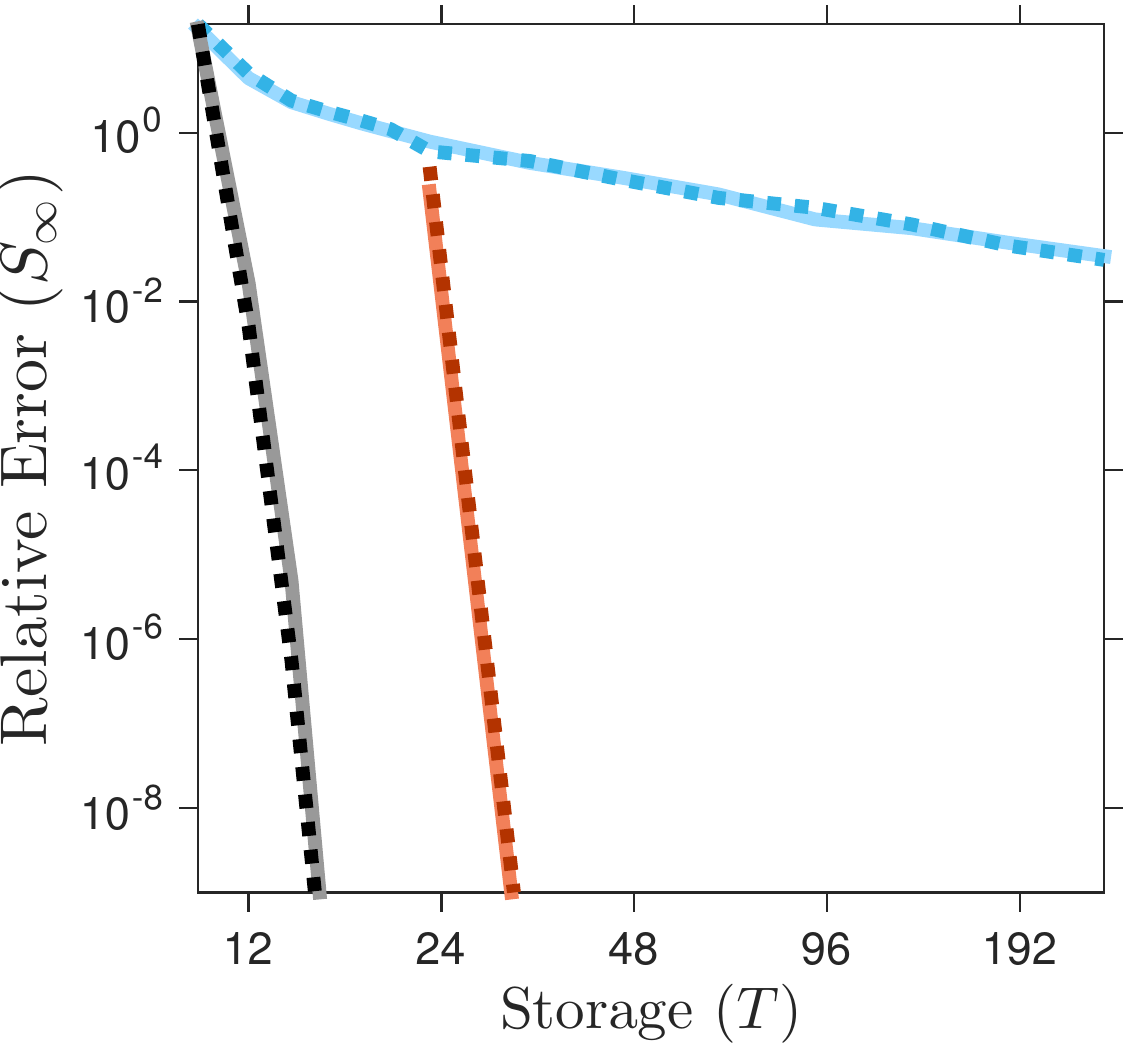}
\caption{\texttt{ExpDecayFast}}
\end{center}
\end{subfigure}
\begin{subfigure}{.325\textwidth}
\begin{center}
\includegraphics[height=1.5in]{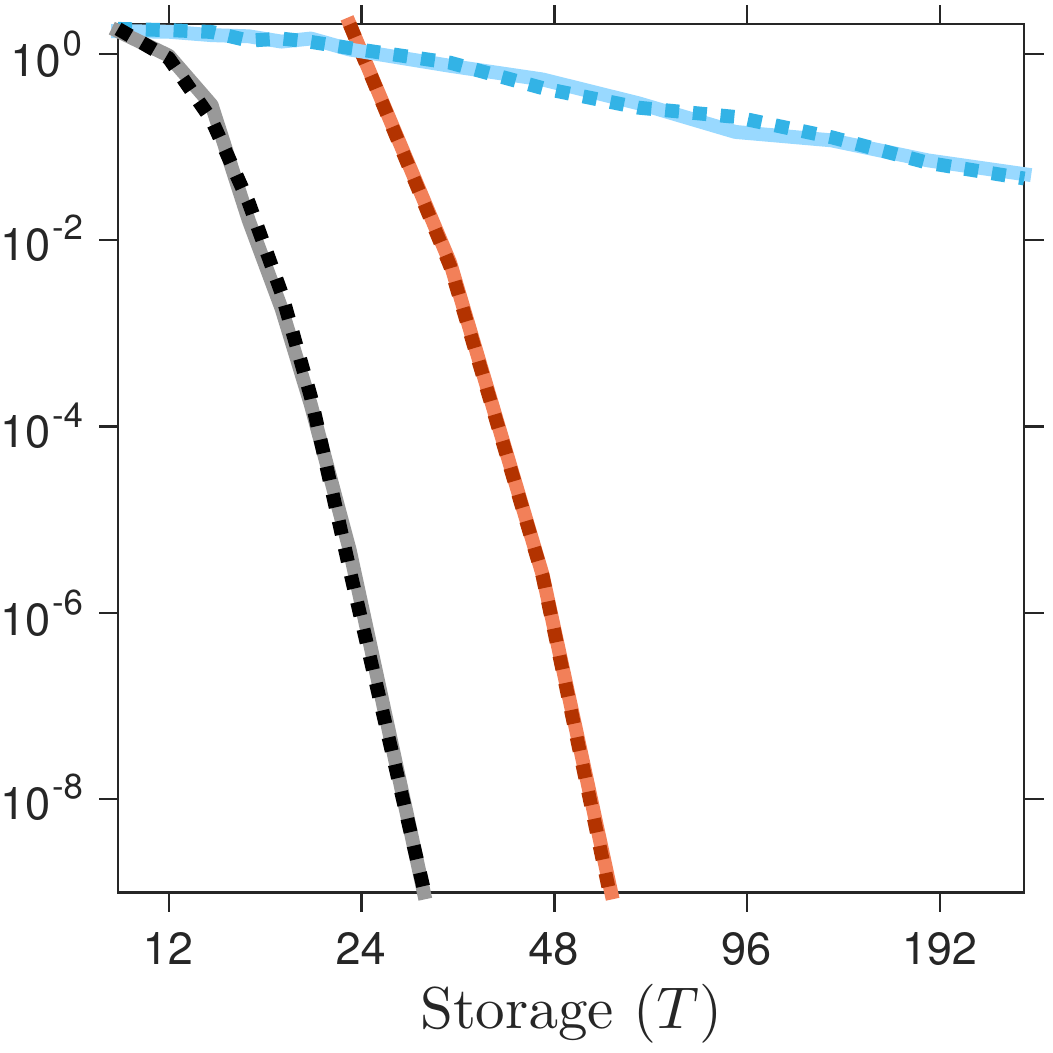}
\caption{\texttt{ExpDecayMed}}
\end{center}
\end{subfigure}
\begin{subfigure}{.325\textwidth}
\begin{center}
\includegraphics[height=1.5in]{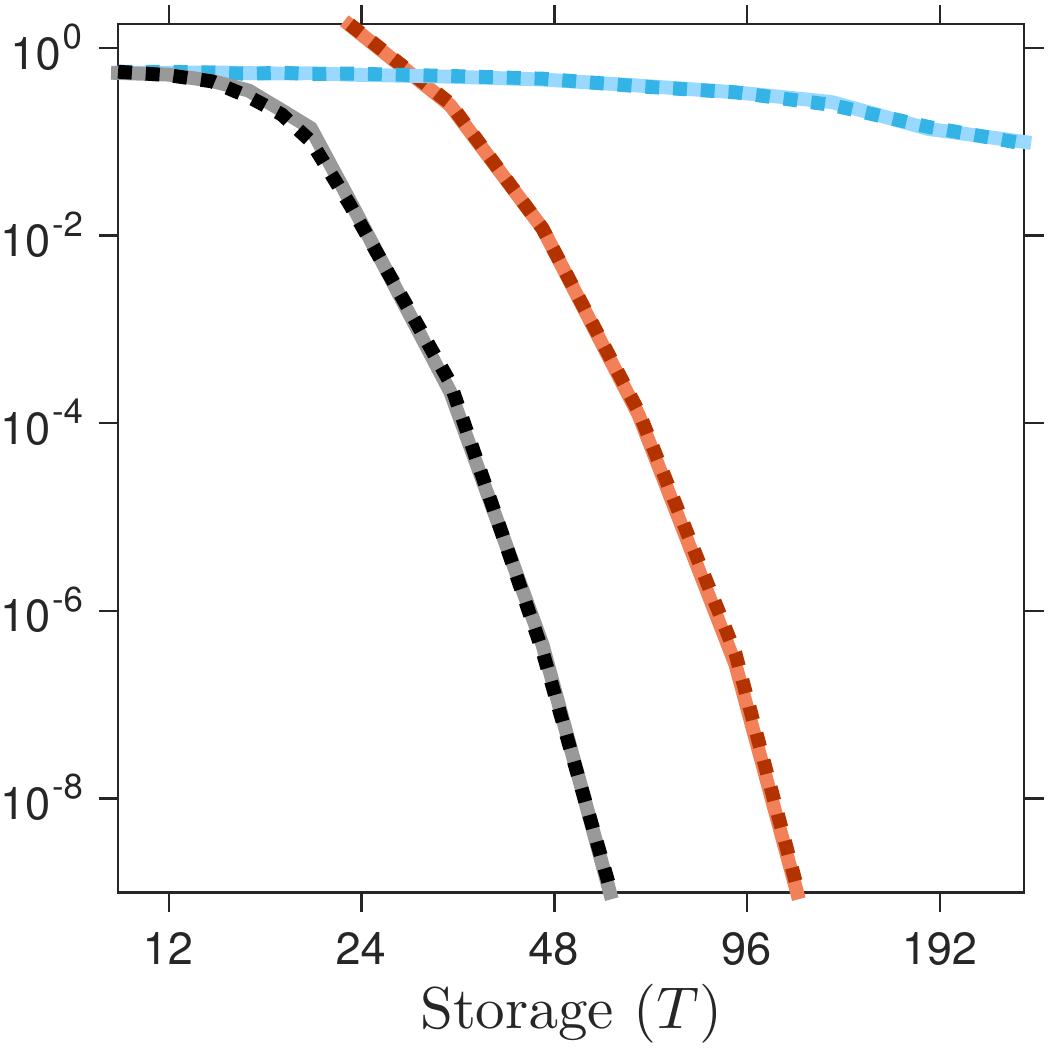}
\caption{\texttt{ExpDecaySlow}}
\end{center}
\end{subfigure}
\end{center}

\caption{\textbf{Synthetic Examples with Effective Rank $R = 10$, Approximation Rank $r = 10$, Schatten $\infty$-Norm Error.}
The series are
generated by three algorithms for rank-$r$ psd approximation with $r = 10$.
\textbf{Solid lines} are generated from the Gaussian sketch;
\textbf{dashed lines} are from the SSFT sketch.
Each panel displays the  Schatten $\infty$-norm relative error~\eqref{eqn:relative-error}
as a function of storage cost $T$.  See Sec.~\ref{sec:numerics}
for details.}
\label{fig:synthetic-Sinf-R10}
\end{figure}

\begin{figure}[htp!]
\begin{center}
\begin{subfigure}{.325\textwidth}
\begin{center}
\includegraphics[height=1.5in]{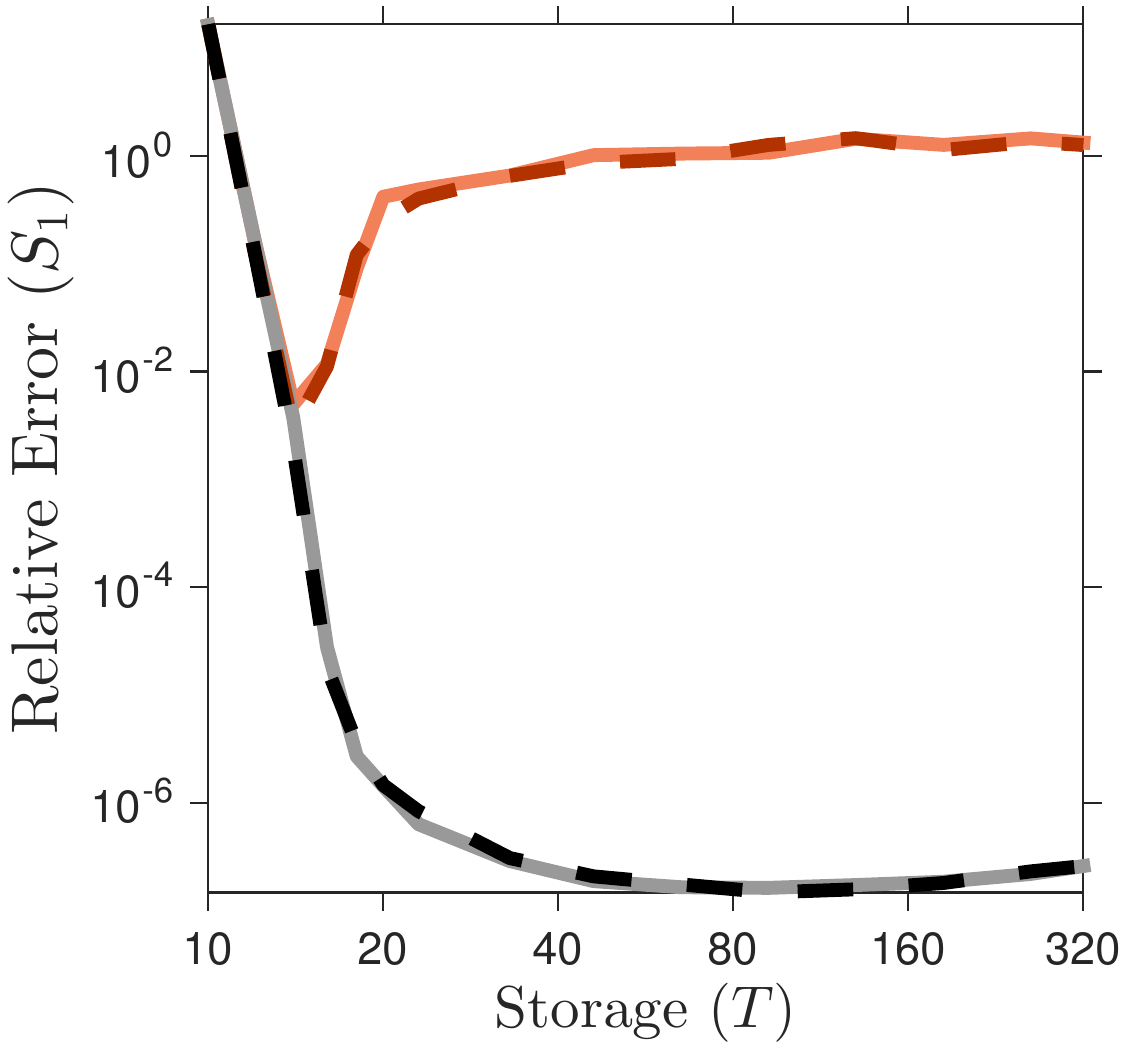}
\caption{\texttt{ExpDecayFast}, $R = 5$}
\end{center}
\end{subfigure}
\begin{subfigure}{.325\textwidth}
\begin{center}
\includegraphics[height=1.5in]{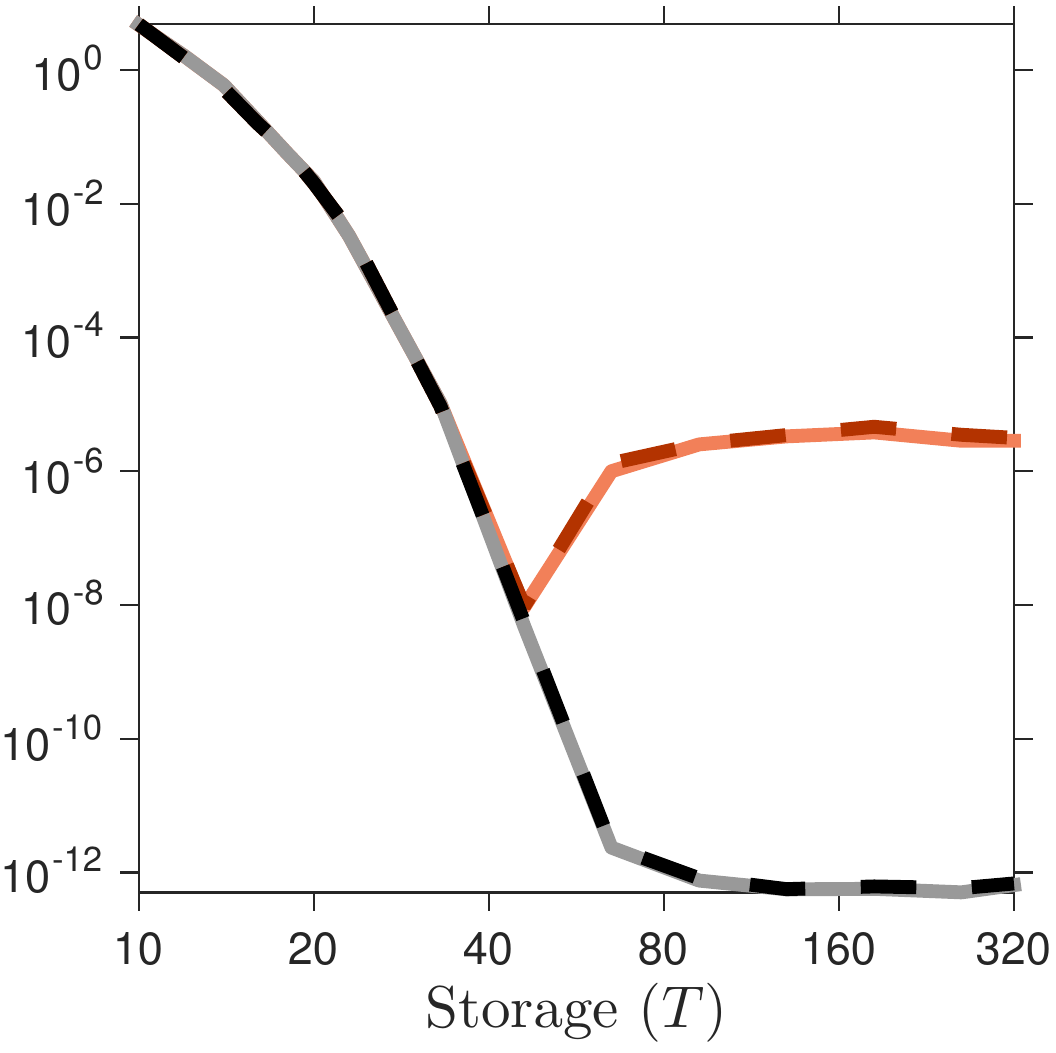}
\caption{\texttt{ExpDecayMed}, $R = 5$}
\end{center}
\end{subfigure}
\begin{subfigure}{.325\textwidth}
\begin{center}
\includegraphics[height=1.5in]{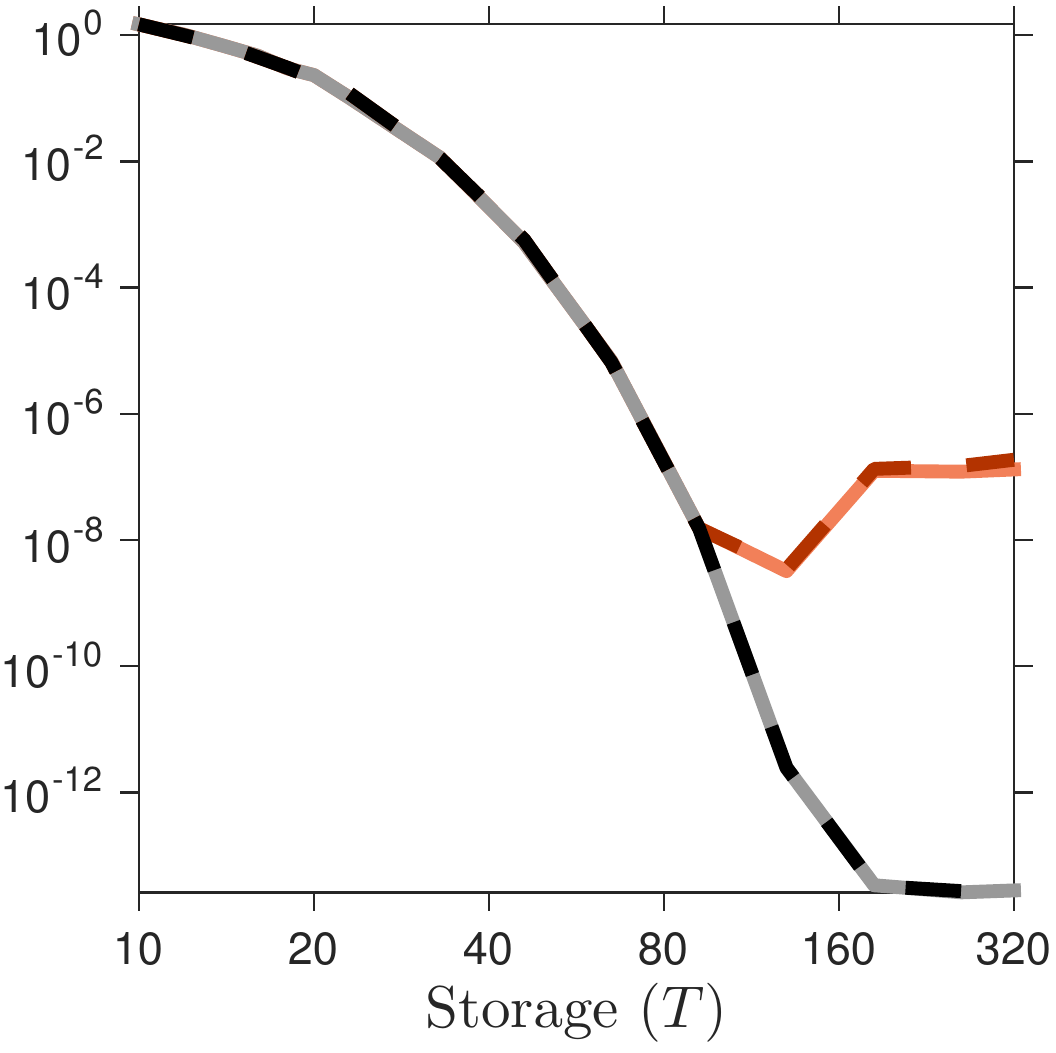}
\caption{\texttt{ExpDecaySlow}, $R = 5$}
\end{center}
\end{subfigure}
\end{center}

\vspace{.5em}

\begin{center}
\begin{subfigure}{.325\textwidth}
\begin{center}
\includegraphics[height=1.5in]{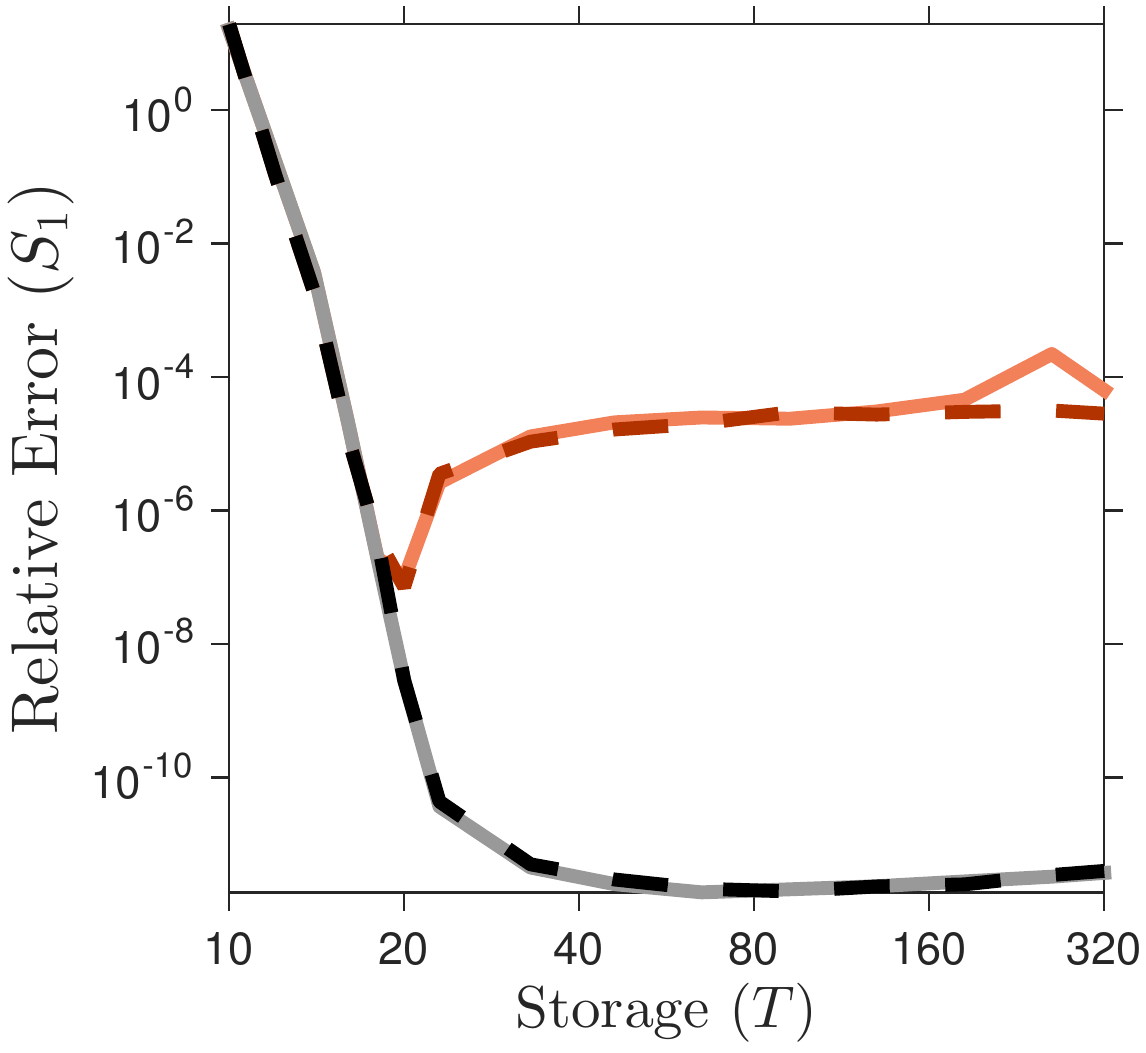}
\caption{\texttt{ExpDecayFast}, $R = 10$}
\end{center}
\end{subfigure}
\begin{subfigure}{.325\textwidth}
\begin{center}
\includegraphics[height=1.5in]{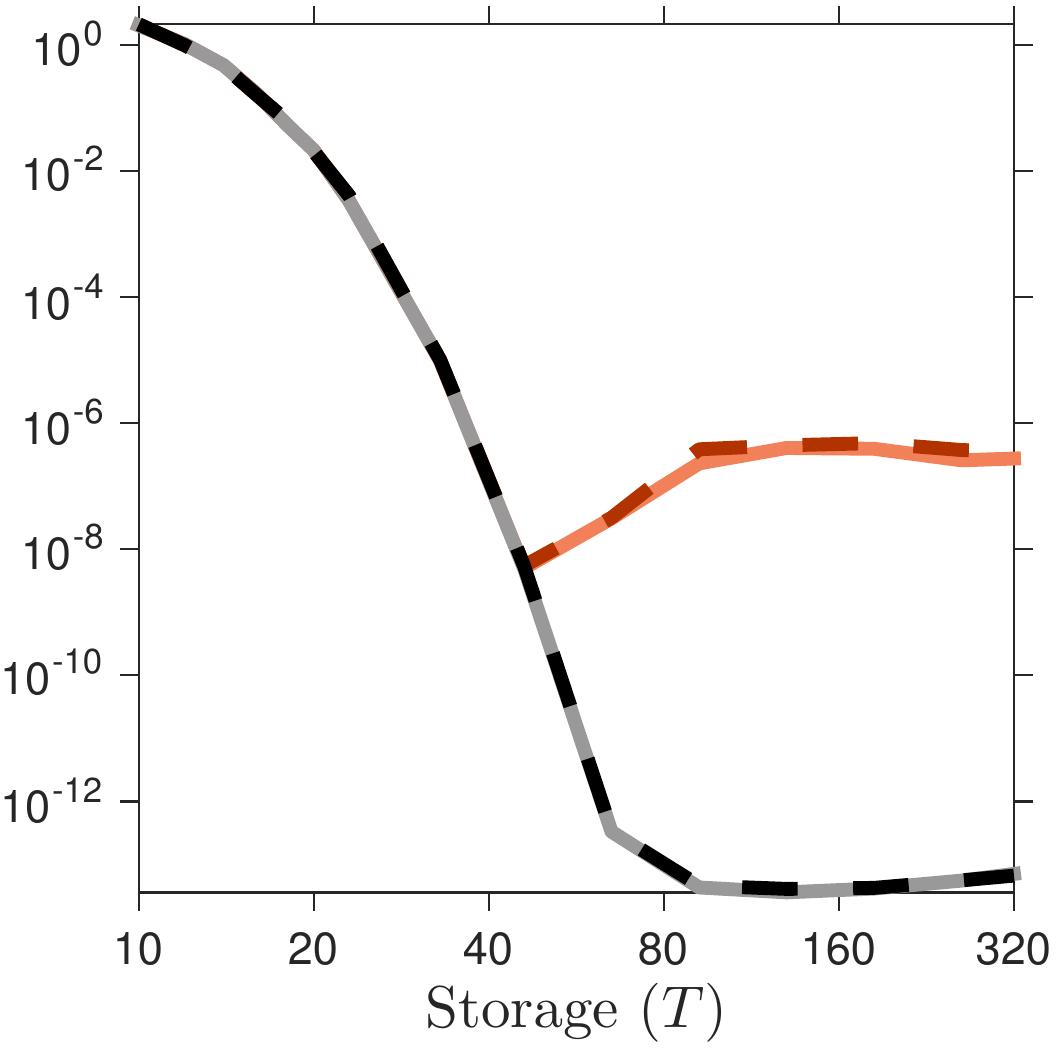}
\caption{\texttt{ExpDecayMed}, $R = 10$}
\end{center}
\end{subfigure}
\begin{subfigure}{.325\textwidth}
\begin{center}
\includegraphics[height=1.5in]{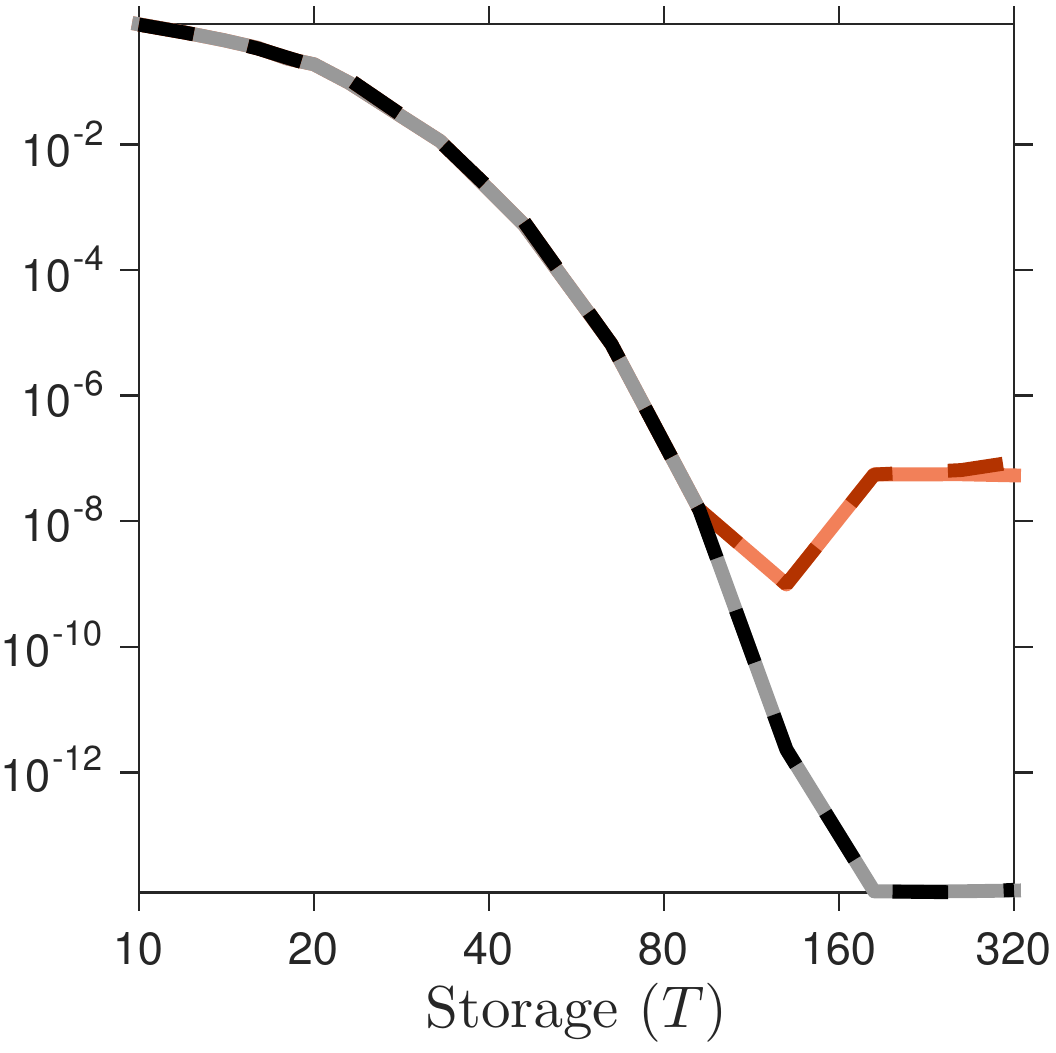}
\caption{\texttt{ExpDecaySlow}, $R = 10$}
\end{center}
\end{subfigure}
\end{center}

\vspace{0.5em}

\begin{center}
\begin{subfigure}{.325\textwidth}
\begin{center}
\includegraphics[height=1.5in]{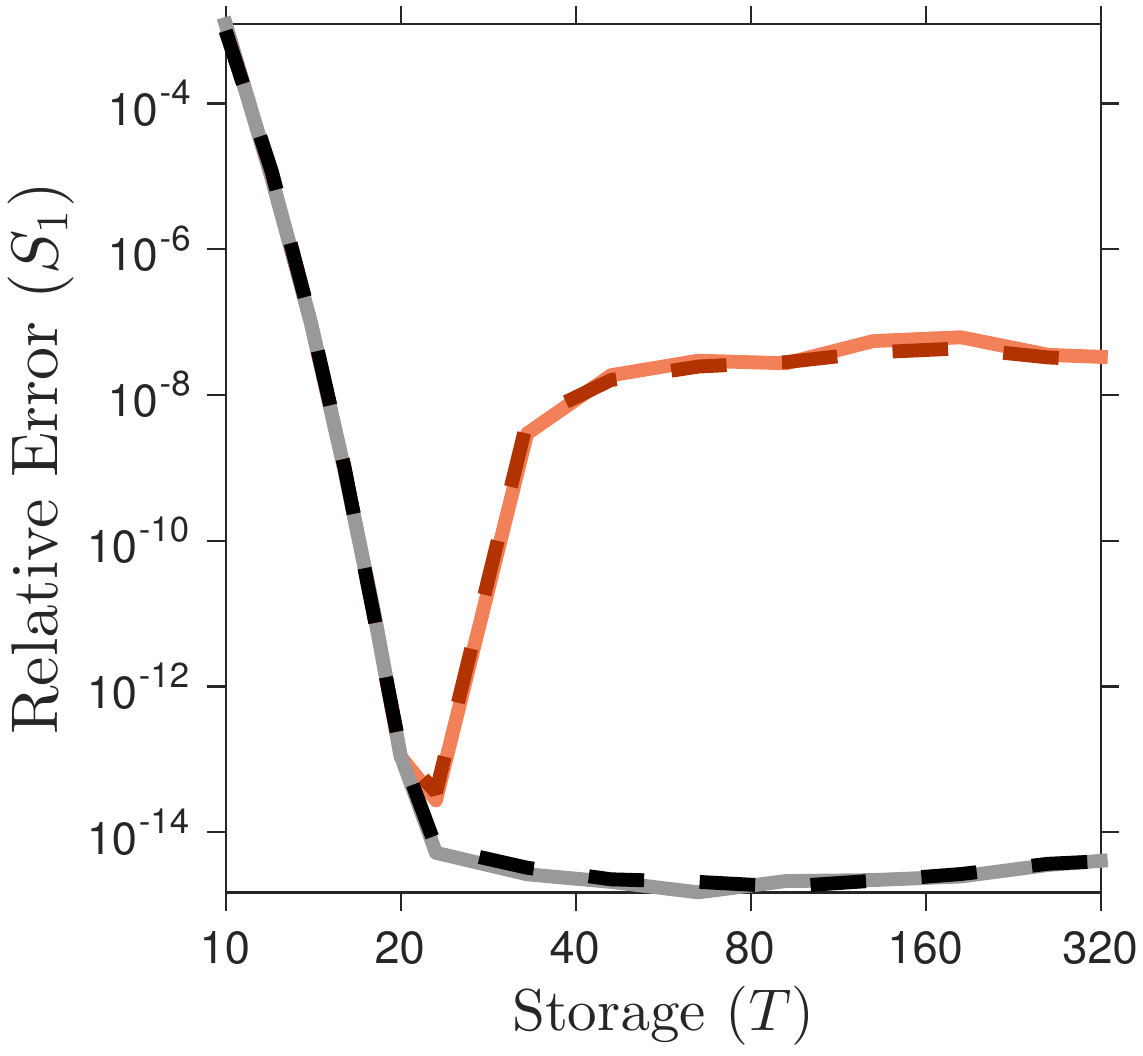}
\caption{\texttt{ExpDecayFast}, $R = 20$}
\end{center}
\end{subfigure}
\begin{subfigure}{.325\textwidth}
\begin{center}
\includegraphics[height=1.5in]{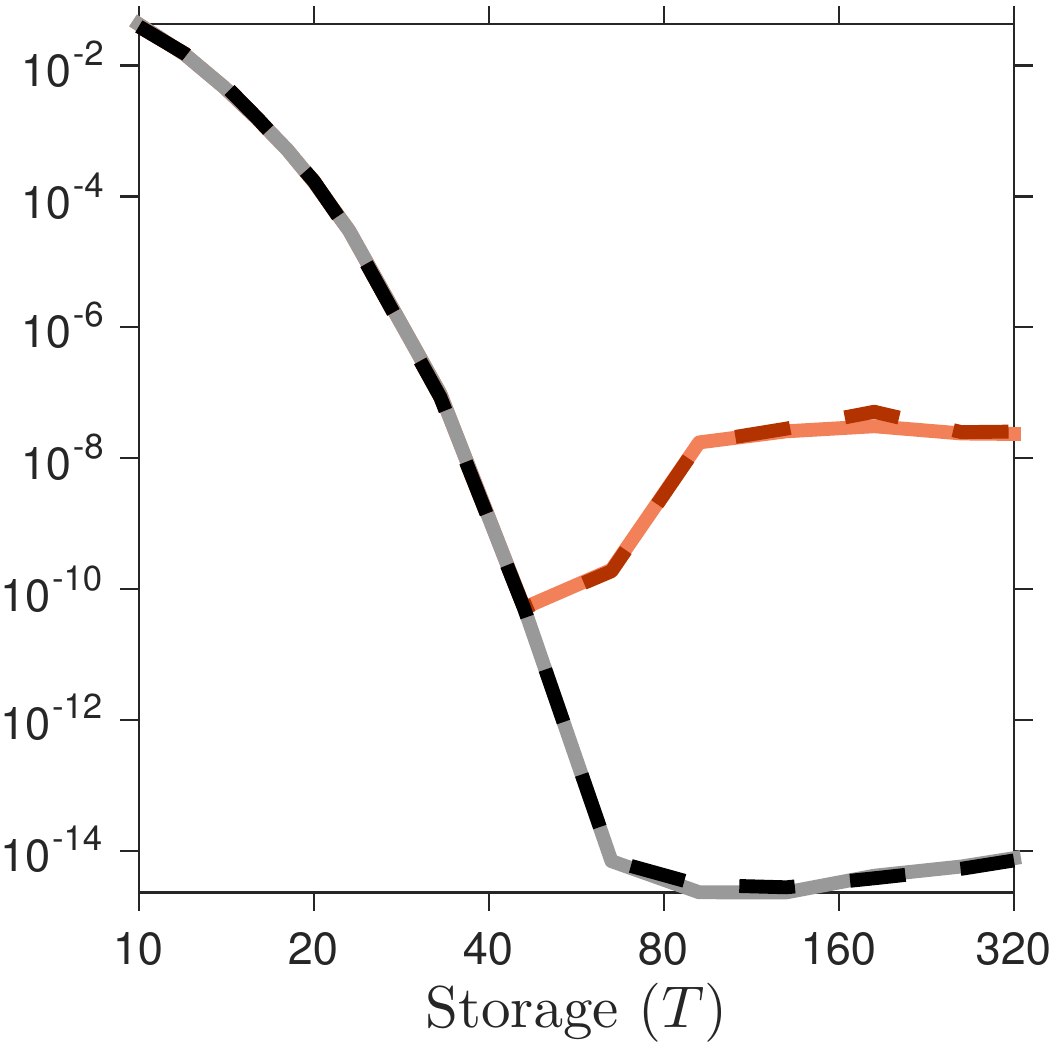}
\caption{\texttt{ExpDecayMed}, $R = 20$}
\end{center}
\end{subfigure}
\begin{subfigure}{.325\textwidth}
\begin{center}
\includegraphics[height=1.5in]{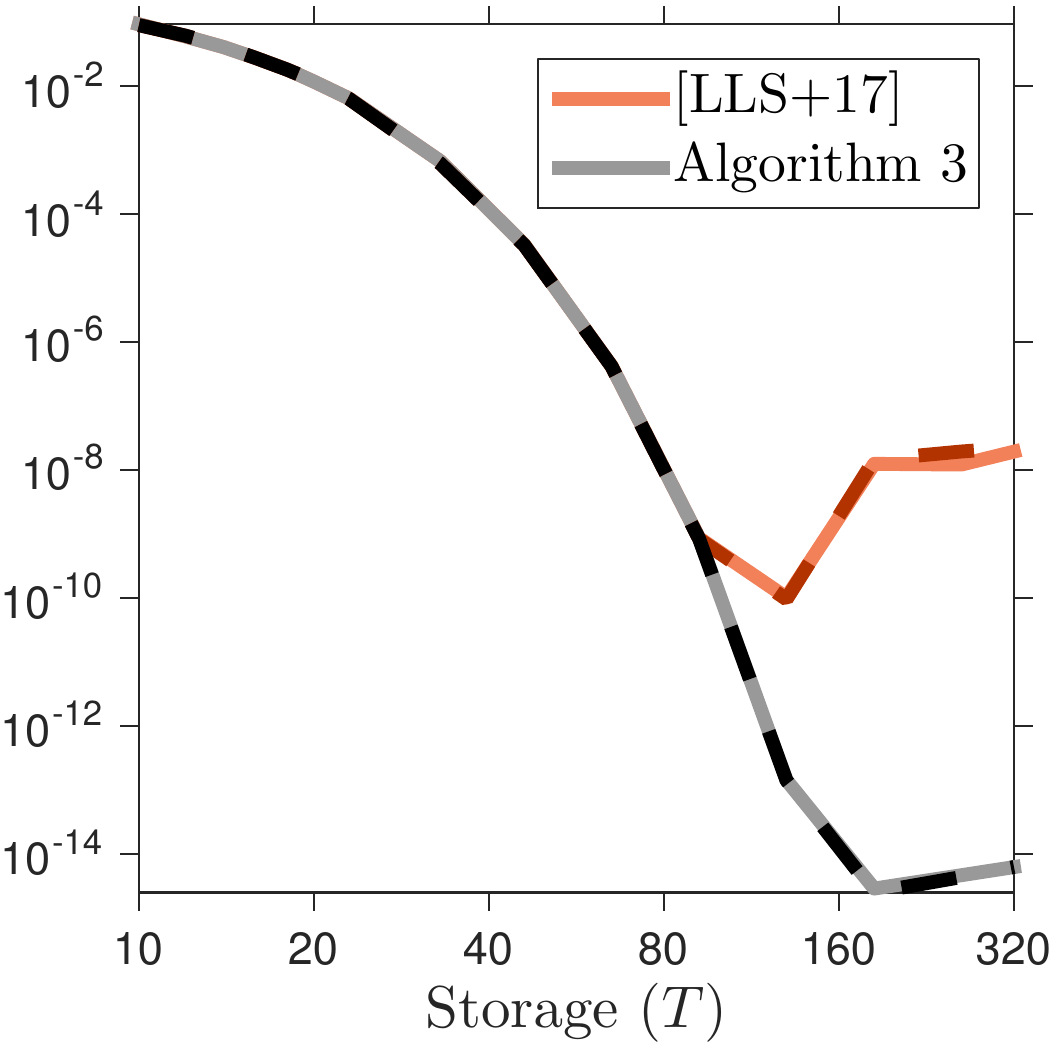}
\caption{\texttt{ExpDecaySlow}, $R = 20$}
\end{center}
\end{subfigure}
\end{center}

\caption{\textbf{Bad Numerics, Approximation Rank $r = 10$, Schatten $1$-Norm Error.} The series are generated by two implementations of the
fixed-rank psd approximation~\eqref{eqn:Ahat-fixed}.
We compare Algorithm~\ref{alg:low-rank-recon} with
another approach [LLS+17] proposed in~\cite[Eqn.~(13)]{LLS+17:Algorithm-971}.
\textbf{Solid lines} are generated from the Gaussian sketch;
\textbf{dashed lines} are from the SSFT sketch.
Each panel displays the Schatten 1-norm relative error~\eqref{eqn:relative-error}
as a function of storage cost $T$.  See App.~\ref{sec:numerics-extra}
for details.}
\label{fig:bad-numerics}
\end{figure}

\clearpage

\fi}

\bibliographystyle{plainnat}
{\small 
 }

\end{document}